\newcolumntype{M}[1]{>{\centering\arraybackslash}m{#1}}
\newcolumntype{N}{@{}m{0pt}@{}}
\theoremstyle{plain}
\newtheorem{thm}{Theorem}[section]
\newtheorem{lem}[thm]{Lemma}
\newtheorem{pro}[thm]{Proposition}
\newtheorem{cor}[thm]{Corollary}
\theoremstyle{definition}
\newtheorem{dfn}[thm]{Definition}
\newtheorem{exa}[thm]{Example}
\newtheorem{rem}[thm]{Remark}
\DeclareMathOperator{\Hom}{Hom}
\DeclareMathOperator{\End}{End}
\DeclareMathOperator{\Ext}{Ext}
\DeclareMathOperator{\modu}{mod}
\newcommand{\Ga}{\Gamma}
\newcommand{\op}{\mathrm{op}}
\DeclareMathOperator{\Gamod}{\Ga\text{-}mod}
\DeclareMathOperator{\Lamod}{\La\text{-}mod}
\DeclareMathOperator{\eLamod}{e\La e\text{-}mod}
\DeclareMathOperator{\Demod}{\De\text{-}mod}
\DeclareMathOperator{\Simod}{\Si\text{-}mod}
\DeclareMathOperator{\pd}{pd}
\DeclareMathOperator{\id}{id}
\DeclareMathOperator{\idim}{id}
\DeclareMathOperator{\pdim}{pd}
\DeclareMathOperator{\gldim}{gldim}
\newcommand{\oTo}{\xymatrix{ \ar@{^{(}->}[r]|{\mathbf{O}}& }} 
\newcommand{\cTo}{\xymatrix{ \ar@{^{(}->}[r]|{\mathbf{|}}& }} 
\newcommand{\coTo}{\xymatrix{ \ar@{^{(}->}[r]|{\mathbf{O}}|{\mathbf{|}}& }} 
\DeclareMathOperator{\Ker}{ker}
\DeclareMathOperator{\coKer}{coker}
\DeclareMathOperator{\Bild}{Im}
\DeclareMathOperator{\kdual}{D}
\DeclareMathOperator{\cogen}{cogen}
\DeclareMathOperator{\copres}{copres}
\DeclareMathOperator{\pres}{pres}
\DeclareMathOperator{\gen}{gen}
\DeclareMathOperator{\rad}{rad}
\newcommand{\kLRcluster}{$k$-$(L,R)$-cluster tilting module}
\newcommand{\kLRclusters}{$k$-$(L,R)$-cluster tilting modules}
\newcommand{\nLRcluster}{$(n-2)$-$(L,R)$-cluster tilting module}
\newcommand{\Si}{\Sigma}
\newcommand{\La}{\Lambda}
\newcommand{\al}{\alpha}
\newcommand{\De}{\Delta}
\newcommand{\N}{\mathbb{N}}
\newcommand{\Z}{\mathbb{Z}}
\newcommand{\mcI}{\mathcal{I}}
\newcommand{\mcP}{\mathcal{P}}
\newcommand{\mbF}{\mathbf{F}}
\DeclareMathOperator{\add}{add}
\DeclareMathOperator{\cotilt}{cotilt}
\DeclareMathOperator{\domdim}{domdim}
\begin{document}

\title{On faithfully balanced modules, \\
F-cotilting and F-Auslander algebras}

\begin{abstract} We revisit faithfully balanced modules. These are faithful modules having the double centralizer property. 
For finite-dimensional algebras our main tool is the category $\cogen^1(M)$ of modules with a copresentation
by summands of finite sums of $M$ on which $\Hom(-,M)$ is exact. 
For a faithfully balanced module $M$ the functor $\Hom(-,M)$ is a duality on these categories - for cotilting modules this is the Brenner-Butler theorem. 
We also study new classes of faithfully balanced modules combining cogenerators and cotilting modules. 
Then we turn to relative homological algebra in the sense of Auslander-Solberg and define a 
relative version of faithfully balancedness which we call $1$-$\mbF$-faithful. We find relative versions of the best known classes of 
faithfully balanced modules (including (co)generators ,(co)tilting and cluster tilting modules). Here we characterize the corresponding modules over the endomorphism ring of the faithfully balanced module - 
this is what we call a \emph{correspondence}. Two highlights are the relative (higher)  Auslander correspondence and the relative cotilting correspondence - the second is a generalization of a relative cotilting correspondence of Auslander-Solberg to an involution (as the usual cotilting correspondence is).  
\end{abstract}
\author{Biao Ma}
\address{Biao Ma, Department of Mathematics, Nanjing University, 22 Hankou Road, Nanjing 210093, People's Republic of China}
\email{biao.ma@math.uni-bielefeld.de}

\author{Julia Sauter}
\address{Julia Sauter\\ Faculty of Mathematics \\
Bielefeld University \\
PO Box 100 131\\
D-33501 Bielefeld }
\email{jsauter@math.uni-bielefeld.de}

\subjclass[2010]{16E30, 16S50, 18G25}
\keywords{tilting, cluster tilting, relative homological algebra, Auslander algebra}

\date{\today}
\maketitle
\section{Introduction}
Let $\La$ be a ring and $M$ a left $\La$-module. We write endomorphisms of ${_\La}M$ on the left, thus for two endomorphisms $f,g\in \End_{\La} (M)$ and an element $m\in M$ the image of $m$ under $gf$ is $g(f(m))$. Then $M$ can be considered naturally as a left 
$\End_{\La} (M)$-module, and moreover as a left $\End_{\La} (M)$-left $\La$-bimodule. We say $M$ is \emph{faithful}/ \emph{balanced}/ \emph{faithfully balanced} \footnote{In \cite[section 4]{AF}, faithfully balancedness is only defined for bimodules. A faithfully balanced module ${_\La}M$ in this paper is called faithful and balanced in loc. cit., which is equivalent to say that ${_\La}M_{\End_{\La}(M)^{\op}}$ is a faithfully balanced bimodule. For example in \cite{SW} faithfully balancedness is used for left modules as in this article. } if the natural map of rings $\La \to \End_{\End_{\La}(M)} (M)$ is injective/surjective/bijective. Balanced modules are also known as modules with the \emph{double centralizer property}, see for example \cite{DR}. In \cite{Wi}, $M$ is faithfully balanced means $\La$ is $M$-static. In \cite{BS}, a faithfully balanced module is a module of faithful dimension at least 2.

In this paper, we restrict to study finite-dimensional algebras and finite-dimensional modules over them. 
For a module ${_{\La}}M$ we define $\add (M)$ to be the category consisting of direct summands of finite direct sums of $M$ and 
\[ \cogen^1(M)= \{ X \mid \exists\ 0 \to X \to M_0\to M_1\ \text{exact}, M_i \in \add(M)\ \text{and}\ \Hom_{\La}(-,M) \text{ exact on it}\}. \]
Dually, one can define $\gen_1(M)$. If ${_{\La}}M$ is a faithfully balanced module, then we have a duality 
\[ 
\Hom_{\La}(-,{_{\La}}M) \colon\cogen^1({}_{\La}M) \longleftrightarrow \cogen^1({}_{\Ga}M) \colon \Hom_{\Ga}(-,{_{\Ga}}M)
\]
where $\Ga=\End_{\La}(M)$. Buan and Solberg \cite{BS} first observed the symmetry:  $\La\in  \cogen^1(_{\La} M)$ is equivalent to $\kdual \La \in \gen_1({}_{\La} M)$ and both are equivalent to $M$ being faithfully balanced (see also Lemma \ref{lagencogen}).
We will consider tuples $(\La , M_1, \ldots , M_t)$ consisting of an algebra and several modules 
up to an equivalence relation which identifies two such tuples $(\La , M_1, \ldots , M_t)$ and $(\La', M_1', \ldots , M_t')$ if there is a Morita equivalence from $\La$ to $\La'$ which sends each $\add (M_i)$ to $\add (M_i')$. We denote by $[\La, M_1, \ldots , M_t]$ the equivalence class of $(\La, M_1, \ldots , M_t)$. It is easy to see that faithfully balancedness of a module is preserved under this equivalence (cf. \cite{CR}). 
\begin{itemize}
\item[$(*)$] The assignment $[\La , {_\La}M] \mapsto 
[\End_{\La}(M), {_{\End_{\La}(M)}}M] $ is an involution on the set of pairs $[\La, {_\La}M]$ with $M$ a faithfully balanced module. 
\end{itemize}
It is a generally intriguing problem to establish an \emph{Endo}-dictionary explaining which properties of $\La$ and ${_{\La}}M$ are translated into which properties of $\End_{\La}(M)$ and $ {_{\End_{\La}(M)}}M$.
A restriction of $(*)$ to a bijection between two sets of such pairs (or related tuples) will be called a \emph{correspondence}.

Two classes of well-studied faithfully balanced modules are (co)tilting modules \cite{BrenBut, M} and (co)generators \cite[Theorem 3]{Tach-Spl} - and their special cases:  
generator-cogenerators \cite{Morita71, Tach-PJM} and Auslander generators (i.e., the additive generator of the module category \cite{AQueenMary}). Starting with M\"uller's results \cite{Mue} there are also \emph{higher} versions of any of these.   
One of our motivations was to understand the interplay between correspondences and relative homological algebra in the sense of Auslander-Solberg \cite{ASoI}.  
In this paper we explain a relative version of faithfully balancedness and then systematically look at the relative analogues of the well-known correspondences. Let us give a brief overview of previously studied correspondences (see the following table) in representation theory of finite-dimensional algebras (or more generally, artin algebras). The relative versions can be found in the corresponding theorems in the second column.  

\begin{table}[ht]
\begin{tabular}{M{9cm}|M{5cm} N}
\hline
\textbf{Classical case} & \textbf{Relative case} \\
\hline
 (co)generator correspondence \\
 (=Wedderburn correspondence and $\Hom (-,\text{ring})$)    & Corollary \ref{relgencogencorres} (1) (2) &  \\[3pt]
\hline
 Morita-Tachikawa correspondence\\ (=generator-cogenerator correspondence) &  Corollary \ref{relgencogencorres} (3)&   \\[2pt]
\hline
 M\"uller correspondence & Lemma \ref{relduality} &\\[3pt]
\hline
 (higher) Auslander correspondence      & Theorem \ref{relAuscorres}  &  \\[3pt]
\hline
 Auslander-Solberg correspondence       & Theorem \ref{relAScorres}  &  \\[3pt]
\hline
(co)tilting correspondence\\ 
 (=Brenner-Butler theorem)              & Theorem \ref{relBBthm}  &  \\[2pt]
\hline
 correspondence of Gorenstein algebras  & Corollary \ref{relGor} &  \\[2pt]
\hline
\end{tabular}
\label{correstable}
\end{table}

 We give a summary of the content (but in the introduction we restrict to the easy versions).
 
In section 2, we study some basic properties of faithfully balanced modules and dualities (equivalences) of subcategories. 

We start the relative theory in section 5. We consider an additive subbifunctor $\mbF\subseteq \Ext^1(-,-)$ of the form $\mbF=\mbF_G=\mbF^H$ for a generator $G$ and a cogenerator $H$ - this is equivalent to consider the exact structure on finite-dimensional $\La$-modules induced by the functor $\mbF$ (cf. \cite{DRSS}), meaning an exact sequence is $\mbF$-exact if and only if it remains exact after applying the functor $\Hom_{\La}(G,-)$ (or equivalently after applying the functor $\Hom_{\La} (-,H)$). 
We define $\cogen^1_{\mbF}(M) \subseteq \cogen^1(M)$ to be the full subcategory of modules $X$ such that there exists an exact sequence $0\to X \to M_0 \to M_1 $ with $M_0, M_1\in \add(M)$ and $\Hom_{\La}(-,H\oplus M)$ is exact on it (analogously we define  $\gen_{1}^{\mbF}(M)$).
We also introduce the notion of $1$-$\mbF$-faithfulness (meaning $G \in \cogen^{1}_{\mbF}(M)$) as the relative analogue of the notion of faithfully balancedness.
Let ${_{\La}}M$ be $1$-$\mbF$-faithful, then we have a duality
\[  
\Hom_{\La}(-,{_{\La}}M) \colon \cogen^1_{\mbF^H}(M) \longleftrightarrow \cogen^1_{\mbF^R} (M) \colon \Hom_{\Ga}(-,{_{\Ga}}M)
\]
where $\Ga=\End_{\La}(M)$ and $R=\kdual\Hom_{\La}(M,H)$. There is also a dual version of the above duality which involves the modules $G$ and $L:=\Hom_{\La}(G,M)$. Then we observe the following relationship between $G,H$ and $L,R$
\[
\xymatrix@C=1em@R=1em{ {_{\La}}G  \ar@{<->}@/_1pc/[dd]_{(-, M)}  \ar@{-->}@/^1pc/[rrrr]^{\tau} & & & & {_{\La}}H \ar@{-->}@/^1pc/[llll]_{\tau^{-}}\\
\\
{_{\Ga}}L  \ar@{-->}@/^1pc/[rrrr]_{\Omega_M^{-2}} & & & & {_{\Ga}}R \ar@{<->}@/_1pc/[uu]_{\kdual(M,-)} \ar@{-->}@/^1pc/[llll]^{\Omega_M^{2}}
}
\]
Here the upper dashed arrows mean $H=\tau G \oplus \kdual\La$ and $G=\tau^-H \oplus \La$ whereas the lower dashed arrows mean $R = {_{\Ga}}M\oplus \Omega_M^{-2} L $ and  $L = {_{\Ga}}M\oplus \Omega_M^{2} R$. 
As in the classical case, we have $G \in \cogen^{1}_{\mbF}(M)$ is equivalent to $H \in \gen_{1}^{\mbF}(M)$ (Theorem \ref{Rel-fb}).

We consider the assignment (AS) (referring to Auslander and Solberg):
\begin{itemize}
\item[(AS)] The assignment $[\La , {_\La}M, G] \mapsto 
[\Ga=\End_{\La}(M), {_{\Ga}}M, L=\Hom_{\La}(G,M)] $ with $M$ faithfully balanced and $G$ a generator.  
\end{itemize}

\paragraph{\textbf{Generator correspondence.}}
A generator ${_\La}G$ is, by definition, a module such that $\La\in \add(G)$ which is automatically a faithfully balanced module. 
\begin{thm} {\rm (Generator correspondence, \cite{Az,Tach-Spl})}
    \emph{The assignment} $(*)$ \emph{restricts to a bijection} 
    \[ \{[\La, G]\colon  G\text{ is a generator} \} \xlongleftrightarrow{1:1} \{[\Ga, P]\colon P\text{ is a f.b. projective module} \}\]
\emph{where f.b. is the abbreviation of faithfully balanced.}
\end{thm}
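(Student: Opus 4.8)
The plan is to show that the involution $(*)$ maps the class of generators into the class of faithfully balanced projective modules and, conversely, maps the class of faithfully balanced projective modules into the class of generators; since $(*)$ is an involution on the set of all faithfully balanced pairs, these two inclusions are then forced to be mutually inverse bijections. Note first that $(*)$ is applicable to both sides: a generator ${}_\La G$ is faithfully balanced (as recalled before the statement), and a faithfully balanced projective module is in particular faithfully balanced.

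For the forward direction, let ${}_\La G$ be a generator, i.e.\ $\La\in\add(G)$, and set $\Ga=\End_\La(G)$, so that $(*)$ sends $[\La,G]$ to $[\Ga,{}_\Ga G]$. Pick $n$ together with a split monomorphism $\La\hookrightarrow G^{n}$ of left $\La$-modules. Applying $\Hom_\La(-,G)$ yields a split epimorphism of left $\Ga$-modules $\Ga^{n}=\Hom_\La(G^{n},G)\twoheadrightarrow\Hom_\La(\La,G)$. The evaluation map $\Hom_\La(\La,G)\to G$, $f\mapsto f(1)$, is bijective and intertwines the $\Ga$-action by post-composition with the natural action of $\Ga=\End_\La(G)$ on $G$, hence $\Hom_\La(\La,G)\cong{}_\Ga G$ as left $\Ga$-modules. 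Thus ${}_\Ga G$ is a direct summand of $\Ga^{n}$, i.e.\ projective; and it is faithfully balanced because $[\Ga,{}_\Ga G]$, being the image under $(*)$ of a faithfully balanced pair, is again such.

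For the backward direction, let ${}_\Ga P$ be a faithfully balanced projective $\Ga$-module and set $\La=\End_\Ga(P)$, so that $(*)$ sends $[\Ga,P]$ to $[\La,{}_\La P]$. Since $P$ is projective, fix a split monomorphism $P\hookrightarrow\Ga^{n}$ of left $\Ga$-modules. Applying $\Hom_\Ga(-,P)$ gives a split epimorphism of left $\La$-modules $\Hom_\Ga(\Ga^{n},P)\twoheadrightarrow\Hom_\Ga(P,P)$. Here $\Hom_\Ga(\Ga^{n},P)\cong({}_\La P)^{n}$ via evaluation at $1$ (as above), while $\Hom_\Ga(P,P)=\End_\Ga(P)={}_\La\La$. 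Hence $\La$ is a direct summand of $({}_\La P)^{n}$, that is $\La\in\add({}_\La P)$, so ${}_\La P$ is a generator.

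Finally, combining the two directions, $(*)$ restricts to a map from $\{[\La,G]\colon G\text{ a generator}\}$ to $\{[\Ga,P]\colon P\text{ f.b.\ projective}\}$ and to a map in the opposite direction; since $(*)\circ(*)=\id$ on faithfully balanced pairs, both are bijections inverse to one another. I do not anticipate a genuine difficulty here; the only thing to be careful about is the bookkeeping of one-sided module structures — specifically, checking that the evaluation isomorphisms above respect the relevant left-module structures and that applying $\Hom$ to a split monomorphism returns a split epimorphism of the correct one-sided modules.
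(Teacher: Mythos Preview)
Your proof is correct. The paper does not actually give a proof of this classical theorem; it is stated with citations to Azumaya and Tachikawa, and the only further comment is the remark that the generator correspondence can alternatively be viewed as Auslander's Wedderburn correspondence composed with the duality $\Hom_{\Ga}(-,\Ga)$. Your argument is the direct elementary verification and is entirely self-contained: you use only that $\Hom_\La(-,G)$ and $\Hom_\Ga(-,P)$ preserve split sequences, together with the fact (recorded in the paper as \cite[Proposition~4.12]{AF}) that ${}_{\End_\La(M)}M$ is automatically faithfully balanced, plus the involution property of $(*)$. The bookkeeping of left module structures you flag at the end is indeed the only delicate point, and you have handled it correctly.
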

 The generator correspondence can also be expressed as Auslander's Wedderburn correspondence composed with the duality $\Hom_{\Ga}(-,\Ga)$, see \cite{ASoIII}.

Our relative generalization is the following 
\begin{thm}{\rm (= Corollary \ref{relgencogencorres} (1))}
\emph{The assignment (AS) restricts to an involution on the set of triples}
$[\La , M , G]$ \emph{with} $\La \oplus M \in \add (G)$ \emph{and} $M$ is $1$-$\mbF_G$-\emph{faithful}   
\end{thm}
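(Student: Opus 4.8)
The plan is to verify the two defining properties of an involution: that (AS) maps the displayed set of triples into itself, and that applying (AS) twice is the identity on it; since (AS) is then its own inverse, it is an involution. Throughout write $\Ga=\End_\La(M)$, $L=\Hom_\La(G,M)$ and $\mbF=\mbF_G$, and recall from the relative set-up that $\mbF=\mbF^H$ for the cogenerator $H=\tau G\oplus\kdual\La$, and that the corresponding relative exact structure on $\modu\Ga$ is $\mbF_L$, equal to $\mbF^R$ for $R={}_{\Ga}M\oplus\Omega_M^{-2}L$. So suppose $\La\oplus M\in\add(G)$ and $M$ is $1$-$\mbF_G$-faithful. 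First, $M$ is faithfully balanced (this is already part of the hypotheses for (AS), and in any case is forced by $1$-$\mbF_G$-faithfulness via Lemma~\ref{lagencogen}), so $(*)$ gives that ${}_{\Ga}M$ is faithfully balanced and $\End_\Ga(M)=\La$ canonically. Applying the additive contravariant functor $\Hom_\La(-,M)$ to $\La\oplus M\in\add(G)$ and using the natural identifications $\Hom_\La({}_{\La}\La,{}_{\La}M)\cong{}_{\Ga}M$ and $\Hom_\La({}_{\La}M,{}_{\La}M)={}_{\Ga}\Ga$, we get $\Ga\oplus{}_{\Ga}M\in\add(\Hom_\La(G,M))=\add(L)$; in particular $L$ is a generator over $\Ga$ and ${}_{\Ga}M\in\add(L)$. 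Finally, by hypothesis $G\in\cogen^1_{\mbF_G}({}_{\La}M)$, and the version of the relative duality involving the modules $G$ and $L$ (using $\mbF_L=\mbF^R$ and Theorem~\ref{Rel-fb}) carries $\cogen^1_{\mbF_G}({}_{\La}M)$ onto $\cogen^1_{\mbF_L}({}_{\Ga}M)$ via $\Hom_\La(-,M)$, sending $G$ to $L=\Hom_\La(G,M)$; hence $L\in\cogen^1_{\mbF_L}({}_{\Ga}M)$, i.e. ${}_{\Ga}M$ is $1$-$\mbF_L$-faithful. Thus $[\Ga,{}_{\Ga}M,L]$ lies in the set and (AS) is well defined on it.

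For the involution identity, apply (AS) to $[\Ga,{}_{\Ga}M,L]$: we obtain $[\End_\Ga(M),{}_{\End_\Ga(M)}M,\Hom_\Ga(L,M)]$, which under the canonical identification $\End_\Ga(M)=\La$ supplied by $(*)$ becomes $[\La,{}_{\La}M,\Hom_\Ga(L,M)]$. It remains to show $\add(\Hom_\Ga(L,M))=\add(G)$ as $\La$-modules. But $G\in\cogen^1_{\mbF_G}(M)\subseteq\cogen^1({}_{\La}M)$, and on the categories $\cogen^1$ the contravariant functors $\Hom_\La(-,M)$ and $\Hom_\Ga(-,M)$ are mutually quasi-inverse dualities, so the evaluation morphism $G\to\Hom_\Ga(\Hom_\La(G,M),M)=\Hom_\Ga(L,M)$ is an isomorphism. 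Hence $(\mathrm{AS})^2[\La,M,G]=[\La,M,G]$, and together with the previous paragraph this proves that (AS) is an involution on the set in question.

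The part I expect to require real work is the last step of well-definedness: that passage through the double centralizer takes the $\La$-side relative structure $\mbF_G$ to the $\Ga$-side structure $\mbF_L$ determined by $L=\Hom_\La(G,M)$ (equivalently, that $\mbF_L=\mbF^R$ on $\modu\Ga$), and that $\Hom_\La(-,M)$ then restricts to a duality $\cogen^1_{\mbF_G}({}_{\La}M)\leftrightarrow\cogen^1_{\mbF_L}({}_{\Ga}M)$ taking $G$ to $L$. This is exactly the content of the relative faithful-balancedness theorem (Theorem~\ref{Rel-fb}) and the relative duality recalled in the introduction; once those are granted, everything else above is a routine check with additive functors together with the double-centralizer identifications coming from $(*)$.
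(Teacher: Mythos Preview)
Your treatment of the involution identity (the second paragraph) is fine and matches the paper. The problem lies in the well-definedness step, specifically in your claim that $\mbF_L=\mbF^R$ on $\Gamod$. This is asserted without proof and is in fact false in general: the paper's own Example~\ref{F-balance-asym} takes $M=G$ (so certainly $M\in\add(G)$), giving $L=(G,M)=\Ga$ and hence $\mbF_L=\Ext^1_\Ga$, while $R=\kdual(M,H)={}_\Ga M\oplus S_c$ has non-injective summands, so $\mbF^R$ is a proper subfunctor of $\Ext^1_\Ga$. In that example ${}_\Ga M$ is $1$-$\mbF_L$-faithful (trivially, since $\mbF_L=\Ext^1_\Ga$) but is \emph{not} $1$-$\mbF^R$-faithful. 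So Lemma~\ref{relduality} only gives you $L\in\cogen^1_{\mbF^R}({}_\Ga M)$, and there is no general inclusion between $\cogen^1_{\mbF^R}(M)$ and $\cogen^1_{\mbF_L}(M)$ that would close the gap. The ``version of the relative duality involving $G$ and $L$'' you allude to is the dual of Lemma~\ref{relduality}, but that is a duality $\gen_1^{\mbF_G}({}_\La M)\leftrightarrow\gen_1^{\mbF_L}({}_\Ga M)$ via $\kdual(M,-)$; it sends $H$ to $R$, not to $\tau L\oplus\kdual\Ga$, so it does not yield $1$-$\mbF_L$-faithfulness either.

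The paper's route to $L\in\cogen^1_{\mbF_L}({}_\Ga M)$ is via Lemma~\ref{CorRelFB} (for $k=1$): the hypothesis $G\in\cogen^1_{\mbF^H}({}_\La M)$ produces a \emph{strong} $1$-$\add({}_\Ga M)$-dualizing sequence $0\to L\to M_0\to M_1\to R\to 0$, and by definition of ``strong'' (Lemma~\ref{strongdualseq}) the functor $\kdual(L,-)$, equivalently $(L,-)$, is exact on it. Thus the first three terms give an $\mbF_L$-exact sequence $0\to L\to M_0\to M_1$ with $M_i\in\add({}_\Ga M)$ on which $(-,{}_\Ga M)$ is exact (the latter is part of the dualizing-sequence axioms), which is exactly $L\in\cogen^1_{\mbF_L}({}_\Ga M)$. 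Equivalently, one may invoke the unnumbered Proposition after Example~\ref{F-balance-asym}: if $M\in\add(G)$, then ${}_\La M$ is $1$-$\mbF_G$-faithful iff ${}_\Ga M$ is $1$-$\mbF_L$-faithful. Either way, the hypothesis $M\in\add(G)$ is used essentially, not merely to get $\Ga\in\add(L)$; your argument does not use it beyond that, which is another sign that the step is incomplete.
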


In Corollary \ref{relgencogencorres} we also give relative versions of the cogenerator correspondence and the generator-cogenerator correspondence (also known as the Morita-Tachikawa correspondence \cite{Morita71, Tach-PJM}, see also \cite{RingelNotes}). The most famous special case is the Auslander correspondence, see below. 


\paragraph{\textbf{Auslander-Solberg and Auslander correspondence}}
The Auslander-Solberg correspondence, which is defined by Iyama and Solberg \cite{IS}, characterizes algebras $\La$ with $\domdim \La\geq k+1 \geq \id \La$. In the case $k=1$, this result is due to Auslander-Solberg \cite{ASo-Gor}. 

Our relative generalization is the following 
\begin{thm}{\rm (= Theorem \ref{relAScorres}, $k=1$)}
\emph{The assignment (AS) restricts to an involution on the set of triples}
$[\La , M , G]$ \emph{with} $\La \in \add (G)$,  $\mbF=\mbF_G$, $M$ \emph{is both} $1$-$\mbF$-\emph{faithful and} $\mbF$-\emph{projective}-\emph{injective}, \emph{and} $\domdim_{\mbF} \La \geq 2 \geq \idim_{\mbF} G$.   
\end{thm}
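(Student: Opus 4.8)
The plan is to deduce this relative Auslander–Solberg correspondence from the general machinery already assembled: the duality $\Hom_\La(-,{_\La}M)\colon \cogen^1_{\mbF^H}(M)\leftrightarrow \cogen^1_{\mbF^R}(M)$ for $1$-$\mbF$-faithful $M$ (and its dual using $G$ and $L$), together with the $G,H,L,R$-dictionary recorded in the displayed diagram. The first step is to verify that the conditions defining the set of triples are \emph{self-dual} under (AS), i.e. that if $[\La,M,G]$ satisfies them, then so does $[\Ga,M,L]$ with $\Ga=\End_\La(M)$ and $L=\Hom_\La(G,M)$. Concretely one must check: (a) $\Ga\in\add(L)$; (b) writing $\mbF'=\mbF_L$ on $\Ga$-modules, $M$ is $1$-$\mbF'$-faithful; (c) $M$ is $\mbF'$-projective-injective over $\Ga$; and (d) $\domdim_{\mbF'}\Ga\ge 2\ge \idim_{\mbF'}L$. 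For (a) one uses that $G$ is a generator so $\La\in\add(G)$ forces ${_\Ga}M=\Hom_\La(\La,M)\in\add(\Hom_\La(G,M))=\add(L)$, hence $\Ga\in\add({_\Ga}M)\subseteq\add(L)$ provided $\Ga$ is a summand of a sum of copies of ${_\Ga}M$; this last fact is exactly that $M$ is faithfully balanced, phrased via $\Ga\in\cogen^1({_\Ga}M)$ (Lemma \ref{lagencogen}), combined with $\mbF$-projectivity-injectivity of $M$ forcing the copresentation to be trivial — so $\Ga\in\add({_\Ga}M)$.

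The heart of the argument is matching the relative homological invariants. Since $M$ is $1$-$\mbF$-faithful, the duality $\Hom_\La(-,M)$ identifies $\cogen^1_{\mbF^H}(M)$ with $\cogen^1_{\mbF^R}(M)$, and under this duality the generator $G\in\cogen^1_{\mbF^H}(M)$ corresponds to $\Hom_\La(G,M)=L$; the dual duality with ${_\Ga}M$ identifies these categories with the corresponding $\Ga$-module categories. I would then translate "$\domdim_{\mbF}\La\ge 2$" and "$\idim_{\mbF}G\le 2$" through this duality. The key point is that $\mbF$-injective dimension of $G$ over $\La$, computed via $\mbF^H$-exact sequences, becomes $\mbF'$-projective (equivalently dominant) dimension data on the $\Ga$-side, because $\Hom_\La(-,M)$ swaps the roles of projectives-relative and injectives-relative. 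Here one should invoke Theorem \ref{relAScorres} itself for general $k$ (which I am allowed to assume once it is stated) — but since the statement to be proved is literally the $k=1$ specialization, the honest approach is: prove the $k=1$ case directly by the two-term duality, checking that the four self-dual conditions above are preserved, and that applying (AS) twice returns the original triple up to the Morita-type equivalence — this last is just the classical fact that $(*)$ is an involution restricted to faithfully balanced pairs, noted in $(*)$, together with the bimodule identity $\Hom_\Ga(\Hom_\La(G,M),M)\cong G$ which holds because $G\in\cogen^1_{\mbF^H}(M)$ and the duality is the identity on such objects after the double application.

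The main obstacle I anticipate is step (d): showing that $\domdim_{\mbF'}\Ga\ge 2$ and $\idim_{\mbF'}L\le 2$ follow from $\domdim_{\mbF}\La\ge 2\ge\idim_{\mbF}G$. The inequality $\domdim_{\mbF'}\Ga\ge 2$ should come from the fact that $L=\Hom_\La(G,M)$ has ${_\Ga}M$ as a summand (the $L={_\Ga}M\oplus\Omega_M^2 R$ relation from the diagram) and that the minimal $\mbF'$-injective copresentation of $\Ga$ is the image under $\Hom_\La(-,M)$ of an $\mbF^H$-exact sequence realizing $\idim_\mbF G\le 2$; dually, $\idim_{\mbF'}L\le 2$ translates $\domdim_\mbF\La\ge 2$. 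The bookkeeping of which cogenerator/generator controls exactness at each stage — $H$ versus $R=\kdual\Hom_\La(M,H)$ versus $L$ versus $G$ — is where errors are easy, so I would set up a single lemma that says: for $1$-$\mbF$-faithful $M$, the duality $\Hom_\La(-,M)$ sends $\mbF_G$-projective-injectives to $\mbF_L$-projective-injectives and carries $\domdim_{\mbF_G}$ to $\idim_{\mbF_L}$ and vice versa on the relevant subcategories, and then the theorem is a two-line consequence. Everything else (preservation of $\add(G)\ni\La\oplus M$, the involution property) is formal once the self-duality of the numerical conditions is in hand.
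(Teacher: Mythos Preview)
Your overall strategy---verify directly that each of the defining conditions is preserved under (AS)---is the same idea the paper is exploiting, but you are missing the organizing lemma that makes it work, and your step (a) contains an actual error.

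For (a): the claim $\Ga\in\add({}_{\Ga}M)$ does not follow from faithfully balancedness; $\Ga\in\cogen^1({}_{\Ga}M)$ is far weaker than $\Ga\in\add({}_{\Ga}M)$, and ``$\mbF$-projective-injectivity of $M$'' is a statement about ${}_{\La}M$, not about the $\Ga$-copresentation of $\Ga$. The correct (and easier) argument is: $M$ is $\mbF$-projective means $M\in\add(G)$, whence $\Ga=(M,M)\in\add((G,M))=\add(L)$ directly. No detour through $\add({}_{\Ga}M)$ is needed or valid.

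For (d), which you rightly identify as the heart, you defer to a lemma you do not prove: that $\Hom_{\La}(-,M)$ swaps $\domdim_{\mbF_G}$ and $\idim_{\mbF_L}$. This is imprecise (the $\mbF$-dominant dimension is in fact self-dual under the assignment, not swapped with $\idim$), and more importantly it hides the substantive content. The paper resolves this by \emph{repackaging} the numerical conditions: Lemma~\ref{relAus-Goralg} shows that, once $\add(M)=\add(G)\cap\add(H)$, the inequality $\domdim_{\mbF}\La\ge 2\ge\idim_{\mbF}G$ is equivalent to the existence of an $\mbF$-exact sequence $0\to G\to M_0\to M_1\to H\to 0$ with $M_i\in\add(M)$. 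Because the sequence is $\mbF_G$-exact and $M$ is $\mbF$-injective, this is automatically a \emph{strong} $1$-$\add(M)$-dualizing sequence from $G$ to $H$. Then Lemma~\ref{CorRelFB} (built from Lemma~\ref{easy}, Lemma~\ref{strongdualseq}, Lemma~\ref{strongness}) says that the assignment $G,H\mapsto (G,M),\kdual(M,H)$ gives a self-inverse bijection on exactly such strong dualizing sequences. The involution statement is then immediate: the conditions on both sides are literally the same, and Lemma~\ref{relAus-Goralg} translates back. Your proposed lemma is essentially the combination of these two, but you have not identified the intermediate object (the strong dualizing sequence) that makes the transfer of the exact-structure data $\mbF_G=\mbF^H\leftrightarrow\mbF_L=\mbF^R$ and the numerical invariants transparent. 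Without it, establishing that $R=\kdual(M,H)$ really is the cogenerator with $\mbF_L=\mbF^R$ on the $\Ga$-side, and that the $\Ga$-sequence one obtains is again $\mbF^R$-exact, requires exactly the bookkeeping you flagged as ``where errors are easy''.
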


As a special case of the Auslander-Solberg correspondence, Iyama's higher Auslander correspondence (\cite{IyAC}) characterizes algebras $\La$ with $\domdim \La\geq k+1 \geq \gldim \La$. The case $k=1$ is the well-known Auslander correspondence \cite{AQueenMary}.


Our relative generalization is the following
\begin{thm}{\rm (= Theorem \ref{relAuscorres}, $k=1$)}
\emph{The assignment (AS) restricts to an involution on the set of triples}
$[\La , M , G]$ \emph{with} $\La \in \add (G)$, $\mbF=\mbF_G$, $M$ \emph{is both} $1$-$\mbF$-\emph{faithful and} $\mbF$-\emph{projective}-\emph{injective}, \emph{and} $\domdim_{\mbF} \La \geq 2 \geq \gldim_{\mbF} \La$.   
\end{thm}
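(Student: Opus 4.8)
The plan is to deduce the statement from the relative Auslander-Solberg correspondence (Theorem \ref{relAScorres}, case $k=1$), which asserts that the assignment (AS) is an involution on the set $\mcY$ of triples $[\La,M,G]$ with $\La\in\add(G)$, $\mbF=\mbF_G$, $M$ both $1$-$\mbF$-faithful and $\mbF$-projective-injective, and $\domdim_{\mbF}\La\geq 2\geq\idim_{\mbF}G$; write $\mcX$ for the set of triples appearing in the statement. First I would check that $\mcX\subseteq\mcY$: since $\mbF=\mbF_G=\mbF^H$ has enough relative projectives ($\add G$) and enough relative injectives ($\add H$), the relative global dimension $\gldim_{\mbF}\La$ is detected by the vanishing of the higher relative extension functors $\mbF^{i}(-,-)$, so $\gldim_{\mbF}\La\leq 2$ is equivalent to $\mbF^{3}(-,-)=0$, and the latter forces $\idim_{\mbF}X\leq 2$ for \emph{every} $\La$-module $X$, in particular $\idim_{\mbF}G\leq 2$. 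Hence, by Theorem \ref{relAScorres}, it remains only to show that the extra requirement $\gldim_{\mbF}\La\leq 2$ is stable under (AS) inside $\mcY$; since (AS) is already an involution on $\mcY$, this will yield the asserted involution on $\mcX$.

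For the stability, let $[\Ga,{}_{\Ga}M,L]$ be the image of $[\La,M,G]$ under (AS), so $\Ga=\End_{\La}(M)$, $L=\Hom_{\La}(G,M)$, and put $R=\kdual\Hom_{\La}(M,H)$; the exact structure on $\Ga$-modules relevant to this image triple is $\mbF_L=\mbF^R$. By the criterion of the previous paragraph, $\gldim_{\mbF}\La\leq 2$ is equivalent to the condition that the second $\mbF$-syzygy $\Omega_{\mbF}^{2}X$ lies in $\add(G)$ for every $\La$-module $X$. Now I would combine the hypotheses $\domdim_{\mbF}\La\geq 2$ and $\domdim_{\mbF^{R}}\Ga\geq 2$ with the relative M\"uller correspondence (Lemma \ref{relduality}) and the dualities of Theorem \ref{Rel-fb},
\[
\Hom_{\La}(-,{}_{\La}M)\colon\cogen^{1}_{\mbF^{H}}(M)\longleftrightarrow\cogen^{1}_{\mbF^{R}}(M)\colon\Hom_{\Ga}(-,{}_{\Ga}M),
\]
together with their dual $\gen_{1}$-version built from $G$ and $L$: the dominant-dimension hypotheses place $\La$-mod and $\Ga$-mod, with their relative exact structures, inside the $\cogen^{1}/\gen_{1}$-categories on which these dualities act, and a duality always interchanges relative projective resolutions with relative injective coresolutions (starting from $\Hom_{\La}(-,{}_{\La}M)\colon\add({}_{\La}M)\xrightarrow{\ \sim\ }\proj\Ga$). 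Transporting the syzygy criterion for $\gldim_{\mbF}\La\leq 2$ across this identification, and using the dominant-dimension hypothesis on the $\Ga$-side to pass from the image subcategory back to all of $\Ga$-mod, yields ``$\idim_{\mbF^{R}}Y\leq 2$ for every $\Ga$-module $Y$'', which by the relative-extension criterion on the $\Ga$-side is exactly $\gldim_{\mbF^{R}}\Ga\leq 2$. As the whole configuration is symmetric under (AS), the converse implication follows the same way, completing the stability step.

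The main obstacle I anticipate is carrying out the transport of the second paragraph precisely: one has to establish the relative M\"uller correspondence in exactly the form needed, identifying each module category, together with its relative exact structure, with a subcategory on the other side that both sits inside the $\cogen^{1}/\gen_{1}$-categories carrying the duality of Theorem \ref{Rel-fb} and is closed under the relevant (co)syzygies, and one has to control the passage between that image subcategory and the whole module category by means of the dominant-dimension hypotheses -- this is where the bound $\geq 2$ is used. One must also verify that the two relative exact structures correspond correctly under (AS), namely that $\mbF=\mbF_G=\mbF^H$ matches $\mbF_L=\mbF^R$. The remaining ingredients -- the duality $\add({}_{\Ga}M)\simeq\proj\Ga$ and the fact that the relative global dimension can be computed interchangeably from relative projective resolutions and from relative injective coresolutions (valid because $\mbF$ has enough relative projectives and injectives) -- are routine and of the same nature as arguments already used for the Auslander-Solberg correspondence.
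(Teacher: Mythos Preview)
Your strategy—reduce to Theorem \ref{relAScorres} and then check that the extra condition $\gldim_{\mbF}\La\leq 2$ is stable under (AS)—is sound, and the ingredients you name (Lemma \ref{relduality}, the syzygy criterion, the dominant-dimension hypothesis) are the right ones. However, the stability paragraph is where all the content lies, and as written it is vague in exactly the place where the paper supplies a clean lemma. The sentence ``the dominant-dimension hypotheses place $\La$-mod and $\Ga$-mod \dots\ inside the $\cogen^1/\gen_1$-categories'' is not literally true (only second $\mbF$-syzygies land there), and the passage ``from the image subcategory back to all of $\Ga$-mod'' is precisely what requires justification.

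The paper packages this step as Lemma \ref{finGldim}: under $\domdim_{\mbF}\Ga\geq k+1$ one has $\cogen^k_{\mbF}(M)=\Omega_{\mbF}^{k+1}(\Gamod)$, and hence $\gldim_{\mbF}\Ga\leq k+1$ is \emph{equivalent} to $\cogen^k_{\mbF}(M)=\add(G)$. With this in hand your stability argument becomes a single line for $k=1$:
\[
\gldim_{\mbF}\La\leq 2 \;\Longleftrightarrow\; \cogen^1_{\mbF}({}_{\La}M)=\add(G) \;\Longleftrightarrow\; \cogen^1_{\mbF^R}({}_{\Ga}M)=\add(L) \;\Longleftrightarrow\; \gldim_{\mbF_L}\Ga\leq 2,
\]
the middle equivalence being Lemma \ref{relduality} and the outer two Lemma \ref{finGldim} on each side. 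This is exactly the paper's proof of Theorem \ref{relAuscorres}, and it does not pass through Theorem \ref{relAScorres} at all: the two lemmas already give both the Auslander-Gorenstein and the Auslander correspondence directly. So your detour through $\mcY$ is harmless but unnecessary; the substantive content of your ``transport'' step, once made precise, \emph{is} the paper's argument.
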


\paragraph{\textbf{Cotilting correspondence}}
The main result on relative cotilting modules (cf. Definition \ref{Fcotilting}) of Auslander-Solberg is the following 
\begin{thm} $($\cite[Theorem 3.13]{ASoII}, \cite[Theorem 2.8]{ASoIII}$)$
\emph{The assignment (AS) restricts to a bijection between the following two sets of triples}
\begin{itemize}
\item[(1)] $[\La , M , G]$ \emph{with} $\La \in \add (G)$, $\mbF=\mbF_G$, $M$ \emph{is $\mbF$-cotilting, and} 
\item[(2)] $[\Ga ,N, L]$ \emph{with} $N \in \add (L)$, $L\in \cogen^1(N)$ \emph{and} $L$ \emph{is a cotilting module.} 
\end{itemize}
\end{thm}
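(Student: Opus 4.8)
The plan is to realize this as a specialization of the relative Brenner–Butler/cotilting correspondence (Theorem \ref{relBBthm}) together with the general involution $(*)$ restricted to $1$-$\mbF$-faithful modules (Theorem \ref{Rel-fb}). First I would recall that an $\mbF$-cotilting module $M$ is in particular $1$-$\mbF$-faithful and faithfully balanced, so the assignment (AS) makes sense on the set in (1); moreover since $\mbF=\mbF_G$ and $\La\in\add(G)$, the data $G$ is determined as a generator and $L=\Hom_\La(G,M)$. The first substantive step is to identify the target: starting from $[\La,M,G]$ in (1), set $\Ga=\End_\La(M)$, $N={_\Ga}M$, $L=\Hom_\La(G,M)$, $R=\kdual\Hom_\La(M,H)$ where $H=\tau G\oplus\kdual\La$. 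One must check that $N\in\add(L)$ (this is the relative/classical fact that $M\in\add(\Hom_\La(G,M))$ when $\La\in\add(G)$, since $\Hom_\La(\La,M)=M$ is a summand of $\Hom_\La(G,M)$), that $L\in\cogen^1(N)$ (i.e. $L\in\cogen^1({}_\Ga M)$, which is exactly $1$-faithfulness of ${}_\Ga N$ — here one invokes that the image of a $1$-$\mbF$-faithful module under (AS) is $1$-$\mbF'$-faithful, a consequence of the duality $\Hom_\La(-,M)\colon\cogen^1_{\mbF^H}(M)\leftrightarrow\cogen^1_{\mbF^R}(M)$ applied to $G$), and finally that ${}_\Ga L$ is a genuine (non-relative) cotilting $\Ga$-module.

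The heart of the argument — and the step I expect to be the main obstacle — is showing that ${}_\Ga L$ is cotilting, i.e. has injective dimension $\le 1$, satisfies $\Ext^1_\Ga(L,L)=0$ (and the higher $\Ext$ vanishing, but $\idim\le 1$ reduces everything to $\Ext^1$), and that there is an exact sequence $0\to L_1\to L_0\to \kdual\Ga\to 0$ with $L_i\in\add(L)$. The strategy here is to transport the defining properties of the $\mbF$-cotilting module $M$ through the duality $\Hom_\La(-,M)$. Since $M$ is $\mbF$-cotilting there is an $\mbF$-exact sequence $0\to M_1\to M_0\to H\to 0$ (relative cotilting resolution of the cogenerator $H$) with $M_i\in\add(M)$ on which $\Hom_\La(-,M)$ is exact; applying $\Hom_\La(-,M)$ and using that $\Hom_\La(-,M)\colon\add(M)\to\add({}_\Ga M)=\add(N)$ is a duality, together with $\kdual\Hom_\La(H,M)\cong R$ — wait, more precisely one needs the identification of $\Hom_\La(M_i,M)$ and of the term coming from $H$ with the relevant $\Ga$-modules — yields an exact sequence of $\Ga$-modules exhibiting a piece of $\kdual\Ga$ resolved by $\add(L)$. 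The relative-$\Ext$ vanishing $\mbF(M,M)=0$ translates, via the fact that $\mbF^H$-exactness becomes honest exactness on $\cogen^1_{\mbF^H}(M)$ under the duality, into $\Ext^1_\Ga(L,L)=0$; and $\idim_{\mbF}{}_\La M\le 1$ becomes $\idim_\Ga L\le 1$ because the relative injective dimension with respect to $\mbF=\mbF^H$ on the $\La$-side corresponds under the duality to the absolute injective dimension on the $\Ga$-side (the relative injectives relative to $H$ are sent to the genuine injectives $\kdual\Ga$ — this is where the choice $R=\kdual\Hom_\La(M,H)$ and the slogan "$R={}_\Ga M\oplus\Omega_M^{-2}L$" enters, pinning down how $\add({}_\La H)$ is transported).

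Having produced a well-defined map from set (1) to set (2), the remaining step is to check it is a bijection by exhibiting the inverse as (AS) applied to the triple in (2), and this is where I would invoke the involutivity already established: $(*)$ is an involution on $1$-$\mbF$-faithful pairs (Theorem \ref{Rel-fb}), so $\End_\Ga(N)\cong\La$ canonically and ${}_{\End_\Ga(N)}N\cong{}_\La M$; one then only has to verify that starting from $[\Ga,N,L]$ in (2) and applying (AS) lands back in (1), i.e. that an honest cotilting module $L$ with $N\in\add(L)$, $L\in\cogen^1(N)$ forces $M=N$ regarded over $\La=\End_\Ga(N)$ to be $\mbF_G$-cotilting for the generator $G$ recovered from $L$ via $G=\tau^-H\oplus\La$, $H=\kdual\Hom_\Ga(N,\,?)$; this is the mirror-image of the computation above and uses the classical Brenner–Butler input (relative version, Theorem \ref{relBBthm}) that the cotilting torsion pair $\cogen^1(L)$ on the $\Ga$-side corresponds to $\cogen^1_{\mbF^H}(M)$ on the $\La$-side. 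Since both composites are identities by the uniqueness in the double-centralizer property, the correspondence follows. Throughout, the one genuinely delicate bookkeeping point is keeping straight the four modules $G,H,L,R$ and the two exact structures $\mbF^H$ (on $\La$-mod) and $\mbF^R$ (on $\Ga$-mod), and verifying that "$\mbF$-cotilting over $\La$" is precisely the preimage of "cotilting over $\Ga$, with $N\in\add(L)$" — equivalently, that the relative condition on the $\La$-side becomes an absolute condition on the $\Ga$-side because $R$ contains ${}_\Ga M$ as a summand (so $\mbF^R$-exactness on $\cogen^1({}_\Ga M)$ is automatic).
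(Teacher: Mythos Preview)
Your overall plan---deduce the result by specializing Theorem~\ref{relBBthm}---is exactly the paper's route, but you miss the key specialization that makes it immediate. In the $4$-tuple $[\La, M, L, G]$ of Theorem~\ref{relBBthm}, take $M=L$ to be the \emph{trivial} $\mbF$-dualizing summand of itself. Then the $4$-tuple assignment sends $[\La, L, L, G]$ to $[\Ga, {}_\Ga L, \widetilde L=(G,L), \widetilde G=(L,L)=\Ga]$, so $\widetilde\mbF=\mbF_\Ga=\Ext^1_\Ga$ is the \emph{absolute} exact structure on $\Gamod$. The conclusion of Theorem~\ref{relBBthm}---that $\widetilde L$ is $\widetilde\mbF$-cotilting with ${}_\Ga L$ as $\widetilde\mbF$-dualizing summand---then reads verbatim as: $\widetilde L$ is an honest cotilting $\Ga$-module and ${}_\Ga L\in\cogen^1({}_\Ga L)\cap\add(\widetilde L)$, i.e.\ condition (2). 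The involution statement in Theorem~\ref{relBBthm} gives the bijection. This is precisely the content of the Remark following Theorem~\ref{relBBthm}.

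By contrast, you try to run the transport-through-$\Hom_\La(-,M)$ argument directly on the triple, and this leads you into several unnecessary and partly incorrect detours. First, you claim $L\in\cogen^1(N)$ is ``$1$-faithfulness of ${}_\Ga N$'': it is not---it is the statement that $N$ is a dualizing summand of $L$, which is a different condition (though both hold here). Second, you write that cotilting means $\idim\le 1$, which is wrong in general; the correspondence works for arbitrary $\mbF$-cotilting modules, and the paper even records the inequality $\idim_\mbF L\le\idim_\Ga\widetilde L\le\idim_\mbF L+2$. Third, your invocation of Theorem~\ref{Rel-fb} for the involutivity is misplaced: that theorem is the equivalence $G\in\cogen^k_\mbF(M)\Leftrightarrow H\in\gen_k^\mbF(M)$, not an involution statement; the involution you need is already packaged in Theorem~\ref{relBBthm}. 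Once you recognize that taking $M=L$ collapses $\widetilde G$ to $\Ga$ and hence $\widetilde\mbF$ to the trivial structure, all of your bookkeeping about $G,H,L,R$ and the two exact structures becomes unnecessary.
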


To improve this result, we need the 4-tuple assignment
\[ [\La , M, L, G] \mapsto [\Gamma, N, \widetilde{L}, \widetilde{G} ]  \]
with $\Gamma = \End_{\La}(M) $, $N={}_{\Ga} M$, $\widetilde{L} = \Hom_{\La}(G, M)$, $\widetilde{G} = \Hom_{\La}(L,M)$. Then we have 
\begin{thm}{\rm (= Theorem \ref{relBBthm})}
The $4$-tuple assignment restricts to an involution on the set of $4$-tuples $[\La , M, L, G]$ satisfying $\La \in \add (G)$, $\mbF=\mbF_G$, $L$ is $\mbF$-cotilting and $L\in \cogen^1_{\mbF}(M)$. 
\end{thm}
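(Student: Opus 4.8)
}}

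The plan is to reduce the statement to facts already established for $1$-$\mbF$-faithful modules, together with the classical Brenner--Butler theorem for cotilting modules. The first thing to check is that the $4$-tuple assignment is well-defined on the indicated set: given $[\La,M,L,G]$ with $\La\in\add(G)$, $\mbF=\mbF_G$, $L$ being $\mbF$-cotilting and $L\in\cogen^1_{\mbF}(M)$, I must verify that $M$ is $1$-$\mbF$-faithful so that the duality $\Hom_{\La}(-,{}_{\La}M)\colon\cogen^1_{\mbF^H}(M)\leftrightarrow\cogen^1_{\mbF^R}(M)$ applies. Since an $\mbF$-cotilting module is in particular $1$-$\mbF$-faithful (it is faithfully balanced in the relative sense) and since $L\in\cogen^1_{\mbF}(M)$ forces $M$ to lie in the image of the duality applied to $L$, one deduces $G\in\cogen^1_{\mbF}(M)$ from $\La\in\add(G)$ and the closure properties of $\cogen^1_{\mbF}(M)$; I would cite Theorem \ref{Rel-fb} here. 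Then $\widetilde L=\Hom_{\La}(G,M)$, $\widetilde G=\Hom_{\La}(L,M)$ and $N={}_{\Ga}M$ are all well-defined $\Ga$-modules with $N\in\add(\widetilde L)$ coming from $\La\in\add(G)$.

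Next I would identify $\widetilde G = \Hom_{\La}(L,M)$ with the Brenner--Butler transform of $L$ and show it is $\widetilde{\mbF}$-cotilting over $\Ga$, where $\widetilde{\mbF}=\mbF_{\widetilde L}$. This is exactly the content of the relative cotilting correspondence of Auslander--Solberg quoted just above (\cite[Theorem 3.13]{ASoII}, \cite[Theorem 2.8]{ASoIII}): under (AS), an $\mbF$-cotilting $M$ with $\La\in\add(G)$ goes to a cotilting $L$-transform; here I apply that statement with the roles adjusted so that $L$ (not $M$) is the $\mbF$-cotilting module, using that $L\in\cogen^1_{\mbF}(M)$ places $L$ in the category on which $\Hom_{\La}(-,M)$ is a duality. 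Concretely, $\Hom_{\La}(-,{}_{\La}M)$ sends the $\mbF^H$-copresentation-defined category to the $\mbF^R$-version, and an $\mbF$-cotilting module in the source is sent to a cotilting module in the target whose endomorphism-ring data is exactly $[\Ga,N,\widetilde L,\widetilde G]$; the key identities $\End_{\Ga}(\widetilde G)\cong\End_{\La}(L)$ and the matching of $\add(N)$, $\add(\widetilde L)$ under the Morita frame are routine from the duality. I would also verify that $\widetilde L\in\cogen^1_{\widetilde{\mbF}}(N)$, which is the relative analogue of $L\in\cogen^1_{\mbF}(M)$, again by transporting the defining copresentation through the duality $\Hom_{\La}(-,M)$ and using that $N={}_{\Ga}M$ is the image of ${}_{\La}M$.

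Finally, I would establish that the assignment is an involution. The outer pair $[\La,M]\mapsto[\Ga,N]={}[\End_{\La}(M),{}_{\End_{\La}(M)}M]$ is the involution $(*)$ on faithfully balanced pairs, so applying the $4$-tuple assignment twice returns $[\La,M]$ up to Morita equivalence. For the remaining two entries, one must check that applying the transform to $\widetilde L=\Hom_{\La}(G,M)$ recovers $\add(G)$ and that applying it to $\widetilde G=\Hom_{\La}(L,M)$ recovers $\add(L)$; this is the statement that the Brenner--Butler-type duality $\Hom_{\Ga}(-,{}_{\Ga}M)$ is inverse to $\Hom_{\La}(-,{}_{\La}M)$ on the relevant subcategories, which is precisely the duality in Theorem \ref{Rel-fb} / Lemma \ref{relduality}, combined with $\Hom_{\Ga}(\Hom_{\La}(G,M),{}_{\Ga}M)\cong G$ and $\Hom_{\Ga}(\Hom_{\La}(L,M),{}_{\Ga}M)\cong L$ for $G,L\in\cogen^1_{\mbF}(M)$. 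I expect the main obstacle to be bookkeeping: making sure the exact structures $\mbF$, $\mbF^H$, $\mbF^R$ and their $\Ga$-side counterparts $\widetilde{\mbF}$ all correspond correctly under the duality, in particular that $R=\kdual\Hom_{\La}(M,H)$ plays on the $\Ga$-side the role that $H$ plays on the $\La$-side, so that ``$L$ is $\mbF$-cotilting with $L\in\cogen^1_{\mbF}(M)$'' is genuinely a self-dual condition. Once the exact-structure dictionary is pinned down, the involution property follows formally from $(*)$ and the established dualities.
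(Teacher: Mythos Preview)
Your outline has the right shape but two of its three steps contain genuine gaps. First a bookkeeping issue: you claim $\widetilde{G}=\Hom_{\La}(L,M)$ should be shown $\widetilde{\mbF}$-cotilting with $\widetilde{\mbF}=\mbF_{\widetilde L}$, but the roles are swapped---it is $\widetilde{L}=(G,M)$ that must be $\widetilde{\mbF}$-cotilting, and $\widetilde{\mbF}=\mbF_{\widetilde G}$ with $\widetilde{G}=(L,M)$ the generator. For step~1, Theorem~\ref{Rel-fb} only relates $G\in\cogen^k_{\mbF}(M)$ to $H\in\gen_k^{\mbF}(M)$ and gives no route from the hypotheses to either. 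The paper instead uses the tensor characterization from Lemma~\ref{cogenF}: $L$ being $1$-$\mbF$-faithful gives $(L,H)\otimes_{B}(G,L)\cong(G,H)$ with $B=\End_{\La}(L)$, and $L\in\cogen^1_{\mbF}(M)$ gives $(M,H)\otimes_{\Ga}(L,M)\cong(L,H)$; chaining these yields $(M,H)\otimes_{\Ga}(G,M)\cong(G,H)$, i.e.\ $M$ is $1$-$\mbF$-faithful. (Alternatively, Lemma~\ref{reldualizing} says this directly.)

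The serious gap is step~2. The Auslander--Solberg correspondence you invoke, applied to the $\mbF$-cotilting module $L$, yields a \emph{classical} cotilting module $(G,L)$ over $B=\End_{\La}(L)$, not an $\widetilde{\mbF}$-cotilting module over $\Ga=\End_{\La}(M)$. Nor does Lemma~\ref{relduality} bridge the gap: that duality lands in $\cogen^1_{\mbF^{\widetilde R}}({}_{\Ga}M)$ with $\widetilde R=\kdual(M,H)$, whereas $\widetilde{\mbF}=\mbF^{\widetilde H}$ with $\widetilde{H}=\kdual(M,R)$ for $R$ the $\La$-module obtained from $L$ by mutation along the strong $\add(M)$-dualizing sequence---these are genuinely different exact structures on $\Gamod$. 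The paper's proof first invokes Lemma~\ref{bAS-correspondence} (which already handles the involution at the level of strong $\add(M)$-dualizing sequences, and in particular establishes $\mbF_{\widetilde G}=\mbF^{\widetilde H}$), reducing everything to proving that $\widetilde{R}=\kdual(M,H)$ is $\widetilde{\mbF}$-cotilting. This is done by applying $\kdual(M,-)$ to the two $\mbF$-exact sequences witnessing that $R$ is $\mbF$-cotilting, and then verifying through explicit commutative diagrams---using that $H,R\in\gen_1^{\mbF_L}(M)$ and $H,R\in\gen_1(M)$---that the resulting sequences are $\widetilde{\mbF}$-exact. Finally $\widetilde{L}$ is $\widetilde{\mbF}$-cotilting by mutation (Lemma~\ref{mutation}). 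Your proposal acknowledges the need for ``exact-structure bookkeeping'' but does not supply it, and that bookkeeping is precisely the content of the proof.
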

It is well known that a cotilting module will induce a triangle duality, see \cite{Hap, CPS}. We prove a relative analogue of this result (Proposition \ref{relder-equi}): In the situation of the previous theorem 
we have a triangle duality between $\mathsf{D}^b_{\mbF_G}(\Lamod)$ and $\mathsf{D}^b_{\mbF_{\widetilde G}}(\Gamod)$ where $\Ga=\End_{\La}(M)$ and ${\widetilde G}=\Hom_{\La}(L,M)$.

We illustrate the above results by the following easy examples which are special cases of $\mbF$-Auslander algebras from Example \ref{ex-FAusalg-2}(4).
\begin{exa}
\begin{itemize}
\item[(1)] Let $\La$ be the path algebra of the quiver $1\to 2\to 3$ and consider the $\La$-modules  $M=P_1\oplus(P_2\oplus \tau^-P_2)$, $G=P_3\oplus M$ and $H=I_1 \oplus M$.  Then we have $\mbF_G=\mbF^H=:\mbF$. It is easy to see that $\domdim_{\mbF}\La=2=\gldim_{\mbF}\La$, and hence $\La$ is a 1-$\mbF$-Auslander algebra. Now, we see $\End_{\La}(M)\cong \La$, ${_{\End_{\La}(M)}}M\cong {_{\La}}M$ and $\Hom_{\La}(G,M)\cong {_{\La}}G$. It follows that the triple $[\La, M, G]$ is a fixed point of the assignment (AS).

\item[(2)] 
The same idea leads to a $2$-$\mbF$-Auslander algebra structure (i.e. $\domdim_{\mbF}\La\geq 3\geq \gldim_{\mbF}\La$) on $\La=K(1\to 2\to 3\to 4)$. Consider $M=P_1\oplus(P_2\oplus \tau^-P_2)\oplus (P_3\oplus \tau^-P_3\oplus \tau^{-2}P_3)$, $G=P_4\oplus M$ and $H=I_1\oplus M$. Then we have $\mbF_G=\mbF^H=:\mbF$ 
and one easily sees $\La$ is a 2-$\mbF$-Auslander algebra. 
 We also define $L=\tau^-P_4\oplus M$, the 
 $\mbF$-exact sequence  sequence $0\to L\to \tau^-P_3\oplus M \to \tau^{-2}P_3\oplus M\to H\to 0$ can be used to show that $L$ is a 2-$\mbF$-cotilting module and $L\in \cogen^1_{\mbF}(M)$. Then we have 
\[
\xymatrix @-1.5pc{
&&& \bullet \ar[dr]&&&&\\
\Ga=\End_{\La}(M)\colon &&\bullet \ar[ur]\ar[dr]\ar @{..}[rr]&&  \bullet \ar[dr]&&& {{_{\Ga}}M={\begin{smallmatrix} & & 1 & &\\ & 0 & & 1 &\\ 0 & & 0 & & 1\end{smallmatrix}} \oplus {\begin{smallmatrix} & & 1 & &\\ & 1 & & 1 &\\ 0 & & 1 & & 1\end{smallmatrix}}\oplus {\begin{smallmatrix} & & 1 & &\\ & 1 & & 1 &\\ 1 & & 1 & & 0\end{smallmatrix}} \oplus {\begin{smallmatrix} & & 1 & &\\ & 1 & & 0 &\\ 1 & & 0 & & 0\end{smallmatrix}}} \\
&\bullet \ar[ur]\ar @{..}@/_1pc/[rrrr] & & \bullet \ar[ur]& & \bullet &&
}
\]
$\widetilde{G}:=\Hom_{\La}(L,M)=\begin{smallmatrix} & & 0 & &\\ & 0 & & 1 &\\ 0 & & 1 & & 0\end{smallmatrix} \oplus \Ga$, $\widetilde{H}:=
\begin{smallmatrix} & & 1 & &\\ & 0 & & 1 &\\ 0 & & 0 & & 1\end{smallmatrix} \oplus \kdual\Ga$ and $\mbF_{\widetilde{G}}=\mbF^{\widetilde{H}}=:\widetilde{\mbF}$. We also define  $\widetilde{L}:=\Hom_{\La}(G,M)
= \begin{smallmatrix} & & 0 & &\\ & 0 & & 0 &\\ 0 & & 0 & & 1\end{smallmatrix} \oplus \begin{smallmatrix} & & 0 & &\\ & 0 & & 1 &\\ 0 & & 0 & & 1\end{smallmatrix} \oplus \begin{smallmatrix} & & 0 & &\\ & 0 & & 1 &\\ 0 & & 1 & & 1\end{smallmatrix} \oplus {_{\Ga}}M$ 
and an $\widetilde{\mbF}$-exact sequence
\[0\to \widetilde{L} \to {_{\Ga}}M \oplus (\begin{smallmatrix} & & 1 & &\\ & 0 & & 1 &\\ 0 & & 0 & & 1\end{smallmatrix})^2\oplus \begin{smallmatrix} & & 1 & &\\ & 1 & & 1 &\\ 0 & & 1 & & 1\end{smallmatrix} \to {_{\Ga}}M \oplus (\begin{smallmatrix} & & 1 & &\\ & 1 & & 0 &\\ 1 & & 0 & & 0\end{smallmatrix})^2 \oplus \begin{smallmatrix} & & 1 & &\\ & 1 & & 1 &\\ 1 & & 1 & & 0\end{smallmatrix} \to \widetilde{H} \to 0.\]
can be used to see that 
$\widetilde{L}$ is a 2-$\widetilde{\mbF}$-cotilting module and $\widetilde{L}\in \cogen^1_{\widetilde{\mbF}}(M) $.
This example is an instance of a more general class of examples which we call special (co)tilting modules systematically studied in section \ref{specialTilt}, for this particular example see subsubsection \ref{ExOf4tuple}.
\end{itemize}
\end{exa}

\paragraph{\bf Acknowledgements:} The first author is supported by the China Scholarship Council. The second author is supported by the Alexander von Humboldt Foundation in the framework of the Alexander von Humboldt Professorship endowed by the German Federal Ministry of Education and Research. The second author also wishes to thank William Crawley-Boevey who contributed Lemma 2.2.

\section{On categories generated or cogenerated by a module}

We fix a finite-dimensional algebra $\La$ (over a field $K$) and denote by $\Lamod$ the category of finitely generated (or equivalently, finite-dimensional) left $\La$-modules. Let $M\in \Lamod$ and $\Ga = \End_{\La} (M)$ be its endomorphism ring. Then $M$ can be naturally viewed as a left $\Ga$-module. We write ${}_{\Ga}M$ when we consider $M$ as a left ${\Ga}$-module. We will study the following four contravariant functors 
\[ 
\begin{aligned}
\Hom_{\La}(-,M) &\colon \Lamod \longleftrightarrow \Gamod  \colon \Hom_{\Ga}(-,M)\\
\kdual \Hom_{\La}(M,-) &\colon \Lamod \longleftrightarrow \Gamod \colon \kdual \Hom_{\Ga}(M,-) 
\end{aligned}
\]
where $D=\Hom_K(-,K)$ is the standard $K$-dual functor. 

In order to keep the formulas and diagrams in reasonable length we will often use the conventions $(-,{_{\La}M}):=\Hom_{\La}(-,M)$ and $\kdual (_{\La}M,-):=\kdual \Hom_{\La}(M,-)$. If there is no ambiguity we may omit the subscript and write $(-,{_{\La}M})$ (or $(-,{_{\Ga}M})$) as $(-,M)$. 

We begin with the Yoneda embedding which is known as \emph{projectivization} (\cite{ARSm}).

\begin{lem}\label{projectivization} $($\cite[Lemma 3.3]{ASoII}\cite[Proposition 2.1]{ARSm}$)$
Let $M\in \Lamod$ and $\Ga=\End_{\La}(M)$. \begin{itemize}
\item[(1)] $((-,{_\La}M), (-,{_\Ga}M))$ is an adjoint pair of contravariant functors and it restricts to a duality
\[
(-,{_\La}M):\add({_\La}M) \longleftrightarrow \add(\Ga)=\mcP(\Ga):(-,{_\Ga}M).
\]

\item[(2)] $(\kdual (_{\La}M,-), \kdual (_{\Ga}M,-))$ is an adjoint pair of contravariant functors and it restricts to a duality
\[
\kdual (_{\La}M,-):\add({_\La}M) \longleftrightarrow \add(\kdual\Ga)=\mcI(\Ga):\kdual (_{\Ga}M,-).
\]
\end{itemize}
\end{lem}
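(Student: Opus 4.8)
The plan is to establish Lemma~\ref{projectivization} by directly verifying the adjunction isomorphisms and then checking that the unit (or counit) of the adjunction is an isomorphism on the additive closures in question. I will prove (1) in detail; part (2) then follows by applying (1) to the $K$-dual, using that $\kdual$ is a duality between $\Lamod$ and $\La^{\op}\text{-}\modu$ and that $\kdual\Hom_\La(M,-) \cong \Hom_{\La^{\op}}(-,\kdual M)$ as functors, or alternatively by a completely parallel argument.

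First I would recall that for the fixed bimodule ${}_\Ga M_{?}$ — more precisely, writing endomorphisms on the left makes $M$ a left $\Ga$-module and also a left $\La$-module with the two actions commuting in the appropriate sense — the standard Hom-tensor/Hom-Hom adjunction gives, for $X\in\Lamod$ and $Y\in\Gamod$, a natural isomorphism
\[
\Hom_\Ga\bigl(Y,\Hom_\La(X,M)\bigr)\;\cong\;\Hom_\La\bigl(X,\Hom_\Ga(Y,M)\bigr),
\]
both sides being naturally identified with the set of $K$-bilinear maps $X\times Y\to M$ that are $\La$-linear in the first variable, $\Ga$-linear in the second, and compatible with the evaluation structure. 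This is the asserted adjunction of contravariant functors (the precise bookkeeping of left/right actions is the one genuinely fiddly point, and I would set it up carefully once at the start, citing \cite{ARSm} for the convention). The unit of this adjunction is the natural evaluation map $\eta_X\colon X\to \Hom_\Ga(\Hom_\La(X,M),M)$.

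Next, to get the duality on additive closures, I would observe that $\Hom_\La(M,M)=\Ga$ (as a left $\Ga$-module this is ${}_\Ga\Ga$, the regular module), so the functor $(-,{}_\La M)$ sends $M$ to $\Ga$ and hence sends $\add({}_\La M)$ into $\add(\Ga)=\mcP(\Ga)$, the projective $\La$-modules being replaced here by projective $\Ga$-modules; symmetrically $\Hom_\Ga(\Ga,M)\cong M$ shows $(-,{}_\Ga M)$ sends $\add(\Ga)$ into $\add({}_\La M)$. It remains to check that $\eta_M\colon M\to\Hom_\Ga(\Hom_\La(M,M),M)=\Hom_\Ga(\Ga,M)\cong M$ is an isomorphism — which is immediate, it is the canonical identification — and that the counit $\Hom_\La(\Hom_\Ga(\Ga,M),M)\to\Ga$ is likewise an isomorphism. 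Since both composites of the two functors are additive and agree with the identity on the generators $M$ and $\Ga$ respectively, and since an additive functor preserves finite direct sums and direct summands, the unit and counit are then isomorphisms on all of $\add({}_\La M)$ and $\add(\Ga)$ respectively; this yields the claimed duality.

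I do not expect any deep obstacle here: the statement is a form of the classical projectivization/Yoneda embedding and the only thing requiring care is the consistent handling of the bimodule actions and of the variance (contravariant functors, so "adjoint pair" means $\Hom(F(-),-)\cong\Hom(-,G(-))$ read on the correct sides). The mild subtlety worth a sentence in the write-up is that while $(-,{}_\La M)$ and $(-,{}_\Ga M)$ form an adjoint pair on the whole module categories, they are generally \emph{not} quasi-inverse there; the equivalence only holds after restricting to $\add({}_\La M)$ and $\mcP(\Ga)$, which is exactly why one checks the unit/counit on the single generating object and invokes additivity. For part (2) I would simply note $\kdual\Hom_\La(M,X)\cong\Hom_{\La}(X, \kdual M)$... — more cleanly, apply part (1) with $M$ replaced appropriately and transport along $\kdual$, so that $\add(M)\leftrightarrow\add(\kdual\Ga)=\mcI(\Ga)$ falls out, $\Hom_\Ga(M,M)=\Ga$ again giving $\kdual\Hom_\Ga(M,M)=\kdual\Ga$ as the image of $M$.
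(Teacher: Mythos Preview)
Your proposal is correct and follows the standard projectivization argument. Note that the paper does not supply its own proof of this lemma; it is stated with external citations to \cite{ASoII} and \cite{ARSm}, so there is no in-paper proof to compare against, and your argument is essentially the classical one found in those references.
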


For every non-negative integer $k$ we associate to a module $M\in \Lamod$ two full subcategories of $\Lamod$
\[ 
\begin{aligned}
\cogen^k(M) &:= \bigg\{ N \;\bigg\vert  \;\begin{aligned} \exists \; \text{exact seq.}\; 0\to & N \to M_0 \to \cdots \to M_k \; \text{with}\; M_i \in \add (M),\;\text{and s.t.}\\  (M_k,M)\to & \cdots \to (M_0,M) \to (N,M) \to 0 \;\;\text{is exact}\end{aligned} \bigg\} \\ 
\gen_k (M) &:= \bigg\{ N \;\bigg\vert  \;\begin{aligned} \exists \;\text{exact seq.}\;  M_k & \to \cdots \to M_0 \to N \to 0\; \text{with}\; M_i \in \add (M),\;\text{and s.t.}\\  (M, M_k)\to & \cdots \to (M, M_0) \to (M, N) \to 0 \;\;\text{ is exact}\end{aligned} \bigg\}.
\end{aligned}
\]
Recall that a map $f:N\to M_0$ with $M_0\in\add({_{\La}}M)$ is called a left $\add(M)$-approximation if the map $\Hom_{\La}(f,M):\Hom_{\La}(M_0,M)\to\Hom_{\La}(N,M)$ is an epimorphism, and this approximation is called minimal if any endomorphism $\theta:M_0\to M_0$ satisfying $\theta f=f$ is an automorphism. It is well-known that every left $\add(M)$-approximation has a minimal version which is unique up to isomorphism, see \cite[Theorem 2.4]{ARSm}. Dually, we can define right (minimal) $\add(M)$-approximation.  
We define $\cogen^{\infty} (M)$ to be the full subcategory consisting of modules $N$ such that there exists an exact sequence $0 \to N \xrightarrow{f_0} M_0 \xrightarrow{f_1} M_1 \cdots \xrightarrow{f_n} M_n \to \cdots  $ such that $f_i$ factors through $\coKer f_{i-1} \to M_{i}$ which is a minimal left $\add(M)$-approximation for every $i\geq 0$. The definition of $\gen_{\infty}(M)$ is dual. 

The following lemma will be used frequently, the case $k=0$ is well known and can be found in \cite[Lemma VI 1.8]{ASS}.

\begin{lem} \label{cogen-k} Let $1\leq k \leq \infty$. 
 \begin{itemize}
\item[(1)]
The following are equivalent for $N\in\Lamod$. 
\begin{itemize}
\item[(1a)] $N \in \cogen^k (M)$.
\item[(1b)] The natural map $N \to \Hom_{\Ga}(\Hom_{\La}(N,M),M)=((N,M),M)$, $n \mapsto (f \mapsto f(n))$ is an isomorphism and $\Ext^i_{\Ga} (\Hom_{\La}(N,M),M)=0$ for $1\leq i \leq k-1$.
\end{itemize}
\item[(2)]
The following are equivalent for $N\in\Lamod$. 
\begin{itemize}
\item[(2a)] $N \in \gen_k(M)$.
\item[(2b)] The natural map $\kdual (M,\kdual (M,N)) \cong \Hom_{\La} (M,N)\otimes_{\Ga} M\to N$, $f\otimes m \mapsto f(m)$ is an isomorphism and $\Ext^i_{\Ga}(M, \kdual (M,N))=0$
for $1\leq i \leq k-1$.
\end{itemize}
\end{itemize}
\end{lem}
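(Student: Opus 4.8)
The statement to prove is Lemma \ref{cogen-k}: the equivalence of membership in $\cogen^k(M)$ (resp. $\gen_k(M)$) with the "double-dual" condition (1b) (resp. (2b)). The two parts are dual to each other via $\kdual$, so I would prove (1) and obtain (2) by applying $\kdual$, swapping the roles of $\La$ and $\Ga$ and of $M$ and $\kdual$-side functors; the only care needed is to note that $\gen_k({}_\La M)$ for $M$ as a $\La$-module corresponds under $\kdual$ to $\cogen^k$ over $\La^{\op}$, and that $\kdual\Hom_\La(M,-)$ matches $\Hom_{\La^{\op}}(-,\kdual M)$. So the mathematical content is entirely in part (1), and within that the engine is Lemma \ref{projectivization}(1): the functor $(-,{}_\La M)$ is a duality between $\add({}_\La M)$ and $\mcP(\Ga)=\proj\Ga$.

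\textbf{Step 1: from a copresentation to a projective resolution.} Suppose $N\in\cogen^k(M)$, witnessed by an exact sequence $0\to N\to M_0\to M_1\to\cdots\to M_k$ with $M_i\in\add(M)$ on which $(-,M)$ is exact; write $K=\coKer(M_{k-1}\to M_k)$ (or iterate further if $k=\infty$) so that $0\to N\to M_0\to\cdots\to M_k\to K\to 0$ is exact and stays exact after $(-,M)$. Applying $\Hom_\La(-,M)$ and using Lemma \ref{projectivization}(1), the images $(M_i,M)$ are projective $\Ga$-modules and we get an exact sequence of $\Ga$-modules $0\to (K,M)\to (M_k,M)\to\cdots\to (M_0,M)\to (N,M)\to 0$, i.e. a projective resolution of $(N,M)$ of length $k$ (or a resolution by projectives if $k=\infty$). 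Now apply $\Hom_\Ga(-,M)=((-,M),M)$ to that projective resolution. By Lemma \ref{projectivization}(1) again, $((M_i,M),M)\cong M_i$ naturally, and the composite of the two dualities is the identity on $\add(M)$, so applying $((-,M),M)$ to the $\Ga$-resolution returns (up to natural iso) the original complex $M_0\to\cdots\to M_k$ augmented by $((N,M),M)$ and $((K,M),M)$. The cohomology of this returned complex in degree $-1$ (the augmentation spot) computes simultaneously $\Hom_\Ga((N,M),M)$ together with the failure of the natural map $N\to((N,M),M)$ to be iso, and in degrees $1,\dots,k-1$ it computes $\Ext^i_\Ga((N,M),M)$; since the original complex $0\to N\to M_0\to\cdots\to M_k$ is exact, these Ext groups vanish in the claimed range and the natural map $N\to((N,M),M)$ is an isomorphism. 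This gives (1a)$\Rightarrow$(1b).

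\textbf{Step 2: the converse.} Assume (1b). Take a minimal projective presentation, or rather a projective resolution, of the $\Ga$-module $(N,M)$: $\cdots\to Q_1\to Q_0\to (N,M)\to 0$ with $Q_i\in\proj\Ga=\add(\Ga)$. By Lemma \ref{projectivization}(1) each $Q_i\cong(M_i,M)$ for a unique (up to iso) $M_i\in\add(M)$, and each map $Q_{i}\to Q_{i-1}$ is $(g_i,M)$ for a unique $g_i:M_{i-1}\to M_i$. Apply $\Hom_\Ga(-,M)=((-,M),M)$ to the resolution; by the duality this yields the complex $0\to((N,M),M)\to M_0\xrightarrow{g_1} M_1\to\cdots$ whose cohomology in degree $i\ge 1$ is $\Ext^i_\Ga((N,M),M)$ and which at the left is controlled by $\Hom_\Ga((N,M),M)$. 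By hypothesis $\Ext^i_\Ga((N,M),M)=0$ for $1\le i\le k-1$, so the complex $M_0\to M_1\to\cdots\to M_k$ is exact in the middle through step $k-1$; and since $N\cong((N,M),M)$ naturally, the sequence $0\to N\to M_0\to\cdots\to M_k$ is exact. It remains to check that $\Hom_\La(-,M)$ is exact on this sequence, i.e. that applying $(-,M)$ returns an exact sequence of $\Ga$-modules. But $(-,M)$ sends it (via the duality on $\add(M)$) back to the complex $Q_k\to\cdots\to Q_0\to(N,M)\to 0$ with the extra term $((N,M),M)\mapsto (N,M)$ again using $N\cong((N,M),M)$; this is the original (exact) projective resolution truncated, hence exact. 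So $N\in\cogen^k(M)$. For $k=\infty$ one argues the same way with infinite resolutions, additionally noting the minimality clause in the definition of $\cogen^\infty(M)$: choosing the projective resolution of $(N,M)$ to be minimal makes each $g_i$ factor through $\coKer g_{i-1}\to M_i$ as a minimal left $\add(M)$-approximation, because minimal projective covers over $\Ga$ correspond under the duality to minimal left $\add(M)$-approximations.

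\textbf{Main obstacle.} The only genuinely delicate point is the bookkeeping in Step 1 identifying the cohomology of the "double-applied" complex with the Ext groups over $\Ga$ \emph{together} with the natural map $N\to((N,M),M)$ — in other words, checking that the natural transformation $\mathrm{id}\to((-,M),M)$ and the derived functors assemble into a hypercohomology-type spectral sequence (or, more elementarily, a diagram chase) whose degree-$(-1)$ and low-degree terms are exactly what appears in (1b). This is essentially the statement that $R\Hom_\Ga(\Hom_\La(-,M),M)$, evaluated on the exact sequence, degenerates; making this precise without invoking derived categories requires either a careful direct diagram chase or citing the standard fact (as in \cite{ARSm}) that for $N\in\cogen^k(M)$ a $(-,M)$-exact copresentation of length $k$ dualizes to a projective resolution of length $k$. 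Everything else is a routine application of the duality in Lemma \ref{projectivization} and the case $k=0$ from \cite[Lemma VI.1.8]{ASS}. The $k=\infty$ case needs the extra remark about minimality matching projective covers, but this is again standard by \cite[Theorem 2.4]{ARSm}.
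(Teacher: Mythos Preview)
Your proposal is correct and follows essentially the same route as the paper's proof: apply $(-,{}_\La M)$ to the copresentation to obtain a projective resolution of $(N,M)$, then apply $(-,{}_\Ga M)$ and use the duality $\add({}_\La M)\leftrightarrow\mcP(\Ga)$ from Lemma~\ref{projectivization}; the converse runs the same machine from a projective resolution of $(N,M)$, and part (2) is obtained by $K$-duality.

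The one point worth noting is that you overstate the ``main obstacle''. No spectral sequence or hypercohomology is needed. The paper simply writes down the commutative ladder
\[
\xymatrix{
0 \ar[r] &((N,M),M)\ar[r] & ((M_0,M),M)\ar[r]&\cdots \ar[r]&((M_k,M),M)\\
0 \ar[r]&N \ar[u]\ar[r] & M_0 \ar[u]^{\cong}\ar[r] & \cdots \ar[r]& M_k \ar[u]^{\cong}
}
\]
with vertical arrows the unit $\id\to((-,M),M)$; since these are isomorphisms on $\add(M)$ and the bottom row is exact, the top row is exact, which simultaneously forces the leftmost vertical map to be an isomorphism and gives $\Ext^i_\Ga((N,M),M)=0$ for $1\le i\le k-1$. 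Your introduction of $K=\coKer(M_{k-1}\to M_k)$ is harmless but unnecessary, and for $k=\infty$ the paper bypasses any minimality discussion by proving $\cogen^\infty(M)=\bigcap_{k\ge 1}\cogen^k(M)$ as a corollary.
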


\begin{proof}
\begin{itemize}
\item[(1)] Let $N\in \cogen^k(M)$, that means we have an exact sequence 
\[ 0 \to N \to M_0 \to \cdots \to M_k \]
with $M_i \in \add (M)$ and such that the functor $\Hom_{\La} (-, M)$ is exact on it, i.e., we get an exact sequence 
\[ (M_k , M) \to \cdots \to (M_0, M) \to  (N,M) \to 0 .\]
This sequence is a projective resolution of $\Hom_{\La} (N,M)$ as a left $\Ga$-module. 
Applying the functor $\Hom_{\Ga} (-,M)$ to it yields a complex 
\[
0 \to ((N,M),M) \to ((M_0,M),M) \to \cdots \to ((M_k,M),M).
\]
Now, consider the natural map $N \to \Hom_{\Ga}(\Hom_{\La}(N,M),M)$, this gives a commutative diagram, 
\[
\xymatrix{
0 \ar[r] &((N,M),M)\ar[r] & \ar[r]((M_0,M),M)&\cdots \ar[r]&((M_k,M),M)\\
0 \ar[r]&N \ar[u]\ar[r] & M_0 \ar[u]\ar[r] & \cdots \ar[r]& M_k \ar[u] 
}
\]
The map $M' \to \Hom_{\Ga}(\Hom_{\La}(M',M),M)$ is an isomorphism for $M' \in \add (M)$ because it is in the case of $M' =M$. This implies that all vertical maps are isomorphisms, in particular $N \to \Hom_{\Ga}(\Hom_{\La}(M,N),M)$ is an isomorphism and since the second row is exact, the complex in the first row is also exact. This implies $\Ext^i_{\Ga} (\Hom_{\La}(N,M), M) =0$ for $1 \leq i \leq k-1$. \\
For the other direction, by Lemma \ref{projectivization} (1) we can take a projective resolution of $\Hom_{\La} (N,M)$ as a left $\Ga$-module as follows
\[ (M_k, M) \to \cdots \to (M_0,M) \to (N,M) \to 0 \]
and apply $\Hom_{\Ga} (-,M)$ to compute $\Ext^i_{\Ga} (\Hom_{\La}(N,M), M)$, $1 \leq i \leq k-1$. 
Since by assumption $\Ext^i_{\Ga} (\Hom_{\La}(N,M), M)=0$, $1 \leq i \leq k-1$ and 
$N \to \Hom_{\Ga}(\Hom_{\La}(N,M),M)$ is an isomorphism. The complex gives an exact sequence
\[ 0 \to N \to M_0 \to \cdots \to M_k. \]
If we apply $\Hom_{\La} (-,M)$ to this sequence we get the projective resolution from before, so it is exact which shows that $N$ is in $\cogen^k(M)$. 
\item[(2)] By using the facts that $N\in \gen_k (M)$ if and only if $\kdual N\in \cogen^k (\kdual M)$ and $\End_{\La^{op}}(\kdual M)\cong \End_{\La}(M)^{op}$, we see that the statement (2) can be deduced from the right module version of (1). 
\end{itemize}
\end{proof}

\begin{cor} 
For $1 \leq k \leq \infty$, the categories $\cogen^k(M)$ and $\gen_k(M)$ are closed under direct sums and summands. Furthermore, we have 
\[ 
\cogen^{\infty} (M) = \bigcap_{1\leq k <\infty } \cogen^k(M), \quad 
\gen_{\infty} (M) = \bigcap_{1\leq k <\infty } \gen_k(M).
\]
\end{cor}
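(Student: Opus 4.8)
The plan is to derive the whole corollary from the intrinsic descriptions in Lemma \ref{cogen-k}, which trade the existence of a copresentation for the invertibility of a canonical evaluation map together with the vanishing of certain $\Ext$-groups over $\Ga = \End_\La(M)$; both of these are manifestly well behaved under finite direct sums and summands. Concretely, for $1\le k\le \infty$ a module $N\in\Lamod$ lies in $\cogen^k(M)$ if and only if the natural map $\ep_N\colon N\to \Hom_\Ga(\Hom_\La(N,M),M)=((N,M),M)$ is an isomorphism and $\Ext^i_\Ga((N,M),M)=0$ for all $i$ with $1\le i\le k-1$ (for $k=\infty$ this means $\Ext^i_\Ga((N,M),M)=0$ for all $i\ge 1$). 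The corresponding statement for $\gen_k(M)$ is Lemma \ref{cogen-k}(2), or can be obtained from the $\cogen$-case by applying the $K$-duality $\kdual$ together with $\End_{\La^{\op}}(\kdual M)\cong \Ga^{\op}$; I would simply record this reduction once and thereafter argue only for $\cogen^k$.

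For closure, let $N=N_1\oplus\cdots\oplus N_r$. The only point that deserves explicit attention is that $\ep_N$ is the direct sum $\ep_{N_1}\oplus\cdots\oplus\ep_{N_r}$ under the canonical identifications $\Hom_\La(N,M)\cong\bigoplus_j\Hom_\La(N_j,M)$ and $((N,M),M)\cong\bigoplus_j((N_j,M),M)$; this is immediate from naturality of $\ep_{(-)}$ applied to the inclusions $\io_j\colon N_j\hookrightarrow N$ and projections $\pi_j\colon N\twoheadrightarrow N_j$ (so that $\ep_N\io_j$ lands in $((N_j,M),M)$ via $\ep_{N_j}$). Since a block-diagonal map is an isomorphism precisely when each diagonal block is, and since $\Hom_\La(-,M)$ and $\Ext^i_\Ga(-,M)$ are additive, we get: $\ep_N$ is an isomorphism iff every $\ep_{N_j}$ is, and $\Ext^i_\Ga((N,M),M)=0$ iff $\Ext^i_\Ga((N_j,M),M)=0$ for every $j$. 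Invoking Lemma \ref{cogen-k}(1) in both directions then shows at once that $\cogen^k(M)$ is closed under finite direct sums and under direct summands, for each $1\le k\le\infty$ (and dually for $\gen_k(M)$). A direct splicing of copresentations would also give closure under finite direct sums, but is awkward for summands, which is why routing through Lemma \ref{cogen-k} is preferable.

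Finally, for $\cogen^\infty(M)=\bigcap_{1\le k<\infty}\cogen^k(M)$ (and dually for $\gen$), I would again read off the characterization: $N\in\bigcap_{1\le k<\infty}\cogen^k(M)$ means $\ep_N$ is an isomorphism and, for every finite $k\ge 1$, $\Ext^i_\Ga((N,M),M)=0$ for $1\le i\le k-1$. As $k$ ranges over all positive integers the union of these index ranges is all of $i\ge 1$, so this is equivalent to $\ep_N$ being an isomorphism and $\Ext^i_\Ga((N,M),M)=0$ for all $i\ge 1$, which by Lemma \ref{cogen-k}(1) with $k=\infty$ is exactly the condition $N\in\cogen^\infty(M)$. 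I do not anticipate a genuine obstacle: the only subtleties are the direct-sum compatibility of $\ep_{(-)}$ noted above and the fact that Lemma \ref{cogen-k} has already absorbed the comparison between the approximation-theoretic definition of $\cogen^\infty(M)$ (resp.\ $\gen_\infty(M)$) and its $\Ext$-vanishing description, so that the intersection identity becomes a formality.
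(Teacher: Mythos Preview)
Your proposal is correct and is exactly the intended argument: the paper gives no explicit proof, leaving the corollary as an immediate consequence of Lemma~\ref{cogen-k}, and what you have written is a careful unpacking of why that lemma yields both the closure properties (via additivity and naturality of $\ep_{(-)}$ and of $\Ext^i_\Ga(-,M)$) and the intersection identity (since the $\Ext$-vanishing conditions for finite $k$ exhaust the condition for $k=\infty$).
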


We will need the following useful lemma which already appeared for the specific situation of a relative cotilting module in \cite[Lemma 3.3 (b)]{ASoII} and \cite[Proposition 3.7]{ASoII}. 
For a finite-dimensional algebra $\La$ we write $\nu_{\La} =\kdual (-, \La), \nu_{\La}^-=(\kdual \La ,-)$ for the Nakayama functors (cf. \cite{ASS}). 

\begin{lem} \label{fullfaith} Let $M\in \Lamod$ and $\Gamma = \End_{\La} (M)$.  
\begin{itemize}
\item[(1)] A module $X\in \cogen^1(M)$ if and only if the natural map
\[\Hom_{\La}(Y,X) \to \Hom_{\Ga} ((X,M), (Y,M))\] 
is an isomorphism for all $Y\in \Lamod$. Furthermore, in this case we have   
\[\nu_{\Gamma} (X,M) = \kdual ((X,M), (M,M)) \cong \kdual (M,X). \]
Dually, a module $Y \in \gen_1(M)$ if and only if the natural map 
\[\Hom_{\La} (Y,X) \to \Hom_{\Ga} (\kdual(M,X), \kdual (M,Y))\] 
is an isomorphism for all $X\in \Lamod$. Furthermore, in this case 
\[\nu^-_{\Ga} \kdual (M,Y) = (\kdual (M,M), \kdual (M,Y)) \cong  (Y,M).\]
\item[(2)] For $k \geq 1$, $X\in \cogen^{k+1}(M)$ if and only if the natural maps 
\[
\Ext^i_{\La} (Y,X) \to \Ext^i_{\Ga} ((X,M), (Y,M)),\ 0\leq i \leq k
\]
are isomorphisms for all $Y \in \bigcap_{i=1}^k\Ker \Ext^i_{\La}(-,M)$.
Dually, $Y \in \gen_{k+1} (M)$ if and only if the natural maps 
\[ \Ext^i_{\La}(Y,X) \to \Ext^i_{\Ga}(\kdual (M,X), \kdual (M,Y)),\ 0\leq i \leq k\]
are isomorphisms for all $X \in \bigcap_{i=1}^k \Ker \Ext^i_{\La}(M,-)$.
\end{itemize}
\end{lem}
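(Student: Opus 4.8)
\textbf{Proof plan for Lemma \ref{fullfaith}.}

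The plan is to treat part (1) first and then bootstrap to part (2) by dimension shifting. For part (1), the forward direction for $X\in\cogen^1(M)$: pick an exact sequence $0\to X\to M_0\to M_1$ with $M_i\in\add(M)$ on which $(-,M)$ is exact, so that $(M_1,M)\to(M_0,M)\to(X,M)\to 0$ is a projective presentation of the $\Ga$-module $(X,M)$ (using Lemma \ref{projectivization}(1)). Now apply $\Hom_\La(Y,-)$ to $0\to X\to M_0\to M_1$ and $\Hom_\Ga(-,(Y,M))$ to the projective presentation, and compare the two resulting left-exact sequences via the natural transformation $\Hom_\La(Y,-)\Rightarrow\Hom_\Ga((-,M),(Y,M))$. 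Lemma \ref{projectivization}(1) says this natural map is an isomorphism on $\add(M)$, hence on $M_0$ and $M_1$; a diagram chase (five-lemma style on the two left-exact rows) forces it to be an isomorphism on $X$. Conversely, if $\Hom_\La(Y,X)\to\Hom_\Ga((X,M),(Y,M))$ is an isomorphism for all $Y$, apply this with $Y$ running over $\add(M)$ and a projective presentation of $(X,M)$ over $\Ga$ to reconstruct an exact $0\to X\to M_0\to M_1$ on which $(-,M)$ is exact — this is essentially the argument already carried out in the proof of Lemma \ref{cogen-k}(1) in the case $k=1$, and indeed one can simply invoke Lemma \ref{cogen-k}(1) directly: condition (1b) there for $k=1$ is precisely the $Y=$ anything instance specialized, combined with the identification of $\Hom_\Ga((X,M),(M',M))$ with $\Hom_\La(M',X)$ for $M'\in\add M$. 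For the Nakayama formula, note $\nu_\Ga(X,M)=\kdual\Hom_\Ga((X,M),\Ga)$; since $\Ga=\End_\La(M)=(M,M)$ as a $\Ga$-module, the isomorphism $\Hom_\Ga((X,M),(M,M))\cong\Hom_\La(M,X)$ (the $Y=M$ case just proved) gives $\nu_\Ga(X,M)\cong\kdual\Hom_\La(M,X)$. The dual statements for $\gen_1$ follow by applying everything to $\La^{\op}$-modules via $\kdual$, exactly as in the reduction used in Lemma \ref{cogen-k}(2), using $\nu^-_\Ga=\Hom_\Ga(\kdual\Ga,-)$ and $\kdual\Ga=\kdual(M,M)$ as a $\Ga$-module.

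For part (2), I would induct on $k$, with $k=1$ reducing to part (1) applied to $X\in\cogen^2(M)\subseteq\cogen^1(M)$ together with the $\Ext^1$ statement. Given $X\in\cogen^{k+1}(M)$, take the exact sequence $0\to X\to M_0\to X'\to 0$ where $M_0$ is a (minimal) left $\add(M)$-approximation and $X'=\coKer$; then $X'\in\cogen^k(M)$ and $(M_0,M)\to(X,M)\to 0$ with kernel $(X',M)$, i.e. $0\to(X',M)\to(M_0,M)\to(X,M)\to 0$ is exact over $\Ga$, so $\Omega_\Ga(X,M)\cong(X',M)$ (up to projective summands). For $Y\in\bigcap_{i=1}^k\Ker\Ext^i_\La(-,M)$, apply $\Hom_\La(Y,-)$ to $0\to X\to M_0\to X'\to 0$: since $\Ext^i_\La(Y,M_0)=0$ for $1\le i\le k$ (as $M_0\in\add M$ and $Y$ kills those $\Ext$'s against $M$), we get $\Ext^i_\La(Y,X)\cong\Ext^{i-1}_\La(Y,X')$ for $2\le i\le k$ and a six-term piece linking $\Ext^1_\La(Y,X)$ to $\Hom$'s of $X,M_0,X'$. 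Matching this against the long exact sequence of $\Ext_\Ga(-,(Y,M))$ applied to $0\to(X',M)\to(M_0,M)\to(X,M)\to 0$ — where $\Ext^i_\Ga((M_0,M),(Y,M))=0$ for $i\ge 1$ since $(M_0,M)$ is $\Ga$-projective — reduces the claim for $(X,k)$ to the claim for $(X',k-1)$, together with the $i=0,1$ cases supplied by part (1) (noting $X,X'\in\cogen^1(M)$). One must also check that $Y$ lying in $\bigcap_{i=1}^k\Ker\Ext^i_\La(-,M)$ implies it lies in $\bigcap_{i=1}^{k-1}\Ker\Ext^i_\La(-,M)$, which is immediate, so the induction hypothesis applies to $(X',Y)$. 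The dual $\gen_{k+1}$ statement again follows by the $\kdual$/$\La^{\op}$ device.

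The main obstacle I anticipate is bookkeeping rather than conceptual: correctly identifying $\Omega_\Ga(X,M)$ with $(X',M)$ \emph{including the handling of projective direct summands} introduced by non-minimal approximations (these do not affect $\Ext^{\ge 1}$ but can perturb the $\Hom$-level natural maps), and then aligning the two long exact sequences so that the comparison maps in the middle are literally the natural transformation being studied — one has to verify naturality/commutativity of all the connecting squares, in particular that the connecting homomorphism of $\Ext_\Ga$ against $(Y,M)$ corresponds under $(-,M)$ to the connecting homomorphism of $\Ext_\La(Y,-)$. This compatibility is where the functoriality of $\Hom_\La(-,M)$ and the adjunction of Lemma \ref{projectivization}(1) do the real work. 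Once that diagram is set up, the five-lemma / induction finishes it mechanically.
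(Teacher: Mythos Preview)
Your treatment of part (1) is essentially the same as the paper's: pick a $\cogen^1$-presentation, apply $(Y,-)$ and $((-,M),(Y,M))$, and compare via the natural transformation which is an isomorphism on $\add(M)$. The converse in the paper is a single line: take $Y=\La$, so the hypothesis reads $X\cong((X,M),M)$, and Lemma~\ref{cogen-k} finishes it. Your detour through $Y\in\add(M)$ is unnecessary but harmless.

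For part (2) the paper takes a genuinely different and somewhat cleaner route than your induction. Rather than peeling off one $\add(M)$-approximation at a time and dimension-shifting through connecting homomorphisms, it takes the full presentation $0\to X\to M_0\to\cdots\to M_{k+1}$ at once. Applying $(-,M)$ gives a projective resolution of $(X,M)$; deleting the degree-zero term and applying $(-,(Y,M))$ produces a complex computing $\Ext^i_\Ga((X,M),(Y,M))$. The comparison is then a single isomorphism of complexes (the vertical maps on each $M_j$ are isomorphisms by part (1) or Lemma~\ref{projectivization}), so the cohomologies agree. The hypothesis $Y\in\bigcap_{i=1}^k\Ker\Ext^i_\La(-,M)$ is used only to identify the cohomology of the top complex $0\to(Y,M_0)\to\cdots\to(Y,M_{k+1})\to 0$ with $\Ext^i_\La(Y,X)$. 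This completely sidesteps the issue you flagged as the main obstacle --- there are no connecting homomorphisms to worry about, and no projective summands from non-minimal approximations to track, because nothing is being spliced. Your inductive approach works too, and the naturality of connecting maps you worry about does hold (it follows from the functoriality of the long exact sequence in both variables), but it is more bookkeeping for the same result. You also omitted the converse of (2); as in (1), it is immediate on taking $Y=\La$, which lies in every $\Ker\Ext^i_\La(-,M)$ and reduces the statement to condition (1b) of Lemma~\ref{cogen-k}.
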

\begin{proof}
\begin{itemize}
\item[(1)] Assume $X\in \cogen^1(M)$, then there exists an exact sequence $0 \to X \to M_0\to M_1$ such that $M_i \in \add (M)$ and $\Hom_{\La} (-,M)$ is exact on it. We apply $\Hom_{\La} (Y,-)$ to get an exact sequence 
\[
0 \to \Hom_{\La} (Y,X) \to \Hom_{\La} (Y,M_0) \to \Hom_{\La} (Y,M_1).
\]
Now, we consider the commutative diagram
\[
\xymatrix{ 
0 \ar[r] & (Y,X) \ar[r]\ar[d]_{(-,M)} & (Y,M_0) \ar[d]_{(-,M)}^{\cong}\ar[r] & (Y,M_1)\ar[d]_{(-,M)}^{\cong} \\
0 \ar[r] & ((X,M),(Y,M))\ar[r] & ((M_0,M),(Y,M)) \ar[r] & ((M_1,M),(Y,M)).
}
\]
The second row also can be obtained by applying first $\Hom_{\La} (-,M)$ then $\Hom_{\Ga} (-,(Y,M))$ to the exact sequence $0 \to X \to M_0\to M_1$, so it remains exact. The induced isomorphism of the kernels is the map in the claim. Conversely, by taking $Y=\La$ we obtain a natural isomorphism $X\xrightarrow{\cong} ((X,M),M)$ which implies $X\in \cogen^1(M)$.
\item[(2)] Assume $X\in \cogen^{k+1}(M)$, then we have an exact sequence $0 \to X \to M_0\to \cdots \to M_{k+1}$ such that $M_i \in \add (M)$ and $\Hom_{\La} (-,M)$ is exact on it. Applying $\Hom_{\La} (-,M)$ yields an exact sequence $(M_{k+1}, M) \to \cdots \to (M_0,M)\to (X,M) \to 0$ which is a projective resolution of $(X,M)$ as a left $\Ga$-module. Now assume $Y\in \bigcap_{i=1}^k\Ker \Ext^i_{\La}(-,M)$. To compute $\Ext^i_{\Ga} ((X,M), (Y,M))$ for $1 \leq i \leq k$ we apply $\Hom_{\Ga} (-,(Y,M))$ to this projective resolution and delete the term $((X,M),(Y,M))$ to get a complex $\cdots  \to 0 \to ((M_0,M),(Y,M))\to ((M_1,M),(Y,M))\to \cdots \to ((M_{k+1},M),(Y,M)) \to 0 \to \cdots$ which fits into the following commutative diagram
\[
\xymatrix @C=0.3cm{ 
\cdots \ar[r]  & 0\ar[r] & (Y,M_0) \ar[r]\ar[d]_{(-,M)}^{\cong} & (Y,M_1) \ar[r]\ar[d]_{(-,M)}^{\cong} &  \cdots \ar[r] & (Y,M_{k+1})\ar[d]_{(-,M)}^{\cong} \ar[r] & 0 \ar[r]& \cdots \\
\cdots \ar[r] & 0 \ar[r]  & ((M_0,M),(Y,M)) \ar[r] & ((M_1,M),(Y,M)) \ar[r] & \cdots \ar[r] & ((M_{k+1},M),(Y,M)) \ar[r] & 0 \ar[r] & \cdots
}
\]
where the complex in the first row is obtained by applying $\Hom_{\La} (Y,-)$ to $0 \to X \to M_0\to \cdots \to M_{k+1}$ and deleting the term $(Y,X)$. Our assumption $Y\in \bigcap_{i=1}^k\Ker \Ext^i_{\La}(-,M)$ implies that the i-th cohomology of the first row is $\Ext^i_{\La} (Y,X)$. Now the isomorphism of the two complexes induces the claimed natural isomorphisms. To prove the other implication, just take $Y=\La$.
\end{itemize}
\end{proof}

We also prove the following simple criterion. 
\begin{lem} \label{isocogen1} Let $M, N\in \Lamod$ and $\Gamma = \End(M)$. If $N$  and $\Hom_{\Ga}(\Hom_{\La}(N,M),M)$ are isomorphic as ${\La}$-modules, 
then we have $N\in \cogen^1({_{\La}}M)$.
\end{lem}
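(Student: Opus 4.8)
The plan is to reduce this statement to the equivalence (1a) $\Leftrightarrow$ (1b) in Lemma~\ref{cogen-k} applied with $k=1$. Recall that condition (1b) for $k=1$ says precisely that the natural evaluation map $N \to \Hom_\Ga(\Hom_\La(N,M),M)$, $n \mapsto (f \mapsto f(n))$, is an isomorphism of $\La$-modules (the vanishing of $\Ext^i_\Ga$ for $1 \le i \le k-1$ is vacuous when $k=1$). So the only thing to check is that the hypothesis of the lemma — that $N$ and $((N,M),M) = \Hom_\Ga(\Hom_\La(N,M),M)$ are isomorphic \emph{as $\La$-modules} — can be upgraded to the statement that the \emph{canonical} evaluation map between them is an isomorphism.

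First I would fix an abstract $\La$-isomorphism $\varphi \colon N \xrightarrow{\ \sim\ } ((N,M),M)$, which exists by hypothesis, and let $\eta_N \colon N \to ((N,M),M)$ denote the canonical evaluation map (which is always $\La$-linear, since $\Ga$-linear maps $(N,M)\to M$ commute with the left $\La$-action on $M$). The goal is to show $\eta_N$ is an isomorphism. The clean way is a dimension count together with injectivity: since $\varphi$ is an isomorphism, $\dim_K N = \dim_K ((N,M),M)$, so it suffices to prove $\eta_N$ is injective (then it is automatically bijective by equality of dimensions). Injectivity of $\eta_N$ is exactly the statement that $M$ is a cogenerator for $N$ in the weak sense that $\bigcap_{f \in \Hom_\La(N,M)} \ker f = 0$; equivalently, that $N$ embeds into a product of copies of $M$.

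So the real content is: if $N \cong ((N,M),M)$ as $\La$-modules, then $\eta_N$ is injective. Here I would argue as follows. Suppose $\eta_N$ has nonzero kernel $K$. Then every $\La$-homomorphism $N \to M$ vanishes on $K$, hence $\Hom_\La(N,M) = \Hom_\La(N/K, M)$, which forces $((N,M),M)$ to be a quotient of $N$ by a submodule "of the same size" as $K$ in the appropriate sense — more precisely, one plays off $\dim_K ((N,M),M) = \dim_K N$ against the inequality $\dim_K ((N/K, M), M) \le \dim_K(N/K) < \dim_K N$ once one knows $\eta_{N/K}$ has image of dimension at most $\dim_K(N/K)$, together with the fact that $\eta$ is a natural transformation so $((N,M),M) \cong ((N/K,M),M)$ via the quotient map. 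This yields $\dim_K N = \dim_K ((N,M),M) = \dim_K((N/K,M),M) \le \dim_K(N/K) < \dim_K N$, a contradiction. Hence $K = 0$, $\eta_N$ is injective, and by the dimension equality it is an isomorphism; Lemma~\ref{cogen-k}(1) then gives $N \in \cogen^1({}_\La M)$.

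I expect the main obstacle to be the bookkeeping in the last paragraph: one must be careful that the identification $((N,M),M) \cong ((N/K,M),M)$ is induced by the canonical maps (so that it is compatible with $\eta_N$ and $\eta_{N/K}$), and that the inequality $\dim_K ((N/K,M),M) \le \dim_K(N/K)$ is justified — this follows because $\eta_{N/K}$ need not be injective, so one instead uses that $((N/K,M),M)$ is a submodule of $M^{\dim_K \Hom_\La(N/K,M)}$ and that $\Hom_\Ga(-,M)$ of a $\Ga$-module generated by $\dim_K \Hom_\La(N/K,M)$ elements... actually the cleanest route avoids this entirely: since $\Hom_\La(N,M) = \Hom_\La(N/K,M)$ as $\Ga$-modules, we get $((N,M),M) = ((N/K,M),M)$ \emph{on the nose}, and the canonical map $\eta_{N/K}$ factors $\eta_N$ through $N \twoheadrightarrow N/K$; so $\mathrm{Im}(\eta_N) = \mathrm{Im}(\eta_{N/K})$ has dimension $\le \dim_K(N/K)$, while $\eta_N$ surjects onto nothing a priori — but here I would instead use that, once $\varphi$ exists, $((N,M),M)$ is a module whose own double-dual is computed from $(((N,M),M),M) = ((N,M),M)$ by idempotency of $(-,M)$ on its image... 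In any case, the safe and short argument is the dimension contradiction above, and verifying its steps carefully is the one place requiring genuine attention.
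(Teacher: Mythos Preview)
Your overall strategy is right: reduce to showing that the canonical evaluation map $\eta_N\colon N\to ((N,M),M)$ is injective, then use the dimension equality coming from the abstract isomorphism $N\cong ((N,M),M)$. The gap is in the contradiction argument. You claim that if $K=\ker\eta_N\neq 0$ then
\[
\dim_K N = \dim_K((N,M),M) = \dim_K((N/K,M),M) \le \dim_K(N/K) < \dim_K N,
\]
but the inequality $\dim_K((N/K,M),M)\le\dim_K(N/K)$ is precisely what fails. Indeed, since $(N/K,M)=(N,M)$ you have $((N/K,M),M)=((N,M),M)$, and this has dimension $\dim_K N$ by hypothesis, which is \emph{strictly larger} than $\dim_K(N/K)$ when $K\neq 0$. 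So the inequality goes the wrong way, and no contradiction results. Your fallback observation that $\Bild(\eta_N)=\Bild(\eta_{N/K})$ has dimension at most $\dim_K(N/K)$ is correct, but it only shows $\eta_N$ is not surjective, which is not a contradiction either.

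The clean fix, and the paper's approach, is to prove injectivity of $\eta_N$ directly by showing that $N\in\cogen(M)$. Here is the missing observation: the essential image of the functor $\Hom_{\Ga}(-,M)\colon\Gamod\to\Lamod$ is contained in $\cogen(M)$. Indeed, for any $\Ga$-module $Y$, choose an epimorphism $\Ga^n\twoheadrightarrow Y$ and apply $\Hom_\Ga(-,M)$ to obtain a monomorphism $(Y,M)\hookrightarrow(\Ga^n,M)\cong M^n$. In particular $((N,M),M)\in\cogen(M)$, and since $N\cong((N,M),M)$ as $\La$-modules, also $N\in\cogen(M)$. This is exactly the statement that $\eta_N$ is a monomorphism; the dimension count then finishes the proof as you intended.
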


\begin{proof}
The essential image of the functor $(-,M)$ is contained in $\cogen (M)$ since if $Y=(Z,M)$, then we may choose a projective cover $P\to Z$ and apply $(-,M)$ to see that $Y\in \cogen (M)$. \\
This means $N \cong ((N,M),M)\in \cogen (M)$. This implies that the natural map 
$N \to ((N,M),M)$ mapping $n \mapsto (f\mapsto f(n))$ is a monomorphism. Since both vector spaces have the same dimension it is an isomorphism. This implies by Lemma \ref{cogen-k} that $N \in \cogen^1(M)$. 
\end{proof}

\subsection{Faithfully balanced modules}
 
Faithfully balanced modules can be defined for any ring. For finite-dimensional algebras, Lemma \ref{cogen-k} allows us to give the following internal definition.
 
 \begin{dfn}\label{fb} We call a finitely generated (left or right) $\La$-module $M$ faithfully balanced if $\La \in \cogen^1(M)$.  
 \end{dfn}
 


The following surprising and also well-known result says every module becomes faithfully balanced when considering as a module over its endomorphism ring.

\begin{lem} $($\cite[Proposition 4.12]{AF} \cite[Lemma 2.2]{ASo-Gor}$)$ Let $M\in \Lamod$ and $\Ga = \End_{\La}(M)$ and consider $M$ as a left $\Ga$-module. Then ${}_{\Ga}M$ is faithfully balanced. 
\end{lem}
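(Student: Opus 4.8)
The plan is to use the internal characterization of faithfully balancedness provided by Definition \ref{fb}: we must show that $\Ga \in \cogen^1({}_{\Ga}M)$, i.e., there is an exact sequence $0 \to \Ga \to N_0 \to N_1$ with $N_i \in \add({}_{\Ga}M)$ on which $\Hom_{\Ga}(-,{}_{\Ga}M)$ remains exact. The key idea is that $\cogen^1$ of a module is detected by a projective resolution of the corresponding module over the \emph{second} endomorphism ring, and here $\End_{\Ga}({}_{\Ga}M) = \End_{\Ga}(M)$ contains $\La$ naturally (the natural ring map $\La \to \End_{\Ga}(M)$). So by Lemma \ref{cogen-k}(1) applied with the algebra $\Ga$, the module ${}_{\Ga}M$, and $N = \Ga$, it suffices to verify that the natural map
\[
\Ga \longrightarrow \Hom_{\End_{\Ga}(M)}\bigl(\Hom_{\Ga}(\Ga, M), M\bigr) = \Hom_{\End_{\Ga}(M)}(M, M) = \End_{\Ga}(M)
\]
is an isomorphism. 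But this natural map is exactly the canonical ring homomorphism $\Ga = \End_{\La}(M) \to \End_{\End_{\La}(M)}(M)$, which is precisely the map appearing in the definition of balancedness of ${}_{\La}M$ — and one checks directly that it is an isomorphism with inverse given by evaluation, since every $\Ga$-linear endomorphism of $M$ is, tautologically, an element of $\Ga$ acting on $M$ on the left (as $\Ga$ acts on $M$ by left multiplication and those actions commute with everything in $\Ga$, i.e. with $\Ga$ itself — in fact $\End_{\Ga}(M)$ consists of maps commuting with the left $\Ga$-action, and left multiplication by $\la \in \Ga$ gives exactly such a map, recovering all of them).

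Concretely, the steps I would carry out are: first, identify $\End_{\Ga}({}_{\Ga}M)$; the left $\Ga$-module structure on $M$ is given by $\varphi \cdot m := \varphi(m)$ for $\varphi \in \Ga = \End_{\La}(M)$, $m \in M$, and a $\Ga$-endomorphism of $M$ is a $K$-linear map $\psi : M \to M$ with $\psi(\varphi(m)) = \varphi(\psi(m))$ for all $\varphi \in \Ga$. Second, observe that since $\id_{M} \in \Ga$ trivially and more usefully $\La$ maps into $\Ga$... actually the cleanest route: note $\psi$ commuting with all of $\Ga = \End_\La(M)$ forces $\psi \in \Ga$ itself — indeed taking $\varphi$ to range over $\Ga$ one sees $\psi$ is $\Ga$-linear, but also $\psi$ is in particular $\La$-linear? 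No: rather, the statement $\End_{\Ga}(M) = \Ga$ is immediate because $\Ga$ acts on $M$ on the left and the centralizer of this action in $\End_K(M)$ contains $\Ga$; conversely any $\psi$ commuting with the left $\Ga$-action, applied to $m = \varphi(m_0)$... the truly clean argument is: the map $\Ga \to \End_\Ga(M)$, $\varphi \mapsto (m \mapsto \varphi \cdot m)$, wait this isn't right either since left multiplication operators needn't be $\Ga$-linear. The correct and standard fact is simply that $\End_{\Ga}({}_\Ga M) = \Ga$ holds because... I would instead cite that the natural map $\Ga \to \End_{\End_\La(M)}(M)$ has a two-sided inverse $\Theta \mapsto \Theta(\text{—evaluated appropriately})$; more precisely for $\Theta \in \End_\Ga(M)$, since $\Theta$ commutes with every left-$\varphi$-multiplication, in particular with $\varphi = $ (any element of $\Ga$), write $\Theta = \Theta \circ \id$ and use that $\id \in \Ga$... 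Let me just say: the identification $\End_{\Ga}(M) \cong \Ga$, $\Theta \mapsto \Theta$ viewed as element of $\Ga$, is the standard "the endomorphism ring of a module over its own endomorphism ring is the original endomorphism ring," proved by noting both the evaluation-at-a-chosen-generating-family maps are mutually inverse; I would present this as a one-line verification.

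Having established $\End_{\Ga}({}_\Ga M) = \Ga$ canonically, the natural map $\Ga \to \Hom_{\End_\Ga(M)}(\Hom_\Ga(\Ga,M), M)$ in the criterion of Lemma \ref{cogen-k}(1) becomes $\Ga \to \Hom_\Ga(M,M)^{...}$ wait — $\Hom_\Ga(\Ga, M) = M$ as a left $\End_\Ga(M) = \Ga$-module, and then $\Hom_{\Ga}(M,M) = \End_\Ga(M) = \Ga$, and the composite natural map $\Ga \to \Ga$ is the identity, hence an isomorphism; and the $\Ext^i_{\End_\Ga(M)}(\Hom_\Ga(\Ga,M), M)$-vanishing conditions for $1 \le i \le k-1$ are vacuous since we only need $k=1$ for the definition of faithfully balanced. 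Therefore by Lemma \ref{cogen-k}(1) (the equivalence (1a) $\Leftrightarrow$ (1b)) with $k=1$, we get $\Ga \in \cogen^1({}_\Ga M)$, which by Definition \ref{fb} says ${}_\Ga M$ is faithfully balanced. The main obstacle — and it is a mild one — is pinning down cleanly the identification $\End_{\Ga}({}_\Ga M) \cong \Ga$ and checking the relevant "natural map" of Lemma \ref{cogen-k} really is the identity under this identification; once the bookkeeping of left/right actions is set straight (the paper's convention of writing $\Ga$-endomorphisms on the left makes this smooth), the rest is formal. Alternatively, and perhaps more transparently, one can bypass Lemma \ref{cogen-k} entirely and directly exhibit the sequence: take a projective presentation $P_1 \to P_0 \to \Ga \to 0$ of the regular module over $\End_\Ga(M) = \Ga$ — i.e., just $0 \to \Ga \xrightarrow{\;\sim\;} \Ga$ — apply $\Hom_\Ga(-, M)$... but this circularity suggests the Lemma \ref{cogen-k} route is the honest one, so I would stick with it.
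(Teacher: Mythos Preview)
The paper does not supply its own proof of this lemma; it is stated with references to \cite{AF} and \cite{ASo-Gor}. So I will simply evaluate your argument.

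Your strategy---reduce to the criterion of Lemma~\ref{cogen-k}(1) with $k=1$, namely that the natural map $\Ga \to \Hom_{\End_{\Ga}(M)}(\Hom_{\Ga}(\Ga,M),M)$ is an isomorphism---is correct and is exactly how one should proceed. But the execution contains a genuine error. In your displayed equation you write
\[
\Hom_{\End_{\Ga}(M)}(M,M) = \End_{\Ga}(M),
\]
which is wrong: the left-hand side is $\End_{\End_{\Ga}(M)}(M)$, one more layer of endomorphisms. You then repeatedly try to argue that $\End_{\Ga}(M) \cong \Ga$, calling it ``the standard `the endomorphism ring of a module over its own endomorphism ring is the original endomorphism ring'\,''. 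This is false in general: take $\La = K$ and $M = K^2$; then $\Ga = M_2(K)$, but ${}_{\Ga}M$ is simple, so $\End_{\Ga}(M) = K \neq \Ga$. What \emph{is} true, and what you actually need, is
\[
\End_{\End_{\Ga}(M)}(M) \;=\; \Ga.
\]

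The correct one-line argument is the Galois-connection property of centralizers. Write $C(-)$ for the centralizer in $\End_K(M)$, and let $\La'$ be the image of $\La$. Then $\Ga = C(\La')$, so $\End_{\Ga}(M) = C(\Ga) = C(C(\La'))$, and therefore
\[
\End_{\End_{\Ga}(M)}(M) \;=\; C(C(C(\La'))) \;=\; C(\La') \;=\; \Ga,
\]
since $C\circ C\circ C = C$ for any Galois connection. Under the identification $\Hom_{\Ga}(\Ga,M)\cong M$, the natural map of Lemma~\ref{cogen-k} becomes precisely this inclusion $\Ga \hookrightarrow \End_{\End_{\Ga}(M)}(M)$, which is thus an isomorphism. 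That finishes the proof. The error in your write-up is not cosmetic: you were trying to prove a false statement ($\End_{\Ga}(M)=\Ga$) rather than the true one ($\End_{\End_{\Ga}(M)}(M)=\Ga$), and this is why the bookkeeping kept collapsing on you.
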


In \cite{BS}, a faithfully balanced module is also known as a module of faithful dimension at least 2. The following lemma (the same as \cite[Proposition 2.2]{BS}), which characterizes modules of faithful dimension at least $k+1$, can be obtained as an immediate consequence of Lemma \ref{cogen-k}.   

\begin{lem}\label{lagencogen} 
The following are equivalent for every $1\leq k \leq \infty$. 
\begin{itemize}
\item[(1)] $\La \in \cogen^{k}(M)$. 
\item[(2)] The natural map $\La \to \End_{\Ga}(M)$ is an isomorphism and $\Ext^i_{\Ga}(M,M)=0$, $1\leq i \leq k-1$. 
\item[(3)] $\kdual \La \in \gen_k(M)$.
\end{itemize}
\end{lem}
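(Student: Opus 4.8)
The plan is to deduce Lemma~\ref{lagencogen} directly from Lemma~\ref{cogen-k}, exploiting the symmetry between the $\cogen$ and $\gen$ statements and the observation that the relevant $\End$ and $\Ext$ terms are computed on the bimodule $M$ itself. First I would apply Lemma~\ref{cogen-k}(1) with $N=\La$. Its condition (1b) says $\La\in\cogen^k(M)$ if and only if the natural map $\La\to\Hom_\Ga(\Hom_\La(\La,M),M)$ is an isomorphism and $\Ext^i_\Ga(\Hom_\La(\La,M),M)=0$ for $1\le i\le k-1$. Since $\Hom_\La(\La,M)\cong M$ as a left $\Ga$-module (this is the standard identification, compatible with the $\Ga$-action because endomorphisms act on the left), the first condition reads $\La\xrightarrow{\sim}\Hom_\Ga(M,M)=\End_\Ga(M)$, where one must check that under this identification the natural map of Lemma~\ref{cogen-k} becomes exactly the canonical ring map $\La\to\End_\Ga(M)$ sending $\lambda$ to left multiplication by $\lambda$; and the vanishing condition becomes $\Ext^i_\Ga(M,M)=0$ for $1\le i\le k-1$. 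This gives the equivalence (1)$\Leftrightarrow$(2).

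Next, for (1)$\Leftrightarrow$(3), I would run the dual half, Lemma~\ref{cogen-k}(2), but applied to $N=\kdual\La$ rather than directly comparing. Actually the cleaner route is: by Lemma~\ref{cogen-k}(2b), $\kdual\La\in\gen_k(M)$ if and only if the natural map $\Hom_\La(M,\kdual\La)\otimes_\Ga M\to\kdual\La$ is an isomorphism and $\Ext^i_\Ga(M,\kdual(M,\kdual\La))=0$ for $1\le i\le k-1$. Here I would use the standard tensor-hom/duality identifications $\Hom_\La(M,\kdual\La)=\Hom_\La(M,\Hom_K(\La,K))\cong\Hom_K(\La\otimes_\La M,K)=\kdual M$ as a right $\Ga$-module, so that $\kdual(M,\kdual\La)\cong M$ as a left $\Ga$-module, and the vanishing condition again collapses to $\Ext^i_\Ga(M,M)=0$ for $1\le i\le k-1$. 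Likewise the isomorphism condition $\kdual M\otimes_\Ga M\xrightarrow{\sim}\kdual\La$ dualizes to $\La\xrightarrow{\sim}\Hom_\Ga(M,M)$ (using that $\kdual$ is a perfect duality on finite-dimensional spaces and that $\Hom_\Ga(M,M)\cong\kdual(\kdual M\otimes_\Ga M)$ naturally). Hence (3)$\Leftrightarrow$(2) as well, closing the triangle.

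The main obstacle I expect is bookkeeping with the identifications rather than anything conceptual: one must verify that the natural maps appearing in Lemma~\ref{cogen-k} really correspond, under $\Hom_\La(\La,M)\cong M$ and $\kdual(M,\kdual\La)\cong M$, to the canonical ring homomorphism $\La\to\End_\Ga(M)$ and to the correct duality pairings — in particular that no left/right or $\op$ twist is introduced, which matters because the paper writes endomorphisms on the left. I would handle this by spelling out the action on elements once: the isomorphism $\Hom_\La(\La,M)\to M$, $\varphi\mapsto\varphi(1)$, is left $\Ga$-linear, and chasing $n\in\La$ through $\La\to\Hom_\Ga(\Hom_\La(\La,M),M)$ lands on the map $\varphi\mapsto\varphi(n)=\varphi(1)\cdot n$, i.e. right multiplication, which is precisely the element $n$ viewed in $\End_\Ga(M)$ via the canonical map. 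With that one checkpoint in place, everything else is a direct translation, and the statement follows from Lemma~\ref{cogen-k} applied to $N=\La$ and (its dual) to $N=\kdual\La$, together with the fact (Corollary above) that these categories behave well, although that is not even needed here.
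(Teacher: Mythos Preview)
Your proof is correct and follows essentially the same route as the paper: both (1)$\Leftrightarrow$(2) and (3)$\Leftrightarrow$(2) are read off directly from Lemma~\ref{cogen-k} with $N=\La$ and $N=\kdual\La$ respectively. The paper phrases the second equivalence slightly differently --- it invokes the right-module version of (1)$\Leftrightarrow$(2) and then passes through $\kdual$ --- but since Lemma~\ref{cogen-k}(2) is itself proved by exactly that dualization, the two arguments are the same in content.

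One small slip in your element chase: with $M$ a \emph{left} $\La$-module and $\varphi\colon\La\to M$ left $\La$-linear, you have $\varphi(n)=\varphi(n\cdot 1)=n\cdot\varphi(1)$, i.e.\ \emph{left} multiplication by $n$, not $\varphi(1)\cdot n$. This does not affect the conclusion --- it is indeed the canonical ring map $\La\to\End_\Ga(M)$ --- but the side on which $n$ acts matters for the bookkeeping you flagged as the main concern.
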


\begin{proof} The equivalence between (1) and (2) is a special case of Lemma \ref{cogen-k}. 
The equivalence to (3) follows again by seeing that the equivalence between (1) and (2) also works for right modules. Then pass with the duality from the right module statement for (1) to (3). 
\end{proof}

The following lemma plays a fundamental role in this paper.
\begin{lem} \label{duality}$($cf. \cite[Proposition 5.1]{X}$)$ 
Let $M$ be a faithfully balanced $\La$-module and $\Ga=\End_{\La}(M)$. Then the functors $(-,{_{\La}}M) \colon \Lamod \longleftrightarrow \Gamod  \colon (-,{_{\Ga}}M)$ restrict to a duality of categories 
\[\cogen^1({}_{\La}M) \longleftrightarrow \cogen^1({}_{\Ga}M).\] 
They restrict further to a duality 
\[ 
\cogen^k({}_{\La}M) \longleftrightarrow \cogen^1({}_{\Ga}M) \cap \bigcap_{i=1}^{k-1} \Ker \Ext^i_{\Gamma} (-,{}_{\Ga}M).
\]
Dually, the functors $\kdual({_{\La}}M,-) \colon \Lamod \longleftrightarrow \Gamod \colon \kdual ({_{\Ga}}M,-) $ restrict to a duality of categories 
\[\gen_1({}_{\La}M) \longleftrightarrow \gen_1({}_{\Ga}M).\] 
They restrict further to a duality 
\[ 
\gen_k({}_{\La}M) \longleftrightarrow \gen_1({}_{\Ga}M) \cap \bigcap_{i=1}^{k-1} \Ker \Ext^i_{\Gamma} ({}_{\Ga} M,-).
\]
\end{lem}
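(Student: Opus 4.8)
The plan is to deduce everything from Lemma \ref{cogen-k} and Lemma \ref{fullfaith} together with the crucial symmetry observation that, because $M$ is faithfully balanced (i.e. $\La \in \cogen^1({}_\La M)$), the algebra $\Ga = \End_\La(M)$ recovers $\La$ via $\La \xrightarrow{\cong} \End_\Ga({}_\Ga M)$; this is precisely the content of Lemma \ref{lagencogen}, and it makes the whole setup symmetric in $\La$ and $\Ga$. In particular ${}_\Ga M$ is again faithfully balanced (this is the cited lemma of Azumaya--Faith/Auslander--Solberg, but here it also follows from the isomorphism $\La \cong \End_\Ga(M)$), so any statement I prove about the functor $(-,{}_\La M)$ on $\cogen^1({}_\La M)$ applies verbatim with $\La$ and $\Ga$ interchanged. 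I would state this symmetry explicitly at the start of the proof so that the ``duality'' (as opposed to a one-sided equivalence) comes for free.

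First I would show that $(-,{}_\La M)$ sends $\cogen^1({}_\La M)$ into $\Gamod$ and, composed with $(-,{}_\Ga M)$, is naturally isomorphic to the identity on $\cogen^1({}_\La M)$. Given $X \in \cogen^1({}_\La M)$, Lemma \ref{cogen-k}(1) (case $k=1$) gives that the natural evaluation map $X \to ((X,M),M)$ is an isomorphism, i.e. $(-,{}_\Ga M)\circ(-,{}_\La M) \cong \id$ on $\cogen^1({}_\La M)$; that the natural transformation is indeed a natural isomorphism of functors follows from naturality of the evaluation map. Symmetrically, for $Y \in \cogen^1({}_\Ga M)$ we get $Y \xrightarrow{\cong} ((Y,M),M)$, so $(-,{}_\La M)\circ(-,{}_\Ga M)\cong \id$ on $\cogen^1({}_\Ga M)$. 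It remains to check that $(-,{}_\La M)$ actually lands inside $\cogen^1({}_\Ga M)$: for $X \in \cogen^1({}_\La M)$ pick an exact sequence $0 \to X \to M_0 \to M_1$ with $M_i \in \add(M)$ on which $(-,M)$ is exact; applying $(-,{}_\La M)$ gives a projective presentation $(M_1,M)\to(M_0,M)\to(X,M)\to 0$ of $(X,M)$ over $\Ga$ with $(M_i,M)\in\add(\Ga)$, and then applying $(-,{}_\Ga M)$ and using that $((M_i,M),M)\cong M_i$ (Lemma \ref{projectivization}(1)) together with the isomorphism $X\cong((X,M),M)$ shows $(X,M)$ sits in the required copresentation; that $(-,{}_\Ga M)$ is exact on it follows because applying it returns the original exact sequence. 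Full faithfulness of the (contravariant) functor on these subcategories is then immediate from Lemma \ref{fullfaith}(1): for $X, Y \in \cogen^1({}_\La M)$ the map $\Hom_\La(Y,X)\to\Hom_\Ga((X,M),(Y,M))$ is an isomorphism (take the $X$ in that lemma to be our $X \in \cogen^1$, with arbitrary $Y$). Combined with essential surjectivity (which follows from $(-,{}_\La M)\circ(-,{}_\Ga M)\cong\id$ on the target), this gives the duality $\cogen^1({}_\La M)\longleftrightarrow\cogen^1({}_\Ga M)$.

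Next I would refine to the $\cogen^k$ statement. The claim is that under the duality just established, $\cogen^k({}_\La M)$ corresponds to $\cogen^1({}_\Ga M)\cap\bigcap_{i=1}^{k-1}\Ker\Ext^i_\Ga(-,{}_\Ga M)$. For the forward inclusion: if $X\in\cogen^k({}_\La M)\subseteq\cogen^1({}_\La M)$, then by Lemma \ref{cogen-k}(1) we have $X\cong((X,M),M)$ and $\Ext^i_\Ga((X,M),M)=0$ for $1\le i\le k-1$, i.e. $(X,M)\in\cogen^1({}_\Ga M)\cap\bigcap_{i=1}^{k-1}\Ker\Ext^i_\Ga(-,M)$. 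Conversely, if $Y\in\cogen^1({}_\Ga M)$ with $\Ext^i_\Ga(Y,M)=0$ for $1\le i\le k-1$, apply Lemma \ref{cogen-k}(1) over $\Ga$ — using the symmetry $\La\cong\End_\Ga(M)$ so that $\Hom_\Ga(\Hom_\Ga(Y,M),M)$ is computed as a $\La$-module — to conclude $Y\in\cogen^k({}_\Ga M)$, hence $(Y,M)\in\cogen^k({}_\La M)$ and $((Y,M),M)\cong Y$; this shows the image of $\cogen^k({}_\La M)$ under $(-,{}_\La M)$ is exactly the stated subcategory of $\Gamod$. One should also remark that these subcategories are closed under summands and sums (the Corollary after Lemma \ref{cogen-k}), so ``duality of categories'' is meaningful. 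The dual statements for $\gen$ are obtained formally: either repeat the argument with $\gen_k$, $\kdual(M,-)$ and Lemma \ref{cogen-k}(2)/Lemma \ref{fullfaith}(1), or apply the already-proven $\cogen$-version to the right module ${}_{\La^{\op}}(\kdual M)$ over $\La^{\op}$, using $\End_{\La^{\op}}(\kdual M)\cong\Ga^{\op}$ and the fact that $N\in\gen_k({}_\La M)$ iff $\kdual N\in\cogen^k({}_{\La^{\op}}\kdual M)$, then dualize back; I would take whichever route is shorter and note the other is symmetric.

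The main obstacle is bookkeeping rather than any deep difficulty: one must be careful that when ``passing to $\Ga$'' and invoking Lemma \ref{cogen-k}(1) or \ref{fullfaith}(1) with the roles of $\La$ and $\Ga$ swapped, the double-centralizer identification $\La\cong\End_\Ga({}_\Ga M)$ is genuinely available — it is, exactly because $M$ is assumed faithfully balanced — and that the ``natural maps'' in the two lemmas are compatible (i.e. the evaluation-type transformations used on the $\La$-side and the $\Ga$-side are mutually inverse up to these identifications). Once the symmetric setup is in place, every assertion reduces to a direct citation of Lemma \ref{cogen-k}, Lemma \ref{fullfaith}(1), Lemma \ref{projectivization}(1), and the Corollary on closure under sums/summands, with no new computation required.
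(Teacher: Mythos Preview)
Your overall strategy matches the paper's: reduce everything to Lemma~\ref{cogen-k} and Lemma~\ref{fullfaith}, using the symmetry $\La\cong\End_\Ga(M)$ coming from faithful balance. However, two specific steps in your argument are broken and the paper handles them differently.

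\textbf{Showing $(X,M)\in\cogen^1({}_\Ga M)$.} You start from a copresentation $0\to X\to M_0\to M_1$ with $M_i\in\add({}_\La M)$, apply $(-,M)$, and obtain $(M_1,M)\to(M_0,M)\to(X,M)\to 0$. But this is a \emph{projective} presentation of $(X,M)$ with $(M_i,M)\in\add(\Ga)$; it is not a copresentation by modules in $\add({}_\Ga M)$, which is what membership in $\cogen^1({}_\Ga M)$ demands. (In general $\add(\Ga)\neq\add({}_\Ga M)$.) Your sentence ``shows $(X,M)$ sits in the required copresentation'' does not follow. The paper instead uses the adjunction triangle identity from Lemma~\ref{projectivization}(1): the compositions $(N,M)\xrightarrow{\al_{(N,M)}}(((N,M),M),M)\xrightarrow{(\al_N,M)}(N,M)$ are the identity, so whenever $\al_N$ is an isomorphism (i.e.\ $N\in\cogen^1({}_\La M)$), so is $\al_{(N,M)}$, and Lemma~\ref{cogen-k}(1) gives $(N,M)\in\cogen^1({}_\Ga M)$ immediately. (Alternatively, if you want a direct construction, start from a projective presentation $P_1\to P_0\to X\to 0$ over $\La$; then $(-,M)$ yields $0\to(X,M)\to(P_0,M)\to(P_1,M)$ with $(P_i,M)\in\add({}_\Ga M)$, and applying $(-,{}_\Ga M)$ returns the original exact presentation because $\La\in\cogen^1({}_\La M)$.)

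\textbf{Converse of the $\cogen^k$ refinement.} You take $Y\in\cogen^1({}_\Ga M)$ with $\Ext^i_\Ga(Y,M)=0$ for $1\le i\le k-1$ and try to conclude $Y\in\cogen^k({}_\Ga M)$ by Lemma~\ref{cogen-k}(1) ``over $\Ga$''. But Lemma~\ref{cogen-k}(1) over $\Ga$ requires $\Ext^i_\La\big((Y,M),M\big)=0$ (Ext over $\La=\End_\Ga(M)$), not $\Ext^i_\Ga(Y,M)=0$; these are different conditions, so the implication fails. Even if it held, ``$Y\in\cogen^k({}_\Ga M)$ hence $(Y,M)\in\cogen^k({}_\La M)$'' is not what the lemma says either. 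The correct (and shorter) argument, which is what the paper means by ``the dualities follow from Lemma~\ref{cogen-k}'', is: set $X=(Y,M)\in\cogen^1({}_\La M)$; then $(X,M)\cong Y$ by the $\cogen^1$-duality already established, so $\Ext^i_\Ga\big((X,M),M\big)\cong\Ext^i_\Ga(Y,M)=0$ for $1\le i\le k-1$, and Lemma~\ref{cogen-k}(1) over $\La$ gives $X\in\cogen^k({}_\La M)$ directly.
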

\begin{proof} 
By Lemma \ref{fullfaith} the functor $(-, {}_{\La}M)$ is fully faithful on $\cogen^1({}_{\La}M)$. Let ${}_{\La\text{-}\Ga} M$ be a $\La$-$\Ga$-bimodule and ${}_{\La}N$ a left $\La$-module, and ${}_{\Ga}N'$ a left $\Ga $-module. We denote by 
$\al_N\colon N \to ((N,M),M)$ and $\al_{N'}\colon N' \to ((N',M),M)$ the two natural maps. Then the compositions 
\[ 
\xymatrix@R=0.3cm{
(N,M) \ar[r]^-{\al_{(N,M)}} & (((N,M),M),M) \ar[r]^-{(\al_N,M)} & (N,M)\\
(N,M) \ar[r]^-{\al_{(N',M)}} & (((N',M),M),M) \ar[r]^-{(\al_{N'},M)} & (N',M)
}
\]
are both identities, since by Lemma \ref{projectivization} the functors $(-, {_{\La}}M)$ and $(-,{_{\Ga}}M)$ form an adjoint pair. Therefore, if $\al_N$ (resp. $\al_{N'}$) is an isomorphism, then so is $\al_{(N,M)}$ (resp. $\al_{(N',M)}$).
Since $M$ is faithfully balanced the dualities follow from Lemma \ref{cogen-k}. 
\end{proof}

\begin{rem} We have already seen in Lemma \ref{cogen-k} that $\cogen^1(M)$ consists of the modules $N$ such that $\al_N$ is an isomorphism. It is also straightforward to see that $\cogen(M)$ consists of the modules $N$ with $\al_N$ a monomorphism. \\
If we now consider a faithfully balanced $\La$-module $M$,  $\Ga = \End_{\La}(M)$ and $\Bild (-,M)$ the essential image of the functor $(-,M)$, then we have 
\[ \cogen^1 ({}_{\Ga}M) \subseteq \Bild (-,M) \subseteq  \cogen ({}_{\Ga}M)  .\]
Let $\Bild (-,M)_{\oplus}$ be the full subcategory of $\Ga$-$\modu$ whose objects are summands of modules in $\Bild (-,M)$. Then it is easy to see from the previous proof that $\Bild (-,M)_{\oplus}$ consists of those modules $N$ such that $\al_N$ is a split monomorphism. \\
If ${}_{\La}M$ is a cogenerator, then $\Bild (-,M) = \cogen^1({}_{\Ga} M)$ and in particular $\Bild (-,M)$ is closed under summands in this case. 
\end{rem}

\begin{cor} Let $k \geq 1$. Let $M\in \Lamod$ be faithfully balanced and assume $\id {}_{\Gamma}M \leq k-1$, then we have 
 \[ 
\cogen^k(M) = \cogen^{k+1}(M) = \cdots = \cogen^{\infty} (M).
\]
\end{cor}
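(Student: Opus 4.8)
The plan is to apply the internal characterization of $\cogen^k(M)$ from Lemma \ref{cogen-k}(1) and reduce everything to a vanishing statement for $\Ext$ groups over $\Ga$. Recall that, by Lemma \ref{cogen-k}(1), a module $N$ lies in $\cogen^k(M)$ precisely when the natural map $N \to ((N,M),M)$ is an isomorphism and $\Ext^i_{\Ga}((N,M),M) = 0$ for $1 \le i \le k-1$. Since $M$ is faithfully balanced, Lemma \ref{duality} (or already Lemma \ref{fullfaith}) tells us that for $N \in \cogen^1(M)$ the natural map $N \to ((N,M),M)$ is always an isomorphism; so the only thing that can fail as $k$ grows is the higher $\Ext$ vanishing. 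Thus $\cogen^k(M) \subseteq \cogen^{k-1}(M) \subseteq \cdots \subseteq \cogen^1(M)$ for all $k$, and it suffices to prove the reverse inclusion $\cogen^k(M) \subseteq \cogen^{k+1}(M)$ under the hypothesis $\id\, {}_{\Ga}M \le k-1$.

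So let $N \in \cogen^k(M)$. First I would note that $\cogen^k(M) \subseteq \cogen^1(M)$, so the natural map $\al_N \colon N \to ((N,M),M)$ is an isomorphism. It then remains to check $\Ext^i_{\Ga}((N,M),M) = 0$ for $1 \le i \le k$; we already know this for $1 \le i \le k-1$ from $N \in \cogen^k(M)$, so the only new case is $i = k$. But the hypothesis $\id\, {}_{\Ga}M \le k-1$ forces $\Ext^k_{\Ga}(X, M) = 0$ for \emph{every} $\Ga$-module $X$, in particular for $X = (N,M)$. Hence $\Ext^i_{\Ga}((N,M),M) = 0$ for all $1 \le i \le k$, and Lemma \ref{cogen-k}(1) gives $N \in \cogen^{k+1}(M)$.

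This shows $\cogen^k(M) = \cogen^{k+1}(M)$. The same argument applies verbatim with $k$ replaced by any $k' \ge k$: since $\id\, {}_{\Ga}M \le k-1 \le k'-1$, we get $\cogen^{k'}(M) = \cogen^{k'+1}(M)$ for all $k' \ge k$, so by induction $\cogen^k(M) = \cogen^{k+1}(M) = \cdots$. Finally, intersecting over all finite stages and using the identity $\cogen^\infty(M) = \bigcap_{1 \le j < \infty} \cogen^j(M)$ from the Corollary following Lemma \ref{cogen-k}, the stabilized value equals $\cogen^\infty(M)$, which completes the proof.

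I do not expect a genuine obstacle here; the statement is essentially a bookkeeping consequence of Lemma \ref{cogen-k}. The one point requiring a little care is making explicit that the isomorphism $\al_N$ persists for all $N$ in any $\cogen^k(M)$ with $k \ge 1$ (so that the only obstruction is the $\Ext$ vanishing) — this is immediate from the inclusion $\cogen^k(M) \subseteq \cogen^1(M)$ together with Lemma \ref{cogen-k}(1), but it is worth stating so that the reduction to injective dimension is transparent. One should also double-check the degenerate reading when $k=1$: there $\id\, {}_{\Ga}M \le 0$ means ${}_{\Ga}M$ is injective, and the claim becomes $\cogen^1(M) = \cogen^\infty(M)$, which the argument recovers.
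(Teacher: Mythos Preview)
Your proof is correct and follows essentially the same route as the paper: the corollary is placed immediately after Lemma~\ref{duality}, and the intended argument is that under $\id {}_{\Ga}M \leq k-1$ the right-hand side $\cogen^1({}_{\Ga}M)\cap \bigcap_{i=1}^{t-1}\Ker\Ext^i_{\Ga}(-,{}_{\Ga}M)$ stabilizes for $t\geq k$, hence so does $\cogen^t({}_{\La}M)$. You short-circuit this by going straight to the characterization in Lemma~\ref{cogen-k}(1), which is the same $\Ext$-vanishing statement without the duality wrapper; in fact your argument shows the faithful balancedness hypothesis is not actually needed for the conclusion.
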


\begin{cor}\label{fbpdid}
Let $k\geq 1$ and $M$ be a faithfully balanced $\La$-module and $\Ext^i_{\La} (M,M)=0$ for $1\leq i \leq k-1$. Then we have
\begin{itemize}
\item[(1)] The functors $(-,{_{\La}}M), (-,{_{\Ga}}M)$ restrict to a duality 
\[  \{ M'\in \add({}_{\La}M) \mid \pd M' \leq k\}  \longleftrightarrow \{ P \in \add (\Ga ) \mid  \Omega_M^{-(k+1)} P=0\}.\]
\item[(2)] The functors $\kdual({_{\La}}M,-) , \kdual ({_{\Ga}}M,-) $ restrict to a duality 
\[  \{ M'\in \add({}_{\La}M) \mid \id M' \leq k\}  \longleftrightarrow \{ J \in \add (\kdual \Ga ) \mid  \Omega_M^{(k+1)} J=0\}.\]
\end{itemize}
\end{cor}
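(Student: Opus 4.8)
\textbf{Plan for the proof of Corollary \ref{fbpdid}.}
The starting point is Lemma \ref{duality}: since $M$ is faithfully balanced and $\Ext^i_\La(M,M)=0$ for $1\le i\le k-1$, the algebra $\La$ lies in $\cogen^k({}_\La M)$ by Lemma \ref{lagencogen}, and the duality $(-,{}_\La M)\colon \cogen^k({}_\La M)\longleftrightarrow \cogen^1({}_\Ga M)\cap\bigcap_{i=1}^{k-1}\Ker\Ext^i_\Ga(-,{}_\Ga M)$ holds. The idea is simply to cut this duality down to the stated additive subcategories. For part (1), I would first observe that $\add({}_\La M)\subseteq\cogen^k({}_\La M)$ (a module in $\add(M)$ clearly admits the required resolution, e.g.\ by itself, and $\Hom_\La(-,M)$ is exact on it since every term is in $\add(M)$), and dually $\add(\Ga)=\mcP(\Ga)$ maps to $\add({}_\La M)$ under $(-,{}_\Ga M)$ by Lemma \ref{projectivization}(1). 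So the duality of Lemma \ref{duality} already restricts to $\add({}_\La M)\longleftrightarrow\add(\Ga)$. It then remains to check that under $(-,{}_\La M)$ the condition ``$\pd_\La M'\le k$'' corresponds exactly to ``$\Omega_M^{-(k+1)}P=0$'' for $P=(M',M)$.

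The translation of the homological condition is the technical heart. Given $M'\in\add({}_\La M)$, take a minimal injective copresentation, or rather the minimal left $\add(M)$-approximation sequence $0\to M'\to M_0\to M_1\to\cdots$; since $M'\in\cogen^\infty(M)$ automatically (being in $\add(M)$, every such minimal approximation resolution stays exact under $(-,M)$), applying $(-,{}_\La M)$ yields the minimal projective resolution of $P=(M',M)$ over $\Ga$ by Lemma \ref{cogen-k}/\ref{projectivization}. Now $\pd_\La M'\le k$ should be reinterpreted: I would use that $M'\in\add(M)$ has finite projective dimension over $\La$, and relate a $\La$-projective resolution of $M'$ to the syzygy data. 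Actually the cleaner route is: the syzygies $\Omega_M^{-i}P$ of $P$ over $\Ga$ are, by Lemma \ref{cogen-k} applied inductively, the images under $(-,{}_\La M)$ of the cosyzygies of $M'$ in the $\add(M)$-resolution, i.e.\ of $\coKer(M_{i-1}\to M_i)$. The condition $\Omega_M^{-(k+1)}P=0$ says this resolution terminates at step $k$, i.e.\ $0\to M'\to M_0\to\cdots\to M_k\to 0$ is exact with $M_i\in\add(M)$ and stays exact under $(-,M)$ — equivalently $M'\in\cogen^k(M)$ \emph{and} the $k$-th cosyzygy vanishes. One must then match ``the $\add(M)$-coresolution of $M'$ has length $\le k$'' with ``$\pd_\La M'\le k$''. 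For this I would invoke Lemma \ref{fullfaith}(2): for $k\ge1$, $X=M'\in\cogen^{k+1}(M)$ iff $\Ext^i_\La(Y,M')\cong\Ext^i_\Ga((M',M),(Y,M))$ for all $Y$ in the relevant Ext-kernels; feeding in the hypothesis $\Ext^i_\La(M,M)=0$ (so that $\La$ itself, and more relevantly $M$, lies in those kernels) lets one read off that $\pd_\La M'\le k$ is governed exactly by the vanishing of $\Ext^{k+1}_\La(-,M')$ on $\add(M)$, which in turn corresponds to $\Omega_M^{-(k+1)}(M',M)=0$.

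For part (2), I would argue entirely by duality: apply part (1) to the faithfully balanced right $\La^{\op}$-module $\kdual M$, using $\End_{\La^{\op}}(\kdual M)\cong\Ga^{\op}$ and the standard dictionary $\pd_{\La^{\op}}\kdual M'=\id_\La M'$, $\add(\La^{\op})\leftrightarrow\add(\kdual\Ga)$ under $\kdual$, together with the identification of the functor $(-,{}_{\La^{\op}}\kdual M)$ with $\kdual({}_\La M,-)$ (this is exactly the mechanism used in the proof of Lemma \ref{cogen-k}(2) and Lemma \ref{fullfaith}). The minimal projective resolution over $\Ga^{\op}$ of $(\kdual M',\kdual M)\cong\kdual(M,M')$ dualizes to the minimal injective coresolution over $\Ga$ of $\kdual(M,M')$, and the syzygy $\Omega_M^{-(k+1)}$ over $\Ga^{\op}$ becomes the cosyzygy, which is what $\Omega_M^{(k+1)}$ denotes here; so part (1) transported through $\kdual$ gives precisely part (2).

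\textbf{Main obstacle.} The genuine work is bookkeeping in the second paragraph: carefully identifying, via Lemma \ref{cogen-k} and Lemma \ref{fullfaith}(2), the $i$-th syzygy $\Omega_M^{-i}(M',M)$ over $\Ga$ with $(-,M)$ applied to the $i$-th cosyzygy of the minimal $\add(M)$-coresolution of $M'$, and then showing that this coresolution has length $\le k$ if and only if $\pd_\La M'\le k$ — the latter equivalence is where the hypothesis $\Ext^i_\La(M,M)=0$ for $1\le i\le k-1$ is actually consumed, and one has to make sure the indices line up (length $k$ coresolution versus $\Omega_M^{-(k+1)}=0$, and projective dimension $k$ versus $\Ext^{k+1}$ vanishing). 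Everything else is a formal restriction of the dualities already established.
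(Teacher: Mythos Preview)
Your overall plan --- start from the duality of Lemma \ref{duality} and restrict to the indicated additive subcategories --- is exactly what the paper does (its proof is the single sentence ``It is straightforward to check that the duality from Lemma \ref{duality} restricts to these equivalences''). However, your execution in the second paragraph rests on a wrong identification and collapses.

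The contravariant duality $(-,{}_\La M)\colon\cogen^1({}_\La M)\leftrightarrow\cogen^1({}_\Ga M)$ sends $M\mapsto(M,M)=\Ga$ and $\La\mapsto(\La,M)={}_\Ga M$. So it exchanges $\add({}_\La M)\leftrightarrow\add(\Ga)$ \emph{and} $\add(\La)\leftrightarrow\add({}_\Ga M)$. Consequently an $\add({}_\Ga M)$-coresolution of $P=(M',M)$ corresponds, after applying $(-,{}_\Ga M)$, to a \emph{projective} (i.e.\ $\add(\La)$-) resolution of $M'$ --- not to an $\add({}_\La M)$-coresolution of $M'$. Your sentence ``the syzygies $\Omega_M^{-i}P$ \dots\ are the images under $(-,{}_\La M)$ of the cosyzygies of $M'$ in the $\add(M)$-resolution'' has the two sides swapped. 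Since $M'\in\add({}_\La M)$, the minimal $\add({}_\La M)$-coresolution of $M'$ is $0\to M'\xrightarrow{\id}M'\to 0$, so every cosyzygy you compute is zero; your argument would then yield $\Omega_M^{-i}P=0$ for all $i\ge1$ and hence ``$\pd_\La M'\le k$ for every $M'\in\add(M)$'', which is false in general. The matching you flag as the ``main obstacle'' --- $\add(M)$-coresolution of length $\le k$ versus $\pd_\La M'\le k$ --- is not a real obstacle but a symptom of having taken the wrong resolution.

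The correct bookkeeping: take a minimal projective resolution $\cdots\to Q_1\to Q_0\to M'\to 0$ and apply $(-,{}_\La M)$ to obtain $0\to P\to(Q_0,M)\to(Q_1,M)\to\cdots$ with $(Q_i,M)\in\add({}_\Ga M)$. The hypothesis $\Ext^i_\La(M,M)=0$ for $1\le i\le k-1$ is precisely what makes this complex exact through degree $k-1$, and under the duality the projective cover $Q_j\to\Omega^j M'$ becomes the minimal left $\add({}_\Ga M)$-approximation of $(\Omega^j M',M)$. Now $\pd_\La M'\le k$, i.e.\ $\Omega^k_\La M'\in\add(\La)$, translates to $\Omega_M^{-(k+1)}P=0$, and conversely an $\add({}_\Ga M)$-coresolution of $P$ of length $\le k$ passes under $(-,{}_\Ga M)$ to a projective resolution of $M'$ of length $\le k$. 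Your plan for part (2) via the $K$-dual is fine once (1) is fixed.
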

\begin{proof}
It is straightforward to check that the duality from Lemma \ref{duality} restricts to these equivalences. \end{proof}
We also recall the following result of Wakamatsu. 
\begin{thm} Let $M$ be a faithfully balanced $\La$-module and $\Ga=\End_{\La}(M)$. Assume $M$ is self-orthogonal both as left $\La$-module and right $\Ga$-module $($i.e., $\Ext^{\! >0}_{\La} ({}_{\La}M,{}_{\La}M)=0=\Ext^{\! >0}_{\Ga} ({}_{\Ga}M,{}_{\Ga}M)$ $)$. Then we have the following 
\begin{itemize} 
\item[(1)] If $\id{}_{\La}M <\infty $ and $\id {}_{\Ga}M <\infty$ $($or resp.  $\pd{}_{\La}M , \pd {}_{\Ga}M <\infty$ $)$, then they are equal. 
\item[(2)] If $\id{}_{\La}M , \id {}_{\Ga}M <\infty$ $($or resp.  $\pd{}_{\La}M , \pd {}_{\Ga}M <\infty$ $)$, then we have $\lvert \Ga \rvert =\lvert {}_{\La}M\rvert =\lvert {}_{\Ga}M\rvert =\lvert \La\rvert $ and $M$ is cotilting $($or resp. tilting$)$. 
\end{itemize}
\end{thm}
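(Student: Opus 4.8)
The plan is to set $n = \id {}_\La M$ and $m = \id {}_\Ga M$ (finite by hypothesis), and to play off the two dualities supplied by Lemma \ref{fbpdid}(2) against each other. Since $M$ is self-orthogonal as a left $\La$-module, every summand of $\add({}_\La M)$ has injective dimension $\le n$; in particular the full module ${}_\La M$ satisfies $\id {}_\La M \le n$, so Lemma \ref{fbpdid}(2) gives a duality
\[
\kdual({}_\La M,-),\ \kdual({}_\Ga M,-)\colon \{M'\in\add({}_\La M)\mid \id M'\le n\} \longleftrightarrow \{J\in\add(\kdual\Ga)\mid \Omega_M^{n+1}J=0\}.
\]
The left-hand side is all of $\add({}_\La M)$, so the right-hand side is all of $\add(\kdual\Ga)$, i.e.\ $\Omega_M^{n+1}(\kdual\Ga)=0$. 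Unwinding what $\Omega_M$ means on the injective side (it is the cosyzygy taken inside the category $\gen_1({}_\Ga M)$, transported via the duality of Lemma \ref{duality}), $\Omega_M^{n+1}(\kdual\Ga)=0$ translates into the statement that every indecomposable injective $\Ga$-module has a coresolution of length $\le n+1$ by modules in $\add({}_\Ga M)$ on which the relevant Hom is exact; combined with self-orthogonality of ${}_\Ga M$ this forces $\id {}_\Ga M \le n$, hence $m \le n$. By the symmetric argument — applying Lemma \ref{fbpdid}(2) now with the roles of $\La$ and $\Ga$ interchanged, which is legitimate because $(\Ga,{}_\Ga M)$ is again faithfully balanced and self-orthogonal with $\End_\Ga(M)\cong\La$ — one gets $n \le m$. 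This proves (1) for injective dimensions; the projective-dimension version is completely parallel, using Lemma \ref{fbpdid}(1) and the fact that self-orthogonality makes $\add({}_\La M)$ consist of modules of projective dimension $\le \pd{}_\La M$, and transposes verbatim.

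For part (2), write $d$ for the common value of $\id{}_\La M = \id{}_\Ga M$ from (1). The equalities of numbers of indecomposable summands come again from the dualities: Lemma \ref{projectivization}(2) already gives $|{}_\La M| = |\kdual\Ga| = |\Ga|$ (projectivization is a duality $\add({}_\La M)\leftrightarrow\add(\kdual\Ga)$), and symmetrically Lemma \ref{projectivization}(1) applied to $(\Ga,{}_\Ga M)$ gives $|{}_\Ga M| = |\La|$; since $\End_\Ga(M)\cong\La$ the two chains of equalities close up to $|\La| = |{}_\La M| = |{}_\Ga M| = |\Ga|$. It remains to see that ${}_\La M$ is cotilting. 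Self-orthogonality is half of the definition, so what is needed is an $\add({}_\La M)$-coresolution of $\kdual\La$ of finite length (equivalently, finite injective dimension of ${}_\La M$ together with the right cogenerating property). Finiteness of length is exactly $\Omega_M^{d+1}$-vanishing, which we extract from $\La\in\cogen^1({}_\La M)$ (faithful balancedness), i.e.\ $\kdual\La \in \gen_1({}_\La M)$ by Lemma \ref{lagencogen}, pushed through Lemma \ref{duality} to $\cogen$-side statements over $\Ga$ and back; the self-orthogonality guarantees the Hom-exactness needed so that the minimal coresolution has length $\le d$ and indeed produces a genuine cotilting module. The projective (tilting) version is the formal dual, using $\La\in\gen_1({}_\La M)$ paired with $\pd{}_\La M<\infty$.

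The main obstacle I expect is the bookkeeping in the middle of part (1): translating "$\Omega_M^{n+1}(\kdual\Ga)=0$ on the $\Ga$-side" into "$\id{}_\Ga M \le n$" is not purely formal, because $\Omega_M$ is a relative (co)syzygy defined with respect to $\add({}_\Ga M)$-approximations, not the absolute injective cosyzygy, and one must use self-orthogonality of ${}_\Ga M$ to upgrade a relative coresolution of the injectives into an honest bound on $\id{}_\Ga M$. Concretely, one takes a minimal $\add({}_\Ga M)$-coresolution of $\kdual\Ga$; Hom$(-, {}_\Ga M)$-exactness (from faithful balancedness of ${}_\Ga M$ over $\End_\Ga(M)\cong\La$) plus $\Ext^{>0}_\Ga({}_\Ga M,{}_\Ga M)=0$ lets one splice this with injective coresolutions of the terms and diagram-chase to conclude the cosyzygy $\Omega_M^{n+1}$ vanishing is equivalent to $\id_\Ga M \le n$. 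Once this dictionary is nailed down the rest is the symmetry argument and is routine; the numerical equalities in (2) and the (co)tilting conclusion are then immediate from the dualities already in hand.
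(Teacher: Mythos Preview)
For part (2) your approach is essentially the paper's: the paper just notes that Corollary~\ref{fbpdid} (applied with the roles of $\La$ and $\Ga$ swapped, using $\id{}_{\Ga}M<\infty$ and self-orthogonality) yields a finite $k$ with $\Omega_M^{k}\kdual\La=0$; together with $\id{}_{\La}M<\infty$ and self-orthogonality this is exactly the cotilting condition, and the numerical equality $|{}_{\La}M|=|\La|$ is then the standard count for cotilting modules. Your write-up is more circuitous but reaches the same conclusion by the same mechanism.

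For part (1), however, the paper does \emph{not} prove anything --- it simply cites Wakamatsu's original paper. Your attempt to derive it from Corollary~\ref{fbpdid} has a genuine gap, and it sits exactly where you flag the ``main obstacle''. First a direction error: $\Omega_M$ is the kernel of a minimal \emph{right} $\add(M)$-approximation, so $\Omega_M^{n+1}\kdual\Ga=0$ gives $\kdual\Ga$ a finite $\add({}_{\Ga}M)$-\emph{resolution}, not a coresolution. More importantly, such a resolution does not bound $\id{}_{\Ga}M$: it tells you the $\add({}_{\Ga}M)$-resolution dimension of the injectives, which is a statement about how $\kdual\Ga$ sits inside $\mathsf{Thick}({}_{\Ga}M)$, not about how long an injective coresolution of ${}_{\Ga}M$ must be. The ``splice with injective coresolutions and diagram-chase'' you propose does not produce the implication $\id{}_{\Ga}M\le n$; self-orthogonality lets you dimension-shift $\Ext^{*}_{\Ga}(-,{}_{\Ga}M)$ along the resolution, but the only vanishing you can extract is $\Ext^{>n}_{\Ga}(\kdual\Ga,{}_{\Ga}M)=0$, and that does not control $\Ext^{>n}_{\Ga}(S,{}_{\Ga}M)$ for an arbitrary simple $S$. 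What your argument does correctly produce, by symmetry, is $\Omega_{{}_{\La}M}^{m+1}\kdual\La=0$ --- precisely the input the paper uses for (2) --- but neither vanishing statement feeds back to bound the other injective dimension. Closing this loop is the actual content of Wakamatsu's theorem and requires a separate argument (e.g.\ showing that for a cotilting module the injective dimension equals the minimal $\add$-resolution length of $\kdual\La$, in the spirit of Miyashita); it is not a bookkeeping matter.
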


\begin{proof} 
(1) is the main result in \cite{Wa}. If  $\id{}_{\La}M , \id {}_{\Ga}M <\infty$, then it follows from the previous corollary that the is a $k$ such that $\Omega^k_M\kdual \La=0$, this implies that $M$ is cotilting and in particular $\lvert {}_{\La} M \rvert = \lvert \La \rvert$. 
\end{proof}

\begin{exa} Assume $\La $ is a self-injective algebra. Then a finite-dimensional $\La$-module is a faithfully balanced if and only if it is a cogenerator. In particular, any faithfully balanced module has at least $\lvert \La \rvert $ summands. 
\end{exa}

\section{Dualizing summands and the Auslander-Solberg assignment}

Auslander and Solberg introduced (in \cite[section 2]{ASoIII}) the following notion.
\begin{dfn} Let $M, L\in \Lamod$ and assume $M$ is a summand of $L$. We say $M$ is a \emph{dualizing summand} of $L$ if $L \in \cogen^1 (M)$. For $k\geq 0$, we say $M$ is a $k$-dualizing summand if $L \in \cogen^k(M)$. Thus a dualizing summand of $L$ is the same as a $1$-dualizing summand of $L$.
\end{dfn} 

By using the duality from Lemma \ref{duality} it is easy to find modules having a given faithfully balanced module as a dualizing summand. 

\begin{cor}  \label{gendualsum} Let $M$ be a faithfully balanced $\La$-module and $\Gamma =\End(M)$. Then the assignments $G\mapsto (G, M), L \mapsto (L,M)$ give inverse bijections between 
\begin{itemize}
\item[(1)] isomorphism classes of ${}_{\La}G\in \cogen^1(M)$ with $\La \in \add (G)$, and 
\item[(2)] isomorphism classes of modules $L\in \Gamod$ having ${}_{\Ga}M$ as a dualizing summand. 
\end{itemize}
\end{cor}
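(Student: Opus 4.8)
The statement asserts a bijection between (1) iso-classes of modules ${}_\La G \in \cogen^1(M)$ containing $\La$ as a summand, and (2) iso-classes of $\Ga$-modules $L$ having ${}_\Ga M$ as a dualizing summand, via $G \mapsto (G,M)$ and $L \mapsto (L,M)$. The strategy is to deduce everything from the duality $(-,{}_\La M) \colon \cogen^1({}_\La M) \longleftrightarrow \cogen^1({}_\Ga M)$ of Lemma \ref{duality}, by checking that the two defining conditions in (1) and (2) correspond to each other under this duality, and that the composites of the two assignments are the identity.

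First I would record that the two maps are well-defined. If ${}_\La G \in \cogen^1(M)$, then by Lemma \ref{duality} the module $(G,M)$ lies in $\cogen^1({}_\Ga M)$, and applying $(-,M)$ again recovers $G$ (up to natural isomorphism), by the duality. The condition $\La \in \add(G)$ translates under the duality $\add({}_\La M) \leftrightarrow \add(\Ga)$ from Lemma \ref{projectivization}(1): since $\La = (M,M) = ({}_\La M, {}_\La M)$ as $\Ga$-modules — wait, more carefully, $(-,M)$ sends $\La$ to $\Hom_\La(\La,M) \cong {}_\Ga M$, so $\La \in \add(G)$ forces ${}_\Ga M \in \add((G,M))$, i.e. ${}_\Ga M$ is a summand of $L := (G,M)$. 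Moreover $L \in \cogen^1({}_\Ga M)$ means precisely $L \in \cogen^1(M)$ over $\Ga$, which together with ${}_\Ga M \in \add(L)$ is exactly the statement that ${}_\Ga M$ is a dualizing summand of $L$ (Definition of dualizing summand: $L \in \cogen^1({}_\Ga M)$ with ${}_\Ga M$ a summand). So $G \mapsto (G,M)$ lands in (2).

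Conversely, suppose $L \in \Gamod$ has ${}_\Ga M$ as a dualizing summand, i.e. ${}_\Ga M \in \add(L)$ and $L \in \cogen^1({}_\Ga M)$. Since ${}_\Ga M$ is faithfully balanced (its endomorphism ring over $\Ga$ is $\La$ again, as $M$ is faithfully balanced over $\La$), we may apply Lemma \ref{duality} in the form $(-,{}_\Ga M) \colon \cogen^1({}_\Ga M) \leftrightarrow \cogen^1({}_\La M)$: so $(L, M) \in \cogen^1({}_\La M) = \cogen^1(M)$ over $\La$, and the summand ${}_\Ga M$ of $L$ is sent to $\Hom_\Ga({}_\Ga M, {}_\Ga M) = \La$ as a $\La$-module, giving $\La \in \add((L,M))$. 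Hence $L \mapsto (L,M)$ lands in (1). That the two assignments are mutually inverse is immediate from the fact that $(-,{}_\La M)$ and $(-,{}_\Ga M)$ compose to the identity on $\cogen^1({}_\La M)$ (resp. $\cogen^1({}_\Ga M)$), which is the content of the duality in Lemma \ref{duality}.

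The only point requiring genuine care — and the step I expect to be the main obstacle — is the precise bookkeeping of how the summand $\La$ behaves under the double-centralizer symmetry: one must verify that $\Hom_\La(\La, M) \cong {}_\Ga M$ as $\Ga$-modules and, dually, $\Hom_\Ga({}_\Ga M, {}_\Ga M) \cong \La$ as $\La$-modules (the latter using that $M$ is faithfully balanced), and that $\add$ is preserved in both directions. This is where Lemma \ref{projectivization}(1) enters decisively: it guarantees $(-,{}_\La M)$ is a \emph{duality} $\add({}_\La M) \leftrightarrow \add(\Ga)$, so the condition "$\La$ is a summand of $G$" on one side is faithfully mirrored by "${}_\Ga M$ is a summand of $L$" on the other, with no loss. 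Everything else is a formal consequence of Lemma \ref{duality}.
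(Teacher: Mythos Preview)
Your proof is correct and follows the paper's intended approach: the corollary is stated there without proof, as an immediate consequence of the duality $\cogen^1({}_\La M)\leftrightarrow\cogen^1({}_\Ga M)$ from Lemma~\ref{duality}, together with the observation that this duality exchanges $\La$ and ${}_\Ga M$. One small remark: your closing appeal to Lemma~\ref{projectivization}(1) is slightly misdirected (that lemma gives $\add({}_\La M)\leftrightarrow\add(\Ga)$, not $\La\leftrightarrow{}_\Ga M$); the correspondence you actually need, and which you already spelled out correctly earlier, is just $\Hom_\La(\La,M)\cong{}_\Ga M$ together with $\Hom_\Ga({}_\Ga M,{}_\Ga M)\cong\La$ by faithful balancedness, and preservation of summands comes for free since any additive duality preserves direct sums and summands.
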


\begin{lem}\label{dualsum-cogen}
Let $M, L\in \Lamod$, $\Ga=\End_{\La}(M)$ and assume $M$ is a summand of $L$. Then $M$ is a dualizing summand of $L$ if and only if $\cogen^1(L)=\cogen^1(M)$.
\end{lem}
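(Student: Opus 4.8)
The plan is to prove the equivalence in two directions, noting that one direction is essentially trivial and the other requires a little approximation-theoretic bookkeeping. First observe that since $M$ is a summand of $L$, we always have $\add(M) \subseteq \add(L)$, hence $\cogen^1(L) \subseteq \cogen^1(M)$ is \emph{not} automatic (the copresenting objects live in the larger category $\add(L)$), but we do get $\cogen^1(M) \subseteq \cogen^k(\text{nothing})$ directly; let me instead set up both inclusions carefully. For the ``if'' direction, suppose $\cogen^1(L) = \cogen^1(M)$. Since $L \in \cogen^1(L)$ trivially (take $0 \to L \xrightarrow{\id} L \to 0$, on which $\Hom_\La(-,L)$ is exact, and note $\Hom_\La(-,M)$ need not be — so one must be slightly more careful: use that $L$ admits a left $\add(L)$-approximation which is split, so $L \in \cogen^1(L)$ because $0 \to L \to L \to 0$ works and $\Hom(-,L)$ is exact on it). Then $L \in \cogen^1(M)$ by hypothesis, which by Definition is exactly the statement that $M$ is a dualizing summand of $L$. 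This direction is immediate once one checks $L \in \cogen^1(L)$.

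For the ``only if'' direction, assume $M$ is a dualizing summand of $L$, i.e.\ $L \in \cogen^1(M)$. The inclusion $\cogen^1(M) \subseteq \cogen^1(L)$: given $N \in \cogen^1(M)$ with exact $0 \to N \to M_0 \to M_1$, $M_i \in \add(M) \subseteq \add(L)$ and $\Hom_\La(-,M)$ exact on it, I need $\Hom_\La(-,L)$ exact on it too. Here I would use Lemma \ref{fullfaith}: since $N \in \cogen^1(M)$, the natural map $\Hom_\La(Y,N) \to \Hom_\Ga((N,M),(Y,M))$ is an isomorphism for all $Y$; combined with $L \in \cogen^1(M)$ and the duality machinery, one deduces that $\Hom_\La(-,L)$ applied to the copresentation stays exact — more directly, write $\Hom_\La(-,L) = \Hom_\La(-, M') $ where... no: instead, the cleanest route is to use Lemma \ref{cogen-k}(1) characterizing $\cogen^1$ via the bidual map $\al_N \colon N \to ((N,M),M)$ being an isomorphism, together with the fact that $\cogen^1$ only depends on $\add(M)$ through the ring $\Ga = \End_\La(M)$. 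The conversely-needed inclusion $\cogen^1(L) \subseteq \cogen^1(M)$ is where the hypothesis that $M$ is a dualizing summand is really used: given $N \in \cogen^1(L)$ with $0 \to N \to L_0 \to L_1$, $L_i \in \add(L)$, $\Hom_\La(-,L)$ exact on it, I splice in the copresentations of $L_0, L_1$ by $\add(M)$ (which exist since $L \in \cogen^1(M)$ implies $\add(L) \subseteq \cogen^1(M)$, as $\cogen^1(M)$ is closed under summands by the Corollary following Lemma \ref{cogen-k}) and use a horseshoe-type argument to assemble an $\add(M)$-copresentation of $N$ on which $\Hom_\La(-,M)$ is exact.

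Concretely: from $0 \to N \to L_0 \xrightarrow{g} L_1$ let $L' = \coKer(N \to L_0) \subseteq L_1$, so $0 \to L' \to L_1 \to L_1/L' \to 0$ and $\Hom_\La(-,L)$ is exact on $0 \to N \to L_0 \to L' \to 0$. Now $L_0, L_1 \in \add(L) \subseteq \cogen^1(M)$, so they have $\add(M)$-copresentations on which $\Hom_\La(-,M)$ is exact; I would take minimal left $\add(M)$-approximations $L_0 \to M_0'$ and $L' \to M_0''$ and build, via the snake lemma and dimension-shifting, an exact sequence $0 \to N \to M_0 \to M_1$ with $M_i \in \add(M)$ and $\Hom_\La(-,M)$ exact on it, hence $N \in \cogen^1(M)$. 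I expect the main obstacle to be exactly this splicing step — verifying that exactness of $\Hom_\La(-,M)$ is preserved when passing from the given $\add(L)$-copresentation to the constructed $\add(M)$-copresentation. The key technical input making it work is that $\add(L) \subseteq \cogen^1(M)$, so that $\Hom_\La(-,M)$-exact $\add(M)$-copresentations of the $L_i$ exist and can be pasted together; once that is in hand the argument is a routine diagram chase using that $\cogen^1(M)$ is closed under kernels of epimorphisms between its objects (which follows from Lemma \ref{cogen-k} since such kernels have $\al$ an isomorphism by the long exact sequence).
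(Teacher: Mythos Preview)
Your ``if'' direction is correct (and the parenthetical worry is unnecessary: $L \in \cogen^1(L)$ trivially via $0\to L \xrightarrow{\id} L \to 0$).

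Your ``only if'' direction has real gaps in both inclusions. For $\cogen^1(M)\subseteq\cogen^1(L)$ you try to show that $\Hom_\La(-,L)$ is exact on a given $\add(M)$-copresentation of $N$, but neither Lemma~\ref{fullfaith} nor the bidual characterization from Lemma~\ref{cogen-k} delivers this: the latter characterizes membership in $\cogen^1(M)$ (via $N\to((N,M),M)$ over $\Ga=\End_\La(M)$), not in $\cogen^1(L)$ (which would involve $\End_\La(L)$); and the former gives $(Y,L)\cong\Hom_\Ga((L,M),(Y,M))$, but applying $\Hom_\Ga((L,M),-)$ to the right-exact sequence $(M_1,M)\to(M_0,M)\to(N,M)\to 0$ does \emph{not} preserve right-exactness unless $(L,M)$ is $\Ga$-projective, which it need not be. For $\cogen^1(L)\subseteq\cogen^1(M)$ your horseshoe sketch is plausible in spirit, but the closing claim that ``$\cogen^1(M)$ is closed under kernels of epimorphisms between its objects'' is not true without the extra hypothesis that the epimorphism stays surjective after $\Hom_\La(-,M)$; and in your setup the cokernel $L'=\coKer(N\to L_0)$ is not known to lie in $\cogen^1(M)$ anyway, so the closure claim would not apply even if it held.

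The paper's argument avoids all of this by working entirely through the characterization $N\in\cogen^1(M)\Leftrightarrow\al_N\colon N\to((N,M),M)$ is an isomorphism. For $\cogen^1(L)\subseteq\cogen^1(M)$: given $0\to X\to L_0\to L_1$ with $(-,L)$ exact, apply $(-,{}_\La M)$ (still exact since $M\in\add(L)$), then $(-,{}_\Ga M)$, and use that $\al_{L_i}$ are isomorphisms (because $L_i\in\add(L)\subseteq\cogen^1(M)$) to conclude $\al_X$ is an isomorphism. For $\cogen^1(M)\subseteq\cogen^1(L)$: given $Y\in\cogen^1(M)$, do \emph{not} try to reuse its $\add(M)$-copresentation; instead build the complex $0\to Y\to L_0^Y\to L_1^Y$ from successive left $\add(L)$-approximations. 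After $(-,{}_\La M)$ this is exact by construction (since $M\in\add(L)$), hence after $(-,{}_\Ga M)$ it is again left-exact; but the double-dual complex is naturally isomorphic to the original (all three terms lie in $\cogen^1(M)$), so the original complex is already exact and witnesses $Y\in\cogen^1(L)$. The unifying point you missed is that one never needs to verify $(-,L)$-exactness or work over $\End_\La(L)$: both inclusions are checked through $(-,{}_\La M)$ followed by $(-,{}_\Ga M)$, using only that $\add(L)\subseteq\cogen^1(M)$.
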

\begin{proof}
The ``if'' part is obvious. For the ``only if'' part, assume $M$ is a dualizing summand of $L$ and $X\in \cogen^1(L)$. Then there exists an exact sequence 
$0\to X \to L^X_0\to  L^X_1$ with $L^X_i\in \add(L)$ and $(-,L)$ exact on it. We apply 
$(-,{_{\La}}M)$ to it and the resulting complex remains exact, since $M \in \add(L)$. Now apply $(-,{_{\Ga}}M)$ to see $X \cong ((X,M), M)$. This proves $\cogen^1(L) \subseteq \cogen^1(M)$. To prove $\cogen^1(M) \subseteq \cogen^1(L)$, take any $Y\in \cogen^1(M)$ and take the minimal left $\add(L)$-approximations $f:Y\to L^Y_0$ and $\coKer f \to L^Y_1$. Then we get a complex $0\to Y\xrightarrow{f} L^Y_0 \to L^Y_1$. We need to show it is exact. By construction, we will obtain an exact sequence $(L^Y_1,M)\to (L^Y_0,M)\to (Y,M)\to 0$ after applying $(-,{_{\La}}M)$. Now apply $(-,{_{\Ga}}M)$ to yield an exact sequence $0\to ((Y,M),M)\to ((L^Y_0,M),M)\to ((L^Y_1,M),M)$ which is naturally isomorphic to the complex $0\to Y\xrightarrow{f} L^Y_0 \to L^Y_1$, as desired.
\end{proof}

There is another subcategory of $\Lamod$ that is closely related to $\cogen^k(M)$:
\[\copres^k(M) := \{ N\ |\ \exists \; \text{exact seq.}\; 0\to  N \to M_0 \to \cdots \to M_k \; \text{with}\; M_i \in \add (M) \}.\]
This subcategory is useful in characterizing tilting modules (see \cite{W}). It follows from the definitions that  $\cogen^0(M)=\cogen(M)=\copres^0(M)$ and $\cogen^k(M)\subseteq \copres^k(M)$ for any $M$ and $k\geq 1$. In particular, if $M$ is injective then $\cogen^k(M)=\copres^k(M)$ for any $k\geq 0$. We observe the following 
\begin{lem} \label{dualSummand} 
Let $M, N\in \Lamod$ and $L=M\oplus N$. 
For $k\geq 1$, if $N \in \cogen^k(M)$ $($i.e., $M$ is a $k$-dualizing summand of $L$ $)$, then $M$ is faithfully balanced if and only if $L$ is faithfully balanced. In this case we have $\cogen^k(M)=\cogen^k(L)$.  
Furthermore, if additionally $\copres^k(L)=\cogen^k(L)$ , then we also have $\copres^k(M)=\cogen^k(M)$. 
\end{lem}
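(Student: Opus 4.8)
The plan is to reduce everything to Lemma~\ref{duality} and Lemma~\ref{dualsum-cogen}. First I would observe that since $N\in\cogen^k(M)\subseteq\cogen^1(M)$, we have in particular that $M$ is a $1$-dualizing summand of $L=M\oplus N$, so Lemma~\ref{dualsum-cogen} applies and gives $\cogen^1(M)=\cogen^1(L)$. Consequently $\La\in\cogen^1(M)$ if and only if $\La\in\cogen^1(L)$, which by Definition~\ref{fb} is precisely the claim that $M$ is faithfully balanced if and only if $L$ is faithfully balanced.

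Next, assuming this common faithful balancedness, I would prove $\cogen^k(M)=\cogen^k(L)$. The inclusion $\cogen^k(L)\subseteq\cogen^k(M)$ is the easy direction: given an exact sequence $0\to X\to L_0\to\cdots\to L_k$ with $L_i\in\add(L)$ on which $(-,L)$ is exact, one applies $(-,M)$ (which is exact on it since $M\in\add(L)$) to get a projective resolution of $(X,M)$ over $\Ga$, and then reapplies $(-,M)$; using Lemma~\ref{cogen-k} and the fact that $M$ is faithfully balanced (so $\La$, hence everything built from approximations, behaves well), the resulting complex $0\to X\to L_0\to\cdots\to L_k$ stays exact after we identify it with $0\to((X,M),M)\to\cdots$. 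Actually the cleanest route is to use the duality of Lemma~\ref{duality}: since $\cogen^1(M)=\cogen^1(L)$, applying $(-,M)$ sends both to $\cogen^1({}_\Ga M)$, and $\cogen^k$ is carved out inside $\cogen^1$ by the vanishing of $\Ext^i_\Ga((X,M),M)$ for $1\le i\le k-1$ (Lemma~\ref{cogen-k}(1)); since this condition only refers to $(X,M)$ and $M$, it is the same condition whether we view $X$ as living in $\cogen^k(M)$ or $\cogen^k(L)$. Hence the two subcategories coincide.

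For the last sentence, suppose in addition $\copres^k(L)=\cogen^k(L)$. We always have $\cogen^k(M)\subseteq\copres^k(M)$, so it remains to show $\copres^k(M)\subseteq\cogen^k(M)$. Take $X\in\copres^k(M)$, i.e.\ an exact sequence $0\to X\to M_0\to\cdots\to M_k$ with $M_i\in\add(M)\subseteq\add(L)$. Then $X\in\copres^k(L)=\cogen^k(L)=\cogen^k(M)$ by the previous paragraph, which is exactly what we want.

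\textbf{Main obstacle.} The only delicate point is the exactness bookkeeping in $\cogen^k(L)\subseteq\cogen^k(M)$: one must be careful that applying $(-,M)$ to a $(-,L)$-exact sequence really does produce a projective resolution over $\Ga$ (it does, because $\add(M)\subseteq\add(L)$ and by Lemma~\ref{projectivization} $(-,M)$ sends $\add(L)$-summands to projective $\Ga$-modules), and that the faithful balancedness of $M$ is what lets us reconstruct the original sequence via Lemma~\ref{cogen-k}. Phrasing this through the $\Ext$-vanishing characterization of $\cogen^k$ inside $\cogen^1$, as in the second paragraph, avoids having to repeat the approximation argument of Lemma~\ref{dualsum-cogen} and is the shortest path.
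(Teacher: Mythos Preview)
Your reduction to $\cogen^1(M)=\cogen^1(L)$ via Lemma~\ref{dualsum-cogen}, and hence the equivalence of faithful balancedness, is correct and matches the paper. The final paragraph on $\copres^k$ is also fine and is exactly the paper's argument.

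The gap is in your proof of $\cogen^k(M)=\cogen^k(L)$ for $k>1$. You write that $\cogen^k$ is carved out of $\cogen^1$ by the vanishing of $\Ext^i_\Ga((X,M),M)$ and that ``this condition only refers to $(X,M)$ and $M$, so it is the same condition whether we view $X$ as living in $\cogen^k(M)$ or $\cogen^k(L)$.'' But Lemma~\ref{cogen-k} applied with $L$ in place of $M$ gives a \emph{different} condition: $X\in\cogen^k(L)$ iff $X\in\cogen^1(L)$ and $\Ext^i_B((X,L),L)=0$ for $1\le i\le k-1$, where $B=\End_\La(L)$. These are Ext-groups over a different ring, computed from different modules; the whole content of the proof is to show they agree. (Your first attempt has the same problem in another guise: the modules $(L_i,M)$ for $L_i\in\add(L)$ need not be $\Ga$-projective, since $(N,M)$ has no reason to be projective.)

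The paper bridges the two Ext conditions as follows. Using Corollary~\ref{gendualsum}, write $L=(G,M)$ for a generator $G$ over $\Ga$; then $L\in\cogen^k(M)$ gives $\Ext^i_\Ga(G,M)=0$ for $1\le i\le k-1$. Since $G$ is a generator, $\gen_{k+1}(G)=\Gamod$, so the dual of Lemma~\ref{fullfaith}(2) yields natural isomorphisms
\[
\Ext^i_\Ga((X,M),M)\;\cong\;\Ext^i_B\bigl((G,(X,M)),(G,M)\bigr)\;\cong\;\Ext^i_B((X,L),L),
\]
the last identification using $(X,L)\cong(G,(X,M))$ for $X\in\cogen^1(M)$. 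This is the missing step you need to supply.
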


\begin{proof} According to Lemma \ref{dualsum-cogen}, we may assume $k>1$. Since $L \in \cogen^k(M)\subseteq \cogen^1(M)$, it follows from Lemma \ref{dualsum-cogen} that $\cogen^1(L)=\cogen^1(M)$ and hence $M$ is faithfully balanced if and only if $L$ is faithfully balanced. Let us from now on assume that $M, L$ are faithfully balanced. We want to see that $\cogen^k(L) =\cogen^k(M)$. Let $\Ga =\End_{\La}(M)$. Since $L \in \cogen^1(M)$ we can find a generator $G\in\Gamod$ such that $L=(G,M)$ by Corollary \ref{gendualsum}. We observe that $L \in \cogen^{k} (M)$ implies $ \Ext^i_{\Ga}((L,M),M)=\Ext^i_{\Ga}(G,M)=0$ for $1 \leq i \leq k-1$. In other words ${}_{\Ga}M \in \bigcap_{i=1}^{k-1} \Ker \Ext^i_{\Ga}(G,-)$. But since $G$ is a generator we have that $\gen_{k+1} (G) = \Gamod$. We set $B= \End_{\La} (L) \cong \End_{\Ga} (G)^{op}$ and take $X \in \cogen^k({}_{\La}M)$. Now, observe 
$(X,L) \cong (((X,M),M), (G,M)) \cong (G, (X,M))$ is an isomorphism of left $B$-modules. 
The dual statement in Lemma \ref{fullfaith} (2) gives that we have natural isomorphisms 
\[ \Ext^i_{\Ga}((X,M),M) \to \Ext^i_B ((G,(X,M)),(G,M)) \cong \Ext^i_B((X,L),L) \]
for $1 \leq i \leq k-1$. This implies by Lemma \ref{cogen-k} that $\cogen^{k} (M) = \cogen^{k} (L)$. Furthermore, since 
$\cogen^k(M) \subseteq \copres^k(M) \subseteq \copres^k(L)$ are always fulfilled, an equality $\cogen^k(M) = \copres^k(L)$ implies they are all equal. 
\end{proof}

\begin{rem}\label{MoritasResult}
Let $M$ be a faithfully balanced module. 
Morita \cite[Theorem 1.1]{Mo58} has shown that for every indecomposable module $N$ the following are equivalent: 
\begin{itemize}
\item[(1)]
$M\oplus N$ is faithfully balanced, 
\item[(2)] $N \in \gen (M)$ or $N \in \cogen (M)$.
\end{itemize}
In particular, $M \oplus P\oplus I$ is faithfully balanced for every projective module $P$ and injective module~$I$.
\end{rem}

\begin{exa}
Let $H$ be a cogenerator, then every summand of $H$ of the form $\kdual \La\oplus X$ is a $k$-dualizing summand for every $k\geq 0$.  
\end{exa}

Now we look at triples $(\La , M, G)$ where $\La $ is a finite-dimensional algebra and $M$ and $G$ are finite-dimensional left $\La$-modules. 
We define the following equivalence relation between these triples: $(\La , M, G) $ is equivalent to $(\La', M', G' )$ if there is a Morita equivalence $\La$-$\modu \to \La'$-$\modu$ restricting to equivalences $\add (M) \to \add (M')$ and $\add (G) \to \add (G')$. We denote by $[\La ,M, G] $ the equivalence class of a triple. 

\begin{dfn} We consider the following assignment 
\[ [\La , M, G] \mapsto [\Gamma, N, L]  \]
with $\Gamma = \End(M) $, $N={}_{\Ga} M$, $L = (G, M)$ and call this the Auslander-Solberg assignment.
\\
There is a dual assignment 
\[ [\La , M, H] \mapsto [\Gamma, N, R]  \]
with $\Ga,N$ as before and $R=\kdual (M,H)$ which we call the dual Auslander-Solberg assignment.
\end{dfn}

From Corollary \ref{gendualsum} we see that the Auslander-Solberg assignment gives a one-to-one correspondence between the following 
\begin{itemize}
\item[(1)] $[ \La, M, G]$ with $\La \in \add (G)$, $G \in \cogen^1(M)$,
\item[(2)] $[ \Gamma , N, L]$ with $N \in \add (L)$, $\Gamma \oplus L \in \cogen^1(N)$. 
\end{itemize}
The previous bijection has an obvious dual version using the dual Auslander-Solberg assignment and $\gen $, $H$ and $R$ instead of $\cogen $, $G$ and $L$, respectively. 

We are going to refine this assignment, our first refinement needs the following definition. Here we denote for $\Ga$-modules $N$ and $X$ by $\Omega_N  X$ the kernel of the minimal right $\add (N)$-approximation $N_X \to X$. For $k\geq 1$ we define inductively  $\Omega_N^k X:= \Omega_NX$ if $k=1$ and 
$\Omega_N^kX:=\Omega_N (\Omega_N^{k-1} X)$ for $k\geq 2$. Dually, we define $\Omega_N^-X $ as the cokernel of a minimal left $\add (N)$-approximation 
$X \to N^X$ and $\Omega^{-k}_NX$ inductively as before.

\begin{dfn} Let $k$ be a non-negative integer and $L,N, R\in \Lamod$. An exact sequence 
\[  0\to L \to N_0 \to N_{1} \to \cdots  \to N_k \to R \to 0 \]
is called a $k$-$\add(N)$-dualizing sequence from $L$ to $R$ if
\begin{itemize}
    \item[(i)] $N_i \in \add (N)$ for $i\in \{ 0,\ldots , k\}$,
    \item[(ii)] the functors $(-,N)$ and $\kdual (N, -)$ are exact on it, 
    \item[(iii)] $\add(R) = \add(N\oplus \Omega_N^{-(k+1)} L )$ and  $\add(L) = \add(N\oplus \Omega_N^{k+1} R )$.
\end{itemize} 
In this case we say $L$ is the left end and $R$ is the right end of this $k$-$\add(N)$-dualizing sequence.
\end{dfn}
This has the following consequences for the ideal quotient categories $\add(L)/\add (N), \add (R) / \add (N)$ (for the definition of an ideal quotient category see \cite[A.3]{ASS}): 
\begin{lem}\label{equiv}
Let $0\to L \to N_0 \to N_{1} \to \cdots  \to N_k \to R \to 0$ be a $k$-$\add(N)$-dualizing sequence from $L$ to $R$ for some $k\geq 0$ in $\Lamod$. Then we have an equivalence 
\[\Omega_N^{-(k+1)}:\add(L)/\add(N) \longleftrightarrow \add(R)/\add(N):\Omega_N^{k+1}.\]
\end{lem}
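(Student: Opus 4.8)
The plan is to show that the two functors $\Omega_N^{-(k+1)}$ and $\Omega_N^{k+1}$ are well-defined on the ideal quotient categories and are mutually quasi-inverse. First I would recall that condition (iii) of a $k$-$\add(N)$-dualizing sequence gives $\add(R)=\add(N\oplus\Omega_N^{-(k+1)}L)$ and $\add(L)=\add(N\oplus\Omega_N^{k+1}R)$, so at the level of objects the assignments $L_0\mapsto \Omega_N^{-(k+1)}L_0$ and $R_0\mapsto\Omega_N^{k+1}R_0$ do land in (and together exhaust, modulo $\add(N)$) the respective categories. The key point is that syzygy and cosyzygy with respect to $\add(N)$ are only well-defined on the stable category $\underline{\add(\cdot)}$ modulo $\add(N)$, i.e. on the ideal quotient $\add(L)/\add(N)$, because the minimal approximations are unique only up to summands in $\add(N)$; so I would first note that $\Omega_N^{-(k+1)}$ and $\Omega_N^{k+1}$ define functors on these quotient categories at all. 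For this I need that for $X\in\add(L)$ the module $\Omega_N^{-(k+1)}X$ again lies in $\add(R)$ (and dually), which follows since $X$ is a summand of a direct sum of copies of $L$, the cosyzygy is additive up to $\add(N)$, and $\Omega_N^{-(k+1)}L\in\add(R)$ by (iii) together with the fact that $\Omega_N^{-(k+1)}N=0$ in the quotient.

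Next I would establish the two natural isomorphisms $\Omega_N^{k+1}\Omega_N^{-(k+1)}\cong \id$ on $\add(L)/\add(N)$ and $\Omega_N^{-(k+1)}\Omega_N^{k+1}\cong\id$ on $\add(R)/\add(N)$. The cleanest way is to use the dualizing sequence $0\to L\to N_0\to\cdots\to N_k\to R\to 0$ itself: since the functors $(-,N)$ and $\kdual(N,-)$ are exact on it (condition (ii)), the sequence remains a "nice" resolution on both sides, so splicing it shows that $R$ is obtained from $L$ by $(k+1)$ successive left $\add(N)$-approximations and, conversely, $L$ is obtained from $R$ by $(k+1)$ successive right $\add(N)$-approximations — and the exactness of $(-,N)$, resp. $\kdual(N,-)$, guarantees that these approximations are genuine $\add(N)$-approximations whose (co)kernels are again computed by the sequence. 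Feeding copies and summands of $L$ through this, and using minimality to strip off $\add(N)$-summands, gives the unit and counit isomorphisms in the quotient category; naturality follows from the functoriality of taking (co)syzygies on the stable category. I would organize this as: (a) $\Omega_N^{-(k+1)}L\cong R$ and $\Omega_N^{k+1}R\cong L$ in the quotient, directly from the sequence; (b) additivity extends this to all of $\add(L)$ and $\add(R)$; (c) the two composites are the identity on objects up to $\add(N)$, hence isomorphic to the identity functor.

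I expect the main obstacle to be the bookkeeping around minimality and the non-uniqueness of approximations: one must be careful that "$\Omega_N$" really is a well-defined functor on $\add(L)/\add(N)$, i.e. that a morphism in the quotient lifts to a morphism of approximation sequences unique up to maps factoring through $\add(N)$, and that iterating $k+1$ times does not accumulate ambiguity outside $\add(N)$. This is essentially the standard fact that syzygy is a functor on the stable category (as in the projective-stable setting), transported to the relative $\add(N)$ context; the conditions (i)–(iii), in particular the exactness of $(-,N)$ and $\kdual(N,-)$ on the sequence, are exactly what is needed to make the relative syzygies behave like ordinary syzygies over a ring with "projectives $=\add(N)$". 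Once that functoriality is in place, the equivalence is formal. I would also remark that this lemma is the relative, $(k{+}1)$-step analogue of the classical fact (used e.g. in the proof of Lemma~\ref{dualSummand}) that a dualizing summand induces a derived-type symmetry, and that it will feed into the later dualizing-sequence correspondences.

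\begin{proof}
By condition (iii), $\Omega_N^{-(k+1)}L\in\add(R)$ and $\Omega_N^{k+1}R\in\add(L)$; more precisely $\add(R)=\add(N\oplus\Omega_N^{-(k+1)}L)$ and $\add(L)=\add(N\oplus\Omega_N^{k+1}R)$. Since relative (co)syzygies with respect to $\add(N)$ are additive and well-defined up to summands in $\add(N)$, the assignments $X\mapsto\Omega_N^{-(k+1)}X$ and $Y\mapsto\Omega_N^{k+1}Y$ define functors $\add(L)/\add(N)\to\add(R)/\add(N)$ and $\add(R)/\add(N)\to\add(L)/\add(N)$.

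It remains to check that the two composites are isomorphic to the respective identity functors. Consider the $k$-$\add(N)$-dualizing sequence
\[ 0\to L \to N_0 \to N_1 \to \cdots \to N_k \to R \to 0. \]
Because $(-,N)$ is exact on it, breaking it into short exact sequences exhibits $R$ as $\Omega_N^{-(k+1)}$ of $L$ through $k+1$ successive left $\add(N)$-approximations $\coKer f_{i-1}\to N_i$; minimality lets us discard any $\add(N)$-summands, so $\Omega_N^{-(k+1)}L\cong R$ in $\add(R)/\add(N)$. Dually, since $\kdual(N,-)$ is exact on the sequence, $\Omega_N^{k+1}R\cong L$ in $\add(L)/\add(N)$, and splicing shows $\Omega_N^{k+1}\Omega_N^{-(k+1)}L\cong L$ and $\Omega_N^{-(k+1)}\Omega_N^{k+1}R\cong R$ in the respective quotients. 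By additivity these isomorphisms extend to all objects of $\add(L)/\add(N)$ and $\add(R)/\add(N)$, and they are natural because taking relative (co)syzygies is functorial on the stable categories modulo $\add(N)$. Hence $\Omega_N^{-(k+1)}$ and $\Omega_N^{k+1}$ are mutually quasi-inverse equivalences.
\end{proof}
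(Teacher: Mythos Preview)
Your approach is essentially the same as the paper's, but the paper is more explicit at the one technical point you flag and then defer: it first proves the $k=0$ case by hand, showing that for a single short exact sequence $0\to U\xrightarrow{f} N_0\xrightarrow{g} V\to 0$ with $(-,N)$ and $(N,-)$ exact on it, the assignment $\alpha\mapsto\gamma$ (obtained by lifting through the approximation diagram) is well-defined on $\add(U)/\add(N)\to\add(V)/\add(N)$; it checks directly that a different lift $\beta'$ of $\alpha$ changes $\gamma$ by a map factoring through $N_0^Y$, and that if $\alpha$ factors through $\add(N)$ then so does $\gamma$. The general case then follows by induction on $k$.

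You instead assert this functoriality as a ``standard fact'' and work with the full $(k{+}1)$-step sequence at once. That is mathematically fine --- the relative (co)syzygy being a functor on the $\add(N)$-stable category is indeed folklore --- but the paper's proof \emph{is} that verification, and your proof sentence ``taking relative (co)syzygies is functorial on the stable categories modulo $\add(N)$'' is precisely the content the paper spends its effort on. If you want your version to stand on its own, you should supply the short diagram chase for one step (well-definedness on morphisms and preservation of the $\add(N)$-ideal), or cite a reference; otherwise your argument reads as restating the conclusion.
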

\begin{proof}
We claim that given a short exact sequence $\eta: 0\to U\xrightarrow{f} N_0\xrightarrow{g} V\to 0$ with $N_0\in \add(N)$ and such that the functors $(-,N)$ and $(N,-)$ are exact on it, then we have an equivalence $\Omega_N^{-1}:\add(U)/\add(N)\leftrightarrow \add(V)/\add(N):\Omega_N^{1}$. Take a map $\alpha:X\to Y$ in $\add(U)$ and consider the following commutative diagram 
\[
\xymatrix{\eta^X:& 0\ar[r] & X\ar[r]^{f^X}\ar[d]_{\alpha}& N_0^X\ar[r]^{g^X}\ar@{-->}[d]_{\beta}& \Omega_N^{-1}X\ar[r]\ar@{-->}[d]_{\gamma} & 0\\
\eta^Y:& 0\ar[r] & Y\ar[r]^{f^Y}& N_0^Y\ar[r]^{g^Y}& \Omega_N^{-1}Y\ar[r] & 0}
\]
where $\eta^X$ and $\eta^Y$ are both direct summands of $\eta$ by our assumption. In particular, we have $\Omega_N\Omega_N^{-1}X\cong X$ and $\Omega_N\Omega_N^{-1}Y\cong Y$ in $\add(U)/\add(N)$. Assume there is another map $\beta':N_0^X\to N_0^Y$ such that $\beta' f^X=f^Y \alpha$ and denote by $\gamma'$ the induced map on cokernels. Then we have $(\beta-\beta')f^X=0$ and thus there exists a unique $\theta:\Omega_N^{-1}X\to N_0^Y$ such that $\beta-\beta'=\theta g^X$. It follows that $\gamma-\gamma'=g^Y \theta$. Now assume $\alpha$ factors as $\alpha=\alpha_2\alpha_1$ though an object $N'\in \add(N)$, then since $f^X$ is a left $\add(N)$-approximation there is a map $\phi:N_0^X\to N'$ such that $\alpha_1=\phi f^X$. Thus we have $\alpha=\alpha_2\alpha_1=(\alpha_2\phi)f^X$, and a diagram chasing gives a map $\psi:\Omega_N^{-1}X\to N_0^Y$ such that $\gamma=g^Y \psi$. These proves that the map $\Hom_{\add(U)/\add(N)}(X,Y)\to \Hom_{\add(V)/\add(N)}(\Omega_N^{-1}X,\Omega_N^{-1}Y), \alpha \mapsto \gamma$ is well defined. Similarly, we have a map $\Hom_{\add(V)/\add(N)}(\Omega_N^{-1}X,\Omega_N^{-1}Y)\to \Hom_{\add(U)/\add(N)}(X,Y), \gamma\mapsto \alpha$. Clearly, these two maps are mutually inverse and this proves the claim. Now the lemma follows by induction on $k$.
\end{proof}

Let $X\in \Lamod$ and $k \geq 1$ be an integer. We define $\tau_k X= \tau (\Omega_{\La}^{k-1} X)$ and $\tau_k^- X:=\tau^-(\Omega_{\La}^{-(k-1)} X)$. We occasionally use the conventions $X^{\perp_{1\sim  k}}:= \bigcap_{i=1}^k \Ker \Ext^i (X,-)$ and ${}^{{}_{1\sim  k} \perp} X := \bigcap_{i=1}^k \Ker \Ext^i(-,X)$. 

\begin{lem} \label{easy}
Let $M$ be a faithfully balanced $\La$-module and $\Ga =\End_{\La} (M)$. Then, for $k \geq 1$, the assignment 
$X,Y \mapsto  (X,M) , \kdual (M,Y) $ gives a self-inverse bijection $($up to seeing $X,Y$ as $\La$ or as $\Ga$-modules$)$ between the following sets of pairs of $\La$-modules and $\Ga$-modules 
\[
\{ 
{}_{\La}G,{}_{\La}H \mid 
\begin{aligned}
G=\tau_k^-H \oplus \La &\in \cogen^1(M) \cap  {}^{{}_{1\sim (k-1)} \perp} M \\
H=\tau_k G \oplus \kdual\La &\in \gen_1(M) \cap M^{\perp_{1\sim (k-1)}}
\end{aligned}
\}
\]

and
\[
\{ 
{}_{\Ga} L,{}_{\Ga } R \mid \exists\ \text{ a }k\text{-}\;{}_{\Ga}M\text{-dualizing sequence from }L\text{ to }R\}.
\]
If all modules are basic, we have $\kdual (M, \tau_k G) = \Omega_M^{-(k+1)} (G,M)$.
\end{lem}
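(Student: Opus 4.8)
The plan is to show that the rule $(G,H)\mapsto\bigl((G,M),\kdual(M,H)\bigr)$ maps the first set into the second and is surjective onto it; self-inverseness then follows for free. Indeed, writing $L=(G,M)$, $R=\kdual(M,H)$ and $N:={}_{\Ga}M$, the biduality isomorphisms of Lemma \ref{cogen-k} (describing $\cogen^1,\gen_1$ as the reflexive objects) give $((G,M),M)\cong G$ and $\kdual(M,\kdual(M,H))\cong H$ from $G\in\cogen^1(M)$, $H\in\gen_1(M)$; and once a $k$-${}_{\Ga}M$-dualizing sequence from $L$ to $R$ is known to exist, Lemma \ref{duality} forces $L\in\cogen^1({}_{\Ga}M)$, $R\in\gen_1({}_{\Ga}M)$, so applying the rule once more on the $\Ga$-side returns $(G,H)$. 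Hence the whole content is the construction of the dualizing sequence and of its inverse.

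For the construction I would take a minimal projective resolution $\cdots\to P_k\xrightarrow{p_k}P_{k-1}\to\cdots\to P_0\to G\to 0$ of $G$ over $\La$. Since $G=\tau_k^-H\oplus\La$, the defining sequences of $\tau_k,\tau_k^-$ and the Nakayama functors intertwine it with a minimal injective coresolution $0\to H\to I^0\to\cdots\to I^k\to\Omega_\La^{-(k+1)}H\to 0$ of $H$ (this is exactly what the identities $H=\tau_kG\oplus\kdual\La$, $G=\tau_k^-H\oplus\La$ encode). Applying $\Hom_\La(-,M)$ to the truncated projective resolution yields the same complex as applying $\kdual\Hom_\La(M,-)$ to the truncated injective coresolution (because $\Hom_\La(P,M)\cong\kdual\Hom_\La(M,\nu_\La P)$, in turn since $\Hom_\La(\La,M)={}_{\Ga}M=\kdual\Hom_\La(M,\kdual\La)$), namely a complex
\[0\to (G,M)\to N_0\to N_1\to\cdots\to N_k\to R\to 0\]
with all $N_i\in\add(N)$ by Lemma \ref{projectivization}. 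Here $R=\kdual(M,H)=\kdual(M,\tau_kG)\oplus N$ and $(G,M)=(\tau_k^-H,M)\oplus N$ (as $\La\in\add(G)$), so $N$ lies inside both ends as a dualizing sequence requires.

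It remains to check the axioms. Axiom (i) has just been noted. For axiom (ii), applying $\Hom_{\Ga}(-,N)$ to the complex recovers, by biduality and $G\in\cogen^1(M)$, the truncated minimal projective resolution of $G$, hence an exact complex, while $\kdual\Hom_{\Ga}(M,-)$ recovers the truncated minimal injective coresolution of $H$ using $H\in\gen_1(M)$; the a priori obstructions to exactness of these two complexes lie in $\Ext^i_\La(G,-)$ resp. $\Ext^i_\La(-,H)$ and vanish for $1\le i\le k-1$ precisely because $G\in{}^{{}_{1\sim(k-1)} \perp} M$ and $H\in M^{\perp_{1\sim(k-1)}}$ (one also uses that $\La,\kdual\La$ are projective resp. injective, and that $\tau_kG,\tau_k^-H$ carry no injective resp. projective summands). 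This AR-theoretic bookkeeping — carrying the two $\Ext$-vanishing hypotheses through all $k+1$ steps and reconciling the several $\Ext$-groups over $\La$ and over $\Ga$ — is where I expect the real work to be. Axiom (iii) should then follow from Lemma \ref{equiv}: with (i),(ii) in hand, $\Omega_N^{-(k+1)}$ is an equivalence $\add((G,M))/\add(N)\leftrightarrow\add(R)/\add(N)$, and comparing it with the explicit splittings of the two ends (using that $\kdual(M,\tau_kG)$, $(\tau_k^-H,M)$ have no $N$-summand) pins down $\add(R)=\add(N\oplus\Omega_N^{-(k+1)}(G,M))$ and $\add((G,M))=\add(N\oplus\Omega_N^{k+1}R)$.

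Surjectivity is obtained by running this backwards: from a $k$-${}_{\Ga}M$-dualizing sequence $0\to L\to N_0\to\cdots\to N_k\to R\to 0$, axiom (ii) lets one apply $\Hom_{\Ga}(-,N)$ and $\kdual\Hom_{\Ga}(M,-)$ to obtain a minimal projective resolution of $G:=\Hom_{\Ga}(L,N)$ and a minimal injective coresolution of $H:=\kdual\Hom_{\Ga}(M,R)$ over $\La$; then $L=(G,M)$, $R=\kdual(M,H)$, Lemma \ref{duality} gives $G\in\cogen^1(M)$, $H\in\gen_1(M)$, the exactness in (ii) gives $G\in{}^{{}_{1\sim(k-1)} \perp} M$, $H\in M^{\perp_{1\sim(k-1)}}$, and axiom (iii) with the Nakayama relations forces $G=\tau_k^-H\oplus\La$, $H=\tau_kG\oplus\kdual\La$. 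Finally the last assertion drops out: in the basic case Lemma \ref{equiv} identifies the right end of the constructed sequence with $N\oplus\Omega_N^{-(k+1)}(G,M)$, and since $R=\kdual(M,\tau_kG)\oplus N$ with $\kdual(M,\tau_kG)$ free of $N$-summands (by Lemma \ref{projectivization}, as $\tau_kG$ has no injective summand), one reads off $\Omega_M^{-(k+1)}(G,M)=\kdual(M,\tau_kG)$.
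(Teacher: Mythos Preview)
Your construction has a real gap. The claim that applying $(-,M)$ to the truncated projective resolution of $G$ yields \emph{the same complex} as applying $\kdual(M,-)$ to the truncated injective coresolution of $H$ rests on identifying $I^j=\nu_\La P_{k-j}$ for every $0\le j\le k$, since your termwise isomorphism $(P,M)\cong\kdual(M,\nu_\La P)$ only matches the two complexes if the injective coresolution of $H$ is the Nakayama image of the projective resolution of $G$. But the relations $H=\tau_kG\oplus\kdual\La$ and $G=\tau_k^-H\oplus\La$, via the four-term sequences defining $\tau$ and $\tau^-$, pin down only the outer pairs $I^0,I^1$ (from $H'=\tau\Omega^{k-1}G'$) and $I^{k-1},I^k$ (from $G'=\tau^-\Omega^{-(k-1)}H'$). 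For $k\le 3$ this happens to cover all indices, but for $k\ge 4$ the middle terms are not determined by these hypotheses. Concretely, the natural map $\nu_\La\Omega^{k-1}G'\to\nu_\La P'_{k-2}$ (with $G'$ the non-projective part of $G$) has kernel $\kdual\Ext^{k-1}_\La(G',\La)$, which nothing in your assumptions forces to vanish; hence $\nu_\La P'_{k-2}$ is in general not the injective envelope of $\Omega^{-2}H'=\nu_\La\Omega^{k-1}G'$, and $I^2\ne\nu_\La P'_{k-2}$. Your verification of the $\kdual(N,-)$-half of axiom (ii), which relies on reading the complex as coming from the injective coresolution of $H$, therefore breaks down.

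The paper's two-line argument avoids this by working abstractly. It applies Lemma \ref{duality} with the roles of $\La$ and $\Ga$ swapped (legitimate because ${}_\Ga M$ is automatically faithfully balanced): that lemma gives directly that $G\in\cogen^1({}_\La M)\cap{}^{{}_{1\sim(k-1)}\perp}M$ is equivalent to $L=(G,M)\in\cogen^k({}_\Ga M)$, and dually $H\in\gen_1({}_\La M)\cap M^{\perp_{1\sim(k-1)}}$ is equivalent to $R=\kdual(M,H)\in\gen_k({}_\Ga M)$. So both halves of (ii) are obtained without ever comparing two explicit resolutions. The remaining link between the two ends --- that is, axiom (iii) and the displayed formula $\kdual(M,\tau_kG)=\Omega_M^{-(k+1)}(G,M)$ --- is supplied by the Nakayama identity $\nu_\Ga(X,M)\cong\kdual(M,X)$ for $X\in\cogen^1({}_\La M)$ from Lemma \ref{fullfaith}, which identifies how the functors $(-,M)$ and $\kdual(M,-)$ interact on the summands in play.
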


\begin{proof} The bijection follows from Lemma \ref{duality} and the observation 
$\nu_{\Ga} (M', M) =\kdual (M, M')$ for every $M'\in\add(M)\subseteq \cogen^1(M)$ from Lemma \ref{fullfaith}. The rest statements are obvious.
\end{proof}

\begin{cor}
Let $G,H$ be as in the bijection of Lemma \ref{easy}, then we have an equivalence \[\tau_k:\underline{\add(G)}\longleftrightarrow \overline{\add(H)}: \tau_k^-,\]
where $\underline{\add(G)}$ (resp. $\overline{\add(H)}$) denotes the projective (resp. injective) stable category, see \cite{AB}. 
\end{cor}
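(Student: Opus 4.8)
The plan is to deduce this corollary directly from the bijection of Lemma~\ref{easy} combined with the equivalence machinery of Lemma~\ref{equiv}. Recall that in the setting of Lemma~\ref{easy} the pair $(G,H)$ corresponds (via $X \mapsto (X,M)$ and $Y\mapsto \kdual(M,Y)$) to a $k$-${}_{\Ga}M$-dualizing sequence
\[
0 \to L \to N_0 \to \cdots \to N_k \to R \to 0
\]
with $L=(G,M)$ and $R=\kdual(M,H)$ and $N_i\in\add({}_{\Ga}M)$. By Lemma~\ref{equiv} this dualizing sequence yields an equivalence $\Omega_M^{-(k+1)}\colon \add(L)/\add(M)\longleftrightarrow \add(R)/\add(M)\colon \Omega_M^{k+1}$ of ideal quotient categories over $\Ga$. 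So the first step is to transport this equivalence back along the duality $(-,M)$ from $\Ga$-modules to $\La$-modules.

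First I would observe that the duality $(-,{}_{\La}M)\colon \cogen^1({}_{\La}M)\longleftrightarrow\cogen^1({}_{\Ga}M)$ of Lemma~\ref{duality} sends $\add(G)$ to $\add(L)$ and $\add(M)$ (as a subcategory of $\cogen^1({}_{\La}M)$) to $\add({}_{\Ga}M)$, hence induces a \emph{duality} of ideal quotient categories $\add(G)/\add(M)\longleftrightarrow \add(L)/\add(M)$; the analogous statement for $\kdual(M,-)$ gives a duality $\add(H)/\add(M)\longleftrightarrow\add(R)/\add(M)$ (using $\gen_1$ and Lemma~\ref{duality}). Composing the first duality, then the equivalence $\Omega_M^{-(k+1)}$ of Lemma~\ref{equiv}, then the inverse of the second duality, produces an equivalence $\add(G)/\add(M)\longleftrightarrow\add(H)/\add(M)$. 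The second step is to identify this composite with $\tau_k$ (and its quasi-inverse with $\tau_k^-$): for this I would use the identities recorded in Lemma~\ref{easy} and in the diagram of the introduction, namely $H = \tau_k G\oplus \kdual\La$, $G = \tau_k^- H\oplus\La$ together with $\kdual(M,\tau_k G)=\Omega_M^{-(k+1)}(G,M)$ in the basic case, which is exactly the statement that $\tau_k$ and $\Omega_M^{-(k+1)}$ agree after conjugating by the two dualities.

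Finally I would rewrite the quotient categories in their stable form. Since $\La\in\add(G)$ the projective objects of $\add(G)$ are exactly the objects of $\add(\La)=\mcP(\La)$, so $\underline{\add(G)} = \add(G)/\add(\La)$; likewise $\kdual\La\in\add(H)$ gives $\overline{\add(H)} = \add(H)/\add(\kdual\La)$. One must check that in the composite equivalence above $\add(M)$ can be replaced by $\add(\La)$ on the source and $\add(\kdual\La)$ on the target without changing the quotient --- this is because $\add(\La)\subseteq\add(G)$ consists of projectives while the extra summands of $G$ beyond $\La$ are $\tau_k^-H$, and $\tau_k$ kills precisely the projective summands (dually for $H$), so passing to $\underline{\add(G)}$, $\overline{\add(H)}$ the functor $\tau_k$ is already well defined and is an equivalence there by the usual Auslander--Reiten theory. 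In fact the cleanest route is to bypass the ideal-quotient-by-$\add(M)$ intermediary altogether and argue directly: $\tau_k = \tau\circ\Omega^{k-1}_{\La}$ and $\tau_k^- = \tau^-\circ\Omega^{-(k-1)}_{\La}$ are mutually inverse equivalences $\underline{\Lamod}\longleftrightarrow\overline{\Lamod}$ on the whole stable categories (a standard fact, combining the Auslander--Reiten duality $\tau\colon\underline{\Lamod}\to\overline{\Lamod}$ with the syzygy shifts), and the content to verify is only that $\tau_k$ restricts to a functor $\underline{\add(G)}\to\overline{\add(H)}$, which is immediate from $H = \tau_k G\oplus\kdual\La$ once we know $\add(\tau_k G)$ is closed under summands inside $\overline{\Lamod}$.

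The main obstacle I anticipate is bookkeeping rather than a genuine difficulty: keeping straight which functors are dualities versus equivalences and ensuring the minimal-approximation conventions used to define $\Omega_M^{\pm(k+1)}$ in Lemma~\ref{equiv} match the ones implicit in $\tau_k$, especially the reduction to the basic case where the identity $\kdual(M,\tau_k G)=\Omega_M^{-(k+1)}(G,M)$ is clean; one should note that $\tau_k$ and $\Omega_M^{\pm(k+1)}$ are only well defined up to projective/injective summands, which is precisely why the statement is phrased in the stable categories, so no genuine ambiguity remains once one works there. I would write the proof so that it simply says: "This is immediate from Lemma~\ref{easy}, Lemma~\ref{equiv} and Lemma~\ref{duality}, after identifying $\underline{\add(G)}=\add(G)/\add(\La)$ and $\overline{\add(H)}=\add(H)/\add(\kdual\La)$ and using $H=\tau_k G\oplus\kdual\La$, $G=\tau_k^-H\oplus\La$."
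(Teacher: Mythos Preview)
Your overall strategy --- transport the equivalence of Lemma~\ref{equiv} along the dualities $(-,M)$ and $\kdual(M,-)$, then identify the composite with $\tau_k$ via Lemma~\ref{easy} --- is exactly the paper's proof. But there is a bookkeeping slip in your first step that forces the unnecessary repair work of your third paragraph. You write that $(-,{}_{\La}M)$ sends $\add({}_{\La}M)$ to $\add({}_{\Ga}M)$; in fact $\Hom_{\La}(M,M)=\Ga$, so $\add({}_{\La}M)$ goes to $\add(\Ga)$. The correct observation is that $(-,M)$ sends $\add(\La)$ to $\add({}_{\Ga}M)$ (since $\Hom_{\La}(\La,M)={}_{\Ga}M$), and dually $\kdual(M,-)$ sends $\add(\kdual\La)$ to $\add({}_{\Ga}M)$ (since $\kdual\Hom_{\La}(M,\kdual\La)={}_{\Ga}M$). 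Hence the induced dualities on ideal quotients are \emph{directly}
\[
\underline{\add(G)}=\add(G)/\add(\La)\longleftrightarrow \add(L)/\add({}_{\Ga}M),\qquad
\overline{\add(H)}=\add(H)/\add(\kdual\La)\longleftrightarrow \add(R)/\add({}_{\Ga}M),
\]
and these are precisely the quotients appearing in Lemma~\ref{equiv}. No ``replacement of $\add(M)$ by $\add(\La)$'' is needed, and your third paragraph (including the alternative route via the full stable module category) can be dropped entirely. With this one correction your argument becomes the paper's proof verbatim.
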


\begin{proof}
This follows from the equivalence of Lemma \ref{equiv} by pre- and postcomposing with $(-,M)$ and $\kdual (M,-)$ and then use the previous Lemma \ref{easy}. 
\end{proof}

\begin{exa} 
Obviously triples $[\La , M, M] $ correspond to triples $[\Ga , N, \Ga ]$ and since $M$ is a generator we conclude that $N$ is a projective $\Ga $-module. The module $M$ is a generator-cogenerator (i.e., $\La \oplus \kdual \La \in \add (M)$) if and only if $N$ is projective-injective. \\
Furthermore, $\add (M)= \add (\tau_k M \oplus \La ) $ and $M$ being a generator-cogenerator with $\Ext^i(M,M)=0$, $1 \leq i \leq k-1$ corresponds to a dualizing sequence 
$0 \to \Gamma \to N_0 \to \cdots \to N_k \to \kdual \Ga \to 0$ with $N_i$ projective-injective and $\mcP( G)$ a projective generator, $\mcI (\Ga )$ an injective cogenerator. But this is equivalent to $\Ga$ being $k$-minimal Auslander-Gorenstein which means by definition $\id {}_{\Ga}\Ga \leq k+1 \leq {\rm domdim} {}_{\Ga}\Ga$. These algebras have been studied by Iyama and Solberg in \cite{IS}.
\end{exa}

Our previous results also enable us to understand all faithfully balanced modules in an easy example.
\begin{exa} Let $\La_n=K(1\to 2\to \cdots \to n)$. Then, the faithfully balanced modules for $\La_2$ are the module which are generator or cogenerators. In general every tilting (and automatically cotilting) $\La_n$-module $T$ that is coming from a slice in the Auslander-Reiten quiver fulfills that every indecomposable module is either cogenerated or generated by $T$. By Remark \ref{MoritasResult} we conclude that every module having $T$ as a summand is faithfully balanced. Clearly, faithfully balanced modules must have $P_1=I_n$ as a summand. But even if a module has a tilting module as a summand it is not necessarily faithfully balanced, for example $T_0=P_1\oplus S_1 \oplus S_3$ is a tilting $\La_3$-module but $P_1 \oplus S_1 \oplus S_2 \oplus S_3$ is not faithfully balanced.
The $21$ faithfully balanced modules for $\La_3$ are: $T_0 \oplus I \oplus P$ for a projective $P$ and an injective $I$, modules with one of the other four tilting modules as a summand (since these four come from slices) and the module $P_2 \oplus I_2 \oplus P_1$. \\
We call two modules $N,M$ equivalent if $\cogen^1(N)=\cogen^1(M)$ and $\gen_1(N)= \gen_1(M)$. Then we consider the partial order on equivalence classes 
\[ 
N \leq M \; \Leftrightarrow \cogen^1(N) \subseteq \cogen^1(M), \gen_1(N) \supseteq \gen_1 (M)  
\]
The Hasse diagram for the $20$ equivalence classes (the $2$ generator-cogenerators are equivalent) of faithfully balanced modules for $\La_3$ is the following. 
\tikzstyle{block} = [rectangle, draw, fill=white!20,
    text width=4em, text centered, rounded corners, minimum height=1em, node distance=1.5cm]
\tikzstyle{line} = [draw, very thick, color=black!50, -latex']

\begin{tikzpicture}[scale=6, node distance = 1cm, auto]
    \node [block] (DLa) {$\kdual \La $};
    \node [block, below of=DLa] (DLaS2) {$\kdual \La \lvert S_2$};
    \node [block, below of=DLaS2] (DLaP2S2) {$\kdual \La \lvert P_2 \lvert S_2$};
    \node [block, left of=DLaS2, node distance=2.5cm] (DLaP3) {$\kdual \La \lvert P_3$};
     \node [block, right of=DLaS2, node distance=2.5cm] (DLaP2) {$\kdual \La \lvert P_2$};
    \node [block, left of=DLaP2S2, node distance=2.5cm] (DLaP3S2) {$\kdual \La \lvert P_3 \lvert S_2$};
    \node [block, right of=DLaP2S2, node distance=2.5cm] (P1I2S2) {$P_1 \lvert I_2 \lvert S_2$};
    
  \node [block, below left of=DLaP2S2, node distance=2.5cm] (P2S2P1I1) {$P_{2\lvert 1} \lvert S_2\lvert I_1$};
   \node [block, below right of=DLaP2S2, node distance=2.5cm] (P3S2P1I2) {$P_{3\lvert 1} \lvert S_2 \lvert I_2$};
    \node [block, left of=P2S2P1I1, node distance=2.5cm] (LaDLa) {$\kdual\La \lvert \La $};
    \node [block, right of=P3S2P1I2, node distance=2.5cm] (P2S2P1I2) {$P_{2\lvert 1} \lvert S_2 \lvert I_2$};
  
   \node [block, left of= LaDLa, node distance=2.5cm] (P3P1I1) {$P_3 \lvert P_1 \lvert I_1 $};
    \node [block, right of=P2S2P1I2, node distance=2.5cm] (P2P1I2) {$P_{2} \lvert P_1 \lvert I_2$};
  
  \node [block, below left of=P3S2P1I2, node distance=2.5cm] (LaS2I2) {$\La \lvert S_2\lvert I_2$};
  \node [block, left of= LaS2I2, node distance=2.5cm] (LaS2I1) {$\La \lvert S_2 \lvert I_1 $};
    \node [block, right of=LaS2I2, node distance=2.5cm] (P2S2P1) {$P_{2} \lvert S_2 \lvert P_1$};

  \node [block, below of=LaS2I2] (LaS2) {$\La \lvert S_2$};
  \node [block, left of= LaS2, node distance=2.5cm] (LaI1) {$\La \lvert I_1 $};
    \node [block, right of=LaS2, node distance=2.5cm] (LaI2) {$\La \lvert I_2$};
  
   \node [block, below of=LaS2] (La) {$\La$};
  
    \path [line] (DLa) -- (DLaS2);
    \path [line] (DLa) -- (DLaP3);
    \path [line] (DLa) -- (DLaP2);
    \path [line] (DLaS2) -- (DLaP2S2);
  \path [line] (DLaS2) -- (DLaP3S2);
  \path [line] (DLaS2) -- (P1I2S2);
  \path [line] (DLaP3) -- (DLaP3S2);
  \path [line] (DLaP2) -- (DLaP2S2);
  \path [line] (DLaP3) -- (P3P1I1);
  \path [line] (DLaP2) -- (P2P1I2);
   \path [line] (P3P1I1) -- (LaI1);
  \path [line] (P2P1I2) -- (LaI2);
   \path [line] (LaI1) -- (La);
  \path [line] (LaI2) -- (La);
   \path [line] (LaS2) -- (La);
   \path [line] (DLaP3S2) -- (LaDLa);
  \path [line] (DLaP3S2) -- (P3S2P1I2);
     \path [line] (DLaP2S2) -- (LaDLa);
  \path [line] (DLaP2S2) -- (P2S2P1I1);
   \path [line] (DLaP2S2) -- (P2S2P1I2);
   \path [line] (P1I2S2) -- (P3S2P1I2);
   \path [line] (P1I2S2) -- (P2S2P1I2);
   \path [line] (LaDLa) -- (LaS2I1);
   \path [line] (LaDLa) -- (LaS2I2);
   \path [line] (P2S2P1I1) -- (LaS2I1);
   \path [line] (P2S2P1I1) -- (P2S2P1);
   \path [line] (P2S2P1I2) -- (LaS2I2);
   \path [line] (P2S2P1I2) -- (P2S2P1);
   \path [line] (P3S2P1I2) -- (LaS2I2);
   \path [line] (LaS2I1) -- (LaI1);
    \path [line] (LaS2I1) -- (LaS2);
     \path [line] (LaS2I2) -- (LaI2);
    \path [line] (LaS2I2) -- (LaS2);
     \path [line] (P2S2P1) -- (LaS2);
\end{tikzpicture}

where for example $P_{3\lvert 1} \lvert S_2 \lvert I_2 = P_3\oplus P_1\oplus S_2\oplus I_2$.
\end{exa}

\section{Combining the cogenerator and the cotilting correspondence.}

As an application of faithfully balanced modules we give a simultaneous generalization of the cogenerator and the cotilting correspondence. We look at modules $M$ which are of the form $M= C\oplus X$ with $C$ a cotilting module and $X \in {}^{{}_{0<}\perp} C$.
If $C=\kdual \La$, then $M$ is an arbitrary cogenerator. If $X=0$, then $M$ is a cotilting module. By Lemma \ref{dualSummand} we know that $M$ is faithfully balanced and if $\id C \leq k$, then $\cogen^{t} (M) = \cogen^{t} (C)$ for all $t \geq k-1$. So, what is the corresponding pair  to a pair $[\La , M]$ as just described?

We will need the following definition. 
\begin{dfn} Let $\Ga$ be a finite-dimensional algebra and $N,J$ a left $\Ga$-modules with $J$ injective. We say that $N$ is a $J$-restricted $k$-cotilting module if the following holds
\begin{itemize}
\item[(i)] there is an exact sequence 
$0 \to N \to J_0 \to \cdots \to J_k \to 0$ with $J_i \in \add J$, $ 0 \leq i \leq k$, 
\item[(ii)] $N$ is self-orthogonal,
\item[(iii)] there is an exact sequence 
$0 \to N_k \to \cdots \to N_0 \to J \to 0$ with $N_i \in \add N$, $0 \leq i \leq k$.
\end{itemize}
\end{dfn}
The following is straightforward to see. 
\begin{lem} \label{res-cotilt} If $N$ is in 
$\cogen^1(J)$ for an injective $\Ga$-module $J$, the following are equivalent
\begin{itemize}
\item[(1)]
$N$ is $J$-restricted $k$-cotilting, 
\item[(2)] $\kdual (N,J)$ is a  left $k$-cotilting $B:=\End (J)^{op}$-module and $\Ext^i_B(\kdual J, \kdual (N,J))=0$ for $1 \leq i \leq k$. 
\end{itemize}
\end{lem}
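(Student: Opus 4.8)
The statement is an ``$\mathbf{F}=\mathbf{F}_G=\mathbf{F}^H$''-style transport of structure along the duality $\kdual(N,-)\colon \cogen^1(J)\to\; ?$ that we already have at our disposal. The key observation is that $B=\End_\Ga(J)^{\op}$ is exactly the endomorphism ring appearing in Lemma~\ref{projectivization} and Lemma~\ref{duality} applied to the injective $\Ga$-module $J$: indeed $(J,\kdual\La)$-type computations give $\End_\Ga(J)\cong\End_{B}(\kdual B)^{\op}$ after dualizing, and $\kdual(-, J)\colon\add(J)\leftrightarrow\add(\kdual B)$ is the projectivization duality of Lemma~\ref{projectivization}(2) read from the other side. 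So the plan is: first record that since $J$ is injective, $\cogen^k(J)=\copres^k(J)$ for all $k$ (stated just before Lemma~\ref{dualSummand}), and that the contravariant functor $\kdual\Hom_\Ga(-,J)$ sends the complexes witnessing (i) and (iii) to complexes of the required shape over $B$.

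\textbf{Step 1: translate (i).} Given the exact sequence $0\to N\to J_0\to\cdots\to J_k\to 0$ with $J_i\in\add J$ on which (being exact with injective cokernels, or equivalently being in $\copres^k(J)=\cogen^k(J)$) the functor $\Hom_\Ga(-,J)$ is exact, apply $\kdual\Hom_\Ga(-,J)$. Using the duality $\add(J)\leftrightarrow\add(\kdual B)$ this yields an exact sequence $0\to \kdual(J_k,J)\to\cdots\to\kdual(J_0,J)\to\kdual(N,J)\to 0$ with $\kdual(J_i,J)\in\add(\kdual B)$, i.e.\ an injective copresentation — wait, rather a projective-style resolution over $B^{\op}$; read as left $B$-modules this is the exact sequence $0\to \kdual(N,J)\to(\text{injectives over }B)$ only after one more dualization. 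The clean way is to invoke Lemma~\ref{duality} (and its dual) with $M:=J$, $\La:=\Ga$, $\Ga:=B$: since $J$ is a cogenerator over its own endomorphism data, $\kdual(N,J)$ lies in $\cogen^1({}_B\kdual(J,J))=\cogen^1({}_B\,\kdual B)$-type category, and condition (i) becomes precisely the statement that $\kdual(N,J)$ admits a finite injective coresolution by $\add(\kdual(N\text{-related }?))$ — concretely, $\kdual(N,J)$ has finite injective dimension $\le k$ as a $B$-module, which is part (1) of being $k$-cotilting. More precisely condition (i) transports, under the duality, to: $\id_B \kdual(N,J)\le k$.

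\textbf{Step 2: translate (ii).} Self-orthogonality $\Ext^i_\Ga(N,N)=0$ for $i>0$ transports under the duality $\kdual(-,J)$ restricted to the relevant subcategory exactly to $\Ext^i_B(\kdual(N,J),\kdual(N,J))=0$ for $i>0$, because a contravariant exact duality between abelian categories (here realized on $\cogen^1(J)$ and its image via Lemma~\ref{duality}/Lemma~\ref{fullfaith}) preserves $\Ext$ in both arguments. One must check $N$ and its syzygies stay inside $\cogen^1(J)$, which follows from (i) and (iii) together with the fact that for injective $J$ one has $\cogen^k(J)=\copres^k(J)$, so all the cosyzygies of $N$ along (i) lie in $\cogen^1(J)$; the kernels along (iii) need $\Hom_\Ga(-,J)$-exactness, which is exactly condition (ii) plus (i) via a dimension-shift. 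This gives the self-orthogonality clause of $\kdual(N,J)$ being cotilting.

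\textbf{Step 3: translate (iii) and the extra vanishing.} The sequence $0\to N_k\to\cdots\to N_0\to J\to 0$ with $N_i\in\add N$, on which $\Hom_\Ga(-,J)$ is exact (this exactness is forced since the terms and $J$ lie in $\cogen^1(J)$, or should be added as the implicit hypothesis — I would state it), dualizes to $0\to\kdual(J,J)\to\kdual(N_0,J)\to\cdots\to\kdual(N_k,J)\to 0$, i.e.\ $0\to {}_BB\to L_0\to\cdots\to L_k\to 0$ with $L_i\in\add(\kdual(N,J))$, which says $\kdual B$, hence every projective $B$-module, has a finite $\add(\kdual(N,J))$-coresolution of length $\le k$ — this is the generation clause of $k$-cotilting, and simultaneously $\kdual J=\kdual\Hom_\Ga(\Ga? )$... the module $\kdual J$ as a $B$-module: one checks $\Ext^i_B(\kdual J,\kdual(N,J))$ is computed by applying $\Hom_B(\kdual J,-)$ to an injective coresolution of $\kdual(N,J)$, and the vanishing for $1\le i\le k$ is the dual reading of ``$J$ has an $\add N$-resolution of length $k$'' via Lemma~\ref{fullfaith}(2)'s formula $\nu^-_\Ga\kdual(M,Y)\cong(Y,M)$-type identity. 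So (iii) splits into exactly the two pieces: the finite $\add(\kdual(N,J))$-copresentation of ${}_BB$, and $\Ext^i_B(\kdual J,\kdual(N,J))=0$ for $1\le i\le k$. Conversely, all three conditions (1)$\Rightarrow$(2) and (2)$\Rightarrow$(1) are obtained by running the same duality backwards, using that $\kdual(-,J)$ is an involutive duality on the pertinent categories (Lemma~\ref{duality}).

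\textbf{Main obstacle.} The only genuinely delicate point is bookkeeping: making sure that at each stage the modules involved ($N$, its cosyzygies, the $N_i$, and $J$) genuinely lie in $\cogen^1(J)$ so that the duality of Lemma~\ref{duality} and the $\Ext$-preservation of Lemma~\ref{fullfaith} apply term by term, and identifying $\End_\Ga(J)^{\op}=B$ so that $\kdual(N,J)$ is unambiguously a \emph{left} $B$-module with $\kdual B$ as an injective cogenerator and $\kdual J$ playing the role of the relevant projective-ish module. Once the identification ``$\kdual(-,J)$ is a duality $\cogen^1(J)\leftrightarrow\cogen^1({}_B\kdual B)$ sending $\add J$ to $\add(\kdual B)$'' is in place, conditions (i)--(iii) and their converses match clause-by-clause with the definition of a $k$-cotilting $B$-module plus the stated $\Ext$-vanishing, essentially formally; this is why the lemma is labelled ``straightforward''. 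I would therefore keep the write-up short: set up $B$ and the duality in one paragraph, then verify the three bullet correspondences in three short paragraphs, each a one-line application of Lemma~\ref{duality}, Lemma~\ref{fullfaith}, and $\cogen^k(J)=\copres^k(J)$.
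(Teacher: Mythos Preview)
Your overall strategy—transport the three conditions along $\kdual(-,J)$ using Lemma~\ref{duality} and Lemma~\ref{fullfaith}—is the paper's approach, but the execution has real errors.

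First, a variance slip running through Steps~1 and~3: the functor $\kdual(-,J)=\kdual\circ\Hom_\Ga(-,J)$ is a composite of two contravariant functors, hence \emph{covariant} and exact (since $J$ is injective). It is not a ``duality''. So (i) maps to $0\to\kdual(N,J)\to\kdual(J_0,J)\to\cdots\to\kdual(J_k,J)\to 0$, an injective coresolution giving $\id_B\kdual(N,J)\le k$; and (iii) maps to $0\to\kdual(N_k,J)\to\cdots\to\kdual(N_0,J)\to\kdual(J,J)\to 0$. You write both backwards. Relatedly, $\kdual(J,J)=\kdual\End_\Ga(J)\cong\kdual B$ as a left $B$-module, not ${}_BB$; so (iii) transports to precisely the cotilting generation clause, a finite $\add(\kdual(N,J))$-resolution of the injective cogenerator $\kdual B$.

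Second, and more substantively, you locate the extra vanishing $\Ext^i_B(\kdual J,\kdual(N,J))=0$ in condition (iii). It actually comes from (i). The paper's organizing device is the adjoint pair $\bigl(\kdual(-,J),\,r:=\Hom_B({}_B\kdual J,-)\bigr)$ with ${}_B\kdual J$ a generator; via Lemma~\ref{duality} this pair restricts to an equivalence
\[
\kdual(-,J)\colon \cogen^{k+1}(J)\;\longleftrightarrow\;\bigcap_{i=1}^{k}\Ker\Ext^i_B(\kdual J,-)\colon r.
\]
Condition (i) puts $N$ in $\cogen^{\infty}(J)\subseteq\cogen^{k+1}(J)$, whence the Ext-vanishing follows at once. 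For the converse $(2)\Rightarrow(1)$ this equivalence is what does the work: the Ext-vanishing is exactly the hypothesis that makes $r$ exact on an injective coresolution of $\kdual(N,J)$ and on the $\add(\kdual(N,J))$-resolution of $\kdual B$, so that both sequences pull back (via $r$, with $r(\kdual B)\cong J$ and $r(\kdual(N,J))\cong N$) to the exact sequences (i) and (iii) over $\Ga$. Your ``run the same duality backwards'' is too vague here; without singling out $r$ and the role of the Ext-vanishing you cannot justify why the pulled-back complexes are exact.

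Self-orthogonality (Step~2) is, as you say, handled by Lemma~\ref{fullfaith}(2); no separate tracking of cosyzygies is needed once you know $N\in\cogen^{k+1}(J)$.
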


\begin{proof} 
Let $B= \End_{\Ga} (J)^{op}$. 
The injective module ${}_{\Ga}J$ induces a restriction functor $\kdual (-,J) \colon \Gamma\!-\!\modu \to B \!-\!\modu $ which has a fully faithful right adjoint $r=({}_B\kdual J,-)$. By Lemma \ref{duality} we get an equivalence of categories 
$\kdual (-,J) \colon \cogen^{k+1}(J) \longleftrightarrow 
\bigcap_{i=1}^k \Ker \Ext_B^i(\kdual J, -)\colon r$  
for every $k \geq 1$ using that ${}_B\kdual J$ is a generator. 
\\
Assume (1), then it is easy to check that $\kdual (N,J)$ is a $k$-cotilting $B$-module since $\kdual (-,J)$ is exact and apply Lemma \ref{fullfaith} (2) to prove the self-orthogonality. Since $N \in \cogen^{k+1} (J)$, we can use the equivalence just mentioned to see that (2) is fulfilled. \\
Assume (2), since $\Ext^i_B(\kdual J, \kdual (N,J))=0$ for $1 \leq i \leq k$ we have that $r$ is exact on an injective coresolution of $S:=\kdual (N,J)$ and on the exact sequence $0 \to S_k \to \cdots \to S_0 \to \kdual B \to 0$ with $S_i \in \add S$. Since $N= (\kdual J, S)$, we can use again Lemma \ref{fullfaith} (2) to see that $N$ is self-orthogonal.   
\end{proof}

\begin{lem} \label{cotilt-plusPerp}
The assignment $[\La ,M] \mapsto [\Ga = \End (M), N={}_{\Ga}M]$ restricts to a bijection between 
\begin{itemize}
\item[(1)] $[\La , M]$ such that $\cogen^{k-1}(M)$ is the perpendicular category of a $k$-cotilting module $C$ which is a summand of $M$.
\item[(2)] $[\Ga ,N]$ such that $N$ is faithfully balanced $J$-restricted $k$-cotilting module for some injective module $J$. 
\end{itemize}
Furthermore, if $[\La , M=C\oplus X] $ is mapped to $[\Ga, N]$ as explained before, then we have $\lvert M \rvert =\lvert \La \rvert +\lvert X\rvert $ and $\lvert N\rvert =\lvert \Ga\rvert -\lvert X\rvert$ and $\End (C) \cong \End_{\Ga}(J)^{op}$, under this isomorphism ${}_{\End (C)} C \cong \kdual (N,J)$ and $\End_{\End(J)}(J)\cong \End_{\End(C)}( (M,C) ) \cong \End_{\La} (M)=\Ga$, therefore ${}_{\Ga}J$ is also faithfully balanced. 
\end{lem}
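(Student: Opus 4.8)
The plan is to use that the underlying assignment $(*)$, $[\La,M]\mapsto[\End_\La(M),{}_{\End_\La(M)}M]$, is already known to be an involution on pairs with $M$ faithfully balanced; so it suffices to prove the two inclusions ``(1) maps into (2)'' and ``(2) maps into (1)'', after which the two restrictions are automatically mutually inverse bijections, and the asserted identities can be read off along the way. For the first inclusion, take $[\La,M]$ as in (1), write $M=C\oplus X$ with $C$ a $k$-cotilting summand and $X\in{}^{\perp}C:=\bigcap_{i\geq1}\Ker\Ext^i_\La(-,C)$ (chosen with no indecomposable summand in $\add C$), and set $\Ga=\End_\La(M)$, $N={}_\Ga M$ (faithfully balanced for free) and $J:=\kdual\Hom_\La(M,C)$. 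By the $\kdual$-projectivization of Lemma \ref{projectivization}(2), $J$ is an injective $\Ga$-module with $\End_\Ga(J)\cong\End_\La(C)^{\op}$, so $B:=\End_\Ga(J)^{\op}\cong\End_\La(C)$.

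\emph{From (1) to (2).} Using $C\in\gen_1({}_\La M)$, tensor--hom adjunction together with Lemma \ref{cogen-k}(2) gives $\Hom_\La(M,C)\otimes_\Ga M\cong C$, hence $\Hom_\Ga(N,J)\cong\kdual C$ and therefore $\kdual(N,J)\cong{}_{B}C$ as a $B$-module under the identification $B\cong\End_\La(C)$. Since $M\in{}^{\perp}C=\cogen^{k-1}(C)$, one checks from the cotilting (co)resolutions of $C$ and the dualities of Lemma \ref{duality} that $N\in\cogen^{k+1}(J)$, so Lemma \ref{res-cotilt} reduces ``$N$ is $J$-restricted $k$-cotilting'' to: (a) $\kdual(N,J)\cong{}_BC$ is $k$-cotilting over $B$, which is the classical cotilting theorem applied to $C$ over $\End_\La(C)$; and (b) $\Ext^i_B(\kdual J,{}_BC)=0$ for $1\leq i\leq k$. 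For (b), $\kdual J\cong\Hom_\La(M,C)=\End_\La(C)\oplus\Hom_\La(X,C)$ as a $B$-module, so it suffices that $\Ext^i_B(\Hom_\La(X,C),{}_BC)=0$, which holds for all $i\geq1$ because the Brenner--Butler cotilting duality $\Hom_\La(-,C)$ carries ${}^{\perp}C$ into $\bigcap_{i\geq1}\Ker\Ext^i_B(-,{}_BC)$ and $X\in{}^{\perp}C$. This puts $[\Ga,N]$ in (2). The stated identities follow from the same ingredients: $\End(C)\cong\End_\Ga(J)^{\op}$ and ${}_{\End(C)}C\cong\kdual(N,J)$ were just obtained; combining the $\kdual$-duality $\add({}_\La M)\leftrightarrow\mcI(\Ga)$ with the fact that $\Hom_\La(-,C)$ is a duality on ${}^{\perp}C\ni M$ gives $\End_{\End(J)}(J)\cong\End_{\End(C)}(\Hom_\La(M,C))\cong\End_\La(M)=\Ga$, whence ${}_\Ga J$ is faithfully balanced; and $|\Ga|=|M|=|C|+|X|=|\La|+|X|$ (a $k$-cotilting module has $|C|=|\La|$, and $C,X$ share no summand), while $|N|=|{}_\Ga M|=|\kdual(N,J)|=|{}_BC|=|\La|=|\Ga|-|X|$ using that $\kdual(-,J)$ is a duality on $\cogen^{k+1}(J)\ni N$ (from the proof of Lemma \ref{res-cotilt}).

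\emph{From (2) to (1).} Conversely, let $[\Ga,N]$ be as in (2) with injective $\Ga$-module $J$, put $\La=\End_\Ga(N)$ and $M={}_\La N$. Since $J$ is injective, $J\cong\nu_\Ga(\Ga e)$ for an idempotent $e\in\Ga$; set $C:=\Hom_\Ga(\Ga e,N)=eN$ and $X:=(1-e)N$, so $M=C\oplus X$ over $\La$. Now run the same chain of functors backwards: by Lemma \ref{res-cotilt}, $S:=\kdual(N,J)$ is a $k$-cotilting $B:=\End_\Ga(J)^{\op}$-module with $\Ext^i_B(\kdual J,S)=0$ for $1\leq i\leq k$; the classical cotilting theorem makes ${}_{\End_B(S)}S$ a $k$-cotilting module, and via adjunction together with Lemmas \ref{cogen-k} and \ref{fullfaith} one identifies $\End_B(S)\cong\La$ and ${}_\La S\cong C=eN$, so ${}_\La C$ is $k$-cotilting; the vanishing $\Ext^i_B(\kdual J,S)=0$ translates through $\Hom_\La(-,C)$ into $X\in{}^{\perp}C$; and Lemma \ref{dualSummand} yields $\cogen^{k-1}(M)=\cogen^{k-1}(C)={}^{\perp}C$, the last equality being the standard $\cogen$-characterization of $k$-cotilting modules (cf.\ \cite{W}). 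Thus $[\La,M]$ lies in (1), and since $(*)$ is an involution the two restrictions are inverse bijections with all claimed identities.

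\emph{Main obstacle.} The delicate part is the bookkeeping of the left/right module and bimodule structures through the four contravariant functors and the resulting $\op$'s — in particular checking that the isomorphisms $\kdual(N,J)\cong{}_BC$ and $\End_{\End(J)}(J)\cong\Ga$ respect the relevant module structures — and, in the direction (2)$\Rightarrow$(1), matching the relative Ext-vanishing $\Ext^i_B(\kdual J,S)=0$ with the perpendicularity $X\in{}^{\perp}C$ and confirming that $C=eN$ genuinely has injective dimension $\leq k$; this is exactly where Lemma \ref{res-cotilt} and the Brenner--Butler duality do the essential work.
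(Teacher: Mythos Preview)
Your $(1)\Rightarrow(2)$ direction is correct, but it takes a detour through $B=\End_\La(C)$ and Lemma \ref{res-cotilt}, whereas the paper verifies the three defining conditions of ``$J$-restricted $k$-cotilting'' directly: it first observes $\kdual\La\in\cogen^t(M)$ for all $t$ (whence $N$ is self-orthogonal by Lemma \ref{fullfaith}(2)), and then simply applies the exact functor $\kdual(M,-)$ to the two cotilting sequences $0\to C_k\to\cdots\to C_0\to\kdual\La\to 0$ and $0\to C\to I_0\to\cdots\to I_k\to 0$ (exactness uses $M\in{}^{\perp}C$) to obtain the required sequences $0\to N\to J_0\to\cdots\to J_k\to 0$ and $0\to N_k\to\cdots\to N_0\to J\to 0$. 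Your route is valid but reproves part of what Lemma \ref{res-cotilt} already encodes.

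In your $(2)\Rightarrow(1)$ direction there is a genuine gap. The step ``the vanishing $\Ext^i_B(\kdual J,S)=0$ translates through $\Hom_\La(-,C)$ into $X\in{}^{\perp}C$'' is circular as written: the Brenner--Butler duality $(-,C)\colon{}^{\perp}({}_\La C)\leftrightarrow{}^{\perp}({}_BC)$ only lets you pull an object back to ${}^{\perp}C$ if you already know it lies in the essential image, i.e.\ that $M\in\cogen^1(C)$; but that is (part of) what you are trying to prove. Similarly, concluding that ${}_\La C$ is $k$-cotilting from ``${}_{\End_B(S)}S$ is $k$-cotilting'' requires identifying the natural map $\La\to\End_B(S)$ as an isomorphism compatible with the module structures, and you do not carry this out (you flag it as the main obstacle but do not resolve it).

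The paper avoids both issues by staying on the $\Ga$-side and using Lemma \ref{fullfaith} directly. From condition (iii) plus self-orthogonality one has $J\in\gen_\infty(N)$, so the dual of Lemma \ref{fullfaith}(2) gives
\[
\Ext^i_\La(M,C)=\Ext^i_\La\bigl(\kdual(N,\kdual\Ga),\kdual(N,J)\bigr)\cong\Ext^i_\Ga(J,\kdual\Ga)=0
\]
for all $i\ge1$, yielding $M\in{}^{\perp}C$ (hence also $C$ self-orthogonal) without ever invoking Brenner--Butler. The remaining cotilting conditions for ${}_\La C$ then drop out by applying $\kdual(N,-)$ to the two sequences in the definition of $J$-restricted $k$-cotilting. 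This is both shorter and avoids the $\op$-bookkeeping entirely.
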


We remark that in the previous lemma in (1) the tilting module $C$ does not have to be mentioned since it can be reobtained as the Ext-injectives in $\cogen^{k-1}(M)$. Recall that a module $I\in \cogen^{k-1}(M)$ is Ext-injective if $\Ext^1(N,I)=0$ for all $N\in \cogen^{k-1}(M)$. Similar, in (2) a restricted cotilting module $N$ is restricted to a unique injective module which is obtained as the direct sum of the injectives appearing in an injective coresolution of $N$.

\begin{proof}
$(1)\mapsto (2):$ Let $[\La , M]$ be as in (1). Since $\kdual \La \in \cogen^t(M)$ for all $t\geq 0$ (cf. remark at the beginning of this section), we conclude that ${}_{\Ga}M=N$ is self-orthogonal by Lemma \ref{fullfaith} (2). Since $C$ is a $k$-cotilting module, we have two exact sequences of $\La$-modules
\[
\begin{aligned}
(1) & \quad 
0 \to C_k \to C_{k-1}\to \cdots \to C_0 \to \kdual \La \to 0  \\
(2) &\quad 
0  \to C \to I_{0}\to \cdots \to I_{k-1} \to I_k\to 0  
\end{aligned}
\]
with $C_i \in \add(C), I_i \in \add\kdual \La$, $0\leq i\leq k$.  
Since $M \in {}^{{}_{0<}\perp }C$ we have $\Ext^i_{\La} (M,C)$ for all $i\geq 1$, this implies that $\kdual (M,-)$ is exact on both of the sequences, so if we denote $J=\kdual (M,C)\in \add \kdual \Ga $, then we obtain two exact sequences with $N={}_{\Ga}M=\kdual (M, \kdual \La)$ 
\[
\begin{aligned}
(1') & \quad 
0 \to N \to J_0 \to \cdots \to J_{k-1} \to J_k \to 0  \\
(2') &\quad 
0  \to N_k \to N_{k-1}\to \cdots \to N_0 \to J\to 0  
\end{aligned}
\]
with $J_i \in \add J, N_i\in \add N$, $0\leq i \leq k$. Sequence $(1')$ implies $\id N\leq k$. Since $N$ is self-orthogonal, we see that the functor $\kdual (N,-)$ is exact on sequence $(1')$ and $(2')$. This implies using sequence $(2')$ that $J\in \gen_k(N)$ and $\Omega_N^{k+1} J = 0$. \\
$(2) \mapsto (1)$ Let $\La = \End_{\Ga} (N)$ and $M= {}_{\La} N$, $C= {}_{\La} \kdual (N,J) \in \add(M)$. 
If $N$ is a $J$-restricted $k$-cotilting, we get an isomorphism $0=\Ext_{\Ga}^i(J, \kdual \Ga ) \to  \Ext_{\La}^i(M,C)$ for every $i \geq 1$ by using Lemma \ref{fullfaith}. In particular, $M \in \bigcap_{i\geq 1} \Ker \Ext^i_{\La} (-,C)$ and $C$ self-orthogonal. 
It is straight-forward to see that the functor $\kdual (N,-)$ is exact on the two exact sequences in the definition of the $J$-restricted $k$-cotilting module and that these yield the two exact sequences to see that $C$ is a $k$-cotilting module. 

\end{proof}
We can refine the previous two lemmas, for that we will use the following four assignments for triples of finite-dimensional algebras together with two modules: 
\begin{itemize}
\item
the Auslander-Solberg assignment (AS), 
\item
the dual Auslander-Solberg assignment (dual AS), 
\item 
swap $s ([\La , M, G] ):= [\La , G, M] $, and 
\item 
passing to the opposite algebra $(-)^{\rm{op}}([\La , M, G] ):= [\La^{\rm op} , \kdual M, \kdual G] $. 
\end{itemize}
We remark that if all involved modules are faithfully balanced then each of the assignments is self-inverse. 

\begin{thm}
We consider the following triples 
\begin{itemize}
\item[(1)] $[\La , M , C]$ such that $C\in \add(M)$, 
$\cogen^{k-1}(M)=\cogen^{k-1}(C) = {}^{{}_{0<}\perp} C$,   
\item[(2)] $[\Ga ,N , J]$ such that $J\in \add \kdual \Ga$, 
$N$ is faithfully balanced $J$-restricted $k$-cotilting module,
\item[(3)] $[B, G, Q]$ such that $Q$ is a $k$-cotilting module, $B\in \add G$ and $G\in \cogen^{k-1}(Q)$.
\end{itemize}
Then the following diagram of bijective assignments is well-defined and commutes 
\[
\xymatrix{
(1)\ar[rr]^{\text{(dual AS)}} \ar[dr]_{s\circ (AS)\circ s}&&(2)\ar[dl]^{(-)^{\rm op}\circ (AS)\circ s} \\
&(3)&
}
\]

\end{thm}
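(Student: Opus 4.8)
The plan is to make each of the three composite assignments explicit on a representative triple, read off well‑definedness from Lemma~\ref{cotilt-plusPerp}, Lemma~\ref{duality}, Lemma~\ref{fullfaith} and the classical cotilting correspondence, and then check that the resulting identifications are mutually compatible, so that the triangle commutes and all three maps are bijective. Throughout I write $M=C\oplus X$ with $X\in{}^{{}_{0<}\perp}C=\cogen^{k-1}(C)$, $\Ga=\End_\La(M)$ and $B=\End_\La(C)$ (so $M$ is faithfully balanced, $C$ being a dualizing summand of $M$ by Lemma~\ref{dualSummand}), and I use that each elementary assignment $(\mathrm{AS})$, $(\text{dual AS})$, $s$, $(-)^{\mathrm{op}}$ is an involution on the class of triples it is applied to: for $(\mathrm{AS})$ and $(\text{dual AS})$ this needs the two non‑algebra entries to be faithfully balanced and to lie in $\cogen^1$ (resp.\ $\gen_1$) of the middle one (Lemma~\ref{duality}); for $s$ and $(-)^{\mathrm{op}}$ it is formal.

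First I would treat $(1)\to(2)$. Applying $(\text{dual AS})$ to $[\La,M,C]$ gives $[\Ga,\,{}_\Ga M,\,J]$ with $J=\kdual\Hom_\La(M,C)$. By Lemma~\ref{cotilt-plusPerp} the two‑entry assignment $[\La,M]\mapsto[\Ga,{}_\Ga M]$ is already a bijection onto the pairs occurring in (2), and the same lemma records $J=\kdual(M,C)\in\add\kdual\Ga$ together with $\End_\Ga(J)^{\mathrm{op}}\cong B$ and ${}_BC\cong\kdual\Hom_\Ga({}_\Ga M,J)$. Since the third entry $C$ is recoverable from $M$ as the $\Ext$‑injectives of $\cogen^{k-1}(M)$, and $J$ is recoverable from ${}_\Ga M$ as the injective part of a minimal injective coresolution, the assignment on triples is a bijection as well; that it is its own inverse reduces to the double‑dual computations $\End_\Ga({}_\Ga M)=\La$ and $\kdual\Hom_\Ga({}_\Ga M,J)\cong\Hom_\La(M,C)\otimes_\Ga M\cong C$, the last because $C\in\add(M)\subseteq\gen_1({}_\La M)$ (Lemma~\ref{cogen-k}(2)).

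Next I would treat $(1)\to(3)$. Here $s\circ(\mathrm{AS})\circ s$ sends $[\La,M,C]$ to $[B,\,\Hom_\La(M,C),\,{}_BC]$. Now ${}_BC$ is a $k$‑cotilting $B$‑module by the classical cotilting correspondence; $B=\Hom_\La(C,C)$ is a summand of $\Hom_\La(C\oplus X,C)=\Hom_\La(M,C)$; and $\Hom_\La(M,C)\in\cogen^{k-1}({}_BC)$ because $M\in\cogen^{k-1}({}_\La C)$, so that Lemma~\ref{duality} places $\Hom_\La(M,C)$ in $\cogen^1({}_BC)\cap\bigcap_{i=1}^{k-2}\Ker\Ext^i_B(-,{}_BC)$, and the dual of Lemma~\ref{fullfaith}(2), together with $\Ext^i_\La(M,C)=0$ for $i\geq 1$ (i.e.\ $M\in{}^{{}_{0<}\perp}C$) and the double‑dual identification $\Hom_B(\Hom_\La(M,C),{}_BC)\cong M$, upgrades this to $\cogen^{k-1}({}_BC)$. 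So the image lies in (3). The inverse runs the same composite backwards: $(\mathrm{AS})$ is self‑inverse on triples $[\La,C,M]$ with $C$ faithfully balanced (true, $C$ is cotilting) and $M\in\cogen^1(C)$ (true), and to land back in (1) one uses Lemma~\ref{dualSummand} for the equalities among the $\cogen^{k-1}$'s together with the characterisation of the perpendicular category of a $k$‑cotilting module as its $\cogen^{k-1}$.

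Finally I would handle $(2)\to(3)$ and the commutativity together. Every $[\Ga,N,J]\in(2)$ is the $(\text{dual AS})$‑image of a unique $[\La,M,C]\in(1)$, so $J=\kdual\Hom_\La(M,C)$ is \emph{naturally} a right $B$‑module, with the canonical map $B\to\End_\Ga(J)^{\mathrm{op}}$ the isomorphism of Lemma~\ref{cotilt-plusPerp}. Then $(-)^{\mathrm{op}}\circ(\mathrm{AS})\circ s$ sends $[\Ga,N,J]$ to $[\End_\Ga(J)^{\mathrm{op}},\,\kdual({}_{\End_\Ga(J)}J),\,\kdual\Hom_\Ga(N,J)]$, and reading $J$ as a right $B$‑module identifies this with $[B,\,\kdual\kdual({}_B\Hom_\La(M,C)),\,{}_BC]=[B,\,\Hom_\La(M,C),\,{}_BC]$, which is exactly the image of $[\La,M,C]$ under $s\circ(\mathrm{AS})\circ s$; hence the triangle commutes, and since $(1)\to(2)$ and $(1)\to(3)$ are bijections so is $(2)\to(3)$, with image (3). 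The step I expect to be the genuine obstacle is not any single homological estimate but the simultaneous bookkeeping of the three dualities $\Hom_\La(-,M)$, $\Hom_\La(-,C)$ and $\kdual$: one must keep track of which algebra every object is a module over — in particular of $J=\kdual\Hom_\La(M,C)$ as a \emph{right} $B$‑module and of $\End_\Ga(J)\cong B^{\mathrm{op}}$ — since this is what makes the triangle of canonical isomorphisms close up; a secondary point is checking that all the $\cogen^{k-1}$‑statements (rather than just $\cogen^1$) follow, via the refined parts of Lemma~\ref{duality} and Lemma~\ref{fullfaith}, from the single hypothesis $\cogen^{k-1}(M)={}^{{}_{0<}\perp}C$.
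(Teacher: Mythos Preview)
Your argument is correct, but it is organised differently from the paper. The paper proves $(1)\leftrightarrow(2)$ via Lemma~\ref{cotilt-plusPerp} (as you do), proves $(2)\leftrightarrow(3)$ directly by invoking Lemma~\ref{res-cotilt} (which characterises $J$-restricted $k$-cotilting $N$ precisely by ${}_B\kdual(N,J)$ being $k$-cotilting together with $\Ext^i_B(\kdual J,\kdual(N,J))=0$, i.e.\ $\kdual J\in\cogen^{k-1}({}_B\kdual(N,J))$), and then reads off $(1)\leftrightarrow(3)$ and the commutativity from the identifications $\End_\Ga(J)^{\mathrm{op}}\cong B$, ${}_B\kdual J\cong{}_B(C,M)$, ${}_B\kdual(N,J)\cong{}_BC$ already recorded in Lemma~\ref{cotilt-plusPerp}. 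You instead establish $(1)\leftrightarrow(3)$ by hand and deduce $(2)\leftrightarrow(3)$ from commutativity; this forces you to verify $(M,C)\in\cogen^{k-1}({}_BC)$ and its converse explicitly. Your verification is fine once one notes that $\cogen^{k-1}(C)=\cogen^\infty(C)$ for a $k$-cotilting module (Theorem~\ref{weisresult}), so that Lemma~\ref{duality} applied in all degrees gives $(M,C)\in\bigcap_{i\geq1}\Ker\Ext^i_B(-,{}_BC)=\cogen^{k-1}({}_BC)$; your reference to ``the dual of Lemma~\ref{fullfaith}(2)'' is not quite the right citation for this step. The paper's route is shorter because Lemma~\ref{res-cotilt} packages exactly this $\cogen^{k-1}$-upgrade, whereas your route has the virtue of making the role of the elementary assignments $s$, $(\mathrm{AS})$, $(\text{dual AS})$, $(-)^{\mathrm{op}}$ fully transparent.
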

\begin{proof}
The correspondence between (1) and (2) is Lemma \ref{cotilt-plusPerp}, using also its proof to see $J=\kdual (M,C)$ in this case. 
The correspondence between (2) and (3) is Lemma \ref{res-cotilt}. \\
Let $[\La, M, C]$ be as in (1) corresponding to $[\Ga , N, J]$ under the dual AS assignment. 
Now, we observe in Lemma \ref{cotilt-plusPerp} also that we have $\End_{\Ga}(J)^{\rm op} \cong \End_{\La} (C)=:B$ and using this isomorphism 
we have 
${}_{B}\kdual J \cong {}_B(C,M)$, ${}_{B} \kdual (N,J) \cong {}_B C$. This implies that the whole diagram is commutative and the correspondence between (1) and (3) is a consequence of this. 
\end{proof}

\begin{exa} Let $\La$ be the path algebra of the quiver $1\to 2 \to 3$ over some field. 
Let $M$ be the $\La$-module $P_2 \oplus P_1 \oplus S_2 \oplus I_2$. The we have $M=C \oplus X$ for $C=P_1 \oplus I_2\oplus S_2$ a $1$-cotilting module and $X=P_2 \in \cogen (C)$. We identify $\Ga $ with the commuting square 
\[ 
\begin{aligned}
\xymatrix@-1pc{ &b\ar[dr]^{\beta}& \\ a\ar[ur]^{\al}\ar[dr]_{\gamma}&& c\\ &d\ar[ur]_{\delta}& }  &\quad \xymatrix{ \\
,\beta \alpha -\delta \gamma = 0 \\ }
\end{aligned}
\]
via $a=[P_2], b=[P_1], c=[I_2], d=[S_2]$. Then ${}_{\Ga} M= I_b \oplus I_c \oplus P_b$ is faithfully balanced, self-orthogonal and it has injective dimension $1$, the injective coresolution of $P_b$ is given by 
$0\to P_b \to I_c \to I_d \to 0$ and we observe that $(M,-)$ is exact on it. We consider the injective module $J= I_b \oplus I_c \oplus I_d$, the previous exact sequence  shows $J \in \gen_1(M)$ and $\Omega^{2}_MJ=0$.  

\end{exa}

\section{On categories relatively cogenerated by a module}

Let $M \in \Lamod$. We recall from \cite{ASoI} that one can associate two additive subbifunctors $\mbF_M, \mbF^M \subseteq \Ext^1(-,-)$ to the subcategory $\add(M)$ defined for $(C,A) \in (\Lamod)^{op} \times \Lamod$ as follows 
\[ 
\begin{aligned}
\mbF^M (C,A)&= \{0 \to A \to B \to C\to 0 \mid \Hom_{\La}(-,M) \text{ is exact on it}\}\\
\mbF_M (C,A)&=\{0 \to A \to B \to C\to 0 \mid \Hom_{\La}(M,-) \text{ is exact on it}\}.
\end{aligned}
\]
An exact sequence in $\La$-modules is called $\mbF^M$ exact if and only if $\Hom_{\La}(-,M)$ is exact on it and the category $\mcI (\mbF_M) =\add (M\oplus \kdual \La)$ is called the category of $\mbF_M$-injectives. \\
An exact sequence in $\La$-modules is called $\mbF_M$ exact if and only if $\Hom (M,-)$ is exact on it and the category $\mcP (\mbF_M) =\add (M\oplus \La)$ is called the category of $\mbF_M$-projectives.

If $\mbF \subseteq \Ext^1(-,-)$ is a subbifunctor, we will say a monomorphism $f:X \to Y$ is an $\mbF$-monomorphism if the short exact sequence $0\to X \xrightarrow{f} Y \to \coKer f \to 0 $ is $\mbF$-exact, dually we define $\mbF$-epimorphism. We say a left exact sequence of morphisms is $\mbF$-exact if all inclusions of images are $\mbF$-monomorphisms, dually we define a right exact map to be $\mbF$-exact if all epimorphisms on cokernels are $\mbF$-epimorphisms. Compositions of $\mbF$-monomorphisms (resp. $\mbF$-epimorphisms) are again $\mbF$-monomorphisms (resp. $\mbF$-epimorphisms).

In the two new exact structures, we have
\begin{itemize}
\item[(1)] $\cogen^k(M)$ is the category of modules $N$ such that
there exists an $\mbF^M$-exact sequence  
\[
0\to N\to M_0 \to \cdots \to M_k
\]
with $M_i\in \add (M)$. Since $M$ is $\mbF^M$-injective, this sequence can be seen as the beginning of an $\mbF^M$-injective coresolution.
\item[(2)] $\gen_k(M)$ is the category of modules $N$ such that
there exists an $\mbF_M$-exact sequence  
\[
M_k \to \cdots \to M_0 \to N\to 0
\]
with $M_i\in \add (M)$. Since $M$ is $\mbF_M$-projective, this sequence can be seen as the beginning of an $\mbF_M$-projective resolution.
\end{itemize}

By \cite[Proposition 1.7]{ASoI}, we have that $\mbF^M, \mbF_M$ are both additive subbifunctors of $\Ext^1_{\La}(-,-)$ with enough projectives and enough injectives. Therefore, one can define for $\mbF\in \{\mbF^M,\mbF_M\}$ the derived right functors $\Ext^i_{\mbF}(-,-)$, $i\geq 1$, these are defined by using $\mbF$-injective coresolutions or $\mbF$-projective resolutions. 

There exist additive subbifunctors of $\Ext^1_{\La}$ which are not of the form $\mbF^M$ or $\mbF_M$, see \cite{Bu-Closed} or \cite{DRSS}. However, according to \cite{ASoII}, the existence of $\mbF$-cotilting modules is equivalent to $\mbF$ is of the form $\mbF=\mbF_G=\mbF^H$ for a generator $G$ and a cogenerator $H$, and in this case $H=\tau G \oplus \kdual\La$ and $G=\tau^-H \oplus \La$. Such a functor is called an additive subbifunctor (of $\Ext^1_{\La}$) of \emph{finite type}. As one of our main results, we will prove (in section 8) the relative (co)tilting correspondence. So, in this paper, we will only consider the additive subbifunctors of finite type. Note that, by definition, for any module ${_{\La}}M$ we have $\mbF_{M}=\mbF_{M\oplus \La}$ and $\mbF^{M}=\mbF^{M\oplus \kdual\La}$.

We define two new full subcategories of $\Lamod$
\[ 
\begin{aligned}
\cogen^k_{\mbF}(M) &:= \bigg\{ N \;\bigg\vert  \;\begin{aligned} \exists\; \mbF\text{-exact seq.} \; 0\to & N \to M_0 \to \cdots \to M_k \ \text{with}\  M_i \in \add (M),\ \text{and s.t.} \ \\  \Hom(M_k,M)\to & \cdots \to \Hom (M_0,M) \to \Hom(N,M) \to 0 \;\;\text{ is exact}\end{aligned} \bigg\} \\ 
\gen_k^{\mbF} (M) &:= \bigg\{ N \;\bigg\vert  \;\begin{aligned} \exists \; \mbF\text{-exact seq} \; M_k & \to \cdots \to M_0 \to N \to 0 \text{with}\ M_i \in \add (M),\ \text{and s.t.}\\  \Hom(M, M_k)\to & \cdots \to \Hom (M, M_0) \to \Hom(M, N) \to 0 \;\;\text{ is exact}\end{aligned} \bigg\}.
\end{aligned}
\]
Similarly, we can define $\copres^k_{\mbF}(M)$ and $\pres_k^{\mbF}(M)$. Then, we have $\cogen^k(M) =\copres^k_{\mbF^M} (M)= \cogen^k_{\mbF^M} (M)$ and $\gen_k(M)= \pres_k^{\mbF_M}(M)= \gen_k^{\mbF_M}(M)$. 

\begin{exa} \label{exa-cogen-F} Let $\mbF=\mbF_G=\mbF^H$ for a generator $G$ and a cogenerator $H$ and $M$ be a module with $\Ext^i_{\La}(G,M)=0$ (resp. $\Ext^i_{\La}(M,H)=0$), $1 \leq i \leq k+1$ for some $k\geq 0$. Then one has 
\[ 
\cogen^k_{\mbF} (M)=\cogen^k (M) \cap \bigcap_{i=1}^{k+1} \Ker \Ext^i_{\La} (G,-) \quad (\text{resp. } \gen_k^{\mbF}(M)=\gen_k(M)\cap \bigcap_{i=1}^{k+1} \Ker \Ext^i_{\La} (-,H) \;).
\]
\end{exa}

\begin{lem} Let $\mbF=\mbF_G=\mbF^H$ for a generator $G$ and a cogenerator $H$. 
A module $Z\in \cogen^k(M)$ is in $\cogen_{\mbF}^k(M)$ if and only if the short exact sequences  \[0 \to \Omega_M^{-i} Z \xrightarrow{f_i} M_i \to \Omega_M^{-(i+1)} Z \to 0\]
with $f_i$ minimal left $\add (M)$-approximation are $\mbF$-exact for $0 \leq i \leq k$.
\end{lem}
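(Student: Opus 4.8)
The plan is to unwind the definitions on both sides and observe that the two minimal approximation resolutions agree, so that the only distinction between being in $\cogen^k(M)$ and in $\cogen^k_{\mbF}(M)$ is whether the (essentially unique) exact sequence realizing membership can be chosen $\mbF$-exact. First I would recall that for $Z\in\cogen^k(M)$, Lemma \ref{cogen-k} (or directly the construction via minimal left $\add(M)$-approximations) produces a canonical exact sequence
\[ 0\to Z\xrightarrow{f_0} M_0\xrightarrow{f_1} M_1\to\cdots\to M_k \]
with $M_i\in\add(M)$, where each $f_i$ factors as $\Omega_M^{-i}Z\to M_i$ through a minimal left $\add(M)$-approximation, i.e. it is built from the short exact sequences $0\to\Omega_M^{-i}Z\to M_i\to\Omega_M^{-(i+1)}Z\to 0$ on which $\Hom_{\La}(-,M)$ is exact; moreover any exact sequence witnessing $Z\in\cogen^k(M)$ has these minimal short exact sequences as direct summands, so they are intrinsic to $Z$. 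This is the content already used in Lemma \ref{duality} and Lemma \ref{dualSummand}.

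Next, for the ``if'' direction: assume the short exact sequences $0\to\Omega_M^{-i}Z\to M_i\to\Omega_M^{-(i+1)}Z\to 0$ are $\mbF$-exact for $0\le i\le k$. Splicing $\mbF$-monomorphisms gives $\mbF$-monomorphisms (stated in the excerpt), hence the composite sequence $0\to Z\to M_0\to\cdots\to M_k$ is $\mbF$-exact as a left exact sequence of morphisms; since $\Hom_{\La}(-,M)$ is exact on it by the $\cogen^k$ property, this exhibits $Z\in\cogen^k_{\mbF}(M)$ directly from the definition of $\cogen^k_{\mbF}(M)$.

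For the ``only if'' direction: suppose $Z\in\cogen^k_{\mbF}(M)$, so there is \emph{some} $\mbF$-exact sequence $0\to Z\to M_0'\to\cdots\to M_k'$ with $M_i'\in\add(M)$ and $\Hom_{\La}(-,M)$ exact on it. I would argue that this sequence splits (as a sequence) into short exact sequences $0\to\Omega_M^{-i}Z\to M_i'\to\Omega_M^{-(i+1)}Z\to 0$ up to summands in $\add(M)$ — this is the standard Schanuel-type comparison: a left $\add(M)$-approximation exists and is unique up to the minimal one plus a split summand in $\add(M)$, and an $\mbF$-exact sequence $0\to U\to M'\to V\to 0$ with $M'\in\add(M)$ remains $\mbF$-exact after deleting a split summand $0\to 0\to N\xrightarrow{\sim}N\to 0$ with $N\in\add(M)$ (since $\mbF$ is additive and such a split sequence is $\mbF$-exact). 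Iterating down the resolution, each $M_i'$ is a minimal piece plus a summand in $\add(M)$, and the minimal short exact sequence $0\to\Omega_M^{-i}Z\to M_i\to\Omega_M^{-(i+1)}Z\to 0$ is then a direct summand of the $i$-th short exact sequence extracted from the $\mbF$-exact sequence; a direct summand of an $\mbF$-exact sequence is $\mbF$-exact because $\mbF$ is an additive subbifunctor (closed under summands of sequences). Hence each minimal short exact sequence is $\mbF$-exact, as claimed.

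The main obstacle, and the step I would write out most carefully, is the last comparison: showing that the minimal approximation short exact sequences appear as direct summands of the short exact sequences coming from an arbitrary $\mbF$-exact witnessing sequence, and that passing to such a summand preserves $\mbF$-exactness. Everything else is formal manipulation of the definitions, but this step requires the uniqueness-up-to-$\add(M)$-summand property of minimal left approximations together with the additivity (closure under direct summands of short exact sequences) of the subbifunctor $\mbF=\mbF_G=\mbF^H$; I would invoke \cite[Proposition 1.7]{ASoI} / \cite{DRSS} and the discussion of $\mbF$-monomorphisms in the excerpt for these facts.
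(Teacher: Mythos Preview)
Your proof is correct. The ``if'' direction is exactly what the paper (implicitly) does: splice the $\mbF$-exact short sequences using that compositions of $\mbF$-monomorphisms are $\mbF$-monomorphisms.

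For the ``only if'' direction your Schanuel-type argument works, but the paper takes a shorter route. Instead of showing that the minimal short exact sequence is a direct summand of the extracted one and then invoking closure of $\mbF$-exact sequences under summands, the paper records a single observation: since $\mbF=\mbF^H$, a monomorphism $f$ is an $\mbF$-monomorphism if and only if $\Hom_{\La}(f,H)$ is surjective; hence if an $\mbF$-monomorphism factors as $f=\alpha\beta$, then $(\beta,H)$ is already surjective and $\beta$ is an $\mbF$-monomorphism. Applied inductively, the given $\mbF$-monomorphic left $\add(M)$-approximation $Z_i\to M_i'$ factors through the minimal one $f_i$, so $f_i$ is automatically an $\mbF$-monomorphism. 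This bypasses the decomposition of $M_i'$ and the appeal to additivity of $\mbF$ on short exact sequences. What your approach buys is that it makes the comparison of the two coresolutions completely explicit (useful if one wants to track the summands), whereas the paper's approach isolates the single fact that carries the whole argument and needs no bookkeeping.
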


\begin{proof} 
It is enough to observe the following: If $f\colon X \to Y$ is an $\mbF$-monomorphism and $\mbF=\mbF^H$, then this is equivalent to $(f,H)$ being surjective. So if an $\mbF$-monomorphism $f$ factors as $f= \al \beta$, then 
$\beta $ also has to be an $\mbF$-monomorphism. 
\end{proof} 

\begin{exa} 
Let $M$ be any module and $k \geq 0$, then $\cogen^k(M)= \cogen^k_{\mbF^M} (M)= \copres^k_{\mbF^M}(M)$ is closed under summands and 
is $\mbF^M$-extension closed since $M$ is $\mbF^M$-injective.

For $k \geq 1$, it is closed under kernels of $\mbF^M$-epimorphisms $X \to Y$ with $X,Y \in \cogen^k(M)$. 
For $k= \infty $ it is also closed under cokernels of $\mbF^M$-monomorphisms $X \to Y$ with $X,Y \in \cogen^{\infty} (M)$. So, one can define the derived category $\mathsf{D}^b_{\mbF^M}(\cogen^k(M))$, see \cite{Nee,KelDer}. 
It is completely unknown which informations these encode. 
\end{exa}

\subsection{The relative version of faithfully balancedness}

Recall that for a finite-dimensional algebra $\La$ a module ${_{\La}}M$ is faithful if and only if $\La\in\cogen(M)=\cogen^0(M)$, and it is faithfully balanced if and only if $\La\in\cogen^1(M)$. So it makes sense to call a faithful module 0-faithful and call a faithfully balanced module 1-faithful. Of course one can define the notion of $k$-faithful module for any non-negative integer $k$. Since in the relative setting balancedness doesn't make sense, we introduce the following definition.
 
 \begin{dfn} Let $\mbF\subseteq \Ext^1_{\La}(-,-)$ be an additive subbifunctor of finite type and $k$ a non-negative integer. We say a module $M$ is $k$-$\mbF$-faithful if $\mcP(\mbF) \subseteq \cogen^k_{\mbF}(M)$. In particular, a $1$-$\mbF_{\La}$-faithful module is just a faithfully balanced module.
 \end{dfn}
 
 Easy examples of $1$-$\mbF$-faithful modules are $\mbF$-(co)tilting modules (see section 8) and modules which have $G$ or $H$ as a summand. Here is an other easy example. 

\begin{exa}\begin{itemize}
\item[(1)] Let $\La$ be a finite-dimensional algebra, $P_1, \ldots, P_n$ its indecomposable projectives and assume that there is a subset $I \subseteq \{1, \ldots , n\} $ such that 
$M:= \bigoplus_{i\in I} \bigoplus_{j\geq 0} \tau^{-j}P_i$ is  finite-dimensional and faithfully balanced. Then $G=M \oplus \La$ and  $H= M \oplus \kdual \La$ fulfill $\mbF_G =\mbF^H=:\mbF$. Clearly, we have $G \in \cogen^1_{\mbF}(M)$, so $M$ is $1$-$\mbF$-faithful. 
\item[(2)]
  Let $\La$ be a basic Nakayama algebra and assume $M= \bigoplus_{X \colon \text{ indec, not simple} }X$ is faithfully balanced \footnote{this is the case if $\La$ has no simple 
projective-injective and $\tau^- S$ is not simple injective for every $S$ simple projective - for example $\mathbb{A}_n$ fulfills this for $n \geq 3$.}. 
Let $G=M \oplus \bigoplus_{P_i \colon \text{ simple proj}} P_i$ and $H = M \oplus  \bigoplus_{I_i \colon \text{ simple inj}} I_i$. Then we claim: 
\[ \{1\text{-}\mbF \text{-faithful modules}\} = \{ M' \oplus S \mid S \text{ semi-simple}, \add (M')= \add (M) \} \]
Since $M$ is $\mbF$-projective-injective, $M$ has to be summand of every $1$-$\mbF$-faithful module. On the other hand, let $S$ be a semi-simple module, we want to see that $M\oplus S$ is $1$-$\mbF$-faithful. Assume that there is a simple projective $P\notin \add (S)$, since $(P,S)=0=(S,P)$ we have that the minimal left $\add (M\oplus S)$ equals the minimal left $\add (M)$ and the minimal left $\add (H)$-approximation, in particular $G \in \cogen_{\mbF} (M \oplus S)$. Now, we look at the cokernel of the approximation $X= \Omega^-_{M\oplus S} P$, since $M$ is faithfully balanced we have $X \in \cogen (M)$, in particular $X$ has no simple injective summand. 
So, every simple summand $S'\notin \add (S)$ of $X$ has a minimal left $\add(H)$-approximation which coincides with a minimal left $\add(M)$- and  $\add (M \oplus S)$-approximation which is an $\mbF$-monomorphism and therefore, we conclude that $G \in \cogen^1_{\mbF} (M \oplus S)$.   
\end{itemize}
\end{exa}
 
The main result of this subsection is the following 
 
\begin{thm} \label{Rel-fb} Let $\mbF\subseteq \Ext^1(-,-)$ be an addtive subbifunctor of the form $\mbF=\mbF_G=\mbF^H$ 
for a generator $G$ and a cogenerator $H$. 
The following are equivalent for every module $M$ and every $k\geq 0$.
\begin{itemize}
\item[(1)] $G \in \cogen^{k}_{\mbF}(M)$.
\item[(2)] $H \in \gen_{k}^{\mbF}(M)$. 
\end{itemize}
\end{thm}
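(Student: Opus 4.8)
The plan is to halve the statement by duality and then to transport everything to the endomorphism ring of $M$.

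First I would reduce to the single implication $(1)\Rightarrow(2)$. The standard duality $\kdual=\Hom_K(-,K)\colon\Lamod\to\La^{\op}\text{-mod}$ sends $\add(M)$ to $\add(\kdual M)$, sends $\mbF^H$-exact sequences to $\mbF_{\kdual H}$-exact sequences and $\mbF_G$-exact sequences to $\mbF^{\kdual G}$-exact sequences, and hence sends $\mbF=\mbF_G=\mbF^H$ to the subbifunctor $\mbF'=\mbF_{\kdual H}=\mbF^{\kdual G}$, again of finite type, now with generator $\kdual H$ and cogenerator $\kdual G$. Using the natural isomorphism $\Hom_{\La^{\op}}(\kdual M,\kdual X)\cong\Hom_{\La}(X,M)$ one checks that $\kdual$ restricts to a duality $\cogen^k_{\mbF}(M)\leftrightarrow\gen^{\mbF'}_k(\kdual M)$. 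Thus condition $(1)$ for $(\La,\mbF,M)$ is literally condition $(2)$ for $(\La^{\op},\mbF',\kdual M)$, and condition $(2)$ for $(\La,\mbF,M)$ is literally condition $(1)$ for $(\La^{\op},\mbF',\kdual M)$; so the implication $(1)\Rightarrow(2)$ holding for all triples is equivalent to $(2)\Rightarrow(1)$ holding for all triples, and it suffices to prove $(1)\Rightarrow(2)$.

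So assume $G\in\cogen^k_{\mbF}(M)$. Since $\La\in\add(G)\subseteq\cogen^1(M)$ and $\cogen^1(M)$ is closed under summands, $\La\in\cogen^1(M)$, i.e. $M$ is faithfully balanced; set $\Ga=\End_{\La}(M)$. From here I would argue by induction on $k$, using freely the relations $H=\tau G\oplus\kdual\La$, $G=\tau^{-}H\oplus\La$ coming from finite type, the dualities $\Hom_{\La}(-,M)\colon\cogen^1({}_{\La}M)\leftrightarrow\cogen^1({}_{\Ga}M)$ and $\kdual\Hom_{\La}(M,-)\colon\gen_1({}_{\La}M)\leftrightarrow\gen_1({}_{\Ga}M)$ (with their higher refinements) from Lemma \ref{duality}, and the full-faithfulness and Nakayama identity $\nu_{\Ga}\Hom_{\La}(X,M)\cong\kdual\Hom_{\La}(M,X)$ from Lemma \ref{fullfaith}. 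For $k=0$ one has $\cogen^0_{\mbF}(M)=\cogen(M)$ and $\gen^{\mbF}_0(M)=\gen(M)$, and both sides are conditions on the minimal $\add(M)$-approximations of $\La$ and of $\kdual\La$ (that the left approximation of $\La$ be an $\mbF$-mono, resp. that the right approximation of $\kdual\La$ be an $\mbF$-epi); under $\Hom_{\La}(-,M)$ these two sequences become the minimal projective presentation and the minimal injective copresentation of ${}_{\Ga}\Ga$, so the equivalence reduces to the faithful/faithfully-balanced symmetry (Lemma \ref{lagencogen}, cases $k=0,1$) with the $\mbF$-exactness tracked through that single pair of $\Ga$-sequences. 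For the step $k-1\to k$, split off the first syzygy $0\to G\to M_0\to C\to 0$, so $C\in\cogen^{k-1}_{\mbF}(M)$; apply $\Hom_{\La}(-,M)$ to land at a projective $\Ga$-resolution of $L:=\Hom_{\La}(G,M)$ of length $\le k$ carrying the relative-exactness data; match this, via $H=\tau G\oplus\kdual\La$ and the dualizing-sequence correspondence of Lemma \ref{easy} (which for $k=1$ gives $L\cong{}_{\Ga}M\oplus\Omega_M^{2}R$ and $R\cong{}_{\Ga}M\oplus\Omega_M^{-2}L$ for $R:=\kdual\Hom_{\La}(M,H)$), against the cosyzygy data of $R$; then translate back through $\kdual\Hom_{\La}(M,-)$ and apply the induction hypothesis to $C$ to obtain the required $\mbF$-exact $\add(M)$-presentation of $H$, i.e. $H\in\gen^{\mbF}_k(M)$.

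The hard part is entirely the bookkeeping in the inductive step: carrying the \emph{relative} exactness datum --- "the successive syzygy short exact sequences are $\mbF$-exact" --- through the functors $\Hom_{\La}(-,M)$ and $\kdual\Hom_{\La}(M,-)$, and recognising it on the $\Ga$-side as the matching condition for $R$. I would handle this by using both descriptions $\mbF=\mbF^H$ and $\mbF=\mbF_G$ at once: $\mbF^H$-exactness (i.e. $\Hom_{\La}(-,H)$-exactness) combined with $\nu_{\Ga}\Hom_{\La}(X,M)\cong\kdual\Hom_{\La}(M,X)$ controls what happens under $\Hom_{\La}(-,M)$, while $\mbF_G$-exactness (i.e. $\Hom_{\La}(G,-)$-exactness) controls the dual side, so that the $\mbF$-exactness of the $\add(M)$-copresentation of $G$ becomes precisely $\Hom_{\Ga}({}_{\Ga}M,-)$- and $\Hom_{\Ga}(-,{}_{\Ga}M)$-exactness of an $\add({}_{\Ga}M)$-presentation of $R$, which after translation is the statement $H\in\gen^{\mbF}_k(M)$. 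Everything else --- the dimension shift and the $k=0$ base case --- is routine once that dictionary is in place.
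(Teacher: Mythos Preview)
Your duality reduction in the first paragraph is fine, but the rest of the argument has real gaps, and the paper's proof takes a completely different and much shorter route.

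\textbf{The base case is wrong.} You claim $\cogen^0_{\mbF}(M)=\cogen(M)$ and $\gen^{\mbF}_0(M)=\gen(M)$. This is false: membership in $\cogen^0_{\mbF}(M)$ requires an $\mbF$-monomorphism $N\hookrightarrow M_0$ (i.e., one on which $\Hom_{\La}(-,H)$ is surjective), not just any monomorphism into $\add(M)$; see Example~\ref{exa-cogen-F} for the discrepancy. Consequently the condition $G\in\cogen^0_{\mbF}(M)$ is \emph{not} a condition on the approximation of $\La$ alone, and the appeal to Lemma~\ref{lagencogen} with ``$\mbF$-exactness tracked'' does not go through.

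\textbf{The induction scheme is unclear.} When you write ``apply the induction hypothesis to $C$'', note that $C=\coKer(G\to M_0)$ is not a generator, so the statement of the theorem does not apply to it. If instead you mean to use $G\in\cogen^{k-1}_{\mbF}(M)$ to deduce $H\in\gen^{\mbF}_{k-1}(M)$ and then upgrade by one step, you still have to explain how the extra $\add(M)$-cosyzygy on the $G$-side translates into an extra $\add(M)$-syzygy on the $H$-side; your invocation of Lemma~\ref{easy} is for the $1$-dualizing-sequence case only and does not by itself carry the relative exactness datum for general $k$.

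\textbf{What the paper actually does.} The paper proves a single characterization lemma (Lemma~\ref{cogenF}): for any $N$, one has $N\in\cogen^k_{\mbF}(M)$ if and only if the natural composition map $(M,H)\otimes_{\Ga}(N,M)\to(N,H)$ is an epimorphism (for $k=0$) or an isomorphism with $\Ext^i_{\Ga}((N,M),\kdual(M,H))=0$ for $1\le i\le k-1$ (for $k\ge 1$); and dually $N\in\gen^{\mbF}_k(M)$ if and only if $(M,N)\otimes_{\Ga}(G,M)\to(G,N)$ satisfies the analogous condition. Now set $N=G$ in the first criterion and $N=H$ in the second: both become the \emph{same} map $(M,H)\otimes_{\Ga}(G,M)\to(G,H)$ together with the \emph{same} vanishing $\Ext^i_{\Ga}((G,M),\kdual(M,H))=0$. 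The equivalence is immediate, with no induction and no $K$-duality reduction needed. This common map is the missing key idea in your sketch; once you see it, the ``bookkeeping'' you anticipate disappears entirely.
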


 
Let $M\in \Lamod$ and $\Gamma = \End_{\La} (M)$. We define 
\[\Sigma = \End_{\La} (H) \quad \text{and} \quad \Delta = \End_{\La} (G).
\]
We first remark that  generators and cogenerators are faithfully balanced, in particular this applies to $H$ and $G$ and we have
\[
\begin{aligned}
\Lamod &= \cogen (H) &&= \cogen^1(H) &&= \cogen^2(H) &&= \cdots &&= \cogen^{\infty} (H) \\ 
\Lamod &= \gen (G) &&= \gen_1(G) &&= \gen_2(G) &&= \cdots &&= \gen_{\infty} (G).
\end{aligned}
\]
By Lemma \ref{duality} we have dualities of categories 
\[
\begin{aligned}
(-, {}_{\La}H)\colon \quad \Lamod & \longleftrightarrow \cogen^1({}_{\Sigma}H) &&\colon (-, {}_{\Sigma }H)\\ 
\kdual ({}_{\La}G, - )\colon \quad \Lamod & \longleftrightarrow \gen_1({}_{\Delta}G)&&\colon \kdual ({}_{\Delta} G,-).
\end{aligned}
\]

The key step in the proof is given by the following lemma.

\begin{lem} \label{cogenF} Keep the above notations. For $0 \leq k \leq \infty $ we have
\begin{itemize}
\item[(1)]
The following are equivalent 
\begin{itemize}
\item[(1a)] $N\in \cogen_\mbF^k(M)$.
\item[(1b)] ${}_{\Si}(N, H) \in \gen_k({}_{\Si} (M, H))$.
\item[(1c)] Consider the natural map $
(M, H) \otimes_{\Gamma } (N,M) \to (N, H)$, $f \otimes g \mapsto f\circ g $.  
\begin{itemize}
\item[(i)] For $k=0$: It is an epimorphism . 
\item[(ii)] For $k\geq 1$: It is an isomorphism and $ \Ext^i_{\Gamma}( (N,M), \kdual (M, H) )=0$ for $1\leq i \leq k-1$. 
\end{itemize}
\end{itemize}
\item[(2)]
The following are equivalent
\begin{itemize}
\item[(2a)] $N\in \gen_k^{\mbF}(M)$.
\item[(2b)] $(G,N)_{\Delta} \in \gen_k ((G,M)_{\Delta} )$.
\item[(2c)] Consider the natural map $(M,N) \otimes_{\Gamma }(G,M) \to  (G, N)$, $f\otimes g \mapsto f\circ g$. 
\begin{itemize}
\item[(i)] For $k=0$: It is an epimorphism. 
\item[(ii)] For $k\geq 1$: It is an isomorphism and $\Ext^i_{\Gamma}((G,M), \kdual (M,N))=0$ for $1\leq i \leq k-1$. 
\end{itemize}
\end{itemize}
\end{itemize}
\end{lem}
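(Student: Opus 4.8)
The plan is to prove (1) and deduce (2) by a dual argument, exactly as in the proof of Lemma \ref{cogen-k}, since $N\in\gen_k^{\mbF}(M)$ over $\La$ translates via $\kdual$ to $\kdual N\in\cogen^k_{\kdual\mbF}(\kdual M)$ over $\La^{\op}$, with $\kdual G$ a cogenerator and $\kdual H$ a generator there. So I concentrate on part (1). The core idea is that the relative category $\cogen^k_{\mbF}(M)$ is governed by the \emph{ordinary} category $\gen_k$ over $\Si=\End_{\La}(H)$, because applying the duality $(-,{}_{\La}H)\colon\Lamod\leftrightarrow\cogen^1({}_{\Si}H)$ of Lemma \ref{duality} converts $\mbF^H$-exact sequences into exact sequences of $\Si$-modules on which $\Hom(-,M)$-exactness corresponds to $\Hom({}_{\Si}(M,H),-)$-exactness.

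First I would establish (1a)$\Leftrightarrow$(1b). Given $N\in\cogen^k_{\mbF}(M)$, take the witnessing $\mbF$-exact sequence $0\to N\to M_0\to\cdots\to M_k$ with $M_i\in\add(M)$ on which $\Hom_{\La}(-,M)$ is exact. Since $\mbF=\mbF^H$, this sequence is $\mbF$-exact precisely when $\Hom_{\La}(-,H)$ is exact on it; so applying the duality $(-,{}_{\La}H)$ gives an exact sequence $(M_k,H)\to\cdots\to(M_0,H)\to(N,H)\to 0$ of $\Si$-modules with each $(M_i,H)\in\add({}_{\Si}(M,H))$ (as $M_i\in\add(M)$). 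I then need that $\Hom_{\La}(-,M)$-exactness of the original sequence is equivalent to $\Hom_{\Si}({}_{\Si}(M,H),-)$-exactness of the dualized sequence; this follows because the duality $(-,{}_{\La}H)$ identifies $\Hom_{\La}(X,M)$ with $\Hom_{\Si}((M,H),(X,H))$ naturally, using Lemma \ref{fullfaith}(1) applied to the faithfully balanced cogenerator $H$ together with $M\in\Lamod=\cogen^1(H)$. Hence the dualized sequence exhibits ${}_{\Si}(N,H)\in\gen_k({}_{\Si}(M,H))$. Conversely, a $\gen_k$-presentation of ${}_{\Si}(N,H)$ over $\add({}_{\Si}(M,H))$ pulls back under $(-,{}_{\Si}H)$ to the required $\mbF$-exact sequence for $N$ (noting $\gen_k({}_{\Si}(M,H))\subseteq\gen_1({}_{\Si}(M,H))\subseteq\gen_1({}_{\Si}H)$ lands in the image of the duality, by Lemma \ref{dualSummand} applied to ${}_{\Si}(M,H)\oplus{}_{\Si}H={}_{\Si}(M\oplus\kdual\La,H)$ whose summand ${}_{\Si}(H,H)={}_{\Si}\Si$ is the projective, or more directly by Lemma \ref{fullfaith}).

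Next I would prove (1b)$\Leftrightarrow$(1c) by applying Lemma \ref{cogen-k}(2) to the $\Si$-module ${}_{\Si}(N,H)$ with respect to the module ${}_{\Si}(M,H)$: its endomorphism ring is $\End_{\Si}({}_{\Si}(M,H))\cong\End_{\La}(M)=\Gamma$ (again by the duality of Lemma \ref{duality}, or by projectivization Lemma \ref{projectivization} combined with Lemma \ref{fullfaith}), and ${}_{\Si}(M,H)$ viewed as a $\Gamma$-module is exactly ${}_{\Gamma}\kdual(M,H)$ — here I should spell out the identification $\Hom_{\Si}({}_{\Si}(N,H),{}_{\Si}(M,H))\cong\Hom_{\La}(M,N)$ and the composite $\Hom_{\La}(M,H)\otimes_{\Gamma}\Hom_{\La}(N,M)\to\Hom_{\La}(N,H)$. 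Lemma \ref{cogen-k}(2b) then says ${}_{\Si}(N,H)\in\gen_k({}_{\Si}(M,H))$ iff the natural map $\kdual(M,H)\otimes_{\Gamma}$ something maps isomorphically to $(N,H)$ and the relevant $\Ext^i_{\Gamma}$ vanish for $1\le i\le k-1$, which after unwinding is exactly (1c). The main obstacle, and the step I'd be most careful about, is the bookkeeping of these canonical identifications — that the various tensor-hom adjunction maps, the duality $(-,{}_{\La}H)$, and the isomorphism of endomorphism rings are all mutually compatible, so that the natural map in (1c) really is the one produced by translating Lemma \ref{cogen-k}(2); this is routine but the place where an error would hide. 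The $k=0$ case (epimorphism rather than isomorphism, no $\Ext$ conditions) should be handled in parallel throughout, using the $k=0$ clauses of Lemmas \ref{cogen-k} and \ref{duality}, and the $k=\infty$ case follows by intersecting over all finite $k$ via the Corollary after Lemma \ref{cogen-k}. Part (2) is then immediate by the $\kdual$-duality described above, swapping the roles of $G$ and $H$, of $\Si$ and $\Delta$, and of $\gen$ and $\cogen$.
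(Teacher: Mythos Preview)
Your approach is essentially the paper's: reduce (1a)$\Leftrightarrow$(1b) via the duality $(-,{}_{\La}H)$, then reduce (1b)$\Leftrightarrow$(1c) by applying Lemma~\ref{cogen-k}(2) over $\Si$, and handle (2) analogously. Two small corrections are worth making, both at the bookkeeping step you rightly flagged as delicate.

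First, the contravariant functor $(-,{}_{\La}H)$ reverses composition, so $\End_{\Si}({}_{\Si}(M,H))\cong\Gamma^{\op}$, not $\Gamma$. This is exactly what makes the tensor product in Lemma~\ref{cogen-k}(2b) come out as $(M,H)\otimes_{\Gamma}(N,M)$ (after swapping the factors using $X_{E}\otimes_{E}{}_{E}Y\cong Y_{\Gamma}\otimes_{\Gamma}{}_{\Gamma}X$ for $E=\Gamma^{\op}$) and what converts the Ext-condition into $\Ext^i_{\Gamma}((N,M),\kdual(M,H))=0$. Your statement ``${}_{\Si}(M,H)$ viewed as a $\Gamma$-module is exactly ${}_{\Gamma}\kdual(M,H)$'' is not right as written; rather, $(M,H)$ is naturally a \emph{right} $\Gamma$-module (equivalently a left $\Gamma^{\op}$-module), and the $\kdual(M,H)$ appears only after the $\kdual$ in the Ext-group of Lemma~\ref{cogen-k}(2b).

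Second, for the converse (1b)$\Rightarrow$(1a), your route through $\gen_1({}_{\Si}H)$ and Lemma~\ref{dualSummand} is more involved than necessary. The paper simply observes that ${}_{\Si}H$ is injective (because $H$ is a cogenerator, so ${}_{\Si}H=\Hom_{\La}(\kdual\La,H)\oplus\cdots$ contains all indecomposable injectives), hence $(-,{}_{\Si}H)$ is exact; this immediately turns a $\gen_k({}_{\Si}(M,H))$-presentation back into the required $\mbF$-exact sequence over $\La$.
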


\begin{proof}
It is easy to see the equivalence of (1a) and (1b) using that the duality $(-,H)$ restricts to a duality of categories 
\[ 
(-,{_{\La}}H) \colon \cogen^k_{\mbF}(M) \longleftrightarrow \cogen^1({}_{\Sigma }H) \cap \gen_k ({}_{\Sigma} (M,H))\colon (-,{_{\Si}}H).
\]
To see that the map from the right to the left is well-defined it is important to observe that ${}_{\Si}H$ is an injective module (since $H$ is a cogenerator), therefore the functor $(-,{}_{\Si}H)$ is exact. 
Similarly, it is easy to see the equivalence of (2a) and (2b) using the second equivalence mentioned above. 
For the equivalence of (1b) and (1c) we translate the statement of (1c) into the characterization from Lemma \ref{cogen-k}. The most important observation is the following $E:= \End_{\Sigma} ((M, H) )= \Gamma^{\op}$. The natural map from Lemma \ref{cogen-k} (for the category $\gen_k{}_{\Si}(M,H)$) is:
\[ 
\begin{aligned}
\Hom_{\Si} ((M,H), (N,H)) \otimes_E (M,H) &\to (N, H) \\
f \otimes g & \mapsto f (g).
\end{aligned}
\]
First observe $E=  \Gamma^{\op}$ means left (resp. right) $E$-modules are naturally right (resp. left) $\Gamma$-modules and $X_E \otimes_E {}_EL \cong L_{\Gamma} \otimes_{\Gamma} {}_{\Gamma}X$.
Secondly, since $H$ is a cogenerator we have 
\[
\Hom_{\Sigma} ((M,H), (N,H) )  = (N,M) 
\]
With this identifications the map from before becomes the natural map mentioned in (1c). \\
The equivalence of (2b) and (2c) is analogue. We set $C= \End_{\Delta} ((G, M) )= \Gamma $.
By lemma \ref{cogen-k} we have to look at the natural map 
\[ 
\begin{aligned}
 \Hom_{\Delta} ( (G, M) ,  (G, N)) \otimes_C (G,M) & \to (G, N) \\
f \otimes g & \mapsto f (g)
\end{aligned}
\]
We have an isomorphism of right $\Gamma $-modules since $G$ is a generator 
\[
\Hom_{\Delta} ( (G, M) ,  (G, N)) =  (M,N) 
\]
With this identifications the map from before becomes the natural map in (2c). 
\end{proof}

We observe that the proof of Theorem \ref{Rel-fb} is a direct consequence of the previous lemma: By setting $N= G$ in part (1) and $N= H$ in part (2), we obtain the same maps in (1c), (2c) and therefore the claim follows.

\begin{lem} \label{relduality} Let $\mbF=\mbF_G=\mbF^H$ for a generator $G$ and a cogenerator $H$.  
Let $M\in \Lamod$, $\Gamma = \End_{\La}(M)$, $L=(G,M)$ and $R= \kdual (M,H)$. \\
If we assume that $\La \in \cogen^1_{\mbF}(M)$ and $H\in \gen_1(M)$ then the duality 
$(-,{_{\La}}M) \colon \cogen^1(M) \leftrightarrow \cogen^1(M) \colon (-,{_{\Ga}}M)$  restricts to a duality 
$\cogen_{\mbF^H}^1({_{\La}}M) \leftrightarrow \cogen^1_{\mbF^R}({_{\Ga}}M)$. 
Furthermore, it restricts to a duality 
\[  
(-,{_{\La}}M) \colon \cogen^k_{\mbF^H}(M) \longleftrightarrow \cogen^1_{\mbF^R} (M) \cap \bigcap_{i=1}^{k-1} \Ker \Ext^i_{\Gamma } (-,R) \colon (-,{_{\Ga}}M)
\]
In particular, $G \in \cogen^k_{\mbF^H} (M)$ is equivalent to $L \in \cogen^1_{\mbF^R}(M)$ and $\Ext^i_{\Gamma} (L,R) =0$ for $1 \leq i \leq k-1$.
\end{lem}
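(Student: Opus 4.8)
The plan is to derive everything from the basic duality $(-,{}_{\La}M)\colon\cogen^1({}_{\La}M)\longleftrightarrow\cogen^1({}_{\Ga}M)\colon(-,{}_{\Ga}M)$ of Lemma~\ref{duality} (which applies because $\La\in\cogen^1_{\mbF}(M)\subseteq\cogen^1(M)$, so $M$ is faithfully balanced), by checking that under this duality the subcategory $\cogen^1_{\mbF^H}({}_{\La}M)$ corresponds to $\cogen^1_{\mbF^R}({}_{\Ga}M)$, and more precisely that $\cogen^k_{\mbF^H}({}_{\La}M)$ corresponds to $\cogen^1_{\mbF^R}({}_{\Ga}M)\cap\bigcap_{i=1}^{k-1}\Ker\Ext^i_{\Ga}(-,R)$. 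The main tool is Lemma~\ref{cogenF}(1), which says that $N\in\cogen^k_{\mbF^H}(M)$ (for $k\ge 1$) if and only if the evaluation map $\mu_N\colon\Hom_{\La}(M,H)\otimes_{\Ga}\Hom_{\La}(N,M)\to\Hom_{\La}(N,H)$, $f\otimes g\mapsto f\circ g$, is an isomorphism and $\Ext^i_{\Ga}(\Hom_{\La}(N,M),\kdual\Hom_{\La}(M,H))=0$ for $1\le i\le k-1$; since $\kdual\Hom_{\La}(M,H)=R$, the latter condition already reads $\Hom_{\La}(N,M)\in\bigcap_{i=1}^{k-1}\Ker\Ext^i_{\Ga}(-,R)$. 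I would then run the very same Lemma~\ref{cogenF}(1) on the $\Ga$-side, applied to ${}_{\Ga}M$ with the cogenerator $R\oplus\kdual\Ga$ (note $\mbF^R=\mbF^{R\oplus\kdual\Ga}$ and $\End_{\Ga}({}_{\Ga}M)\cong\La$), and identify the resulting evaluation map with the $K$-dual of $\mu_N$.

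The identifications I would record, for $N\in\cogen^1({}_{\La}M)$ and $\widetilde N:=\Hom_{\La}(N,M)$, are: by Hom--$\kdual$ adjunction $\Hom_{\Ga}({}_{\Ga}M,R)\cong\kdual\bigl(\Hom_{\La}(M,H)\otimes_{\Ga}M\bigr)$, and the evaluation map $\Hom_{\La}(M,H)\otimes_{\Ga}M\to H$ is an isomorphism by Lemma~\ref{cogen-k}(2) because $H\in\gen_1({}_{\La}M)$, hence $\Hom_{\Ga}({}_{\Ga}M,R)\cong\kdual H$ (the $\kdual\Ga$-summand of $R\oplus\kdual\Ga$ can be dropped, as $\kdual\Ga\in\gen_1({}_{\Ga}M)$ makes its contribution automatically invertible); next $\Hom_{\Ga}(\widetilde N,{}_{\Ga}M)\cong N$ by Lemma~\ref{duality} (equivalently Lemma~\ref{fullfaith}(1)); $\Hom_{\Ga}(\widetilde N,R)\cong\kdual\bigl(\Hom_{\La}(M,H)\otimes_{\Ga}\widetilde N\bigr)$ again by adjunction; and $\kdual H\otimes_{\La}N\cong\kdual\Hom_{\La}(N,H)$ by the standard finite-dimensional tensor--hom duality. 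Under these isomorphisms the $\Ga$-side evaluation map $\Hom_{\Ga}({}_{\Ga}M,R)\otimes_{\La}\Hom_{\Ga}(\widetilde N,{}_{\Ga}M)\to\Hom_{\Ga}(\widetilde N,R)$, $\phi\otimes\psi\mapsto\phi\circ\psi$, becomes a map $\kdual\Hom_{\La}(N,H)\to\kdual\bigl(\Hom_{\La}(M,H)\otimes_{\Ga}\widetilde N\bigr)$, and tracing a decomposable tensor $\xi\otimes n$ through all the adjunctions shows that it is exactly $\kdual(\mu_N)$ (both amount to precomposition with composition of homomorphisms).

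Granting this, the rest is bookkeeping. Since $\kdual$ reflects isomorphisms, $\mu_N$ is an isomorphism iff the $\Ga$-side evaluation map is, which by Lemma~\ref{cogenF}(1) (case $k=1$, over $\Ga$) means $\widetilde N\in\cogen^1_{\mbF^R}({}_{\Ga}M)$; combining with the reading of the $\Ext$-conditions above gives, for $N\in\cogen^1({}_{\La}M)$, the equivalence $N\in\cogen^k_{\mbF^H}(M)\iff\widetilde N\in\cogen^1_{\mbF^R}({}_{\Ga}M)\cap\bigcap_{i=1}^{k-1}\Ker\Ext^i_{\Ga}(-,R)$. As $(-,{}_{\La}M)$ and $(-,{}_{\Ga}M)$ are already mutually inverse dualities between $\cogen^1({}_{\La}M)$ and $\cogen^1({}_{\Ga}M)$, restricting them to these subcategories yields both stated dualities (the $k=1$ case first, general $k$ for the refinement). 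The final assertion is the specialization to $N=G\in\cogen^1({}_{\La}M)$, whose dual object is $\widetilde N=\Hom_{\La}(G,M)=L$ (so that $\Hom_{\Ga}(L,M)\cong G$).

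I expect the main obstacle to be precisely the element-wise verification that, after the adjunction and double-duality identifications, the $\Ga$-side evaluation map coincides with $\kdual(\mu_N)$: this requires that every tensor--hom adjunction used, as well as the isomorphism $\Hom_{\Ga}(\Hom_{\La}(N,M),M)\cong N$, be compatible with composition of homomorphisms — conceptually transparent but a genuine diagram chase. A secondary nuisance is the consistent tracking of left/right module structures over $\Ga$, $\Ga^{\op}$ and $\La\cong\End_{\Ga}({}_{\Ga}M)$, and the harmless passage between the cogenerator $R\oplus\kdual\Ga$ and $R$ itself.
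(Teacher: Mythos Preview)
Your approach is correct and rests on the same key ingredients as the paper's proof—Lemma~\ref{cogenF}(1), the Hom--tensor adjunction, and the assumption $H\in\gen_1(M)$—but the organization is different. The paper proceeds by taking a projective presentation $P_1\to P_0\to X\to 0$ of an $X\in\cogen^1_{\mbF^H}(M)$, applying $(-,M)$ to obtain the candidate $\mbF^R$-copresentation of $(X,M)$, and then checking exactness of $(-,R)$ on it by identifying, via adjunction and the isomorphisms $(M,H)\otimes_{\Ga}(Y,M)\cong(Y,H)$ from Lemma~\ref{cogenF}, the resulting complex with $\kdual(-,H)$ applied to the presentation. It then appeals to symmetry (after checking separately that $\Ga\in\cogen^1_{\mbF^R}(M)$) for the inverse direction, and reads off the higher $k$ statement from Lemma~\ref{cogenF}.

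Your route instead invokes Lemma~\ref{cogenF}(1) on \emph{both} sides and shows that the two evaluation-map criteria are $K$-duals of one another, so both directions of the duality come out simultaneously and no separate symmetry check is needed. This is a bit more conceptual and symmetric; the price is the diagram chase you anticipate, verifying that under the identifications $({}_{\Ga}M,R)\cong\kdual H$, $(\widetilde N,{}_{\Ga}M)\cong N$, $(\widetilde N,R)\cong\kdual\bigl((M,H)\otimes_{\Ga}\widetilde N\bigr)$ and $\kdual H\otimes_{\La}N\cong\kdual(N,H)$ the $\Ga$-side composition map really becomes $\kdual(\mu_N)$. The paper's approach avoids this by working with an explicit presentation, at the cost of a less symmetric argument. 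Both are valid; your observation that the $\kdual\Ga$-summand of $R\oplus\kdual\Ga$ contributes an automatic isomorphism (via $\kdual M\otimes_{\La}N\cong\kdual(N,M)$) is correct and worth stating explicitly.
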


\begin{proof}
Since $H \in \gen_1(M)$ we have that $\kdual (M,R)= \kdual (M,\kdual (M,H)) \to H$ is an isomorphism. 
So, it is enough to proof that 
$(-, {}_{\La}M) $ maps $ \cogen^1_{\mbF^H} (M)$ to $ \cogen^1_{\mbF^R} (M)$ and use $R \in \gen^1({}_{\Gamma}M)$ to get the quasi-inverse by symmetry. 

Let $X \in \cogen^1_{\mbF^H}(M)$. We choose a projective presentation $P_1 \to P_0 \to X \to 0$. By applying $(-, {_\La}M)$ we get an exact sequence of $\Gamma $-modules $0 \to (X,M) \to (P_0 ,M) \to (P_1, M)$ is with $(P_i,M) \in \add(M)$.  
We apply $(-,R)$ to get a complex $((P_1,M), R) \to ((P_0,M), R) \to ((X,M), R) \to 0$. We would like to see that it is exact. By Hom-Tensor adjunction it identifies with the first row in the following commutative diagram
\[
\xymatrix{
\kdual [(M,H) \otimes_{\Gamma} (P_1, M)] \ar[r]& \kdual [(M,H)\otimes_{\Gamma} ( P_0, M)] \ar[r]& \kdual [(M,H) \otimes_{\Gamma} (X,M)] \ar[r]& 0\\
\kdual(P_1,H)\ar[r]\ar[u]&\kdual(P_0,H)\ar[u]\ar[r]& \kdual (X,H)\ar[r]\ar[u]& 0
}
\]
Note that the arrows up are the dual of the natural maps $(M,H) \otimes_{\Ga} (Y,M) \to (Y,H)$ given by $f \otimes g \mapsto 
f\circ g$. By Lemma \ref{cogenF} we know that this natural map is an isomorphism if and only if $Y \in \cogen_{\mbF^H}^1(M)$. By assumption we have $P_1,P_0,X \in \cogen_{\mbF^H}^1(M)$ and the first row identifies with the complex in the second row. But the exactness of the second row follows since $\kdual (-,H)$ is right exact. This proves $(X,M)\in \cogen^1_{\mbF^R} (M)$. For the symmetry, we need to see  $\Gamma \in \cogen_{\mbF^R}^1(M)$. 
But $\Gamma = (M , M) $ and $M \in \cogen^1_{\mbF^H}(M)$ implies the claim by the argument just given. 

The further restriction follows directly from Lemma \ref{cogenF}.

\end{proof}



Of course there is a dual version of the previous lemma which we will leave out.

If ${}_{\La}M$ is $1$-$\mbF^H$-faithful, then ${}_{\Ga} M$ does not have to be $1$-$\mbF^R$-faithful (with $\Ga =\End_{\La}(M)$ and $R=\kdual (M,H)$). We give an example for this:

\begin{exa}\label{F-balance-asym}
Let  $\La$ be the path algebra of $1\xrightarrow{\alpha} 2\xrightarrow{\beta} 3$ modulo the relation $\beta\alpha=0$. Let $G=\La \oplus S_1$, $H=\kdual \La \oplus S_2$ and $\mbF=\mbF^H=\mbF_G$. Then $M:=G$ is clearly $1$-$\mbF^H$-faithful and $\pd_{\mbF} M =0$. Let us look at $\Ga =\End_{\La}(M)$, since we have irreducible morphisms $S_3 \to \begin{smallmatrix} 2 \\ 3 \end{smallmatrix} \to \begin{smallmatrix} 1 \\ 2 \end{smallmatrix}\to S_1$, 
we can identify it with the following bound path algebra $d\to c\to b \to a$ modulo all path of length $2$. We have ${}_{\Ga} M = (P_1, M) \oplus (P_2 , M) \oplus (P_3 , M) = P_b \oplus P_c \oplus P_d $ and ${}_{\Ga} R:= \kdual (M,H) = {}_{\Ga}M \oplus \kdual (M,S_2)$. We apply $\kdual (M,-) $ to an injective coresolution $0 \to S_2 \to I_2 \to I_1$ to obtain a projective presentation 
$P_b=(P_1,M) \to P_c=(P_2, M) \to \kdual (M, S_2) \to 0$. This implies $\kdual (M,S_2) \cong S_c$ and therefore $\tau^- R = \tau^- S_c =S_d$. It is easy to see that $S_d \notin \cogen (_{\Ga} M)$ implying  $\tau^- R \notin \cogen^1_{\mbF^R} (M)$. This shows 
${}_{\Ga}M$ is not $1$-$\mbF^R$-faithful. 
\end{exa}

Thus the property of being $1$-$\mbF$-faithful is not as nicely symmetric as being faithfully balanced. Nevertheless, we can get the symmetry again if we restrict to the following special case. 
\begin{pro} Let $\mbF=\mbF_G=\mbF^H$ for a generator $G$ and a cogenerator $H$.  
Let $M$ be a faithfully balanced $\La$-module, $\Gamma = \End_{\La} (M)$, $L=(G,M)$ and $R= \kdual (M,H)$. \\
If $M\in \add(H)$ $($or equivalently, $\kdual \Ga \in \add (R)$ $)$, then the following are equivalent: 
\begin{itemize}
\item[(1)]
${}_{\La}M$ is $1$-$\mbF^H$-faithful.
\item[(2)]
${}_{\Ga}M$ is $1$-$\mbF^R$-faithful.  
\end{itemize}
Dually, if $M\in \add(G)$, then ${}_{\La}M$ is $1$-$\mbF_G$-faithful if and only if ${}_{\Ga}M$ is $1$-$\mbF_L$-faithful. 
\end{pro}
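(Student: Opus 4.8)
I will prove only the first equivalence; the $\add(G)$-statement is entirely dual (interchange $\mbF^H,\gen,H,R$ with $\mbF_G,\cogen,G,L$). The first move is to make the situation symmetric and cut the work in half. Applying $\Hom_\La(M,-)$ and then $\kdual$ shows at once that $M\in\add(H)$ iff $\kdual\Ga=\kdual\Hom_\La(M,M)\in\add(\kdual\Hom_\La(M,H))=\add(R)$. Under this hypothesis $R$ is a cogenerator of $\Gamod$, so $\mbF^R$ is an additive subbifunctor of finite type, say $\mbF^R=\mbF_{G'}$ with generator $G'=\Ga\oplus\tau^-_\Ga R$ and $\mbF^R$-injectives $\add(R)$; moreover ${}_\Ga M\in\add(R)$ always, since $R=\kdual\Hom_\La(M,\tau G\oplus\kdual\La)\cong{}_\Ga M\oplus\Omega_M^{-2}L$ (using $\kdual\Hom_\La(M,\kdual\La)\cong{}_\Ga M$ and Lemma \ref{easy}). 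As $M$, $H$, $R$ and ${}_\Ga M$ are all faithfully balanced, the dual Auslander--Solberg assignment is an involution interchanging $[\La,M,H]$ and $[\Ga,{}_\Ga M,R]$ (with $\End_\Ga(M)\cong\La$ and $\kdual\Hom_\Ga(M,R)\cong{}_\La H$). Hence it suffices to prove the single implication $(1)\Rightarrow(2)$: applying it to $[\Ga,{}_\Ga M,R]$ then yields $(2)\Rightarrow(1)$.

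Assume $(1)$, i.e. $G\in\cogen^1_{\mbF^H}(M)$. Since $\cogen^1_{\mbF}(M)$ is closed under summands we get $\La\in\cogen^1_{\mbF^H}(M)$, and Theorem \ref{Rel-fb} gives $H\in\gen_1^{\mbF}(M)\subseteq\gen_1(M)$; thus the hypotheses of Lemma \ref{relduality} are met. That lemma produces the duality $(-,{}_\La M)\colon\cogen^1_{\mbF^H}(M)\leftrightarrow\cogen^1_{\mbF^R}({}_\Ga M)$, under which $L=(G,M)$ and $\Ga=(M,M)$ lie in $\cogen^1_{\mbF^R}({}_\Ga M)$. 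Since this category is closed under summands and $G'=\Ga\oplus\tau^-_\Ga R$, the whole statement reduces to showing $\tau^-_\Ga R\in\cogen^1_{\mbF^R}({}_\Ga M)$.

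For this I would apply Lemma \ref{easy} to the faithfully balanced $\Ga$-module ${}_\Ga M$ (whose endomorphism ring is $\La$). The point is that the hypothesis $M\in\add(H)$, equivalently $\kdual\Ga\in\add(R)$, is exactly the condition $R\cong\tau_\Ga G'\oplus\kdual\Ga$; together with $R\in\gen_1({}_\Ga M)$ (because $H\in\gen_1({}_\La M)$ and $\kdual({}_\La M,-)$ is a duality $\gen_1({}_\La M)\leftrightarrow\gen_1({}_\Ga M)$ by Lemma \ref{duality}) and the automatic $G'\in\cogen^1({}_\Ga M)$, this places the pair $({}_\Ga G',{}_\Ga R)$ in the source set of the bijection of Lemma \ref{easy}. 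Hence $(G',R)$ corresponds to $(L',{}_\La H)$ with $L'=\Hom_\Ga(G',{}_\Ga M)$, admitting a $1$-${}_\La M$-dualizing sequence $0\to L'\to M_0\to M_1\to H\to 0$ with $M_i\in\add(M)$. Applying $\Hom_\La(-,M)$ to $0\to L'\to M_0\to M_1$ (exact on it by the dualizing axioms) and using that $\nu^-_\Ga\kdual({}_\La M,-)$ and $\Hom_\La(-,M)$ agree on $\add(M)$, one reads off $\tau^-_\Ga R\cong\Hom_\La(L',{}_\La M)$ up to a projective summand. So, via the duality $(-,{}_\La M)$ and the fact that projectives already belong to $\cogen^1_{\mbF^R}({}_\Ga M)$, everything comes down to showing $L'\in\cogen^1_{\mbF^H}(M)$.

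I expect this last verification to be the main obstacle, and it is the one place where $M\in\add(H)$ is genuinely used --- recall that without it the statement fails (Example \ref{F-balance-asym}). Here I would argue: since $M$ is $\mbF^H$-injective ($M\in\add(H)=\mcI(\mbF^H)$), one has $\cogen^1_{\mbF^H}(M)=\copres^1_{\mbF^H}(M)$ (the relative analogue of ``$\cogen=\copres$ for injectives'', which follows from the criterion just before Example \ref{exa-cogen-F} together with the equivalence $\mbF^H$-exact $\Rightarrow$ exact after $\Hom_\La(-,M)$ for sequences with middle terms in $\add(M)\subseteq\add(H)$). It then remains to check that the truncation $0\to L'\to M_0\to M_1$ of the dualizing sequence is $\mbF^H$-exact; the dualizing axioms already give exactness after $\Hom_\La(-,M)$ and after $\Hom_\La(M,-)$, and the hard part is to upgrade this to exactness after $\Hom_\La(-,H)$, which I would do by comparing with the minimal $\mbF^H$-injective coresolution of $L'$ and exploiting that the right end of the dualizing sequence is the cogenerator $H$ itself together with $\add(M)\subseteq\add(H)$. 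Once $L'\in\cogen^1_{\mbF^H}(M)$ is established, the duality of Lemma \ref{relduality} gives $\tau^-_\Ga R\in\cogen^1_{\mbF^R}({}_\Ga M)$, hence $G'\in\cogen^1_{\mbF^R}({}_\Ga M)$, completing $(1)\Rightarrow(2)$ and, by the symmetry reduction, the proposition.
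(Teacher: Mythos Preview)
Your reduction to proving only $(1)\Rightarrow(2)$ via the involution, and further to $\tau^-_\Ga R\in\cogen^1_{\mbF^R}({}_\Ga M)$, is exactly right and matches the paper. But you then take a detour through Lemma~\ref{easy} and make what is actually the trivial step look like the ``main obstacle''.

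Here is why your hard part is not hard. You already know (and state) that $(1)$ together with Theorem~\ref{Rel-fb} gives $H\in\gen_1^{\mbF}(M)$. By the lemma immediately following Example~\ref{exa-cogen-F}, this means precisely that the \emph{minimal} $\add(M)$-approximation sequence
\[
0\to\Omega_M^2 H\to M_1\to M_0\to H\to 0
\]
is already $\mbF^H$-exact. This minimal sequence is (up to adding $\id_M$) the very dualizing sequence you extract from Lemma~\ref{easy}, since $\add(L')=\add(M\oplus\Omega_M^2 H)$. So $(-,H)$-exactness of $0\to L'\to M_0\to M_1$ is automatic, not something you have to ``upgrade'' to; your proposed comparison with a minimal $\mbF^H$-injective coresolution is unnecessary.

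The paper's proof is the same idea stripped of the detour: start directly from the $\mbF$-exact sequence $0\to\Omega_M^2 H\to M_1\to M_0\to H\to 0$ coming from $H\in\gen_1^{\mbF}(M)$. Since $M\in\add(H)$, the functor $(-,M)$ is exact on it, so $\Omega_M^2 H\in\cogen^1_{\mbF^H}(M)$ in one line. Applying $\kdual(M,-)$ to the last three terms gives an injective copresentation of $R$; applying $(-,M)$ to the first three and using $\nu_\Ga^-\kdual(M,M_i)\cong(M_i,M)$ identifies $(\Omega_M^2 H,M)\cong\tau^-_\Ga R$ (modulo projectives), which lies in $\cogen^1_{\mbF^R}({}_\Ga M)$ by Lemma~\ref{relduality}. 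That is the whole argument---no need for $L'$, $G'$, or Lemma~\ref{easy}. Your proof is not wrong, just longer than it needs to be, and it misidentifies where the content lies: the only genuine use of $M\in\add(H)$ is the single implication ``$\mbF^H$-exact $\Rightarrow$ $(-,M)$-exact'', which gives $\Omega_M^2 H\in\cogen^1_{\mbF^H}(M)$ immediately.
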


\begin{proof}
We assume $M \in \add (H)$. 
Assume $G \in \cogen^1_{\mbF^H}(M)$, we have to see $\tau^- R \in \cogen_{\mbF^R}^1(M)$. \\
Since $H \in \gen_1^{\mbF^H}(M)$ implies that we have an $\mbF$-exact sequence 
\[ 0 \to \Omega^{2}_M H \to M_1 \to M_0 \to H \to 0 \]
with $M_i \in \add (M)$. 
Since $M \in \add (H)$, this implies $\Omega_M^{2} H\in \cogen_\mbF^1(M)$. 
We apply $\kdual (M,-)$ to the last three terms of the four term sequence and obtain an injective copresentation of $R$. We apply $(-,M)$ to the first three terms and observe and get an exact sequence 
\[ (M_0,M) \to (M_1,M) \to (\Omega_M^{2}H ,M) = \tau^-R \to 0\]
in particular this proves the claim. 
\end{proof}

\subsection{Strong dualizing sequences}
 
\begin{dfn} Let $0 \to L \to M_0 \to M_1 \to \cdots \to M_k \to R \to 0 $ be a $k$-$\add(M)$-dualizing sequence in $\Gamod$ for some non-negative integer $k$. We say it is strong if $\kdual (L,-)$ is exact on it.   
\end{dfn}
We can characterize it as follows. 
\begin{lem}\label{strongdualseq}  
A $k$-$\add(M)$-dualizing sequence as in the above definition is strong if and only if one (equivalently all) of the following equivalent statement is fulfilled: 
\begin{itemize}
\item[(1)] $\kdual (L,-)$ is exact on it, i.e., it is an $\mbF_L$-exact sequence $($or equivalently, $R \in \gen^{\mbF_L}_{k} (M) $ $)$.
\item[(2)] $(-,R)$ is exact on it, i.e., it is an $\mbF^R$-exact sequence $($or equivalently, $L \in \cogen_{\mbF^R}^{k} (M)$ $)$.
\item[(3)] Consider the natural map $ (M, R)\otimes_{\La}(L,M) \to (L,R) $, where $\La=\End_{\Ga} (M)$.
\begin{itemize}
\item[(i)] For $k=0$: It is an epimorphism . 
\item[(ii)] For $k\geq 1$: It is an isomorphism and $\Ext^i_{\La} ((L,M),\kdual (M,R)) =0$ for $1 \leq i \leq k-1$. 
\end{itemize}
\end{itemize}
\end{lem}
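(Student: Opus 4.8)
The plan is as follows. Write $\sigma\colon 0\to L\to M_0\to\cdots\to M_k\to R\to0$ for the given $k$-$\add(M)$-dualizing sequence in $\Gamod$, and set $\La=\End_\Gamma(M)$. Since $\kdual$ is exact and faithful, ``$\sigma$ is strong'' means exactly ``$\sigma$ is $\mbF_L$-exact'', which is the first reformulation in~(1); so the whole lemma reduces to the chain $(1)\Leftrightarrow(2)\Leftrightarrow(3)$. Recall from axiom~(ii) of a dualizing sequence that $(-,M)$ and $\kdual(M,-)$ are exact on $\sigma$; consequently $L\in\cogen^k(M)$, $R\in\gen_k(M)$, and every connecting map of $\sigma$ is a left (resp.\ right) $\add(M)$-approximation of the relevant cokernel (resp.\ kernel). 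The engine of the proof will be Lemma~\ref{cogenF}, applied over the algebra $\Gamma$ with $M$ and $\La=\End_\Gamma(M)$ playing the roles of $\La$ and $\Gamma$ there; note that part~(1) of that lemma uses only that the chosen module is a cogenerator — so that $(-,H)$ is a duality onto $\cogen^1$ with injective image and $\End_{\End(H)}((M,H))=\End(M)^{\op}$ — and part~(2) only that it is a generator, so the finite-type hypothesis standing in that section is not actually needed for those equivalences.

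First I would dispose of the internal reformulations: ``$\sigma$ is $\mbF^R$-exact $\Leftrightarrow L\in\cogen^k_{\mbF^R}(M)$'' and ``$\sigma$ is $\mbF_L$-exact $\Leftrightarrow R\in\gen^{\mbF_L}_k(M)$''. The forward implications are immediate (restrict $\sigma$ to its left part $0\to L\to M_0\to\cdots\to M_k$, resp.\ right part $M_0\to\cdots\to M_k\to R\to0$, on which the required $\Hom$-exactness holds by~(ii)). For the converse one uses standard facts about minimal approximations (\cite[Theorem 2.4]{ARSm}): because all connecting maps of $\sigma$ are $\add(M)$-approximations, $\sigma$'s left part decomposes as the minimal $\add(M)$-coresolution of $L$ plus split-exact complexes with terms in $\add(M)$; split-exact sequences lie in every subbifunctor and relative exactness passes to direct summands, so $\sigma$'s left part is $\mbF^R$-exact precisely when the minimal $\add(M)$-coresolution of $L$ is, i.e.\ precisely when $L\in\cogen^k_{\mbF^R}(M)$; the right-hand assertion is dual.

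The core step is then the Tor--Ext criterion~(3). Take the cogenerator $R\oplus\kdual\Gamma$ of $\Gamod$; since $\kdual\Gamma$ is injective, $\mbF^{R\oplus\kdual\Gamma}=\mbF^R$. Applying Lemma~\ref{cogenF}(1) (with $N=L$) over $\Gamma$ gives: $L\in\cogen^k_{\mbF^R}(M)$ iff the natural map $(M,R\oplus\kdual\Gamma)\otimes_\La(L,M)\to(L,R\oplus\kdual\Gamma)$ is an isomorphism (an epimorphism if $k=0$) and $\Ext^i_\La\bigl((L,M),\kdual(M,R\oplus\kdual\Gamma)\bigr)=0$ for $1\le i\le k-1$. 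This condition splits along the summand decomposition $R\oplus\kdual\Gamma$; using $(M,\kdual\Gamma)\cong\kdual M$ and $\kdual(M,\kdual\Gamma)\cong M$, the $\kdual\Gamma$-part is exactly ``$\kdual M\otimes_\La(L,M)\to\kdual L$ is an isomorphism and $\Ext^i_\La((L,M),M)=0$'', which by Lemma~\ref{cogenF}(1) applied with the trivial cogenerator $\kdual\Gamma$ ($\mbF^{\kdual\Gamma}=\Ext^1$) just says $L\in\cogen^k(M)$ — and that holds automatically. What survives is precisely~(3). Symmetrically, Lemma~\ref{cogenF}(2) applied with the generator $L\oplus\Gamma$ (so $\mbF_{L\oplus\Gamma}=\mbF_L$), after splitting off the $\Gamma$-summand — whose contribution is the tautology ``$(M,R)\otimes_\La M\to R$ is an isomorphism and $\Ext^i_\La(M,\kdual(M,R))=0$'', equivalent to $R\in\gen_k(M)$, which holds — shows $R\in\gen^{\mbF_L}_k(M)$ is equivalent to the same condition~(3). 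Combined with the previous paragraph this gives $(1)\Leftrightarrow(3)\Leftrightarrow(2)$, hence the lemma.

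The hard part will be the bimodule bookkeeping in the last paragraph: verifying that the natural map attached to the cogenerator $R\oplus\kdual\Gamma$ genuinely decomposes as a direct sum along $R\oplus\kdual\Gamma$ and that under $(M,\kdual\Gamma)\cong\kdual M$ its $\kdual\Gamma$-component is the map whose bijectivity (with the accompanying $\Ext$-vanishing) encodes $L\in\cogen^k(M)$; and, relatedly, checking that the proof of Lemma~\ref{cogenF} really does go through verbatim without the finite-type assumption. Everything else is formal, and the $k=0$ case is subsumed by reading ``isomorphism'' as ``epimorphism'' throughout.
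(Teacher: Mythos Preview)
Your proof is correct and takes a genuinely different, more conceptual route than the paper. The paper proves $(1)\Leftrightarrow(3)$ (and dually $(2)\Leftrightarrow(3)$) by writing down a three-row commutative diagram: the first row is $\kdual(L,-)$ applied to $\sigma$, the third row is $((L,M),\kdual(M,-))$ applied to $\sigma$, and the middle row links them via Hom--tensor adjunction; the vertical comparison maps are isomorphisms on the $\add(M)$-terms, and chasing the kernels yields the statement about the natural map. This is a direct computation that never leaves the sequence $\sigma$.

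You instead recognise that Lemma~\ref{cogenF} already contains the needed computation and reduce to it by the trick of enlarging $R$ to the genuine cogenerator $R\oplus\kdual\Gamma$ (so that $\mbF^R=\mbF^{R\oplus\kdual\Gamma}$), applying Lemma~\ref{cogenF}(1), and then splitting off the $\kdual\Gamma$-summand as the automatically satisfied condition $L\in\cogen^k(M)$. This is slicker and makes explicit that Lemma~\ref{strongdualseq} is essentially a corollary of Lemma~\ref{cogenF}; indeed the Remark immediately following the paper's proof hints at this connection. The price you pay is the small metamathematical check that the proof of Lemma~\ref{cogenF}(1) only uses that $H$ is a cogenerator (not the full finite-type hypothesis); you flag this correctly, and it does hold. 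The paper's approach is more self-contained but essentially reproves a special case of Lemma~\ref{cogenF} by hand.
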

\begin{proof}
We will prove $(1)$ and $(3)$ are equivalent and the equivalence of $(2)$ and $(3)$ can be proved dually.

We consider the following commutative diagram
\[
\scalebox{0.9}{
\xymatrix @C=0.3cm{
0 \ar[r] & \kdual(L,R) \ar[r]^{i'}\ar[d]^{i} & \kdual(L,M_k) \ar[r]^{f}\ar[d]^{\cong} & \kdual(L,M_{k-1}) \ar[r]\ar[d]^{\cong} & \cdots \ar[r] & \kdual(L,M_0) \ar[d]^{\cong} \\
0 \ar[r] & \kdual((M,R)\otimes(L,M)) \ar[r]^{j}\ar[d]^{\cong} & \kdual((M,M_k)\otimes(L,M)) \ar[r]^{g}\ar[d]^{\cong} & \kdual((M,M_{k-1})\otimes(L,M)) \ar[r]\ar[d]^{\cong} & \cdots \ar[r] & \kdual((M,M_0)\otimes(L,M)) \ar[d]^{\cong}\\
0 \ar[r] & ((L,M), \kdual(M,R)) \ar[r] & ((L,M), \kdual(M,M_k)) \ar[r] & ((L,M), \kdual(M,M_{k-1})) \ar[r] & \cdots \ar[r] & ((L,M), \kdual(M,M_0)).
}
}
\]

Assume (1), then the first row is exact. Since the functor $((L,M),-)$ is left exact, the sequence $0 \to \kdual((M,R)\otimes(L,M)) \to \kdual((M,M_k)\otimes(L,M)) \to \kdual((M,M_{k-1})\otimes(L,M))$ is exact. For $k=0$, we have $ji$ is a monomorphism and so is $i$. This shows the natural map $(M, R)\otimes_{\La}(L,M) \to (L,R) $ is an epimorphism. For $k\geq 1$, we have an induced isomorphism on kernels
\[
\kdual(L,R)=\ker f  \xrightarrow{\cong} \ker g=\kdual((M,R)\otimes(L,M)).
\]
This proves the natural map $ (M, R)\otimes_{\La}(L,M) \to (L,R) $ is an isomorphism. Now the exactness of the first row implies the exactness of the last row which is equivalent to $\Ext^i_{\La} ((L,M),\kdual (M,R)) =0$ for $1 \leq i \leq k-1$. Conversely, assume (3). If $k=0$, then the map $i$ is a monomorphism and so is $i'$. If $k\geq 1$, then the last row is exact and the natural map $ (M, R)\otimes_{\La}(L,M) \to (L,R) $ is an isomorphism will imply the first row is isomorphisc to the last row. So we have, in both cases, that the first row is exact. Since the functor $\kdual (L,-)$ is right exact, (1) follows from the exactness of the first row.
\end{proof}

\begin{rem}
From the proof of the above lemma we see that for any $X$ if $N\in \cogen^1_{\mbF^X}(M)$ then the natural map $(M,X)\otimes(N,M)\to (N,X)$ is an isomorphism. The converse holds true if $X$ is a cogenerator (cf. Lemma \ref{cogenF}). Similarly, we have if $N\in \gen_1^{\mbF_X}(M)$ then the natural map $(M,N)\otimes(X,M)\to (X,N)$ is an isomorphism.
\end{rem}

\begin{lem}\label{strongness} Let $\Ga $ be a finite-dimensional algebra and $0 \to L \to M_0 \to \cdots \to M_k \to R \to 0$ be a $k$-$\add(M)$-dualizing sequence of $\Ga$-modules with $M$ faithfully balanced. 
Define $\La = \End_{\Ga}(M)$, $G= (L,M)$ and $H= \kdual (M,R)$. If $\Gamma \in \cogen_{\mbF^{R}}^1(M)$ and $R \in \gen_1 (M)$ then for every $k \geq 1$ the functor $(-,M)$ restricts to a duality 
\[ 
 \cogen_{\mbF^{R}}^k (M) \longleftrightarrow \cogen_{\mbF^{H}}^1(M) \cap \bigcap_{i=1}^{k-1} \Ker \Ext^i_{\La} (-,H \oplus M).  
\]
In particular, $L \in \cogen_{\mbF^R}^k (M)$ is equivalent to 
$\Ext^i_{\La} (G, H \oplus M)=0$, $1 \leq i \leq k-1$. 
\end{lem}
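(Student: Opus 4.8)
The plan is to adapt the proof of Lemma \ref{relduality} verbatim; indeed Lemma \ref{strongness} is the ``dual'' incarnation of that result with $\Ga$ as base algebra, $\La=\End_{\Ga}(M)$, and with $R$ and $H=\kdual({}_{\Ga}M,R)$ in the roles of $H$ and $R$ there. The only structural difference is that $R$ is not assumed to be a cogenerator, so Lemma \ref{cogenF} is unavailable and must be replaced by the remark following Lemma \ref{strongdualseq}: for \emph{any} $X$, if $N\in\cogen^1_{\mbF^X}(M)$ then the natural map $(M,X)\otimes(N,M)\to(N,X)$ is an isomorphism. First I would record the structural input. Since $M$ is a faithfully balanced $\Ga$-module it is also faithfully balanced as a $\La$-module (because $\End_{\La}(M)=\Ga$), so Lemma \ref{duality} already provides the ``absolute'' duality $(-,M)\colon\cogen^k({}_{\Ga}M)\leftrightarrow\cogen^1({}_{\La}M)\cap\bigcap_{i=1}^{k-1}\Ker\Ext^i_{\La}(-,M)$; what must be added is that the subcategory $\cogen^k_{\mbF^R}(M)\subseteq\cogen^k({}_{\Ga}M)$ goes exactly onto $\cogen^1_{\mbF^H}(M)\cap\bigcap_{i=1}^{k-1}\Ker\Ext^i_{\La}(-,H\oplus M)$. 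I would also note the symmetry of the situation: $R\in\gen_1(M)$ gives $\kdual({}_{\La}M,H)=\kdual({}_{\La}M,\kdual({}_{\Ga}M,R))\cong R$, while $\kdual({}_{\Ga}M,R)=H$ by definition and $H\in\gen_1({}_{\La}M)$ holds automatically by the adjunction argument in Lemma \ref{duality}; hence the pairs $(\Ga,R)$ and $(\La,H)$ are interchangeable and it suffices to treat one of the two inclusions in the level-$1$ duality.

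The core step is the level $k=1$ case. Given $N\in\cogen^1_{\mbF^R}(M)$, I would pick a projective presentation $P_1\to P_0\to N\to 0$ in $\Gamod$ and apply $\Hom_{\Ga}(-,M)$ to obtain an exact sequence $0\to(N,M)\to(P_0,M)\to(P_1,M)$ with $(P_i,M)\in\add({}_{\La}M)$; since $P_0,P_1,N$ all lie in $\cogen^1({}_{\Ga}M)$, double-dualizing recovers $P_1\to P_0\to N\to 0$, so $\Hom_{\La}(-,M)$ is exact on this three-term sequence, and it remains only to see that $\Hom_{\La}(-,H)$ is exact on it, i.e. that $((P_1,M),H)\to((P_0,M),H)\to((N,M),H)\to 0$ is exact. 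By Hom--tensor adjunction and $H=\kdual({}_{\Ga}M,R)$ one identifies $((Y,M),H)\cong\kdual(\Hom_{\Ga}(M,R)\otimes_{\La}\Hom_{\Ga}(Y,M))$ naturally in $Y$, and the natural transformation $\Hom_{\Ga}(M,R)\otimes_{\La}\Hom_{\Ga}(Y,M)\to\Hom_{\Ga}(Y,R)$ is an isomorphism for $Y\in\{P_0,P_1,N\}$ by the quoted remark, because all three lie in $\cogen^1_{\mbF^R}(M)$ ($P_0,P_1$ since $\Ga\in\cogen^1_{\mbF^R}(M)$ by hypothesis and this category is closed under sums and summands, $N$ by assumption). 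Thus the displayed complex is $\kdual\Hom_{\Ga}(-,R)$ applied to $P_1\to P_0\to N\to 0$, hence exact since $\kdual\Hom_{\Ga}(-,R)$ is right exact; so $(N,M)\in\cogen^1_{\mbF^H}(M)$. Applying this to $M\in\cogen^1_{\mbF^R}(M)$ (trivially, via $0\to M\xrightarrow{\ \id\ }M\to 0$) yields $\La=(M,M)\in\cogen^1_{\mbF^H}(M)$, so the same argument runs symmetrically; as the composites of the two functors are the identity by Lemma \ref{duality}, this gives the duality $\cogen^1_{\mbF^R}(M)\leftrightarrow\cogen^1_{\mbF^H}(M)$.

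For the refinement, given $N\in\cogen^k_{\mbF^R}(M)$ I would take its $\mbF^R$-exact $\add({}_{\Ga}M)$-coresolution $0\to N\to M_0\to\cdots\to M_k$, apply $\Hom_{\Ga}(-,M)$ to get the start of a projective $\La$-resolution of $(N,M)$, and compute $\Ext^i_{\La}((N,M),H)$ from it; the same identification $((M_j,M),H)\cong\kdual\Hom_{\Ga}(M_j,R)$ (valid since each $M_j\in\add({}_{\Ga}M)\subseteq\cogen^1_{\mbF^R}(M)$) turns this complex into $\kdual$ of $\Hom_{\Ga}(-,R)$ applied to the coresolution, whose $i$-th homology vanishes for $1\le i\le k-1$ precisely because the coresolution is $\mbF^R$-exact. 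Together with $\Ext^i_{\La}((N,M),M)=0$ from Lemma \ref{duality} this places $(N,M)$ in $\cogen^1_{\mbF^H}(M)\cap\bigcap_{i=1}^{k-1}\Ker\Ext^i_{\La}(-,H\oplus M)$. Conversely, for $Z$ in that target category Lemma \ref{duality} furnishes an $\add({}_{\Ga}M)$-coresolution of $(Z,M)$ on which $\Hom_{\Ga}(-,M)$ is exact, and reversing the Hom--tensor computation (using $Z\in\cogen^1_{\mbF^H}(M)$ and $\Ext^i_{\La}(Z,H)=0$) shows $\Hom_{\Ga}(-,R)$ remains exact on it, i.e. it is $\mbF^R$-exact, so $(Z,M)\in\cogen^k_{\mbF^R}(M)$. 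Finally the ``in particular'' is the case $N=L$: the dualizing sequence shows $L\in\cogen^k({}_{\Ga}M)$, hence $\Ext^i_{\La}(G,M)=\Ext^i_{\La}((L,M),M)=0$ for $1\le i\le k-1$ by Lemma \ref{duality}, and Lemma \ref{strongdualseq} identifies $L\in\cogen^k_{\mbF^R}(M)$ with strongness of the sequence and plugs it into the duality, so that $L\in\cogen^k_{\mbF^R}(M)$ becomes equivalent to $\Ext^i_{\La}(G,H\oplus M)=0$, $1\le i\le k-1$.

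The hard part will be making the Hom--tensor and double-dual identifications completely precise with the various left/right actions of $M$, $R$ and $H$ over $\Ga$ and $\La$, and --- since $R$ need not be a cogenerator --- locating exactly where the ``only if'' half of Lemma \ref{cogenF} would have been invoked in the finite-type proof of Lemma \ref{relduality} and substituting the explicit coresolution computation above (in particular for the converse inclusion and for the $k=1$ instance of the ``in particular''); this is also the reason the target of the duality carries ``$H\oplus M$'' rather than ``$H$'', the vanishing of $\Ext^i_{\La}(-,M)$ no longer being absorbed into the relative condition itself.
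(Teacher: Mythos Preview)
Your argument is correct and carefully tracks the proof of Lemma~\ref{relduality}, substituting the remark after Lemma~\ref{strongdualseq} for Lemma~\ref{cogenF} wherever the cogenerator hypothesis on $R$ is missing. But the paper's proof is a two-line reduction that avoids all this rework: since $\mbF^R=\mbF^{R\oplus \kdual\Ga}$ always holds, one simply replaces $R$ by the genuine cogenerator $R\oplus\kdual\Ga$ and applies Lemma~\ref{relduality} verbatim (with $\Ga$ and $R\oplus\kdual\Ga$ in the roles of $\La$ and $H$ there). The hypotheses match because $\Ga\in\cogen^1_{\mbF^R}(M)=\cogen^1_{\mbF^{R\oplus\kdual\Ga}}(M)$ is given, and $R\oplus\kdual\Ga\in\gen_1(M)$ since $R\in\gen_1(M)$ by assumption and $\kdual\Ga\in\gen_1(M)$ by Lemma~\ref{lagencogen} ($M$ is faithfully balanced). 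On the codomain side one computes $\kdual(M,R\oplus\kdual\Ga)=H\oplus{}_\La M$, and $\cogen^1_{\mbF^{H\oplus M}}(M)=\cogen^1_{\mbF^H}(M)$ because the $(-,M)$-exactness is already built into the definition of $\cogen^1_{?}(M)$. This is exactly why the target carries $H\oplus M$ rather than $H$ in the $\Ext$-vanishing: the padding summand $\kdual\Ga$ contributes the extra $M$ on the other side. So the very obstacle you flagged --- that $R$ is not a cogenerator, forcing you to downgrade Lemma~\ref{cogenF} to a one-directional remark and redo the converse and the refinement by hand --- dissolves under this substitution, and Lemma~\ref{relduality} can be invoked as a black box. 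Your route gives an independent direct proof; the paper's route shows the lemma is formally a special case of Lemma~\ref{relduality}.
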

\begin{proof}
The case $k=1$ follows directly from Lemma \ref{relduality}. For $k>1$ we note that $\mbF^R=\mbF^{R \oplus \kdual \Ga}$ and then apply Lemma \ref{relduality} using the cogenerator $R\oplus \kdual \Ga $ (in place of $H$).
 \end{proof}

\begin{lem} \label{CorRelFB} Let $M$ be a faithfully balanced $\La$-module and $\Ga =\End_{\La} (M)$. 
Let $k \geq 1$. Then, the assignment 
$X,Y \mapsto  (X,M) , \kdual (M,Y) $ gives a self-inverse bijection $($up to seeing $X,Y$ as $\La$ or as $\Ga$-modules$)$ between the following sets of pairs of $\La$-modules and $\Ga$-modules 
\[
\{ 
{}_{\La}G,{}_{\La}H \mid 
\begin{aligned}
G=\tau_k^-H \oplus \La &\in \cogen_{\mbF^H}^1(M) \cap  {}^{{}_{1\sim (k-1)} \perp} (M\oplus H) \\
H=\tau_k G \oplus \kdual\La &\in \gen^{\mbF_G}_1(M) \cap (M\oplus G)^{\perp_{1\sim (k-1)}}
\end{aligned}
\}
\]

and
\[
\{ 
{}_{\Ga} L,{}_{\Ga } R \mid \exists\ \text{a strong }k\text{-}\;{}_{\Ga}M\text{-dualizing sequence from }L\text{ to }R\}.
\]

\end{lem}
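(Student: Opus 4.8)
The plan is to deduce the statement from Lemma~\ref{easy} by restricting the bijection given there. First I would observe that the conditions defining the left-hand set refine those of Lemma~\ref{easy}: since $\cogen^1_{\mbF^H}(M)\subseteq\cogen^1(M)$, $\gen^{\mbF_G}_1(M)\subseteq\gen_1(M)$ and ${}^{{}_{1\sim(k-1)}\perp}(M\oplus H)\subseteq{}^{{}_{1\sim(k-1)}\perp}M$ (dually for $H$), any pair $({}_\La G,{}_\La H)$ in the present left-hand set also satisfies the hypotheses of Lemma~\ref{easy}. Hence Lemma~\ref{easy} already supplies a self-inverse bijection between the larger sets, under which $\End_\Ga(M)=\La$, $((G,M),M)\cong G$ (as $G\in\cogen^1(M)$), $\kdual(M,\kdual(M,H))\cong H$ (as $H\in\gen_1(M)$), and $(L,R)=((G,M),\kdual(M,H))$ comes equipped with a $k$-${}_\Ga M$-dualizing sequence $\eta\colon 0\to L\to N_0\to\cdots\to N_k\to R\to0$. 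Comparing the two sets of conditions, it remains to show: for such a pair, $\eta$ is strong if and only if $G\in\cogen^1_{\mbF^H}(M)$, $H\in\gen^{\mbF_G}_1(M)$ and $\Ext^i_\La(G,H)=0$ for $1\le i\le k-1$.

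For this I would use Lemma~\ref{strongdualseq} to rewrite ``$\eta$ strong'' as $L\in\cogen^k_{\mbF^R}(M)$ (equivalently $R\in\gen^{\mbF_L}_k(M)$), and then transport this across the duality $(-,M)$ by applying Lemma~\ref{strongness} and its dual over $\Ga$ (viewing ${}_\Ga M$, which is automatically faithfully balanced, with $\End_\Ga(M)=\La$ and the associated modules $(L,M)\cong G$, $\kdual(M,R)\cong H$). Lemma~\ref{strongness} then reads: granting its hypotheses $\Ga\in\cogen^1_{\mbF^R}(M)$ and $R\in\gen_1(M)$, one has $L\in\cogen^k_{\mbF^R}(M)$ if and only if $\Ext^i_\La(G,H\oplus M)=0$ for $1\le i\le k-1$, and in that case $(-,M)$ carries $\cogen^k_{\mbF^R}(M)$ into $\cogen^1_{\mbF^H}(M)$, forcing $G=(L,M)\in\cogen^1_{\mbF^H}(M)$; the dual statement similarly yields $H\in\gen^{\mbF_G}_1(M)$, and since $\Ext^i_\La(G,M)=0$ is already part of the Lemma~\ref{easy} hypotheses the only condition left over is $\Ext^i_\La(G,H)=0$. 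The hypothesis $R\in\gen_1({}_\Ga M)$ holds by Lemma~\ref{duality} since $H\in\gen_1({}_\La M)$; the hypothesis $\Ga\in\cogen^1_{\mbF^R}(M)$ I would get in the ``$\Rightarrow$'' direction from $G\in\cogen^1_{\mbF^H}(M)$ (hence $\La\in\cogen^1_{\mbF^H}(M)$, using summand-closure and the minimal copresentation) via the relative duality applied to $\Ga=(M,M)$, and in the ``$\Leftarrow$'' direction from strongness itself, which gives $L\in\cogen^1_{\mbF^R}(M)$ and hence the copresentation of $\Ga$ needed. The self-inverseness and the matching of the $\tau_k$-relations are inherited verbatim from Lemma~\ref{easy}.

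The main obstacle I expect is exactly the verification of the relative hypothesis $\Ga\in\cogen^1_{\mbF^R}(M)$, and more broadly the phenomenon highlighted in Example~\ref{F-balance-asym} that $1$-$\mbF^H$-faithfulness of ${}_\La M$ need not pass to ${}_\Ga M$: unlike in Lemma~\ref{easy} one genuinely needs \emph{both} the $\cogen^1_{\mbF^H}$-condition on $G$ and the $\gen^{\mbF_G}_1$-condition on $H$, and one must keep careful track of which $\mbF$-exactness is available at each stage of $\eta$ (note that for $k>1$ the functor $\mbF^H$ need not be of finite type, so Lemma~\ref{relduality} is not directly applicable to it). A robust way to organize the argument is induction on $k$: the case $k=1$ is Lemma~\ref{relduality} together with its dual and Theorem~\ref{Rel-fb}, and for $k>1$ one splices off the outermost terms of $\eta$, replacing $H$ by $H\oplus\kdual\La$ and $G$ by $G\oplus\La$ so that $\mbF^{H\oplus\kdual\La}=\mbF^H$ and $\mbF_{G\oplus\La}=\mbF_G$, exactly as in the proof of Lemma~\ref{strongness}, and then invokes the inductive hypothesis for the resulting $(k-1)$-${}_\Ga M$-dualizing sequence.
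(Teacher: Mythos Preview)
Your approach is exactly the paper's—combining Lemma~\ref{easy}, Lemma~\ref{strongdualseq}, and Lemma~\ref{strongness}—and your first two paragraphs correctly lay out how they fit together. The only issue is that you overcomplicate the verification of the hypothesis $\Ga\in\cogen^1_{\mbF^R}({}_{\Ga}M)$ needed for Lemma~\ref{strongness}: this is in fact automatic from the dualizing sequence alone. By Lemma~\ref{cogenF}\,(1c) applied over $\Ga$ with $N=\Ga$ and cogenerator $R\oplus\kdual\Ga$, the condition $\Ga\in\cogen^1_{\mbF^R}(M)$ unwinds to the natural map $\Hom_\Ga(M,R\oplus\kdual\Ga)\otimes_\La M\to R\oplus\kdual\Ga$ being an isomorphism, and by Lemma~\ref{cogen-k}\,(2) this is exactly $R\oplus\kdual\Ga\in\gen_1({}_\Ga M)$. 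The latter holds because $R\in\gen_1({}_\Ga M)$ (from the dualizing sequence) and $\kdual\Ga\in\gen_1({}_\Ga M)$ (since ${}_\Ga M$ is faithfully balanced, Lemma~\ref{lagencogen}). So no separate argument is needed in either direction, the induction on $k$ you sketch in the last paragraph is unnecessary (Lemma~\ref{strongness} already covers all $k\ge1$ by passing to the cogenerator $R\oplus\kdual\Ga$), and your remark that $\mbF^H$ ``need not be of finite type'' is mistaken: here $H=\tau_kG\oplus\kdual\La$ is a cogenerator, so $\mbF^H=\mbF_{\tau^-H\oplus\La}$ is of finite type and Lemma~\ref{relduality} applies directly.
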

\begin{proof}
This follows from Lemma \ref{easy}, Lemma \ref{strongdualseq} and Lemma \ref{strongness}.
\end{proof}

\begin{exa} 
Let $M$ be a faithfully balanced $\La$-module and assume that it has a summand $X\oplus \tau^-X$ with $X$ not injective. We define $G=\La \oplus \tau^-X$, $H= \kdual \La \oplus X$ and $\mbF= \mbF_G =\mbF^H$. 
Then, by definition we have $G \in \cogen^1(M) = \cogen^1_{\mbF}(M)$ and $H \in \gen_1 (M) = \gen^\mbF_1(M)$. 
Therefore, we obtain for $\Ga =\End_{\La}(M)$ a strong $\add({}_{\Ga}M)$-dualizing sequence with a projective-plus-$M$ left end and an injective-plus-$M$ right end. \\
\end{exa}

Now, we can formulate a relative version of the generator/ cogenerator and Morita-Tachikawa correspondence. 
\begin{cor}\label{relgencogencorres}
\begin{itemize}
\item[(1)] $($relative generator correspondence$)$ \\
The Auslander-Solberg assignment $[\La, M, G] \mapsto [\End(M), M, (G,M)]$ is an involution on the set of triples 
$[\La , M , G]$ with $\La \oplus M \in \add (G)$ and $M$ is $1$-$\mbF_G$-faithful.  
\item[(2)] $($relative cogenerator correspondence$)$ \\
The dual Auslander-Solberg assignment $[\La, M, H] \mapsto [\End(M), M, \kdual (M,H)]$ is an involution on the set of triples 
$[\La , M , H]$ with $\kdual \La \oplus M \in \add (H)$ and $M$ is $1$-$\mbF^H$-faithful. 
\item[(3)] $($relative Morita-Tachikawa correspondence$)$ \\
The assignment $[\La , M, G,H]\mapsto [\End (M), M, L=(G,M), R=\kdual (M,H)]$ is a bijection between
\begin{itemize}
    \item[*] $[\La ,M, G,H]$ with $\La\in \add (G), \kdual \La \in \add(H)$, $G= \La \oplus \tau^- H$ and $M\in \add(G) \cap \add(H)$ is $1$-$\mbF_G$-faithful,  and 
\item[*] $[\Ga , N, L,R]$ with $L,R$ are the ends of a strong $\add(N)$-dualizing sequence with $\Ga \in \add (L)$ and $\kdual \Ga \in \add(R)$. 
\end{itemize}
\end{itemize}
\end{cor}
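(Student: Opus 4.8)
The plan is to derive all three parts of Corollary~\ref{relgencogencorres} from the machinery already assembled, namely Corollary~\ref{gendualsum} (which identifies the image of the Auslander--Solberg assignment with triples having $N$ as a dualizing summand), Theorem~\ref{Rel-fb} (the $\mbF$-relative symmetry $G\in\cogen^1_\mbF(M)\Leftrightarrow H\in\gen^\mbF_1(M)$), and Lemma~\ref{CorRelFB} together with Lemma~\ref{strongdualseq} and Lemma~\ref{strongness} for part~(3). The first thing I would do is record that under the assignment $[\La,M,G]\mapsto[\Ga,N,L]$ with $\Ga=\End_\La(M)$, $N={}_\Ga M$, $L=(G,M)$, the condition ``$M$ faithfully balanced'' is preserved (since $N={}_\Ga M$ is always faithfully balanced by Lemma following Definition~\ref{fb}), so that the assignment genuinely composes with itself; I would invoke the discussion after the definition of the Auslander--Solberg assignment, and the remark that each of these assignments is self-inverse on triples of faithfully balanced modules.

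For part~(1): I would first check that the target triple $[\Ga,N,L]$ again lies in the prescribed set, i.e.\ that $\Ga\oplus N\in\add(L)$ and that $N$ is $1$-$\mbF_L$-faithful. The condition $\La\oplus M\in\add(G)$ says $G$ has both $\La$ and $M$ as summands; applying the duality $(-,M)\colon\add(M)\leftrightarrow\add(\Ga)$ of Lemma~\ref{projectivization} turns the summand $M$ of $G$ into the summand $\Ga=(M,M)$ of $L=(G,M)$, and turns (using that $G\in\cogen^1(M)$, which holds because $\La\in\add(G)\subseteq\cogen^1(M)$... actually because $M$ is $1$-$\mbF_G$-faithful hence $\La\in\cogen^1(M)$, so $M$ faithfully balanced) the summand $\La$ of $G$ into a summand of $L$; but the summand of $L$ coming from $\La$ is $(\La,M)\cong{}_\Ga M=N$. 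Hence $\Ga\oplus N\in\add(L)$. That $M$ being $1$-$\mbF_G$-faithful translates into $N$ being $1$-$\mbF_L$-faithful is precisely the ``dually'' half of the Proposition preceding Subsection~5.3 (with roles of $G$ and $L$), once one knows $M\in\add(G)$ — which is part of the hypothesis. Since $(*)$ restricted to faithfully balanced pairs is an involution and $\widetilde L=\Hom_\Ga(L,N)\cong G$ by Corollary~\ref{gendualsum}, applying the assignment twice returns $[\La,M,G]$, giving the involution. Part~(2) is formally dual: apply part~(1) to $\La^{\op}$ (replacing $G$ by $H$, $M$ by $\kdual M$, using $\mbF^H$ on $\La$ corresponds to $\mbF_{\kdual H}$ on $\La^{\op}$), or argue directly with $\gen_1$, $\kdual(M,-)$ and the dual Auslander--Solberg assignment; I would just say ``dual to~(1)''.

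For part~(3): the hypotheses say $\La\in\add(G)$, $\kdual\La\in\add(H)$, $G=\La\oplus\tau^-H$ (equivalently $H=\tau G\oplus\kdual\La$), and $M\in\add(G)\cap\add(H)$ is $1$-$\mbF_G$-faithful. The key point is that $M\in\add(G)\cap\add(H)$ together with Theorem~\ref{Rel-fb} gives that $M$ (being faithfully balanced and having $\La,\kdual\La$-flavoured summands) fits a strong $\add({}_\Ga M)$-dualizing sequence; more precisely, I would apply Lemma~\ref{CorRelFB} with $k=1$: the pair $(G,H)$ with $G=\tau_1^-H\oplus\La\in\cogen^1_{\mbF^H}(M)\cap{}^{\perp}(M\oplus H)$ (the $\Ext$-vanishing range is empty for $k=1$, and $G\in\cogen^1_{\mbF^H}(M)=\cogen^1_{\mbF}(M)\supseteq$ the condition from $1$-$\mbF_G$-faithfulness — here one uses $\mbF=\mbF_G=\mbF^H$) corresponds to a pair $(L,R)$ which are the ends of a strong $\add(N)$-dualizing sequence, and the conditions $\La\in\add(G),\kdual\La\in\add(H)$ translate to $\Ga\in\add(L),\kdual\Ga\in\add(R)$ exactly as in part~(1) via $(-,M)$ and $\kdual(M,-)$ (Lemma~\ref{projectivization}). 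Conversely, given $[\Ga,N,L,R]$ with $L,R$ the ends of a strong $\add(N)$-dualizing sequence and $\Ga\in\add(L),\kdual\Ga\in\add(R)$, Lemma~\ref{CorRelFB} produces $(G,H)$ with the required properties, and $N={}_\Ga M$ is faithfully balanced so $\End_\Ga(M)=\La$ recovers the original algebra. The bijectivity then follows because both directions are mutually inverse instances of the same self-inverse assignment on faithfully balanced data.

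The main obstacle I anticipate is bookkeeping the exact translation of the relative faithfulness condition ``$M$ is $1$-$\mbF_G$-faithful'' across the assignment in part~(3) — specifically verifying that $G\in\cogen^1_{\mbF}(M)$ with $\mbF=\mbF_G=\mbF^H$ is exactly the input needed for Lemma~\ref{CorRelFB} (whose hypothesis is phrased via $\cogen^1_{\mbF^H}(M)$ and a $\perp$-condition against $M\oplus H$), and checking that the side condition $M\in\add(G)\cap\add(H)$ is what forces the dualizing sequence to be \emph{strong} rather than merely a dualizing sequence. Everything else (preservation of $\Ga\oplus N\in\add(L)$ etc., self-inverseness) is a routine application of Lemma~\ref{projectivization} and the already-established involutivity of $(*)$ on faithfully balanced pairs, so I would keep those verifications terse and refer back to Corollary~\ref{gendualsum} and the remark preceding Subsection~5.3.
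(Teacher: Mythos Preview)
Your proposal is correct and matches the paper's intended route: the corollary is stated without proof precisely because it is meant to be read off from Lemma~\ref{CorRelFB} (for part~(3), with $k=1$) and from the Proposition immediately preceding subsection~5.2 (for parts~(1) and~(2)), together with the duality of Corollary~\ref{gendualsum} / Lemma~\ref{projectivization} to translate the summand conditions. Your unpacking of how $\La\oplus M\in\add(G)$ becomes $\Ga\oplus N\in\add(L)$, and how $M\in\add(G)\cap\add(H)$ becomes $\Ga\in\add(L)$, $\kdual\Ga\in\add(R)$, is exactly right.

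One small correction to your anticipated obstacle: the condition $M\in\add(G)\cap\add(H)$ is \emph{not} what forces the dualizing sequence to be strong. Strongness is encoded entirely in the passage from Lemma~\ref{easy} to Lemma~\ref{CorRelFB}, i.e.\ in the upgrade from $G\in\cogen^1(M)$ to $G\in\cogen^1_{\mbF^H}(M)$ --- and that upgrade is precisely the hypothesis ``$M$ is $1$-$\mbF_G$-faithful'' (since $\mbF_G=\mbF^H$). The role of $M\in\add(G)\cap\add(H)$ is separate: it is the condition that transports to $\Ga\in\add(L)$ and $\kdual\Ga\in\add(R)$ on the other side (and, conversely, is recovered from those via $(-,N)$ and $\kdual(N,-)$). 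Once you make this distinction, the bookkeeping in part~(3) becomes mechanical.
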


\subsection{$\mbF$-dualizing summands}

Of course, we can also consider relative dualizing summands. 
\begin{dfn} 
Let $\mbF=\mbF_G=\mbF^H$, $M, L\in \Lamod$ and assume $M$ is a summand of $L$. We say $M$ is an $\mbF$-dualizing summand of $L$ if $L \in \cogen^1_{\mbF}(M)$. For $k \geq 0$, we say it is a $k$-$\mbF$-dualizing summand if $L \in \cogen^k_{\mbF}(M)$.  
\end{dfn}

Relative dualizing summands have the properties which we expect from them: 

\begin{lem} \label{reldualizing} 
Let $\mbF=\mbF_G=\mbF^H$ and 
$M, N$ be $\La$-modules and $L=M\oplus N$, $k\geq 1$.
If $N \in \cogen^k_{\mbF}(M)$ $($i.e., $M$ is $k$-$\mbF$-dualizing summand of $L$ $)$, then $M$ is $1$-$\mbF$-faithful if and only if $L$ is $1$-$\mbF$-faithful. \\
If  $H \in \gen_1(M)$, then  $\cogen^k_{\mbF}(M)=\cogen^k_{\mbF}(L)$.  
Furthermore, in this case if also  $\copres^k_{\mbF}(L)=\cogen^k_{\mbF}(L)$  then we  have $\copres^k_{\mbF}(M)=\cogen^k_{\mbF}(M)$. \\
In particular, if $M$ is $1$-$\mbF$-faithful, then $M \oplus P\oplus I$ is $1$-$\mbF$-faithful for every $\mbF$-projective module $P$ and $\mbF$-injective module~$I$.
\end{lem}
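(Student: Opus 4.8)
The plan is to mimic the proof of the classical statement, Lemma \ref{dualSummand}, inserting the relative ($\mbF$-exact) refinements where needed. First I would reduce the first assertion to Lemma \ref{dualsum-cogen} together with the relative version of the symmetry. Since $N\in\cogen^k_{\mbF}(M)\subseteq\cogen^1_{\mbF}(M)\subseteq\cogen^1(M)$, Lemma \ref{dualsum-cogen} gives $\cogen^1(M)=\cogen^1(L)$, hence (by Definition \ref{fb}) $M$ is faithfully balanced iff $L$ is. To upgrade this to $1$-$\mbF$-faithfulness I would argue directly: $G\in\cogen^1_{\mbF}(M)$ iff $G\in\cogen^1_{\mbF}(L)$. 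The inclusion ``$\subseteq$'' is immediate because $M\in\add(L)$ and any $\mbF$-exact copresentation of $G$ by $\add(M)$ is a copresentation by $\add(L)$. For the reverse inclusion I would take an $\mbF$-exact sequence $0\to G\to L_0\to L_1$ with $L_i\in\add(L)$ on which $(-,M)$ is exact, write $L_i=M_i\oplus N_i$, and replace the $N$-components using the $\mbF$-exact $\add(M)$-copresentations of the $N_i$ (which exist since $N\in\cogen^k_{\mbF}(M)\subseteq\cogen^1_{\mbF}(M)$); here one uses that $H\in\gen_1(M)$ is \emph{not} yet needed, only that $\mbF$-monomorphisms compose and that $\cogen^1_{\mbF}(M)$ is $\mbF$-extension closed (Example after Lemma, the ``$\cogen^k(M)$ is $\mbF^M$-extension closed'' observation, applied with $\mbF\subseteq\mbF^M$... more carefully, one uses that a sub-left-exact-sequence argument as in Lemma \ref{dualsum-cogen} goes through $\mbF$-exactly because $\mbF=\mbF^H$ and factoring through an $\mbF$-monomorphism preserves $\mbF$-monomorphisms). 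This yields $G\in\cogen^1_{\mbF}(L)$.

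Next, for the equality $\cogen^k_{\mbF}(M)=\cogen^k_{\mbF}(L)$ under the extra hypothesis $H\in\gen_1(M)$, I would follow the template of the second half of the proof of Lemma \ref{dualSummand}, but now invoking Lemma \ref{relduality} (and Lemma \ref{strongness}, Lemma \ref{cogenF}) in place of Lemma \ref{cogen-k}. Concretely: set $\Ga=\End_{\La}(M)$. By the first part $M$ (equivalently $L$) is $1$-$\mbF$-faithful, so $\La\in\cogen^1_{\mbF}(M)$. The hypothesis $H\in\gen_1(M)$ is exactly what is required to apply Lemma \ref{relduality}, which gives the duality $(-,M)\colon\cogen^k_{\mbF^H}(M)\leftrightarrow\cogen^1_{\mbF^R}(M)\cap\bigcap_{i=1}^{k-1}\Ker\Ext^i_{\Ga}(-,R)$, where $R=\kdual(M,H)$. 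Now $L\in\cogen^k_{\mbF}(M)$ translates, via this duality (and the ``in particular'' of Lemma \ref{relduality} applied to $G\rightsquigarrow L$), into a vanishing condition $\Ext^i_{\Ga}((L,M),R)=0$ for $1\le i\le k-1$ together with $(L,M)\in\cogen^1_{\mbF^R}(M)$. Writing $(L,M)=(M,M)\oplus(N,M)=\Ga\oplus(N,M)$ and using that $\Ga$ is projective, this vanishing plus the Hom–tensor identifications of Lemma \ref{cogenF} propagate to give, for any $X\in\cogen^k_{\mbF}(M)$, the corresponding vanishing for $(X,L)\cong(X,M)$ as a module over $B=\End_{\La}(L)$, exactly as in Lemma \ref{dualSummand}. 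Then Lemma \ref{cogen-k} (relative form, i.e. the characterisation in Lemma \ref{cogenF}/\ref{relduality}) forces $\cogen^k_{\mbF}(M)=\cogen^k_{\mbF}(L)$.

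The $\copres$ statement is then formal: one always has $\cogen^k_{\mbF}(M)\subseteq\copres^k_{\mbF}(M)\subseteq\copres^k_{\mbF}(L)$ (the last because $\add(M)\subseteq\add(L)$ and an $\mbF$-exact copresentation by $\add(M)$ is one by $\add(L)$), so once $\cogen^k_{\mbF}(M)=\cogen^k_{\mbF}(L)=\copres^k_{\mbF}(L)$ is assumed, all four coincide. Finally, the ``in particular'' clause follows by taking $N=P\oplus I$ with $P$ $\mbF$-projective and $I$ $\mbF$-injective: then $\mcP(\mbF)=\add(M\oplus\La)$ and $\mcI(\mbF)=\add(M\oplus\kdual\La)$ give that $P\in\gen_1(M)\subseteq\gen_1^{\mbF}(M)$ is built from $M$ and projectives and $I$ from $M$ and injectives, so $N\in\cogen^{\infty}_{\mbF}(M)\subseteq\cogen^1_{\mbF}(M)$ — using Remark \ref{MoritasResult} at the level of underlying modules and the $\mbF$-exactness of the relevant approximation sequences (any sequence with $\mbF$-projective or $\mbF$-injective middle/end term is $\mbF$-exact). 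Then the first assertion applies.

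The main obstacle I expect is the reverse inclusion in the $1$-$\mbF$-faithfulness step and, more seriously, the propagation of the $\Ext$-vanishing in the middle step: one must check carefully that the Hom–tensor adjunction isomorphisms of Lemma \ref{cogenF} are natural enough to transport $\Ext^i_{\Ga}((L,M),R)$-vanishing to $\Ext^i_{B}((X,L),L)$-vanishing for all $X\in\cogen^k_{\mbF}(M)$, i.e. that the relative version of the identity $(X,L)\cong(G,(X,M))$ from the proof of Lemma \ref{dualSummand} still holds and is compatible with the $\mbF$-structure. Everything else is a routine transcription of the non-relative arguments with ``$\mbF$-exact'' inserted in front of ``exact''.
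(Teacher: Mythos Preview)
Your overall strategy diverges from the paper's, and in one place this creates a genuine gap.

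The paper does \emph{not} work over $\Gamma=\End_{\La}(M)$ at all. Instead it transports the whole problem to $\Sigma=\End_{\La}(H)$-modules via the duality of Lemma~\ref{cogenF}: one has
\[
(-,H)\colon \cogen^k_{\mbF}(M)\;\longleftrightarrow\;\cogen^1({}_{\Sigma}H)\cap\gen_k\bigl({}_{\Sigma}(M,H)\bigr),
\]
and similarly with $L$ in place of $M$. Since $L\in\cogen^k_{\mbF}(M)$ translates to $(L,H)\in\gen_k((M,H))$, the dual of Lemma~\ref{dualsum-cogen} immediately gives $\gen_1((M,H))=\gen_1((L,H))$, hence the first assertion. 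For the equality $\cogen^k_{\mbF}(M)=\cogen^k_{\mbF}(L)$ the paper shows that the extra hypothesis $H\in\gen_1(M)$ is exactly what makes ${}_{\Sigma}(M,H)$ \emph{faithfully balanced}, so the dual of the already-proved classical Lemma~\ref{dualSummand} applies verbatim in $\Sigma$-mod to give $\gen_k((M,H))=\gen_k((L,H))$. No $1$-$\mbF$-faithfulness of $M$ is ever invoked.

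Your route via $\Gamma$ and Lemma~\ref{relduality} breaks at this last point: Lemma~\ref{relduality} has $\La\in\cogen^1_{\mbF}(M)$ as a standing hypothesis, but the second assertion of the present lemma does \emph{not} assume $M$ is $1$-$\mbF$-faithful (the first assertion only establishes an equivalence, not that either side holds). So you cannot appeal to Lemma~\ref{relduality} here. The paper's embedding into $\Sigma$-mod sidesteps this entirely, because over $\Sigma$ the relevant faithful-balancedness (of $(M,H)$) follows from $H\in\gen_1(M)$ alone.

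A second, smaller issue is your ``in particular''. An $\mbF$-injective $I$ need not lie in $\cogen^1_{\mbF}(M)$; what you do get from $1$-$\mbF$-faithfulness of $M$ (via Theorem~\ref{Rel-fb}) is $I\in\add(H)\subseteq\gen_1^{\mbF}(M)$. So one must use the \emph{dual} of the first assertion to add $I$, and the first assertion itself to add $P$ (since $P\in\add(G)\subseteq\cogen^1_{\mbF}(M)$).
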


\begin{proof} Let $\Si = \End_{\La} (H)$.
We consider the duality for $M$ from Lemma \ref{cogenF}: 
\[ 
(-,H) \colon \cogen^k_{\mbF}(M) \longleftrightarrow \cogen^1({}_{\Sigma }H) \cap \gen_k ({}_{\Sigma} (M,H))\colon (-,H)
\]
and also for $L$ we have 
\[ 
(-,H) \colon \cogen^k_{\mbF}(L) \longleftrightarrow \cogen^1({}_{\Sigma }H) \cap \gen_k ({}_{\Sigma} (L,H))\colon (-,H).
\]
Since $(M,H)$ is a summand of $(L,H)$ and $(L,H) \in \gen_k(M,H)$ follows that 
$\gen_1(L,H) \subseteq \gen_1(M,H)$ (dual argument to $1$-dualizing summand situation). \\
Furthermore, we claim:
if $H \in \gen_1(M)$, then ${}_{\Si}(M,H)$ is faithfully balanced (and therefore, the claim follows from the dual of Lemma \ref{dualSummand} and using the duality from above again). 
So, assume there is an exact sequence $M_1 \to M_0 \to H \to 0$ with $M_i \in \add(M)$ and $(M,-)$ exact on it. Apply $(-,H)$ to it, to obtain an exact sequence $0\to \Si \to (M_0,H) \to (M_1,H)$. Apply $(-,(M,H))$ to it and using $((X,H), (Y,H))= (Y,X)$ for all $\La$-modules $X,Y$ you can identify the result with the complex 
$(M,M_1) \to (M,M_0) \to (M,H) \to 0$ which we know is exact since $H \in \gen_1(M)$. This proves $\Si \in \cogen^1((M,H))$ and therefore the claim.  The remaining claims are proven as in Lemma \ref{dualSummand}. 
\end{proof}

\begin{exa} Let $G$ be a generator and $\mbF=\mbF_G$. Then a $1$-$\mbF$-faithful summand of $G$ is the same as an $\mbF$-dualizing summand of $G$. These are easily determined as follows, let $H=\kdual \La \oplus \tau G$ and $P_1 \to P_0 \to H \to 0$ a minimal $\mbF$-presentation with $P_i \in \add(G)$. Then, the $1$-$\mbF$-faithful summands of $G$ are the summands $P$ of $G$ with $P_1\oplus P_0 \in \add(P)$. Of course, with a dual statement one can find the $1$-$\mbF$-faithful (i.e., the $\mbF$-codualizing) summands of $H$.  
\end{exa}

\section{Relative Auslander-Solberg and Auslander correspondence}
We generalize the notion of dominant dimension to the relative setting.
\begin{dfn}
Let $\Ga$ be a finite-dimensional algebra and  $\mbF=\mbF_G=\mbF^H$ for a generator ${_{\Ga}}G$ and a cogenerator ${_{\Ga}}H$. Consider the minimal $\mbF$-coresolution of $G$ by $\mbF$-injectives
\[
0 \to G \to H_0 \to H_1 \to  H_2 \to \cdots .
\] 
We define $\domdim_{\mbF} \Ga=k$ if there exists an integer $k$ such that $H_{i}\in \add(G)$ for $0\leq i \leq k-1$ and $H_k \notin \add(G)$. If $H_i\in \add(G)$ for all $i\geq 0$  then we define $\domdim_{\mbF} \Ga=\infty$.   
\end{dfn}

\begin{rem}
As is in the classical case, our definition of $\mbF$-dominant dimension is left-right symmetric in the following sense: A functor $\mbF=\mbF_G=\mbF^H$ determines a functor $\mbF_{\kdual H}=\mbF^{\kdual G}=:\mbF^*$ in the category $\Ga^{op}$-mod and vice versa, and $\domdim_{\mbF} \Ga=k$ if and only if $\domdim_{\mbF^*} \Ga^{op}=k$.
\end{rem}

\subsection{Relative Auslander-Solberg correspondence}

\begin{lem}\label{relAus-Goralg}
Let $\mbF=\mbF_G=\mbF^H$ with $G$ and $H$ basic and assume ${}_{\Ga}M$ is a module such that $\add (M) = \add (G) \cap \add (H)$.
Then the following are equivalent for every $k \geq 1$. 
\begin{itemize}
\item[(1)] There is an $\mbF$-exact sequence $0 \to G \to M_0 \to M_1 \to \cdots \to M_k\to H \to 0$ with $M_i \in \add (M)$. 
\item[(2)] $\domdim_{\mbF} \Ga \geq k+1 \geq \id_{\mbF} G.$
\item[(3)] $\domdim_{\mbF} \Ga \geq k+1 \geq \pd_{\mbF} H.$
\end{itemize}
\end{lem}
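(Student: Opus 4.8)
The plan is to prove the cycle of implications $(1)\Rightarrow(2)\Rightarrow(3)\Rightarrow(1)$, using the symmetry between $\mbF$-dominant dimension and $\mbF$-injective/projective dimension together with the relative duality machinery already set up (Lemma \ref{relduality}, Lemma \ref{cogenF}, and the basic facts about $\mbF$-(co)resolutions). The key observation driving everything is that, since $\add(M)=\add(G)\cap\add(H)$, the modules in $\add(M)$ are simultaneously $\mbF$-projective and $\mbF$-injective, so an $\mbF$-exact sequence of the form in (1) can be read in two directions: from the left it is the start of an $\mbF$-injective coresolution of $G$ (witnessing $\id_{\mbF}G\le k+1$ once we know $H$ is $\mbF$-injective), and from the right it is the start of an $\mbF$-projective resolution of $H$ (witnessing $\pd_{\mbF}H\le k+1$). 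Conversely the terms $M_0,\dots,M_{k-1}$ all lying in $\add(M)\subseteq\add(G)$ is exactly what $\domdim_{\mbF}\Ga\ge k+1$ records, once one checks the sequence can be taken to be (a summand of) the minimal $\mbF$-injective coresolution of $G$.

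First I would establish $(1)\Rightarrow(2)$. Given the $\mbF$-exact sequence in (1), split it into short $\mbF$-exact sequences $0\to \Omega^{-i}_{\mbF}G\to M_i\to \Omega^{-(i+1)}_{\mbF}G\to 0$ for $0\le i\le k-1$ and $0\to \Omega^{-k}_{\mbF}G\to H\to 0$, i.e. $\Omega^{-(k+1)}_{\mbF}G=0$ modulo $\mbF$-injectives; since $H$ is a cogenerator it is $\mbF$-injective, so concatenating gives an $\mbF$-injective coresolution of $G$ of length $\le k+1$, whence $\id_{\mbF}G\le k+1$. For $\domdim_{\mbF}\Ga\ge k+1$ I would pass to the \emph{minimal} $\mbF$-injective coresolution $0\to G\to H_0\to H_1\to\cdots$: the sequence in (1) need not be minimal, but a standard argument (removing split summands, as in the minimal-approximation results cited via \cite[Theorem 2.4]{ARSm}) shows the minimal one is a direct summand complex of the given one in each degree $\le k$, hence $H_i\in\add(M_i)\subseteq\add(M)\subseteq\add(G)$ for $0\le i\le k-1$, which is precisely $\domdim_{\mbF}\Ga\ge k+1$. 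The implication $(2)\Rightarrow(3)$ (and its converse) should follow from the left-right symmetry recorded in the Remark just before this subsection: applying the duality $\kdual$ interchanges $\mbF=\mbF_G=\mbF^H$ on $\Ga$ with $\mbF^*=\mbF_{\kdual H}=\mbF^{\kdual G}$ on $\Ga^{\op}$, sends $\domdim_{\mbF}\Ga$ to $\domdim_{\mbF^*}\Ga^{\op}$, and sends $\id_{\mbF}G$ to $\pd_{\mbF^*}\kdual G$; since $\pd_{\mbF}H$ over $\Ga$ corresponds to $\id_{\mbF^*}\kdual H$ over $\Ga^{\op}$, statement (2) for $(\Ga,\mbF)$ and statement (3) for $(\Ga,\mbF)$ are literally (2) and (3) for $(\Ga^{\op},\mbF^*)$ with $G$ replaced by $\kdual H$ — but we do not yet know $(2)\Leftrightarrow(3)$ in general, so I would instead argue $(2)\Rightarrow(1)$ and $(3)\Rightarrow(1)$ directly and let the cycle close.

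For $(2)\Rightarrow(1)$: assume $\domdim_{\mbF}\Ga\ge k+1$ and $\id_{\mbF}G\le k+1$. Take the minimal $\mbF$-injective coresolution $0\to G\to H_0\to\cdots\to H_{k+1}\to 0$ (finite of length $\le k+1$ by the $\id_{\mbF}$ bound). The dominant-dimension hypothesis gives $H_i\in\add(G)$ for $0\le i\le k$; I claim one can further arrange $H_i\in\add(M)=\add(G)\cap\add(H)$ after a finite modification, and that $H_{k+1}\in\add(H)$ automatically (it is $\mbF$-injective, hence in $\add(M\oplus\kdual\Ga)\subseteq\add(H)$ since $\kdual\Ga\in\add(H)$; here one also needs $H_{k+1}\in\add(H)$, which is where the precise form $\add(M)=\add(G)\cap\add(H)$ and $G\in\add(H)$ may be invoked, or a small diagram chase replacing $H_{k+1}$ by $H$ itself). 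The cokernel of $G\to H_0$ should then be shown to lie in $\cogen^1_{\mbF}(G)=\cdots$, and peeling off $\mbF$-injective terms, combined with the identification of the truncated coresolution with the desired sequence ending in $H$, produces (1); the step where I expect to have to be careful is splicing the tail so that the sequence genuinely \emph{ends} at $H$ rather than at some other $\mbF$-injective with the right $\add$-closure, which is exactly the role of $\mbF=\mbF^H$ being of finite type with $H=\tau G\oplus\kdual\La$. The implication $(3)\Rightarrow(1)$ is dual, using $\mbF$-projective resolutions of $H$ and $\add(M)\subseteq\add(H)\cap\add(G)$ with $G$ a generator. The main obstacle, then, is not any single implication but the bookkeeping in $(2)\Rightarrow(1)$: controlling the last term of the minimal $\mbF$-injective coresolution of $G$ and showing the intermediate terms, a priori only in $\add(G)$, can be taken in the smaller class $\add(M)$ — this should come out of the minimality of the coresolution together with the fact that every $\mbF$-injective summand of an $\mbF$-injective coresolution term of length $< \domdim_{\mbF}\Ga$ already lies in $\add(G)$, hence in $\add(M)$.
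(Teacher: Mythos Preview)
Your overall structure---showing $(1)\Rightarrow(2)$, $(1)\Rightarrow(3)$ easily and then $(2)\Rightarrow(1)$, $(3)\Rightarrow(1)$ directly---matches the paper's. The implication $(1)\Rightarrow(2)$ is fine apart from an index slip (you want $H_i\in\add(M)$ for $0\le i\le k$, not $k-1$, to get $\domdim_{\mbF}\Ga\ge k+1$). The real issue is $(2)\Rightarrow(1)$, where you correctly isolate the obstacle---arranging the \emph{last} term to be $H$ itself rather than some $H'\in\add(H)$---but do not resolve it. Your suggested fix (``minimality of the coresolution together with the fact that every $\mbF$-injective summand of a term of length $<\domdim_{\mbF}\Ga$ lies in $\add(G)$'') only controls the middle terms $H_0,\dots,H_k$; it says nothing about which indecomposable summands of $H$ actually occur in $H_{k+1}$, and there is no a priori reason the non-$\mbF$-projective summands of $H$ must all appear there.

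The paper closes this gap with a tilting-theoretic argument you are missing. From the minimal coresolution $0\to G\to M_0\to\cdots\to M_k\to H'\to 0$ one applies the \emph{dual} of Lemma~\ref{HigherTilt}: since $M$ is $\mbF$-projective (so $\pd_{\mbF}M\le 1$) and $G\in\cogen^k_{\mbF}(M)$, the module $M\oplus H'=M\oplus\Omega_M^{-(k+1)}G$ is an $\mbF$-tilting module; it is moreover $\mbF$-injective since $M,H'\in\add(H)$. Iterated mutation (Lemma~\ref{mutation}) starting from the basic $\mbF$-tilting module $G$ shows $M\oplus H'$ is basic. A basic $\mbF$-tilting module has $\lvert G\rvert=\lvert H\rvert$ indecomposable summands and lies in $\add(H)$, hence $M\oplus H'=H$. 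Now adding $M\xrightarrow{1}M$ in the last two positions turns $M_k\to H'$ into $(M_k\oplus M)\to H$, yielding the sequence in (1). This counting-via-tilting step is the missing idea; without it, the ``small diagram chase'' you allude to does not go through.
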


\begin{proof}
(1) $\Rightarrow$ (2) and (1) $\Rightarrow$ (3) are obvious. We prove (2) $\Rightarrow$ (1) and (3) $\Rightarrow$ (1) is dual.

Assume (2) then we have an $\mbF$-exact sequence 
\[
0 \to G \to M_0 \to M_1 \to \cdots \to M_{k-1} \to M_k' \to H' \to 0
\]
with $M_i\in \add(M)$ for $0\leq i \leq k-1$, $M_k'\in \add(M)$ and $H'\in \add(H)$. We may assume this $\mbF$-exact sequence is a successive composition of minimal left $\add(M)$-approximations of the cokernels. By the dual version of Lemma \ref{HigherTilt} we have $M\oplus H'$ is an $\mbF$-tilting module with $\id_{\mbF}(M\oplus H')=0$. By Lemma \ref{mutation} (1) we know that $M\oplus H'$ is basic and hence $M\oplus H'=H$. Now the desired $\mbF$-exact sequence in (1) can be obtained by adding $M\xrightarrow{1} M$ to $M_k' \to H'$.
\end{proof}

\begin{thm}\label{relAScorres}
Let ${}_{\La}M$ be a faithfully balanced module and $\Ga = \End_{\La} (M)$. The assignment $X,Y\mapsto (X,M), \kdual (M,Y)$ gives a self-inverse bijection between the following sets of pairs of modules 
\begin{itemize}
\item[(1)] \{${}_{\La}L,{}_{\La}R\mid {}_{\La}M \oplus \La \in \add (L), {}_{\La}M \oplus \kdual \La \in \add (R), L= \tau_k^- R \oplus \La, R = \tau_k L \oplus \kdual \La  $, $ \Ext_{\La}^i (L,R) =0$, $1 \leq i \leq k-1$ such that there exists a strong $\add({}_{\La}M)$-dualizing sequence with left end $L$ and right end $R$\}. 
\item[(2)] \{${}_{\Ga}G,{}_{\Ga}H\mid M \oplus \Ga \in \add (G), M \oplus \kdual \Ga \in \add (H), G= \tau^- H \oplus \Ga, H = \tau G \oplus \kdual \Ga  $ such that there exists a strong  $k$-$\add({}_{\Ga}M)$-dualizing sequence with left end $G$ and right end $H$\}. 
\end{itemize}
\end{thm}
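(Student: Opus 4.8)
The plan is to deduce the statement from the machinery already in place: the strong-dualizing-sequence correspondence Lemma \ref{CorRelFB}, the relative dominant-dimension lemma Lemma \ref{relAus-Goralg}, and the basic duality Lemma \ref{duality} together with its refinements Lemma \ref{fullfaith} and Lemma \ref{cogen-k}. Since ${}_\La M$ is faithfully balanced we have the double-centralizer property $\End_\Ga({}_\Ga M)=\La$, so the pair of contravariant functors $(-,M),\ \kdual(M,-)$ can be applied with the roles of $\La$ and $\Ga$ exchanged, and it is this symmetry that makes the claimed bijection self-inverse. Concretely, given $(L,R)$ in the first set I would put $G:=(L,M)$ and $H:=\kdual(M,R)$ and show $(G,H)$ lies in the second set; the inverse assignment is the same formula read on the $\Ga$-side, and self-inverseness then reduces to the biduality isomorphisms $((X,M),M)\cong X$ and $\kdual(M,\kdual(M,Y))\cong Y$, which hold by Lemma \ref{cogen-k} because the dualizing-sequence hypotheses place $L$ (resp.\ $R$) in $\cogen^k(M)$ (resp.\ $\gen_k(M)$).

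The verification then splits into transporting three pieces of data. First, the $\add$-conditions: using $(\La,M)=M$, $(M,M)=\Ga$, $\kdual(M,M)=\kdual\Ga$, $\kdual(M,\kdual\La)\cong M$ and the analogous identities on the $\Ga$-side (available since $\End_\Ga M=\La$), the hypotheses ${}_\La M\oplus\La\in\add(L)$ and ${}_\La M\oplus\kdual\La\in\add(R)$ are exactly equivalent to $M\oplus\Ga\in\add(G)$ and $M\oplus\kdual\Ga\in\add(H)$; in particular $G$ is a generator, $H$ a cogenerator, $\add(M)=\add(G)\cap\add(H)$, and $\mbF:=\mbF_G=\mbF^H$ is of finite type with $H=\tau G\oplus\kdual\Ga$, $G=\tau^-H\oplus\Ga$. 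Second, the dualizing sequence: since a strong dualizing sequence is by definition exact under $(-,M)$ and $\kdual(M,-)$ and, being strong, also under the Hom-functor out of its left end, Lemma \ref{strongdualseq} together with Lemma \ref{strongness} identify its image over $\Ga$ as a strong $k$-$\add({}_\Ga M)$-dualizing sequence from $G$ to $H$, and conversely; here I would also invoke Lemma \ref{relAus-Goralg} to restate the existence of such a sequence as the bound $\domdim_\mbF\Ga\ge k+1\ge\id_\mbF G$, which renders the matching visibly symmetric in $\La$ and $\Ga$. Third, the higher translate relations $R=\tau_kL\oplus\kdual\La$, $L=\tau_k^-R\oplus\La$ together with the vanishing $\Ext^i_\La(L,R)=0$ for $1\le i\le k-1$: for these I would use the formula $\kdual(M,\tau_kL)\cong\Omega_M^{-(k+1)}(L,M)$ of Lemma \ref{easy} (reducing to basic representatives, which is harmless since all categories in sight are closed under summands) to connect $\tau_k$ on the $\La$-side with the iterated $\add(M)$-approximation syzygy $\Omega_M^{-(k+1)}$ appearing in condition (iii) of the definition of a $k$-$\add(M)$-dualizing sequence, while the $\Ext$-vanishing is converted, via the ``in particular'' clause of Lemma \ref{strongness} (namely $L\in\cogen^k_{\mbF^R}(M)\iff\Ext^i_\La((L,M),H\oplus M)=0$ for $1\le i\le k-1$), into the statement that the transported sequence has the required length $k$.

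The main obstacle I expect is precisely this third step: reconciling the asymmetry between the two descriptions — on the $\La$-side a higher ($\tau_k$) tilting-type pair with $\Ext$-vanishing, on the $\Ga$-side a finite-type relative generator/cogenerator pair with a genuine $k$-term dualizing sequence. This rests on the ``length $\leftrightarrow$ $\Ext$-vanishing'' dictionary coming from Lemma \ref{duality} and Lemma \ref{fullfaith} (the duality $(-,M)$ turns $\cogen^k$ into $\cogen^1\cap\bigcap_{i=1}^{k-1}\Ker\Ext^i_\Ga(-,M)$), and requires some care in tracking how the ordinary syzygy $\Omega_\La$ hidden inside $\tau_k$ interacts with the $\add(M)$-approximation syzygy $\Omega_M$. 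A smaller but not entirely routine point is checking that $\mbF_G$ really is of finite type here (so that $H=\tau G\oplus\kdual\Ga$ is available), which follows from $M\in\add(G)$ being $\mbF_G$-projective-injective together with $\add(G)\cap\add(H)=\add(M)$. Once these identifications are settled, the remaining verifications — well-definedness, mutual inverseness, and that the transported sequence again satisfies (i)--(iii) and ``strong'' — are routine.
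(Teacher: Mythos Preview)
Your proposal is correct and aligns with the paper's one-line proof, which simply reads ``Combine Lemma~\ref{CorRelFB} and Lemma~\ref{relAus-Goralg}.'' Your anticipated ``main obstacle''---the dictionary between the $\tau_k$/$\Ext$-vanishing description on one side and the strong $k$-$\add(M)$-dualizing sequence on the other---is exactly the content of Lemma~\ref{CorRelFB} (whose proof already packages Lemmas~\ref{easy}, \ref{strongdualseq} and \ref{strongness}), so once you invoke it there is nothing further to verify beyond matching the extra $\add$-conditions and applying the $k=1$ case with the roles of $\La$ and $\Ga$ swapped.
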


\begin{proof}
Combine Lemma \ref{CorRelFB} and Lemma \ref{relAus-Goralg}.
\end{proof}

\subsection{Relative Auslander correspondence}
\begin{lem} \label{finGldim}
Let $k \geq 1$ and assume $\domdim_{\mbF} \Ga \geq k+1$. Let  
${}_{\Ga}M$ be a module with $\add (M) = \add (G) \cap \add (H)$. Then we have $\cogen_\mbF^{k}(M) = \Omega_\mbF^{k+1} (\Gamod)$ and $\gen_k^{\mbF}(M)= \Omega_\mbF^{-(k+1)}(\Gamod)$. Furthermore, the following are equivalent: 
\begin{itemize}
\item[(1)] $\cogen^{k}_{\mbF}(M) = \add (G)$.
\item[(2)] $\gen_{k}^{\mbF}(M) = \add (H)$.
\item[(3)] $\gldim_{\mbF} \Ga \leq k+1$.
\end{itemize}
\end{lem}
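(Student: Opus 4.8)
The strategy is to reduce everything to the relative homological algebra of $\mbF$ together with the relative version of $\cogen^k$ developed in Lemma \ref{cogenF} and the dominant-dimension hypothesis. First I would establish the two ``Furthermore'' identities $\cogen^k_\mbF(M)=\Omega^{k+1}_\mbF(\Gamod)$ and $\gen^\mbF_k(M)=\Omega^{-(k+1)}_\mbF(\Gamod)$. For the first inclusion $\subseteq$: if $N\in\cogen^k_\mbF(M)$ there is an $\mbF$-exact sequence $0\to N\to M_0\to\cdots\to M_k$ with $M_i\in\add(M)$, so $N=\Omega^{k+1}_\mbF$ of the cokernel of the last map (using that $M$ is $\mbF$-injective, being a summand of $H$, so this is an $\mbF$-injective coresolution and $\Omega^{-(k+1)}_\mbF$ of $N$ makes sense). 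Conversely, given any $\Ga$-module $X$, the hypothesis $\domdim_\mbF\Ga\geq k+1$ says the minimal $\mbF$-injective coresolution $0\to\Ga\to H_0\to\cdots$ of $\Ga$, hence (by taking $\add(\Ga)$-summands and the fact that $\add(M)=\add(G)\cap\add(H)$) the minimal $\mbF$-injective coresolution of any projective, has its first $k+1$ terms in $\add(G)$; since the $\mbF$-injectives lying in $\add(G)$ are exactly $\add(M)$, the $(k+1)$-st syzygy $\Omega^{k+1}_\mbF X$ of any module is the kernel of a map whose preceding $k+1$ terms (after the $0$-th projective cover step) can be arranged in $\add(M)$ --- here I need the standard fact that a minimal $\mbF$-projective resolution followed by enough $\mbF$-injective information gives the required sequence, and that $\Hom(M_i,M)$-exactness comes for free because the $M_i$ are actually $\mbF$-injective-$\mbF$-projective so the sequence stays exact under $\Hom(-,M)$. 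The $\gen$-statement is dual, using $\add(M)\subseteq\add(G)$ and that $\mbF^*$-dominant dimension on $\Ga^{\mathrm{op}}$ equals $\domdim_\mbF\Ga$ by the Remark.

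Next I would prove the equivalence $(1)\Leftrightarrow(2)$. By Lemma \ref{cogenF}, the functor $(-,H)$ gives a duality $\cogen^k_\mbF(M)\leftrightarrow\cogen^1({}_\Si H)\cap\gen_k({}_\Si(M,H))$, and dually $(-,G)$-type or $\kdual(G,-)$-type functors relate $\gen^\mbF_k(M)$ to the corresponding category on the other side; more directly, the relative duality of Lemma \ref{relduality}/\ref{strongness} or simply the observation that $\add(G)$ and $\add(H)$ are interchanged under $\tau$ (with $H=\tau G\oplus\kdual\La$, $G=\tau^-H\oplus\La$) shows that $\cogen^k_\mbF(M)=\add(G)$ holds if and only if its ``dual'' $\gen^\mbF_k(M)=\add(H)$ holds. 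Concretely: $\cogen^k_\mbF(M)=\add(G)$ means every module has its $(k+1)$-st $\mbF$-syzygy in $\add(G)$, equivalently $\Omega^{k+1}_\mbF(\Gamod)\subseteq\add(G)$; applying $\kdual$ and passing to $\Ga^{\mathrm{op}}$ with $\mbF^*$ turns this into $\Omega^{-(k+1)}_{\mbF}(\Gamod)\subseteq\add(H)$, which is $(2)$.

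For $(1)\Leftrightarrow(3)$: the condition $\gldim_\mbF\Ga\leq k+1$ means $\Omega^{k+1}_\mbF X$ is $\mbF$-projective for every $X$, i.e. $\Omega^{k+1}_\mbF(\Gamod)\subseteq\mcP(\mbF)=\add(G)$ (recall $\mcP(\mbF)=\add(M\oplus\La)$, but since $\add(M)\subseteq\add(G)$ and $\La\in\add(G)$ as $G$ is a generator, $\mcP(\mbF)\subseteq\add(G)$, and conversely an object of $\add(G)$ that is an $\mbF$-syzygy, hence in $\cogen^\infty_\mbF(M)$, and $\mbF$-projective must lie in $\add(G)$ --- here one uses that $\add(G)\cap(\text{things with no proper }\mbF\text{-submodule structure})$... more carefully, $\gldim_\mbF\Ga\leq k+1$ $\iff$ $\Omega^{k+1}_\mbF(\Gamod)$ consists of $\mbF$-projectives $\iff$ $\Omega^{k+1}_\mbF(\Gamod)=\mcP(\mbF)\cap\Omega^{k+1}_\mbF(\Gamod)$). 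Combined with the first part $\cogen^k_\mbF(M)=\Omega^{k+1}_\mbF(\Gamod)$, condition $(1)$ reads $\Omega^{k+1}_\mbF(\Gamod)=\add(G)$. So I must show: $\Omega^{k+1}_\mbF(\Gamod)=\add(G)$ $\iff$ $\Omega^{k+1}_\mbF(\Gamod)\subseteq\mcP(\mbF)$. The forward direction uses $\mcP(\mbF)=\add(\La\oplus M)\subseteq\add(G)=\Omega^{k+1}_\mbF(\Gamod)$ and that $\mcP(\mbF)\supseteq\Omega^{k+1}_\mbF(\Gamod)$ always when the latter consists of $\mbF$-projectives --- wait, that needs $\add(G)\subseteq\mcP(\mbF)$, which is false in general. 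The correct argument: if $\gldim_\mbF\Ga\le k+1$ then all $\mbF$-syzygies of order $\ge k+1$ are $\mbF$-projective, so $\Omega^{k+1}_\mbF(\Gamod)=\mcP(\mbF)$; but one always has $\cogen^k_\mbF(M)\subseteq\cogen^k_\mbF(G)$... I will instead invoke that $\add(G)$ is always contained in $\cogen^k_\mbF(M)$ when $\domdim_\mbF\Ga\ge k+1$ (each indecomposable summand of $G$ admits an $\mbF$-injective coresolution starting in $\add(M)$ since $\add(M)=\add(G)\cap\add(H)$ and the first $k+1$ injective terms of $\Ga$'s coresolution, hence of $G$'s, lie in $\add(G)$), so $\add(G)\subseteq\cogen^k_\mbF(M)=\Omega^{k+1}_\mbF(\Gamod)$ unconditionally here; then $(1)$ is the reverse inclusion $\Omega^{k+1}_\mbF(\Gamod)\subseteq\add(G)$, and since $\mcP(\mbF)\subseteq\add(G)$ (as $G$ is a generator and $M$ is an $\mbF$-projective summand of $G$... indeed $M\in\add(G)$ because $\add M=\add G\cap\add H$), $\gldim_\mbF\Ga\le k+1$, i.e. $\Omega^{k+1}_\mbF(\Gamod)\subseteq\mcP(\mbF)$, implies $(1)$; conversely $(1)$ gives $\Omega^{2(k+1)}_\mbF(\Gamod)=\Omega^{k+1}_\mbF(\add G)\subseteq\Omega^{k+1}_\mbF(\Gamod)=\add(G)$, and iterating together with a dimension-shift / finiteness argument (each summand of $G$ has finite $\mbF$-injective dimension bounded using $\id_\mbF G\le$ something, or rather $G$ is $\mbF$-tilting-like) forces the syzygies to stabilize at $\mbF$-projectives; cleanly, $\add(G)=\Omega^{k+1}_\mbF(\Gamod)$ is closed under $\Omega^{k+1}_\mbF$, and $\Ga\in\add G$ has $\Omega^{k+1}_\mbF\Ga$ which must be $\mbF$-projective because... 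I would finish this via the dual of Lemma \ref{HigherTilt} identifying $G$ (with the dualizing sequence from Lemma \ref{relAus-Goralg}) as an $\mbF$-tilting module of $\pd_\mbF\le k+1$, whence $\gldim_\mbF\Ga\le\pd_\mbF G+k+1$...

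\textbf{The main obstacle.} The genuinely delicate point is the implication $(1)\Rightarrow(3)$: deducing the \emph{global} bound $\gldim_\mbF\Ga\le k+1$ from the \emph{equality} $\cogen^k_\mbF(M)=\add(G)$. The inclusion $\supseteq$ in $\cogen^k_\mbF(M)\supseteq\add(G)$ and the identity $\cogen^k_\mbF(M)=\Omega^{k+1}_\mbF(\Gamod)$ are the soft parts; the hard part is that ``every $(k{+}1)$-st $\mbF$-syzygy is a summand of $G$'' upgrades to ``every $(k{+}1)$-st $\mbF$-syzygy is $\mbF$-projective.'' I expect to handle this exactly as in the classical Auslander-correspondence proof: $G$ with its strong dualizing sequence $0\to G\to M_0\to\cdots\to M_k\to H\to 0$ (Lemma \ref{relAus-Goralg}) makes $G$ an $\mbF$-tilting $\Ga$-module with $\pd_\mbF G\le k+1$ and $\id_\mbF H\le k+1$; combined with $\cogen^k_\mbF(M)=\add(G)$ one gets that $\Gamod=\pres^\mbF_{k+1}(G)$ has every object of $\mbF$-projective-dimension $\le k+1$, i.e. $\gldim_\mbF\Ga\le k+1$. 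I will cite the relative (higher) tilting machinery (Lemma \ref{HigherTilt} and its dual) rather than redo it. Everything else is bookkeeping with the functors $(-,M)$, $(-,H)$, $\kdual(M,-)$ and the exact structures $\mbF^M$, $\mbF_M$.
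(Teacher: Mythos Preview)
Your plan contains two genuine gaps.

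\textbf{First gap: the identity $\mcP(\mbF)=\add(G)$.} You write ``$\mcP(\mbF)=\add(\La\oplus M)$'' and then worry that ``$\add(G)\subseteq\mcP(\mbF)$ \dots\ is false in general,'' which leads you into the long detour about iterating syzygies and invoking tilting machinery for $(1)\Rightarrow(3)$. But here $\mbF=\mbF_G$ with $G$ a generator, so $\mcP(\mbF)=\add(G\oplus\La)=\add(G)$ on the nose. Consequently $\gldim_\mbF\Ga\le k+1$ is \emph{literally} the statement $\Omega_\mbF^{k+1}(\Gamod)\subseteq\add(G)$, and since the hypothesis already gives $\add(G)\subseteq\cogen^k_\mbF(M)=\Omega_\mbF^{k+1}(\Gamod)$, the equivalence $(1)\Leftrightarrow(3)$ is immediate --- no stabilization argument, no $\mbF$-tilting theory needed. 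Your ``main obstacle'' dissolves once this misidentification is fixed.

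\textbf{Second gap: the inclusion $\Omega_\mbF^{k+1}(\Gamod)\subseteq\cogen^k_\mbF(M)$.} Your argument here is hand-wavy: you try to produce, for a $(k{+}1)$-st $\mbF$-syzygy $Z$, an $\mbF$-exact sequence $0\to Z\to M_0\to\cdots\to M_k$ by mixing projective resolutions with the dominant-dimension coresolution of $G$, but you never actually exhibit such a sequence or verify $(-,M)$-exactness. The paper handles this differently and cleanly: under $\domdim_\mbF\Ga\ge k+1$ one has $H\in\gen^\mbF_k(M)$ and $M$ is $\mbF$-self-orthogonal with $\id_\mbF M\le 1$ (indeed $\id_\mbF M=0$), so Lemma~\ref{HigherTilt} applies and gives
\[
\cogen^k_\mbF(M)=\bigcap_{i\ge1}\Ker\Ext^i_\mbF(-,C)\quad\text{for }C=M\oplus\Omega_M^{k+1}H,\ \id_\mbF C\le k+1.
\]
Now for $X=\Omega_\mbF^{k+1}Y$ a dimension shift yields $\Ext^i_\mbF(X,C)\cong\Ext^{i+k+1}_\mbF(Y,C)=0$ for all $i\ge1$, hence $X\in\cogen^k_\mbF(M)$. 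This use of Lemma~\ref{HigherTilt} is the substantive input you are missing; everything else in the paper's proof is bookkeeping.
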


\begin{proof}
Since $\domdim_{\mbF} \Ga \geq k+1$ and $\add (M) = \add (H) \cap \add (G)$, we have clearly  $\add (G) \subseteq \cogen_\mbF^{k}(M) \subseteq  \Omega_\mbF^{k+1} (\Gamod)$. On the other hand, we prove in Lemma \ref{HigherTilt} that in this case: 
\[\cogen^k_{\mbF}(M) = \bigcap_{i\geq 1} \Ker \Ext^i_{\mbF} (-,C)\] for 
$C=M \oplus \Omega_M^{k+1}H$ and $\id_{\mbF} C\leq k+1$. So given $X \in \Omega_\mbF^{k+1} (\Gamod)$, there is an $Y \in \Gamod$ such that $X= \Omega_\mbF^{k+1} Y$ and then by dimension shift for $i \geq 1$ 
\[ \Ext^i_{\mbF}(X,C) = \Ext^{i+k+1}_{\mbF}(Y, C) = 0\]
since $\id_{\mbF}C \leq k+1$. In particular, $X \in \cogen^ k_{\mbF}(M)$. One can prove $\gen_k^{\mbF}(M) = \Omega_\mbF^{-(k+1)}(\Gamod )$ with the dual argument. 

Now clearly, $\gldim_{\mbF} \Ga \leq k+1$ is equivalent to $ \Omega^{k+1}_{\mbF}(\Gamod)\subseteq \add (G)$ and by the just proved result, we conclude it is equivalent to (1). The equivalence of (3) and (2) can be proven with the analogous argument. 
\end{proof}

\begin{dfn}
Let $M\in \Lamod$ and assume that there is a strong $\add(M)$-dualizing sequence with left end $L$ and right end $R$.

We say that $M$ is a \kLRcluster 
\ if
\begin{itemize}
   
\item[(i)] $\La \in \cogen_{\mbF^R}^1(M)$ and $\kdual \La \in \gen_1^{\mbF_L} (M)$,
\item[(ii)] $\cogen_{\mbF^R}^1(M) \cap \bigcap_{i=1}^{k-1} \Ker \Ext^i_{\La} (-,R) = \add (L)$ and $\gen^{\mbF_L}_1(M) \cap \bigcap_{i=1}^{k-1} \Ker \Ext^i_{\La} (L,-) = \add (R)$. \end{itemize}
Let $\Ga$ be a finite-dimensional algebra and $\mbF=\mbF_G$ for a generator $G$. Then we say $\Ga$ is a $k$-$\mbF$-Auslander algebra if $\domdim_{\mbF} \Ga \geq k+1 \geq \gldim_{\mbF} \Ga $. 
\end{dfn}

\begin{thm}\label{relAuscorres} $($relative Auslander correspondence$)$ \\
Let $k \geq 1$. There is a one-to-one correspondence between isomorphism classes of basic \kLRclusters ${}_{\La} M$ $($for some $L,R$ $)$ and finite-dimensional algebras $\Ga $ with an exact structure given by $\mbF=\mbF_G=\mbF^H$ such that $\domdim_{\mbF} \Ga \geq k+1 \geq \gldim_{\mbF} \Ga $. 
The correspondence is induced by the assignment \\
\[
[\La , M, L,R]\mapsto [\Ga= \End_{\La}(M), {}_{\Ga}M, G=(L,M), H= \kdual (M,R)].
\] 
\end{thm}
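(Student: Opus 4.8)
The plan is to reduce the statement to the already-established Relative Auslander--Solberg correspondence (Theorem \ref{relAScorres}) together with the characterization of $\mbF$-Auslander algebras via finite relative global dimension from Lemma \ref{finGldim}. First I would set up the assignment: given a basic \kLRcluster\ ${}_{\La}M$ with associated strong $\add(M)$-dualizing sequence having ends $L$ and $R$, put $\Ga=\End_{\La}(M)$, $G=(L,M)$, $H=\kdual(M,R)$, and $\mbF=\mbF_G$. Since ${}_{\La}M$ is faithfully balanced (the dualizing sequence forces $M\oplus\La\in\add(L)$, hence $\La\in\cogen^1(M)$, so Definition \ref{fb} applies), Lemma \ref{CorRelFB} tells us that $[L,R]\mapsto[G,H]$ is exactly the self-inverse bijection of Theorem \ref{relAScorres}: the pair $(L,R)$ with $M\oplus\La\in\add L$, $M\oplus\kdual\La\in\add R$, $L=\tau_k^-R\oplus\La$, $R=\tau_kL\oplus\kdual\La$, $\Ext^i_{\La}(L,R)=0$ for $1\le i\le k-1$, and strong $\add(M)$-dualizing sequence corresponds to the pair $(G,H)$ over $\Ga$ with $M\oplus\Ga\in\add G$, $M\oplus\kdual\Ga\in\add H$, $G=\tau^-H\oplus\Ga$, $H=\tau G\oplus\kdual\Ga$, and a strong $k$-$\add({}_{\Ga}M)$-dualizing sequence with ends $G$, $H$. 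In particular $\mbF_G=\mbF^H$ by the standard fact recalled in section 5, and $\add(M)=\add(G)\cap\add(H)$ because ${}_{\Ga}M$ is $\mbF$-projective-injective and sits inside both $\add G$ and $\add H$.

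Next I would translate the cluster-tilting conditions (i) and (ii) on ${}_{\La}M$ into statements about $\Ga$. Condition (i), namely $\La\in\cogen^1_{\mbF^R}(M)$ and $\kdual\La\in\gen_1^{\mbF_L}(M)$, is precisely the hypothesis of Lemma \ref{strongness} (with the roles swapped: here we apply the $\La$-side version), which gives a duality $(-,M)\colon\cogen^k_{\mbF^R}(M)\leftrightarrow\cogen^1_{\mbF^H}(M)\cap\bigcap_{i=1}^{k-1}\Ker\Ext^i_{\La}(-,H\oplus M)$. Applying the functor $(-,M)$ and using $(L,M)=G$, condition (ii) — that $\cogen^1_{\mbF^R}(M)\cap\bigcap_{i=1}^{k-1}\Ker\Ext^i_{\La}(-,R)=\add(L)$ — becomes exactly the statement that $\add(G)$ equals $\cogen^k_{\mbF}(M)$ over $\Ga$ (with $\mbF=\mbF_G=\mbF^H$). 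By Lemma \ref{finGldim}, knowing $\domdim_{\mbF}\Ga\ge k+1$ (which I get from the existence of the strong $k$-$\add(M)$-dualizing sequence $0\to G\to M_0\to\cdots\to M_k\to H\to 0$ with $M_i\in\add(M)\subseteq\add(G)$, this being the beginning of a minimal $\mbF$-injective coresolution of $G$), the equality $\cogen^k_{\mbF}(M)=\add(G)$ is equivalent to $\gldim_{\mbF}\Ga\le k+1$. Hence ${}_{\La}M$ being a \kLRcluster\ is equivalent to $\domdim_{\mbF}\Ga\ge k+1\ge\gldim_{\mbF}\Ga$, i.e.\ $\Ga$ is a $k$-$\mbF$-Auslander algebra.

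For the reverse direction, starting from $\Ga$ with $\mbF=\mbF_G=\mbF^H$ and $\domdim_{\mbF}\Ga\ge k+1\ge\gldim_{\mbF}\Ga$, I set ${}_{\Ga}M$ to be the additive generator of $\add(G)\cap\add(H)$, which is $\mbF$-projective-injective and for which Lemma \ref{relAus-Goralg} supplies a strong $k$-$\add(M)$-dualizing sequence from $G$ to $H$ (the hypothesis $\domdim_{\mbF}\Ga\ge k+1\ge\id_{\mbF}G$ is implied by $\domdim_{\mbF}\Ga\ge k+1\ge\gldim_{\mbF}\Ga$). Then $[\Ga,M,G,H]\mapsto[\La=\End_{\Ga}(M),{}_{\La}M,L=(G,M),R=\kdual(M,H)]$, and running the same equivalences (Lemma \ref{CorRelFB}, Lemma \ref{strongness}, Lemma \ref{finGldim}) in reverse shows ${}_{\La}M$ is a basic \kLRcluster. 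That the two assignments are mutually inverse follows from the self-inverse nature of the underlying pair-assignment in Theorem \ref{relAScorres} together with $\End_{\Ga}(M)\cong\End_{\End_{\La}(M)}(M)=\La$ for faithfully balanced $M$ (the involution $(*)$), and compatibility with the Morita equivalence classes $[-]$ is routine. The main obstacle I anticipate is bookkeeping the direction of the various dualities — keeping straight which of $\mbF_G$, $\mbF^H$, $\mbF^R$, $\mbF_L$ governs which category on which algebra, and checking that $\add(M)=\add(G)\cap\add(H)$ genuinely holds on both sides so that Lemma \ref{finGldim} and Lemma \ref{relAus-Goralg} are applicable; none of the individual steps is hard, but the consistency of the identifications needs care.
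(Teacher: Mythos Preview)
Your overall strategy coincides with the paper's: both arguments establish the correspondence by transferring the cluster-tilting condition (ii) across the $(-,M)$ duality to the equality $\cogen^k_{\mbF}({}_{\Ga}M)=\add(G)$, and then invoke Lemma~\ref{finGldim} to convert this into $\gldim_{\mbF}\Ga\le k+1$. The paper's proof is shorter in that it cites Lemma~\ref{relduality} and Lemma~\ref{finGldim} directly, without routing through Theorem~\ref{relAScorres} or Lemma~\ref{relAus-Goralg}; your detour through those results is harmless but not needed.

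There is, however, a genuine mix-up in the key step. You invoke Lemma~\ref{strongness} (with roles swapped) and write down the duality
\[
\cogen^k_{\mbF^R}({}_{\La}M)\;\longleftrightarrow\;\cogen^1_{\mbF^H}({}_{\Ga}M)\cap\bigcap_{i=1}^{k-1}\Ker\Ext^i_{\Ga}(-,H\oplus M),
\]
but condition (ii) in the definition of a \kLRcluster\ speaks of $\cogen^1_{\mbF^R}({}_{\La}M)\cap\bigcap_{i=1}^{k-1}\Ker\Ext^i_{\La}(-,R)$, which is \emph{not} the left-hand side above. The duality you actually need is the one in Lemma~\ref{relduality}, applied with $\Ga$ playing the role of the base algebra (so that the lemma's ``$G,H$'' are the theorem's $G,H$ and the lemma's ``$L,R$'' recover the theorem's $L,R$ via $((L,M),M)\cong L$, $\kdual(M,\kdual(M,R))\cong R$): that lemma gives
\[
\cogen^k_{\mbF^H}({}_{\Ga}M)\;\longleftrightarrow\;\cogen^1_{\mbF^R}({}_{\La}M)\cap\bigcap_{i=1}^{k-1}\Ker\Ext^i_{\La}(-,R),
\]
whose right-hand side \emph{is} exactly condition (ii). This is precisely the lemma the paper cites. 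Once you swap in Lemma~\ref{relduality} for Lemma~\ref{strongness} here, your argument goes through; the remaining bookkeeping (that the hypotheses $\Ga\in\cogen^1_{\mbF}({}_{\Ga}M)$ and $H\in\gen_1({}_{\Ga}M)$ hold, and that $\add({}_{\Ga}M)=\add(G)\cap\add(H)$) follows from your step~1 via Lemma~\ref{CorRelFB}, as you indicate. Your closing caveat about keeping the directions of the dualities straight was exactly on target.
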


\begin{proof}
Let $M$ be an \kLRcluster  and $\Ga = \End_{\La} (M)$, $G=(L,M), H=\kdual (M,R)$ and $\mbF=\mbF_G=\mbF^H$. Since $L \in \cogen^1_{\mbF^R} (M)\cap \bigcap_{i=1}^{k-1} \Ker \Ext^i_{\La} (-,R) $, we have $G \in \cogen_\mbF^k(M) $ by Lemma \ref{relduality}. Similarly, from $\La \in \add (L), \kdual \La \in \add (R)$ we conclude that ${}_{\Ga} M \in \add (G) \cap \add (H)$ and therefore $\domdim_{\mbF} \Ga\geq k+1 $. By the same lemma, we also have $\cogen_\mbF^k(M)= \add (G)$ and therefore by Lemma \ref{finGldim} $\gldim_{\mbF} \Ga \leq k+1$. \\
Conversely, by Lemma \ref{finGldim} and Lemma \ref{relduality} we can also conclude the other implication. 
\end{proof}

The easiest example can be found for $k=1$. Here, for a $1$-cluster tilting pair $(L,R)$ with respect to $M$ we have $G=L$ is a generator, $H=R$ is a cogenerator with $\mbF=\mbF_G=\mbF^H$ and the definition shortens to a module $M$ such that  $\cogen^1_{\mbF}(M) = \add (G)$ and $\gen_1 ^{\mbF}(M)= \add (H)$ is fulfilled.

Here are some easy examples of $1$-$\mbF$-Auslander algebras. 
\begin{exa}\label{ex-FAusalg-1}
\begin{itemize}
\item[(1)] Let $\mbF=\mbF_{\La}$ and $M$ be a projective-injective module such that $\cogen^1 (M) = \add (\La )$ and $\gen_1(M)=\add (\kdual \La )$. Then, by Lemma \ref{finGldim} it is easy to see that this is equivalent to $\domdim \La \geq 2 \geq \gldim \La $ and it is well-known that this characterizes $\La$ to be an Auslander algebra. 
\item[(2)] Assume $\mbF=\mbF_G=\mbF^H$ and $G=H$ is a generator-cogenerator, in this case we say $\La$ is $\mbF$-selfinjective. A classification of $\mbF$-selfinjective algebras can be found in \cite[section 5]{ASo-Gor}. For example, if $G$ is an Auslander generator ($=$ $1$-cluster tilting module), this is fulfilled.  
Then, if we choose $M=G=H$, then we have $\cogen^1_{\mbF}(M)=\cogen^1(M)=\add (M)=\gen^1(M)=\gen_1^{\mbF} (M)$
and this gives us another example. 
\item[(3)] Let $\Ga$ be the path algebra of $1 \to 2 \to 3$ and let $M= P_2 \oplus P_1 \oplus I_2$. We define $G:= \Ga \oplus M$ and $H:= \kdual \Ga \oplus M$, then it is easy to see $\mbF_G=\mbF^H =: \mbF$ and $\cogen_\mbF^1(M) = \add (G), \gen_1^{\mbF}(M) = \add (H)$. 
\item[(4)]
Let $\Ga$ be the path algebra of the following quiver: $\xymatrix@C=1.5pc@R=1pc{&1\ar[d]& \\ 3\ar[r]&2\ar[r]&4.}$\\
Let $M:= P_2 \oplus \tau^- P_2\oplus \tau^{-2}P_2$, $G = M \oplus P_1\oplus P_3 \oplus P_4, H=M \oplus I_1 \oplus I_3 \oplus I_4$ and $\mbF:= \mbF_G=\mbF^H$. Then we have $\mbF$-exact sequences 
\[
\begin{aligned}
 0 \to P_4 &\to P_2 \to \tau^-P_2 \to I_4 \to 0 \\
 0\to P_3 &\to \tau^-P_2 \to  \tau^{-2}P_2 \to I_3 \to 0\\
  0\to P_1 &\to \tau^-P_2 \to  \tau^{-2}P_2 \to I_1\to 0
\end{aligned}
\]
which show $\domdim_{\mbF} \Ga = 2$. It also easy to see that $2= {\rm max}_X \{\pdim_{\mbF} X\} (= \gldim_{\mbF} \Ga )$ , since the three missing indecomposables which are not in $\add G$ or $\add H$ are $2, \begin{smallmatrix}1\\2\end{smallmatrix},
 \begin{smallmatrix}3\\2\end{smallmatrix} $ which appear as cosyzygies of the three injectives in the $\mbF$-exact sequences and so all have $\pdim_{\mbF}=1$. 
 We have $\La= \End_{\Ga}(M)$ is given by the following quiver with relations 
 \[ 
 \xymatrix{ \bullet \ar@<-.5ex>[r] _{\alpha}\ar@<.5ex>[r]^{\beta}& \bullet\ar@<-.5ex>[r]_{\gamma} \ar@<.5ex>[r]^{\delta} & \bullet  }\quad \gamma \alpha = \delta \beta = \delta \alpha + \gamma \beta =0.
 \]
\item[(5)]
Let $\La $ be the path algebra modulo the relations:  
$\xymatrix@C=1.5pc@R=-0.2pc{
& b \ar[dr]^{\beta}  & &&\\
a\ar[ur]^{\alpha} \ar[dr]_{\gamma}  &&d ,&  \beta \alpha - \delta \gamma =0 \\
&c \ar[ur]_{\delta} & &&}$

Its Auslander-Reiten quiver is drawn in the following graphic, in the square boxes you find  $M= P_d\oplus \begin{smallmatrix} b& \\ &d\\ c& \end{smallmatrix} \oplus P_a \oplus \begin{smallmatrix} &b \\ a &\\ &c \end{smallmatrix}\oplus I_a$ and together with the remaining circled modules $G= M \oplus P_b \oplus P_c$ 
\[
\xymatrix@C=1.5pc@R=-0.2pc{ & *+[o][F-]{\begin{smallmatrix} &d\\ c& \end{smallmatrix}} \ar[ddr]&& b \ar@/^/[ddr]&&{ \begin{smallmatrix} a&\\ &c \end{smallmatrix}}\ar[ddr]& \\ 
&&& *+[F-:<3pt>]{P_a} \ar[dr]&&& \\ 
*+[F-:<3pt>]{d} \ar[uur] \ar[dr] && *+[F-:<3pt>]{\begin{smallmatrix} b& \\ &d\\ c& \end{smallmatrix}} \ar@/^/[uur] \ar[dr]\ar[ur]&&*+[F-:<3pt>]{ \begin{smallmatrix} &b \\ a &\\ &c \end{smallmatrix}}\ar[uur]\ar[dr]&& *+[F-:<3pt>]{a}\\ 
& *+[o][F-]{\begin{smallmatrix} b&\\ &d \end{smallmatrix}} \ar[ur]&&c\ar[ur]&& { \begin{smallmatrix} &b \\ a& \end{smallmatrix}} \ar[ur]& }
\]
It is very easy to see that $M$ is faithfully balanced and $\mathbf{F}= \mathbf{F}_G= \mathbf{F}^{M\oplus \kdual \La}$ fulfills $\domdim_{\mathbf{F}} \La =2 = \gldim_{\mathbf{F}} \La$. 
Now we look at $\Gamma = \End_{\La} (M)$, this is given by the path algebra of the following quiver with the overlapping zero-relations
\[
\xymatrix@-1pc{  &&3\ar[dr]^{\delta}& & &\\  1\ar[r]_{\alpha}  & 2 \ar[rr] _{\beta} \ar[ur]^{\gamma} && 4 \ar[r]_{\varepsilon} & 5,& \quad \beta \alpha =0= \delta \gamma \alpha, \; \; \; \varepsilon \beta=0=\varepsilon \delta \gamma }
\]
the vertices $1,2,3,4,5$ correspond to the summands $d,\, \begin{smallmatrix} b& \\ &d\\ c& \end{smallmatrix} ,\, P_a ,\,  \begin{smallmatrix} &b \\ a &\\ &c \end{smallmatrix},\, a$ 
in the given order. To calculate ${}_{\Ga}M=(\La , M) = \kdual (M, \kdual \La) $ we look at its four indecomposable summands 
\[
\begin{aligned}
&P_3=(P_a, M)= \kdual (M,I_a)=I_5 \quad \quad & &I_3 = \kdual (M, I_d) = (P_d, M)=P_1\\
&(P_b,M) = \kdual (M, I_b) \quad \quad & & (P_c,M)= \kdual (M, I_c)
\end{aligned}  
\]
then we apply $(-,M)$ to the $\mathbf{F}$-exact sequence $0 \to P_b \to \begin{smallmatrix} b& \\ &d\\ c& \end{smallmatrix} \to \begin{smallmatrix} &b \\ a &\\ &c \end{smallmatrix} \to I_b \to 0$ and obtain a 
projective presentation $0\to P_5 \to P_4 \to P_2 \to (P_b,M) \to 0$ (and similar for $(P_c,M)) $. From this we conclude that $(P_b,M)$, $(P_c,M)$ are two regular modules in different homogeneous tubes for the full subquiver $\widetilde{A}_2$, more precisely: 
\[(P_b,M)=:R_0\colon \xymatrix@-1.5pc{&&K\ar[dr]^{1} &&\\0 \ar[r] &K \ar[rr]_{0}\ar[ur]^{1} && K\ar[r] &0 } \quad \quad  (P_c,M)=:R_1\colon \xymatrix@-1.5pc{&&K\ar[dr]^{1} &&\\0 \ar[r] &K \ar[rr]_{1}\ar[ur]^{1} && K\ar[r] &0 } \]
then we have  $\tau^{ (\pm)} R_j = R_j$, $j=0,1$. We set now $\widetilde{G}= P_2 \oplus P_4\oplus P_5 \oplus M, \widetilde{H}=I_1\oplus  I_2\oplus I_4 \oplus M$ and define  $\widetilde{\mathbf{F}}:= \mathbf{F}_{{}_{\Ga}\widetilde{G}} = \mathbf{F}^{{}_{\Ga}\widetilde{H}}$, observe that $\add ({}_{\Ga}M) = \add(\widetilde{G}) \cap \add (\widetilde{H})$. The following sequences are $\widetilde{\mathbf{F}}$-exact (setting $R_{01}=R_0 \oplus R_1$)
\[ 
\xymatrix@-1.5pc{
0\ar[r] & P_2\ar[r]& R_{01} \ar[r] & I_3 \ar[r]& I_1 \ar[r]&0 \\
0\ar[r] & P_4\ar[r]& P_3 \ar[r] & I_3 \ar[r]& I_2 \ar[r]&0 \\
0\ar[r] & P_5\ar[r]& P_3 \ar[r] & R_{01} \ar[r] & I_4 \ar[r]&0 }
\]
We deduce $\domdim_{\widetilde{\mathbf{F}}} \Ga =2=\idim_{\widetilde{\mathbf{F}}} \kdual \Ga $.
We have $\End_{\Ga} (\widetilde{G} )^{op} \cong  \End_{\La} (G)$ has $\gldim \leq 4$ since $\gldim_{\mathbf{F}} \La \leq 2$ by Appendix, Lemma \ref{gldimF}, therefore by the same argument and the observation 
$ 2=\idim_{\widetilde{\mathbf{F}}} \kdual \Ga$ we conclude $\gldim_{\widetilde{\mathbf{F}}} \Ga \leq 2$. 
\end{itemize} 
\end{exa}

Here are examples of higher $\mbF$-Auslander algebras. 
\begin{exa}\label{ex-FAusalg-2}
\begin{itemize}
 \item[(1)] 
 A \kLRcluster $M$ with $L=M=R$ is just the same as a $k$-cluster tilting module in the sense of \cite{IyAR}. In this case, $\Gamma = \End_{\La} (M)$, $G=(M,M)= \Gamma $, $H= \kdual (M,M) = \kdual \Ga $, so $\mbF = \mbF_{\Ga}$ and so $\domdim_{\mbF} \Ga = \domdim \Ga$, $\gldim_{\mbF} \Ga = \gldim \Ga$ and we reobtain a higher Auslander algebra (this is the Krull-dimension zero case of Iyama's Auslander correspondence, see \cite{IyAC}). 
\item[(2)] Let $\Ga$ be the path algebra of $1 \to 2 \to \cdots \to n$.  Let $M_t:= \bigoplus_{i\neq t} \bigoplus_{j\geq 0} \tau^{-j}P_i$, $G_t=M_t \oplus P_t$, $\mathbf{F}_t= \mathbf{F}_{G_t}$, $1<t \leq n$, then 
  $\Ga$ has the structure of a $(t-2)$-$\mathbf{F}_t$-Auslander algebra for $t\geq 3$ and for $t=2$ we have $\domdim_{\mathbf{F}_2} \La = 1 = \gldim_{\mathbf{F}_2} \La$.  
  For large $n$ we have that $\La_3 = \End_{\La} (M_3) $ is a representation-infinite algebra with an $\mathbf{F}$-Auslander structure.  
  \item[(3)]
We consider the following quiver (of Dynkin type $E_6$) 
 $Q$\[\xymatrix@-1.5pc{ &&f&&\\
 a\ar[r]& b\ar[r]& c\ar[r] \ar[u]& d \ar[r]&e }\]
 For $x \in \{ a,b,d,e,f\}$ we define $M_x = \bigoplus_{y \neq x} \bigoplus_{j\geq 0} \tau^{-j}P_y$, $G_x=M_x \oplus P_x$, $\mathbf{F}_x=\mathbf{F}_{G_x}$. 
 Then an inspection if the AR-quiver gives the following for the path algebra $\Ga=KQ$: 
  $\Ga$ is a $2$-$\mathbf{F}_a$- and $2$-$\mathbf{F}_b$-Auslander algebra,  a $4$-$\mathbf{F}_d$- and $4$-$\mathbf{F}_f$-Auslander algebra and $6$-$\mathbf{F}_e$-Auslander algebra
 \item[(4)] Let $\Ga=K(1 \to 2 \to \cdots \to n)$ for some integer $n>3$ and we define $ M:= \bigoplus_{i=1}^{n-1}\bigoplus_{j\geq 0} \tau ^{-j} P_i$, $G= M\oplus P_n$, $H=M \oplus I_1$  and $\mbF=\mbF_G=\mbF^H$.
We find the minimal $\mbF$-projective resolution of $I_1$ (which is also the minimal $\mbF$-injective resolution of $P_n$)  as follows 
\[ 
0 \to P_n \to \begin{smallmatrix} n-1 \\ n \end{smallmatrix}  \to \begin{smallmatrix} n-2 \\ n-1 \end{smallmatrix} \to \cdots \to  \begin{smallmatrix} 1 \\ 2 \end{smallmatrix} \to I_1 \to 0  \quad (*)
\]
from this we conclude $\pdim_{\mbF} \kdual \Ga = n-1 $ and  $\domdim_{\mbF} G = n-1$. One can easily see that the highest $\pdim_{\mbF}$ is obtained at an injective module and therefore $\gldim_{\mbF} \Ga =n-1$, so we have an $(n-2)$-$\mbF$-Auslander algebra.

Let $\La = \End_{\Ga} (M)$, we denote by $P_{[M_i]}, I_{[M_i]}, S_{[M_i]}$  the projective, injective and semi-simple $\La $-module associated to $M_i\in \add (M)$. 
Let  $L={}_{\La} (G,M)= \La \oplus (P_n, M)$, $R= \kdual (M,H)= \kdual \La \oplus  \kdual (M, I_1) $ and ${}_{\La} M \in \add (L) \cap \add (R)$. Then we have 
$\Pi:= (\bigoplus_{1 < j< n}P_j , M) = \kdual (M, \bigoplus_{1 < j< n}I_j)$ is a projective-injective $\La $-module, $M= \Pi \oplus P_{[P_1]}\oplus I_{[P_1]}$, $L=\La \oplus I_{[P_1]}$, 
$R= \kdual \La \oplus P_{[P_1]}$. We verify $(\Pi, P_{[P_1]})=((S_i,M), P_{[P_1]})=(I_{[P_1]}, P_{[P_1]})=0$ for $3\leq i \leq n$ and $(I_1,M)=0$, $(S_2,M)=S_{[\begin{smallmatrix} 1 \\ 2 \end{smallmatrix}] }$. 
We apply $(-,M)$ to $(*)$ and obtain an exact sequence of $\La$-modules
\[ 
0 \to 0=(I_1, M) \to P_{[\begin{smallmatrix} 1 \\ 2 \end{smallmatrix}]} \to P_{[\begin{smallmatrix} 2 \\ 3 \end{smallmatrix}] }\to \cdots \to P_{[\begin{smallmatrix} n-1 \\ n \end{smallmatrix}] }\to I_{[P_1]} \to 0 \quad (**)
\]
This implies $\pdim I_{[P_1]} = n-2$. Now, apply $(-, P_{[P_1]})$ to $(**)$ and obtain $K=(S_{[\begin{smallmatrix} 1 \\ 2 \end{smallmatrix}] }, P_{[P_1]}) \cong \Ext_{\La}^1((S_3,M), P_{[P_1]}) = \Ext_{\La}^{n-2}(I_{[P_1]}, P_{[P_1]})$. 

We would like to see that ${}_{\La} M$ is a \nLRcluster with respect to $L$ and $R$ as before. Since we easily verify $\cogen^1_{F^H}({}_{\Ga} M) = \add (G \oplus \bigoplus_{3 \leq i <n} S_i)$ and $(S_i,M)= \Omega^{n-i}I_{[P_1]}$ by the exact sequence $(**)$, 
we have $\cogen^1_{F^R} (M) = \add (L \oplus \bigoplus_{3\leq i < n} \Omega^{n-i} I_{[P_1]})$. Now, we conclude 
\[
\begin{aligned}
&\quad \cogen^1_{F^R} (M)\cap \bigcap_{i=1}^{n-3} \Ker \Ext^i_{\La} (-,R) \\ 
&= \add (L \oplus \bigoplus_{3\leq i < n} \Omega^{n-i} I_{[P_1]})\cap \bigcap_{i=1}^{n-3} \Ker \Ext^i_{\La} (-,  P_{[P_1]}) \\
&= \add (L)
\end{aligned}
\]
where we use the calculation of $ \Ext_{\La}^{j}(I_{[P_1]}, P_{[P_1]})$, $j \geq 1$ from before.

\end{itemize}
\end{exa}

There are further examples of converting $K\mathbb{A}_n$
into a relative Auslander algebra. Here is another family of these: 
\begin{exa}\label{ex-FAusalg-3}
We fix $\Ga =K(1\to 2 \to \cdots \to n)$ for some integer $n \geq 3$ and we will also allow quotients by certain admissible $2$-sided ideals $I$. 
Our aim is to describe a family of $\mbF$-Auslander algebras which \emph{interpolate} between Iyama's example \cite[Example 2.4]{IyAR} and the usual exact structure on $\Gamod$. 
We study the following class of generators $G_{\ell}:= \Ga \oplus \bigoplus_{1\leq i\leq \ell} \bigoplus_{j>0} \tau^{-j}P_i$, $1< \ell < n-1$ \footnote{For  $\ell =1$  we  have $\mbF_1=\Ext^1_{\Ga}$ and observe $\domdim \Ga = 1 = \gldim \Ga$; $\ell =n, n-1$ are already studied in the previous examples} . 
 \begin{itemize}
\item[1.] If $(n-\ell +1)\lvert n$ (or equivalently, $(n-\ell +1)\lvert (\ell -1)$), then $\Ga $ is a $(2\frac{\ell -1}{n-\ell +1})$-minimal $\mbF_{\ell}$-Auslander-Gorenstein algebra 
(i.e., $\domdim_{\mbF_{\ell}} \Ga \geq 2\frac{\ell -1}{n-\ell +1} +1 \geq \idim_{\mbF_{\ell}} G_{\ell} $), where $\mbF_{\ell}=\mbF_{G_{\ell}}$.
If $\ell < n-1$ and $n-\ell+1$ does not divide $n$, then $\Ga$ is not a minimal $\mbF_{\ell}$-Auslander-Gorenstein algebra. \\
\emph{proof:} For $\ell \leq k \leq n$ we look at the $\mbF_{\ell}$-injective resolution of $P_k$ and here we keep track the sequence of tops (they are all simple) of the $\mbF_{\ell}$-injectives appearing, it fulfills 
$a_1 = \ell, a_2= k-(n-\ell -1), a_t= a_{t-2} -(n-\ell +1)$ for all $t\geq 3$. Now, the condition to be a  minimal $\mbF_{\ell}$-Auslander-Gorenstein algebra is equivalent to that there is one $t$ (for all $k$) such that $a_t=1$. 
Since $t$ has to work for all $k$ (and $\ell < n-1$), we conclude that $t$ has to be uneven, say $t=2s+1$ (then it is an $2s$-minimal $\mbF_{\ell}$-Auslander-Gorenstein algebra). Now, the recursion tells us 
$1=a_t= a_{t-2}- (n-\ell +1) = a_{t-2s} - s(n-\ell +1)= \ell -s(n-\ell +1)$, so it follows $s=\frac{\ell -1}{n-\ell +1}$. 

\item[2.] But from the shape of the Auslander-Reiten quiver of $\Gamma$ we can conclude that the maximal $\pdim_{\mbF_{\ell}}$ is obtained at an injective module, therefore $\gldim_{\mbF_{\ell}} \Gamma = \pdim_{\mbF_{\ell}} \kdual \Gamma$ and we have: \\
$\Ga$ is a $k$-$\mbF_{\ell}$-Auslander algebra (for some $k$) if and only if $(n-\ell +1) \lvert n$ and in this case $k = 2( \frac{\ell -1}{n-\ell+1})$. 

 \item[3.] Assume $I$ is a $2$-sided admissible ideal with $\{ X \mid IX=0\} \subseteq \{ X \mid \dim_K X \geq n-\ell+2\} $. We define $\overline{G}_{\ell}:= \Ga /I \otimes_{\Ga} G_{\ell} $ is a generator for $\Ga/I$ and we set $\overline{\mbF}_{\ell}:= \mbF_{\overline{G}_{\ell}}$. Since we can use the \emph{same} $\mbF$-projective and $\mbF$-injective resolutions (because of the choice of the ideal) we have: 
  $\Ga$ is a $k$-$\mbF_{\ell}$-Auslander algebra)  if and only if $\Ga /I$ is an $k$-$\overline{\mbF}_{\ell}$-Auslander algebra) \\
In particular, if we set $I=\rad^{n-\ell +1}(\Ga)$, then we have $\overline{G}_{\ell}  = \Ga /I$ and if $(n-\ell+1) \lvert n$ then we get a (non-relative) $2(\frac{\ell-1}{n-\ell +1})$-Auslander algebra. \\
If we allow $\ell=n-1$ (cf. previous example), this describes the $(n-1)$-Auslander algebra of Iyama \cite[Example 2.4]{IyAR}.
\end{itemize}
\end{exa}

Conceptually the same family can be defined more generally for Nakayama algebras, we explain this in the selfinjective Nakayama algebra case: 
\begin{exa}
Let $C_n$ be the oriented cycle quiver with arrows $i \to i+1 (\rm{mod}\; n)$ and $J \subseteq KC_n$ be the ideal generated by the arrows, $N \in \N$, we define $\Ga := KC_n /J^N$ (this is a self-injective Nakayama algebra). Let $n-\ell +1 < N$ and $M_{\ell\ell \geq n-\ell+1}$ be the direct sum of all modules of vector space dimension $\geq n-\ell +1$ and let $X_n$ be the direct sum of all modules having $S_n$ as a composition factor and vector space dimension $< n- \ell +1$, we define $G_{\ell} = M_{\ell \ell \geq n- \ell +1}\oplus X_n$ and $\mbF_{\ell}= \mbF_{G_\ell}$. Then $G_n$ is the Auslander generator and for $\ell=n-1$ we have $\Ga$ is an $(n-2)$- $\mbF_{n-1}$- Auslander algebra. Moreover, for 
$1<\ell < n-1$ we have $\Ga$ is a $k$-$\mbF_{\ell}$-Auslander algebra (for some $k$) if and only if $(n-\ell +1) \lvert n$, and in this case $k = 2 \frac{\ell -1}{n-\ell +1}$. The proof is exactly the same as in the previous example.  
\end{exa}

\section{The 4-tuple assignment}
Now we consider $4$-tuples $(\La , M, L, G)$ with $\La$ a finite-dimensional algebra and $M,L,G$ finite-dimensional $\La $-modules.
We define the following equivalence relation between these 4-tuples: $(\La, M, L, G)$ is equivalent to $(\La', M', L', G')$ if there is an equivalence of categories $\La$-$\modu \xrightarrow{\sim} \La'$-$\modu$ restricting to equivalences $\add (M) \xrightarrow{\sim} \add (M')$, $\add (L) \xrightarrow{\sim} \add (L')$ and $\add (G) \xrightarrow{\sim} \add (G')$. We denote by $[\La, M, L, G]$ the equivalence class of a 4-tuple and we may assume the algebra and all the modules appearing in the equivalence class to be basic.

To establish a relative version of cotilting correspondence which is an involution, we will need the following definition.
\begin{dfn} \label{bAS} 
We define the following assignment 
\[ [\La , M, L, G] \mapsto [\Gamma, N, \widetilde{L}, \widetilde{G} ]  \]
with $\Gamma = \End_{\La}(M) $, $N={}_{\Ga} M$, $\widetilde{L} = (G, M)$, $\widetilde{G} = (L,M)$ and call this the balanced Auslander-Solberg assignment or just the 4-tuple assignment. \\
The dual $4$-tuple assignment is the following 
\[ [\La , M, R, H] \mapsto [\Gamma, N, \widetilde{R}, \widetilde{H} ]  \]
with $\Gamma = \End(M) $, $N={}_{\Ga} M$, $\widetilde{R} = \kdual (M, H)$, $\widetilde{H} = \kdual (M, R)$. 
 Since, we will always consider pairs $(G,H)$ and $(L,R)$ which determine each other, we will in later proofs combine the two assignments into a $6$-tuple assignment 
 \[ [\La , M, L,R, G,H] \mapsto [\Gamma, N, \widetilde{L}, \widetilde{R},\widetilde{G}, \widetilde{H}]  \]
with $\Gamma = \End(M) $, $N={}_{\Ga} M$, $\widetilde{L} = (G, M)$, $\widetilde{R} = \kdual (M, H)$, $\widetilde{G} = (L,M)$, $\widetilde{H} = \kdual (M, R)$. 
 \end{dfn}

\begin{lem} \label{bAS-correspondence}
Keep the above notations. Then we have
\begin{itemize}
\item[(1)] 
The $4$-tuple assignment restricts to an involution on the set of $4$-tuples $[ \La, M, L, G]$ with 
$\La \in \add (G)$, $\mbF=\mbF_G$, 
$M$ is $1$-$\mbF$-faithful, $M$ is an $\mbF$-dualizing summand of $L$ and $L$ is the left end of an $\mbF$-exact strong $\add(M)$-dualizing sequence. 
\item[(2)] The dual $4$-tuple assignment restricts to an involution on the set of $4$-tuples $[\La, M, R, H]$ with 
$\kdual \La \in \add (H)$, $\mbF=\mbF^H$, 
$M$ is $1$-$\mbF$-faithful, $M$ is an $\mbF$-codualizing summand of $R$ and $R$ is the right end of an $\mbF$-exact strong $M$-dualizing sequence. 
\end{itemize}
\end{lem}

\begin{proof}
We take a $6$-tuple $[\La , M, L,R, G,H]$ with $\La \in \add(G), \kdual \La \in \add(H)$, $\mbF=\mbF_G=\mbF^H$, $M$ $1$-$\mbF$-faithful and there is an $\mbF$-exact strong $M$-dualizing sequence 
$0\to L \to M_0 \to M_1 \to R \to 0$. We want to see that applying the $6$-tuple assignment gives an involution. So consider 
$[\Gamma, N, \widetilde{L}, \widetilde{R},\widetilde{G}, \widetilde{H}  ]$ with $\Gamma = \End(M) $, $N={}_{\Ga} M$, $\widetilde{L} = (G, M)$, $\widetilde{R} = \kdual (M, H)$, $\widetilde{G} = (L,M)$, $\widetilde{H} = \kdual (M, R)$. 
Clearly, $\Ga \in \add(\widetilde{G}) $, $\kdual \Ga \in \add (\widetilde{H})$ since $M \in \add (L) \cap \add (R)$ and since $L$ and $R$ are ends of an $\add({}_{\La}M)$-dualizing sequence we have 
$\mbF_{\widetilde{G}}=\mbF^{\widetilde{H}}=: \widetilde{\mbF}$. Since $L$ is left end of a strong $\add(M)$-dualizing sequence, we have by Lemma \ref{CorRelFB} that $\widetilde{G} =(L,M) \in \cogen_{\widetilde{\mbF}}^1(N)$, this means $N$ is $1$-$\widetilde{\mbF}$-faithful. 
Since $M$ is $1$-$\mbF$-faithful we get a strong $\add(N)$-dualizing sequence 
$0 \to \widetilde{L}\to \widetilde{N_0} \to \widetilde{N_1} \to \widetilde{R}\to 0 $ by Lemma \ref{CorRelFB}. It split off the summand $0\to N\xrightarrow{1} N \xrightarrow{0}N \xrightarrow{1}N \to 0$ and obtain an exact sequence 
\[
0 \to \widetilde{L}'\to N_0 \to N_1 \to \widetilde{R}'\to 0  
\eqno{(*)}\]
with $N_i \in \add (N)$. 
The only missing property is that $(*)$ is $\widetilde{\mbF}$-exact. We first observe that $N_i = \kdual (M, I_i)$ with $0\to H' \to I_1 \to I_0$ is an injective copresentation, $H=\kdual \La  \oplus H'$. Since $(\widetilde{G}, -)=((L,M), -)$ is left exact, it is enough to check that it is also right exact on $(*)$. 
Now, since $L\in \cogen_{\mbF^H}^1(M)$ we have a natural isomorphism $\kdual (L,H) \to ((L,M), \kdual (M,H))$ by Lemma \ref{cogenF} (1). 
In particular, we have a natural isomorphism $(\widetilde{G}, N_i) = ((L,M), \kdual (M, I_i)) \to \kdual (L, I_i)$ since $I_i \in \add(H)$. 
This means when we apply $(\widetilde{G}, -)$ to the last three nonzero terms of $(*)$ we get an exact sequence which identifies under the just mentioned natural isomorphism with 
\[
\kdual (L, I_0) \to \kdual (L, I_1 ) \to \kdual (L, H') \to 0
\]
and this is exact. 
\end{proof}

\section{Relative cotilting theory}
Relative cotilting modules are introduced in \cite{ASoII}. 
\begin{dfn}\label{Fcotilting}
Let $\mbF=\mbF^H \subseteq \Ext^1_{\La}$ be an additive subbifunctor with $H$ a cogenerator. 
We call a $\La$-module $C$ a $k$-$\mbF$-cotilting module if
\begin{itemize}
    \item[(i)] it is $\mbF$-self-orthogonal (i.e., $\Ext^{\! >0}_{\mbF}(C,C)=0$),
    \item[(ii)] $\id_{\mbF} C \leq k$, and
    \item[(iii)] there is an $\mbF$-exact sequence $0 \to C_k \to \cdots \to C_1 \to C_0 \to H \to 0$ with $C_i \in \add (C)$. 
\end{itemize}  
\end{dfn}
We recall a result of Wei. Partially, it is already proven in \cite{ARapp}. 
\begin{thm}\label{weisresult} $($\cite[Theorem 3.10]{W}$)$ Let $\mbF\subseteq \Ext^1(-,-)$ be an additive subbifunctor with enough projectives and injectives, $C$ be a $\La$-module and let $k \geq 1$. Then the following are equivalent
\begin{itemize}
\item[(1)] $C$ is a $k$-$\mbF$-cotilting module.
\item[(2)] $\cogen^{k-1}_{\mbF}(C) = \bigcap_{i\geq 1} \Ker \Ext^i_{\mbF}(-,C)$.
\end{itemize}
In this case, we also have 
$ \copres^{k-1}_{\mbF}(C)=\cogen^{k-1}_{\mbF} (C)$ and  
\[ \cogen^{k-1}_{\mbF}(C) = \cogen^{k}_{\mbF}(C) = \cogen^{k+1}_{\mbF}(C) = \cdots = \cogen^{\infty }_{\mbF}(C).\]
\end{thm}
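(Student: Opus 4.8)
The plan is to work in the exact category $(\Lamod,\mbF)$, which by hypothesis has enough projectives and injectives, so that $\Ext^i_{\mbF}$, the relative syzygy functors $\Omega_{\mbF}^{\pm}$, relative (co)dimensions and dimension shifting are all available. Write $\mcX:=\bigcap_{i\geq1}\Ker\Ext^i_{\mbF}(-,C)$. I would first record the elementary closure properties coming straight from the long exact $\Ext_{\mbF}$-sequence: if $C$ satisfies (i) then $\mcX$ contains $\add(C)$ and all $\mbF$-projectives, is closed under $\mbF$-extensions and direct summands, and whenever $0\to A\to B\to D\to0$ is $\mbF$-exact with $B,D\in\mcX$ one has $A\in\mcX$. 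The whole proof is then organized around the chain
\[
\copres^{k-1}_{\mbF}(C)\ \subseteq\ \mcX\ \subseteq\ \cogen^{k-1}_{\mbF}(C)\ \subseteq\ \copres^{k-1}_{\mbF}(C),
\]
whose outer inclusions are, respectively, a dimension shift and the tautology $\cogen\subseteq\copres$, while the middle one carries the content.

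For the first inclusion I take an $\mbF$-exact $0\to X\to C_0\to\cdots\to C_{k-1}$, cut it into short $\mbF$-exact pieces with intermediate images $W_1,\dots,W_{k-1}$ and set $V:=\coKer(W_{k-1}\hookrightarrow C_{k-1})$; using only the $\mbF$-self-orthogonality (i) one shifts $\Ext^i_{\mbF}(X,C)\cong\Ext^{i+1}_{\mbF}(W_1,C)\cong\cdots\cong\Ext^{i+k}_{\mbF}(V,C)$ for every $i\geq1$, and $\Ext^{i+k}_{\mbF}(V,C)=0$ since $\id_{\mbF}C\leq k$ by (ii). This already yields $\copres^{k-1}_{\mbF}(C)\subseteq\mcX$. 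Combined with the middle inclusion (proved below) it gives the two supplementary equalities for free: $\copres^{k-1}_{\mbF}(C)=\cogen^{k-1}_{\mbF}(C)$ is the sandwich, and $\cogen^{k-1}_{\mbF}(C)=\cogen^{j}_{\mbF}(C)$ for every $j\geq k-1$ (including $j=\infty$) because both sides equal $\mcX$ — for $j>k-1$ one extends a length-$(k-1)$ coresolution, every intermediate cokernel remaining in $\mcX$.

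The substantial step is $\mcX\subseteq\cogen^{k-1}_{\mbF}(C)$: given $X\in\mcX$ I build its $\add(C)$-coresolution inductively by a minimal left $\add(C)$-approximation $f\colon X\to C^0$. Hom-exactness at the first spot is the defining property of an approximation, so the two points to check are that $f$ is an $\mbF$-monomorphism and that $\coKer f$ again lies in $\mcX$. This is the relative ``dual Bongartz'' step: one starts from an $\mbF$-injective copresentation of $X$ (its terms lie in $\add(H)$ since $H$ is a cogenerator), resolves each $\mbF$-injective by $\add(C)$ using condition (iii) (and $\id_{\mbF}C\le k$ to keep everything bounded), and totalizes, the vanishing $\Ext^{\geq1}_{\mbF}(X,C)=0$ being exactly what keeps the totalization $\mbF$-exact. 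Iterating, after $k-1$ steps the construction closes up and gives $X\in\cogen^{k-1}_{\mbF}(C)$, completing $(1)\Rightarrow(2)$.

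For $(2)\Rightarrow(1)$ I read the axioms off the equality $\cogen^{k-1}_{\mbF}(C)=\mcX$. Property (i) is immediate: the trivial coresolution $0\to C\to C\to0\to\cdots\to0$ shows $C\in\cogen^{k-1}_{\mbF}(C)=\mcX$, i.e. $\Ext^{>0}_{\mbF}(C,C)=0$. For (ii) and (iii) I would invoke the Auslander–Reiten theory of contravariantly finite (co)resolving subcategories transported to the $\mbF$-exact structure (this is the part already available via \cite{ARapp}): since $\add(C)$ is contravariantly finite, $\mcX=\cogen^{k-1}_{\mbF}(C)$ is contravariantly finite, and together with the closure properties above and the identification with $\copres^{k-1}_{\mbF}(C)$ it forms an ``$\mbF$-cotilting class'' whose associated relative cotilting object is $\add(C)$-equivalent to $C$; unwinding this forces $\id_{\mbF}C\leq k$ and produces the $\mbF$-exact sequence $0\to C_k\to\cdots\to C_0\to H\to0$ of (iii) (concretely by iterating right $\add(C)$-approximations of $H$ and of its $\mbF$-syzygies, which are $\mbF$-epimorphisms). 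The supplementary equalities in the converse then follow as in the forward direction. I expect the genuine obstacle to be the inclusion $\mcX\subseteq\cogen^{k-1}_{\mbF}(C)$, and within it the assertion that a minimal left $\add(C)$-approximation of an object of $\mcX$ is an $\mbF$-monomorphism with cokernel still in $\mcX$: this is where (ii), (iii) and the cogenerator $H$ all enter to convert an $\mbF$-injective coresolution into a bounded $\add(C)$-one, and it is the relative analogue of the only non-formal part of the classical (co)tilting theorem.
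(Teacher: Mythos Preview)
The paper does not give its own proof of this theorem; it is quoted verbatim from Wei \cite{W} (with the remark that part of it goes back to \cite{ARapp}). So there is no in-paper argument to compare against, and your attempt must be judged on its own merits.

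Your treatment of $(1)\Rightarrow(2)$ is correct in substance. The inclusion $\copres^{k-1}_{\mbF}(C)\subseteq\mcX$ is exactly the dimension shift you describe, and the key inclusion $\mcX\subseteq\cogen^{k-1}_{\mbF}(C)$ works as you outline---though the ``totalization'' language is heavier than needed: one simply lifts the $\mbF$-monomorphism $X\to I$ (with $I$ $\mbF$-injective) through $C_0\to I$ using $\Ext^1_{\mbF}(X,\ker(C_0\to I))=0$, then observes that the minimal left $\add(C)$-approximation of $X$ is an $\mbF$-monomorphism because some map $X\to\add(C)$ is. The supplementary equalities then follow as you say.

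The genuine gap is in $(2)\Rightarrow(1)$. Your derivation of (i) is fine, but for (ii) and (iii) you only ``invoke the Auslander--Reiten theory of contravariantly finite (co)resolving subcategories''---this is not a proof, and it is precisely here that the work lies. What one can extract directly from the hypothesis is that iterated right $\add(C)$-approximations of $H$ are $\mbF$-epimorphisms: using $G\in\mcX=\cogen^{k-1}_{\mbF}(C)$ one shows every map $G\to K_j$ (the $j$-th kernel) factors through $\add(C)$, hence each $C_j\to K_j$ is $\mbF$-epi, and moreover $\Ext^{\geq1}_{\mbF}(X,K_j)=0$ for all $X\in\mcX$. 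What is \emph{not} automatic is that this resolution terminates after $k$ steps, i.e.\ that $K_k\in\add(C)$, or equivalently that $\id_{\mbF}C\leq k$. Your sketch gives no mechanism for this; the dimension shift $\Ext^i_{\mbF}(K_k,C)\cong\Ext^{i+k}_{\mbF}(H,C)$ shows the two conditions are essentially the same problem, so neither bootstraps the other. In Wei's (and Bazzoni's) arguments this is the technical heart and requires a separate construction---typically producing for an arbitrary $N$ an $\mbF$-exact sequence $0\to N\to Y\to Z\to 0$ with $Z\in\cogen^{k-2}_{\mbF}(C)$ and $Y$ of controlled shape, which then forces $\Ext^{k+1}_{\mbF}(N,C)=0$. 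You should either supply that argument or explicitly defer to \cite{W} for this direction.
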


\begin{lem} \label{HigherTilt} 
Let $\mbF=\mbF_G=\mbF^H\subseteq \Ext^1(-,-)$ be an additive subbifunctor with enough projectives and injectives. Let $k \geq 1$ and $M$ an $\mbF$-self-orthogonal module. 
If $\id_{\mbF} M \leq 1$ and $H \in \gen^\mbF_{k-1} (M)$, then $C=M\oplus \Omega^k_{M} H$ is a $k$-$\mbF$-cotilting module. Furthermore, we have 
\[ \cogen_\mbF^{k-1} (M) = \bigcap_{i\geq 1} \Ker \Ext^i_{\mbF}(-,C). \]
Then $M$ is an $(k-1)$-$\mbF$-dualizing summand of $C$. 
\end{lem}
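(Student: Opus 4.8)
The strategy is to verify the three defining conditions of a $k$-$\mbF$-cotilting module for $C = M \oplus \Omega_M^k H$, then identify the perpendicular category, and finally read off the dualizing-summand claim. Throughout we use the hypotheses that $M$ is $\mbF$-self-orthogonal with $\id_{\mbF} M \leq 1$ and $H \in \gen_{k-1}^{\mbF}(M)$, so that there is an $\mbF$-exact sequence $M_{k-1} \to \cdots \to M_0 \to H \to 0$ with $M_i \in \add(M)$, which can be taken to be a composition of minimal right $\add(M)$-approximations. Splicing in the syzygies, we obtain $\mbF$-exact short exact sequences $0 \to \Omega_M^{j+1} H \to M_j \to \Omega_M^j H \to 0$ for $0 \le j \le k-1$ (with $\Omega_M^0 H = H$), and by definition $C = M \oplus \Omega_M^k H$.

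First I would check $\id_{\mbF} C \leq k$. Since $\id_{\mbF} M \le 1 \le k$, it suffices to bound $\id_{\mbF} \Omega_M^k H$. Splicing $\mbF$-injective coresolutions of the terms $M_j$ (each of $\mbF$-injective dimension $\le 1$) along the sequences above, one shows by a dimension-shift argument along $0 \to \Omega_M^{j+1}H \to M_j \to \Omega_M^j H \to 0$ that $\id_{\mbF} \Omega_M^{j}H \le \id_{\mbF} \Omega_M^{j+1} H + 1$, hence $\id_{\mbF} \Omega_M^k H$ is finite; a more careful count (using that $H$ itself is $\mbF$-injective, so $\id_{\mbF} H = 0$) gives exactly $\id_{\mbF} \Omega_M^k H \le k$, and hence $\id_{\mbF} C \le k$. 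Next, condition (iii): the required $\mbF$-exact sequence $0 \to C_k \to \cdots \to C_0 \to H \to 0$ with $C_i \in \add(C)$ is obtained by concatenating the minimal $\mbF$-approximation sequences: $0 \to \Omega_M^k H \to M_{k-1} \to \cdots \to M_0 \to H \to 0$, where $\Omega_M^k H \in \add(C)$ and each $M_i \in \add(M) \subseteq \add(C)$.

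The main work is condition (i), $\mbF$-self-orthogonality of $C$, i.e.\ $\Ext_{\mbF}^{>0}(C,C) = 0$. We already know $\Ext_{\mbF}^{>0}(M,M) = 0$. For the remaining pieces I would argue: $\Ext_{\mbF}^{>0}(M, \Omega_M^k H) = 0$ because dimension-shifting along the $\mbF$-exact sequences $0 \to \Omega_M^{j+1}H \to M_j \to \Omega_M^j H \to 0$ reduces $\Ext_{\mbF}^i(M, \Omega_M^k H)$ to $\Ext_{\mbF}^{i+k}(M, H) = 0$ (as $H$ is $\mbF$-injective). For $\Ext_{\mbF}^{>0}(\Omega_M^k H, M)$, note $\Omega_M^k H \in \cogen_{\mbF}^k(M) \subseteq \bigcap_{i=1}^{k}\Ker\Ext_{\mbF}^i(-,M)$ (since it has an $\mbF$-exact $\add(M)$-copresentation of the required length and $M$ is $\mbF$-injective, the standard $\mbF$-analogue of Lemma~\ref{cogen-k}), and then for $i > k$ one uses $\id_{\mbF} M \le 1$ together with dimension-shift in the first variable to conclude. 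Finally $\Ext_{\mbF}^{>0}(\Omega_M^k H, \Omega_M^k H) = 0$ follows by combining the two previous vanishings through the sequences $0 \to \Omega_M^{j+1}H \to M_j \to \Omega_M^j H \to 0$ once more. This completes (i)--(iii), so $C$ is a $k$-$\mbF$-cotilting module; the identity $\cogen_{\mbF}^{k-1}(M) = \bigcap_{i \ge 1}\Ker\Ext_{\mbF}^i(-,C)$ then follows from Theorem~\ref{weisresult}(2) once one checks $\cogen_{\mbF}^{k-1}(C) = \cogen_{\mbF}^{k-1}(M)$ — the inclusion $\supseteq$ is clear since $M$ is a summand of $C$, and $\subseteq$ follows because $\Omega_M^k H \in \add(C)$ has, via the concatenated sequence above, a finite $\mbF$-exact $\add(M)$-resolution, so $\cogen_{\mbF}^{k-1}(C) \subseteq \cogen_{\mbF}^{k-1}(M)$ by a standard horseshoe/approximation argument. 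The last assertion, that $M$ is a $(k-1)$-$\mbF$-dualizing summand of $C$, is exactly the statement $C \in \cogen_{\mbF}^{k-1}(M)$, which is immediate from the concatenated $\mbF$-exact sequence $0 \to \Omega_M^k H \to M_{k-1} \to \cdots \to M_0 \to H \to 0$ restricted to its first $k$ terms together with $M \in \add(M)$.

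I expect the main obstacle to be bookkeeping the $\mbF$-relative homological algebra carefully: making sure that all the syzygy sequences are genuinely $\mbF$-exact (this uses that the approximations are taken over $\add(M)$ and $M$ is $\mbF$-injective, so the $\mbF$-monomorphism property is automatic via the characterization "$f$ is an $\mbF$-monomorphism iff $(f,H)$ is surjective"), and that the dimension-shift arguments stay within the relative derived functors $\Ext_{\mbF}^i$ rather than the absolute ones. The self-orthogonality computation for $\Omega_M^k H$ against itself is where one must be most careful to invoke the right vanishing ranges, but no single step should be deep given Theorem~\ref{weisresult} and the relative version of Lemma~\ref{cogen-k}.
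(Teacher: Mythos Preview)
Your verification that $C$ is $k$-$\mbF$-cotilting is essentially the paper's, but there is a recurring slip: you write several times that ``$M$ is $\mbF$-injective''. It is not --- only $\id_{\mbF}M\le 1$ is assumed. This infects your argument for $\Ext^{>0}_{\mbF}(\Omega_M^kH,M)=0$: the claims $\Omega_M^kH\in\cogen_{\mbF}^k(M)$ and $\cogen_{\mbF}^k(M)\subseteq\bigcap_{i=1}^k\Ker\Ext_{\mbF}^i(-,M)$ are neither justified nor needed. The paper simply observes that $\id_{\mbF}M\le 1$ already kills $\Ext_{\mbF}^{\ge 2}(-,M)$, and one dimension shift along $0\to\Omega_M^kH\to M_{k-1}\to\Omega_M^{k-1}H\to 0$ gives $\Ext_{\mbF}^1(\Omega_M^kH,M)\cong\Ext_{\mbF}^2(\Omega_M^{k-1}H,M)=0$.

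The substantive gap is in the perpendicular identity and the final dualizing-summand claim. You propose to invoke Wei's theorem for $C$ and then show $\cogen_{\mbF}^{k-1}(C)=\cogen_{\mbF}^{k-1}(M)$; for the inclusion $\subseteq$ you appeal to a ``horseshoe/approximation argument'', and for the last sentence you say $C\in\cogen_{\mbF}^{k-1}(M)$ is immediate from the concatenated sequence $0\to\Omega_M^kH\to M_{k-1}\to\cdots\to M_0$. But $(-,M)$-exactness of that sequence at the last required spot needs $\Ext_{\mbF}^1(H,M)=0$, which is \emph{not} among the hypotheses and fails in easy examples (take $\mbF=\Ext^1$, $M=\La$ with $\id\La=1$). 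So neither $\Omega_M^kH\in\cogen_{\mbF}^{k-1}(M)$ nor the equality $\cogen_{\mbF}^{k-1}(C)=\cogen_{\mbF}^{k-1}(M)$ is available to you at this stage; the argument becomes circular. The paper instead proves $\bigcap_{i\ge 1}\Ker\Ext_{\mbF}^i(-,C)\subseteq\cogen_{\mbF}^{k-1}(M)$ directly by induction on $k$: writing $C_j=M\oplus\Omega_M^jH$, one checks $\bigcap\Ker\Ext_{\mbF}^i(-,C_k)\subseteq\bigcap\Ker\Ext_{\mbF}^i(-,C_{k-1})$, uses the inductive hypothesis to get a $(k-2)$-copresentation of any $X$ in the perpendicular, and then shows the last cosyzygy $Z$ lies in $\cogen_{\mbF}(M)$ via a two-way dimension shift identifying $\Ext_{\mbF}^1(Z,\Omega_M^1H)\cong\Ext_{\mbF}^1(X,\Omega_M^kH)=0$. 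The final assertion $C\in\cogen_{\mbF}^{k-1}(M)$ then follows \emph{a posteriori} from self-orthogonality of $C$ and the perpendicular identity, not from the concatenated sequence.
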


\begin{proof} 
It is straightforward to check $\id_{\mbF} C \leq k$ by induction on $k$. Now we check $C$ is $\mbF$-self-orthogonal: \\
(i) using the definition of $\Omega_M^kH $ by approximations one easily checks $\Ext_\mbF^i(M, \Omega_M^kH) =0$ for all $i \geq 1$,\\ 
(ii) then using $M$ $\mbF$-selforthogonal one shows $\Ext_\mbF^i(\Omega_M^kH , M) \cong \Ext_\mbF^{i+1}(\Omega^{k-1}_MH , M)=0$ for all $i \geq 1$ , here the last space is zero since $\id_{\mbF} M \leq 1$, \\
(iii) to see $\Ext_\mbF^i(\Omega^k_MH, \Omega_M^kH) =0$ for all $i \geq 1$ we use (i) and $\id_{\mbF} C \leq k$. More precisely, one applies $(-,\Omega^k_MH) $ to the $\mbF$-exact sequences 
$0 \to \Omega_M^tH \to M_{t-1} \to \Omega_M^{t-1}H \to 0$ with $M_{t-1} \in \add (M)$. We then can conclude 
$\Ext_\mbF^i (\Omega^k_MH, \Omega_M^kH) \cong  \Ext_\mbF^{i+1} (\Omega^{k-1}_MH, \Omega_M^kH) \cong \cdots \cong \Ext_\mbF^{i+k} (H, \Omega_M^kH)=0$ since $\id_{\mbF}C \leq k$. \\
Together with $H \in \gen_{k-1}^{\mbF}(M)$, we conclude that $C$ is an $k$-$\mbF$-cotilting module. Furthermore, it is easy to check $\cogen^{k-1}_{\mbF}(M) \subseteq \bigcap_{i\geq 1} \Ker \Ext^i_{\mbF}(-,C)$. 
We prove the other inclusion by induction over $k$. \\
Let $k=1$. 
By definition we have $C \in \cogen_{\mbF} (M)$ and this implies using Wei's result $\Ker \Ext^1_{\mbF}(-,C) = \cogen_{\mbF} (C) \subseteq \cogen_{\mbF}(M)$.\\
Let $k \geq 2$. Since $C$ does depend on $k$ we denote it in this part of the proof with $C_k$. We first observe \\
(i) $\bigcap_{i\geq 1} \Ker \Ext^i_{\mbF}(-,C_k) \subseteq \bigcap_{i\geq 1} \Ker \Ext^i_{\mbF}(-,C_{k-1})$. This is easy to see using that there is an $\mbF$-exact sequence $0 \to C_k \to M' \to C_{k-1} \to 0$ with $M'\in \add (M)$. \\
(ii) By induction hypothesis we may assume $\bigcap_{i\geq 1} \Ker \Ext^i_{\mbF}(-,C_k) \subseteq \cogen_\mbF^{k-2}(M)$. \\
Let $X \in \bigcap_{i\geq 1} \Ker \Ext^i_{\mbF}(-,C_k)$, so there exists an $\mbF$-exact sequence 
$0\to X \to M^0 \to \cdots \to M^{k-2} \to Z \to 0$
with $M^i \in \add (M)$, $(-,M)$ exact on it. 
We claim $Z \in \cogen_{\mbF}(M)= \Ker \Ext_\mbF^1(-, C_1)$. We split the sequence up in short $\mbF$-exact sequences $X:=X^0, Z:= X^{k-1}$ and $0 \to X^t \to M^t \to X^{t+1} \to 0$, $0 \leq t \leq k-2 $. Since $(-,M)$ is exact on the sequence for $t=k-2$, we conclude $\Ext^1_{\mbF}(Z,M)=0$. So, it is enough to see 
$\Ext_\mbF^1(Z, \Omega_M^1H)=0$. We first show:\\
(iii) $\Ext_\mbF^1(Z, \Omega_M^1H)\cong \Ext_\mbF^k(Z, \Omega_M^kH)$ by applying $(Z, -)$ to the sequences 
$0 \to \Omega^t_MH \to M_{t-1} \to \Omega_M^{t-1}H \to 0$ and concluding $\Ext_\mbF^i(Z, \Omega_M^{t-1}H) \cong \Ext_\mbF^{i+1} (Z, \Omega_M^tH)$ for all $i \geq 1$. Applying this iteratively gives (iii). Now, we prove: \\
(iv) $\Ext_\mbF^k(Z, \Omega_M^kH)\cong \Ext^1(X, \Omega_M^kH)$ by applying $(-, \Omega_M^kH)$ to the short exact sequences 
$0 \to X^t \to M^t \to X^{t+1} \to 0$ and conclude $\Ext^i_{\mbF}(X^t, \Omega_M^kH) \cong \Ext^{i+1}_{\mbF}(X^{t+1}, \Omega_M^kH) $ for all $i \geq 1$. Applying this iteratively gives (iv). \\
But since $X \in \bigcap_{i\geq 1} \Ker \Ext^i_{\mbF}(-,C_k)$ we have $\Ext^1(X, \Omega_M^kH)=0$ and therefore, using (iii) and (iv) this implies $\Ext_\mbF^1(Z, \Omega_M^1H)=0$.
\end{proof} 

\begin{rem}
If $C$ is a $1$-$\mbF$-cotilting module and $M$ an $\mbF$-dualizing summand, then we have $M=C$. 
Therefore, non-trivial $\mbF$-dualizing summands only appear in the theory of $\mbF$-cotilting modules with $\id_{\mbF} > 1$.  
\end{rem}

\begin{exa} Let $M\in \Lamod$ be rigid (i.e., $\Ext_{\La}^1(M,M)=0$) and also $X:=\Omega^-M$ be rigid, then for $H=X\oplus \kdual \La$, $\mbF=\mbF^H$ we have $\id_{\mbF} M\leq 1$, $M$ is $\mbF$-self-orthogonal and $X \in \gen_1(M)$. \\
If we now assume additionally that $M$ is faithfully balanced and $\Ext^{1,2}(M\oplus X,X)=0$, then we have 
\[ H\in \gen_1^{\mbF}(M) =\gen_1(M) \cap \bigcap_{i=1}^2 \Ker \Ext^i(-,X) \]
(cf. Example \ref{exa-cogen-F}) implying that $M$ is $1$-$\mbF$-faithful. 
In particular, we have then $C:=M\oplus \Omega_M H$ is a $1$-$\mbF$-cotilting module with 
\[ \cogen_{\mbF}(M)= \bigcap_{i\geq 1}\Ker \Ext^i_{\mbF}(-,C).
\]
\end{exa}

\begin{exa}
 Let $X$ be an arbitrary faithfully balanced module and $k\geq 1$. If $\tau X \in \cogen^{k-1}(X)$, then $\cogen^{k-1} (X) $ is the $\mbF^X$-perpendicular category $\bigcap_{i\geq 1} \Ker \Ext^i_{\mbF^X} (-, C)$ for the $\mbF^X$-$k$-coltilting module $C= X \oplus\Omega_X^k \kdual \La $.
If $\add (X)$ is, for example, $\tau$-stable then $\tau X \in \cogen^{k-1}(X)$. 
\end{exa}
More generally we will study the $\mbF$-cotilting modules obtained from a $1$-$\mbF$-faithful $\mbF$-injective module as special cotilting modules (in section 9).

Let us fix an $\mbF$-exact resolution by $\mbF$-projectives of $H$ (with $\add (H)= \mcI(\mbF) $) 
\[ \cdots\to P_2 \to P_1 \to P_0 \to H \to 0. \]

Then we obtain the relative version of \cite[Theorem 1.1]{IZ} as follows, let 
$\cotilt_n^{\mbF}(\La)$ be the set of basic isomorphism classes of $n$-$\mbF$-cotilting $\La$-modules. It is naturally a poset with respect $C \leq C'$ if and only if $C \in \bigcap_{i\geq 1} \Ker \Ext^i_{\mbF} (-, C') $. 

\begin{lem} Let $\mbF=\mbF_G=\mbF^H$ and $n \geq 1$, we define $P:= \bigoplus_{j=0}^{n-1} P_j$. 
If $\id_{\mbF} P \leq n$ and $\id_{\mbF} \Omega_P^n H \leq n$, then 
$C= P \oplus \Omega_P^n H$ is an $n$-$\mbF$-cotilting module and it is the minimum element in $\cotilt_n^{\mbF}(\La)$. \\
Furthermore, if $\id_{\mbF} P_j \leq j+1$, $1 \leq j \leq n-1$, then $\id_{\mbF} P\oplus \Omega_P^n H \leq n$. 
\end{lem}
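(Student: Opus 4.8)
The statement has two parts: first, that $C = P \oplus \Omega_P^n H$ is an $n$-$\mbF$-cotilting module and the minimum of $\cotilt_n^{\mbF}(\La)$; second, a sufficient condition ($\id_{\mbF} P_j \le j+1$) guaranteeing $\id_{\mbF}(P \oplus \Omega_P^n H) \le n$, which feeds back into the hypotheses of the first part. I would treat the second (dimension-bound) part first since it is what makes the hypotheses of the first part checkable, then verify cotilting-ness, and finally the minimality.

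\textbf{Step 1: the inequality $\id_{\mbF} P \oplus \Omega_P^n H \le n$.} We are given $\id_{\mbF} P_j \le j+1$ for $1 \le j \le n-1$. Note $P_0 \in \add(P)$, and $P_0$ is an $\mbF$-projective direct summand of $H$ only after truncation — in any case $\id_{\mbF} P_0$ is controlled because $P_0 \twoheadrightarrow H$ with $\mbF$-projective kernel, so $\id_{\mbF} P_0 \le \max(\id_{\mbF} H, 1+\id_{\mbF} \Omega_{\mbF} H)$; more simply, I would just observe that $\id_{\mbF} P = \max_j \id_{\mbF} P_j \le n$ follows from $\id_{\mbF} P_j \le j+1 \le n$ for $j \le n-1$ (and $P_0$ handled separately, its $\id_{\mbF}$ is at most that of $P_1,\dots$ via the $\mbF$-exact sequence $0 \to \Omega_P^1 H \to P_0 \to H \to 0$ — here I must be slightly careful about whether $\Omega_P$ means the $\add(P)$-syzygy; since $P_0 \in \add P$ the map $P_0 \to H$ factors the $\add(P)$-approximation, so $\Omega_P^1 H$ is a summand-twist of $\Omega_{\mbF}H = \ker(P_0 \to H)$). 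For $\Omega_P^n H$: dimension-shifting along the $\mbF$-exact sequences $0 \to \Omega_P^{t+1} H \to P_t' \to \Omega_P^t H \to 0$ (with $P_t' \in \add P$) gives, for any module $Y$ and $i \ge 1$, an exact sequence relating $\Ext^i_{\mbF}(\Omega_P^{t}H, Y)$, $\Ext^i_{\mbF}(P_t', Y)$, and $\Ext^{i}_{\mbF}(\Omega_P^{t+1}H, Y)$; iterating and using $\id_{\mbF} P_t' \le t+1$ one bounds $\id_{\mbF} \Omega_P^n H \le n$ (the shift by $n$ steps plus $\id_{\mbF} H \le$ what is forced by the resolution gives exactly $n$). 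This is the routine part; the only genuine care needed is bookkeeping between $\Omega_P^{\bullet}$ (minimal $\add P$-approximation syzygies) and the $\mbF$-projective syzygies $\Omega_{\mbF}^{\bullet}$, which differ only by $\add P$-summands and hence have the same $\id_{\mbF}$.

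\textbf{Step 2: $C = P \oplus \Omega_P^n H$ is $n$-$\mbF$-cotilting.} With $\id_{\mbF} C \le n$ in hand (from Step 1, using $P = \bigoplus_{j=0}^{n-1}P_j$ and the definition of $\Omega_P^n H$), I would verify the three axioms of Definition~\ref{Fcotilting}. $\mbF$-self-orthogonality: this is the heart, and it mirrors closely the argument in the proof of Lemma~\ref{HigherTilt}. One checks $\Ext^{>0}_{\mbF}(P,P)=0$ (since each $P_j$ is $\mbF$-projective, even stronger — all higher $\Ext_{\mbF}$ out of $P$ vanish, so $\Ext^{>0}_{\mbF}(P, -)=0$), which immediately kills $\Ext^{>0}_{\mbF}(P, \Omega_P^n H)$ and $\Ext^{>0}_{\mbF}(P,P)$. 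For $\Ext^{>0}_{\mbF}(\Omega_P^n H, P)$: dimension-shift using the defining $\mbF$-exact sequences and $\id_{\mbF} P \le n$, so $\Ext^{i}_{\mbF}(\Omega_P^n H, P) \cong \Ext^{i+n}_{\mbF}(H, P) = 0$ for $i \ge 1$. For $\Ext^{>0}_{\mbF}(\Omega_P^n H, \Omega_P^n H)$: same shift, reducing to $\Ext^{i+n}_{\mbF}(H, \Omega_P^n H)$, which vanishes because $\id_{\mbF} C \le n$ and hence $\id_{\mbF} \Omega_P^n H \le n$. Axiom (iii), the existence of an $\mbF$-exact $0 \to C_n \to \cdots \to C_0 \to H \to 0$ with $C_i \in \add C$: take the truncated $\mbF$-projective resolution $0 \to \Omega_P^n H \to P_{n-1}' \to \cdots \to P_0' \to H \to 0$ assembled from the approximation sequences — all terms lie in $\add(P) \cup \{\Omega_P^n H\} \subseteq \add C$, and it is $\mbF$-exact by construction. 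Axiom (ii), $\id_{\mbF} C \le n$, is Step 1. This establishes $C \in \cotilt_n^{\mbF}(\La)$.

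\textbf{Step 3: minimality.} I must show $C \le C'$ for every $C' \in \cotilt_n^{\mbF}(\La)$, i.e. $C \in \bigcap_{i \ge 1}\Ker \Ext^i_{\mbF}(-, C')$. Since $P$ is a sum of $\mbF$-projectives, $\Ext^{>0}_{\mbF}(P, C') = 0$ automatically. It remains to see $\Ext^{>0}_{\mbF}(\Omega_P^n H, C') = 0$. Here I would use that $C'$ being $n$-$\mbF$-cotilting gives, by Wei's theorem (Theorem~\ref{weisresult}), $\bigcap_{i \ge 1}\Ker \Ext^i_{\mbF}(-, C') = \cogen^{n-1}_{\mbF}(C')$, and separately $\cogen^{n-1}_{\mbF}(C') = \cogen^\infty_{\mbF}(C')$ contains $H$ iff… — actually the cleanest route: $H \in \mcI(\mbF) = \add(H)$ is $\mbF$-injective, so $\Ext^{>0}_{\mbF}(-, H) = 0$, and the defining $\mbF$-exact sequences give $\Ext^i_{\mbF}(\Omega_P^n H, C') \cong \Ext^{i+n}_{\mbF}(H, C')$. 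Now $\Ext^{i+n}_{\mbF}(H, C') = 0$ for $i \ge 1$ because $\id_{\mbF} C' \le n$. Hence $\Ext^{>0}_{\mbF}(\Omega_P^n H, C') = 0$, so $C \le C'$ for all $C'$, establishing that $C$ is the minimum element of $\cotilt_n^{\mbF}(\La)$.

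\textbf{Expected main obstacle.} The only real subtlety is the consistent handling of the two notions of syzygy — the $\mbF$-projective syzygy $\Omega_{\mbF} H$ appearing in the given resolution $\cdots \to P_1 \to P_0 \to H \to 0$ versus the $\add(P)$-approximation syzygy $\Omega_P^{\bullet}H$ in the statement — and checking that $\id_{\mbF} P = \max_j \id_{\mbF} P_j$ really follows from the hypothesis $\id_{\mbF} P_j \le j+1$ together with the bound $j \le n-1$ (so $j+1 \le n$), plus the separate treatment of $\id_{\mbF} P_0$. Everything else is dimension-shifting along $\mbF$-exact sequences and invocations of Theorem~\ref{weisresult} and Lemma~\ref{HigherTilt}, which are routine once the setup is pinned down.
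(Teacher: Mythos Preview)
Your proposal is correct and follows essentially the same approach as the paper's proof: both use that $P$ is $\mbF$-projective (so $\Ext^{>0}_{\mbF}(P,-)=0$) and then dimension-shift $\Ext^i_{\mbF}(\Omega_P^n H, -) \cong \Ext^{i+n}_{\mbF}(H,-)$ to handle self-orthogonality and minimality in one stroke via $\id_{\mbF} C \le n$ (resp.\ $\id_{\mbF} C' \le n$). The paper is more compressed---it notes at once that $\Omega_P^n H = \Omega_{\mbF}^n H$ (which dissolves your ``main obstacle'' about the two syzygy notions), proves self-orthogonality in a single line $\Ext^i_{\mbF}(C,C)=\Ext^i_{\mbF}(\Omega_{\mbF}^n H,C)=\Ext^{i+n}_{\mbF}(H,C)=0$ rather than your four cases, and treats the ``Furthermore'' clause last by a short induction on $n$---but the mathematical content is the same.
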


\begin{proof}
We check that $\id_{\mbF} C \leq n$ implies that $C$ is $\mbF$-selforthogonal: Observe that $\Omega_P^nH = \Omega_\mbF^nH$ and let $i \geq 1$, then we have  
$ \Ext_\mbF^i(C,C) = \Ext_\mbF^i (\Omega_\mbF^nH, C)= \Ext_\mbF^{i+n}(H,C) =0$ since $\id_{\mbF}C \leq n$. \\
Since the last condition is fulfilled by definition of $C$, we can conclude that $C$ is an $n$-$\mbF$-cotilting module. \\
If $L\in \cotilt_n^{\mbF}(\La)$, then we have by definition of $C$ that 
$\Ext_\mbF^i(C,L) = \Ext^{i+n}_{\mbF} (H, L)=0$ since $\id_{\mbF} L \leq n$. Therefore $C$ is the minimum. \\
The last claim is a straight forward induction over $n$. 
For $n=1$ the claim follows from the previous lemma. For the induction step apply $(-,M)$ to the $\mbF$-exact sequence 
$0 \to \Omega_P^nH \to P_{n-1} \to \Omega_P^{n-1}H \to 0$, 
by hypothesis $\id_{\mbF} P_{n-1}\leq n$, $\id_{\mbF} \Omega_P^{n-1}H \leq n-1$ we conclude $\id_{\mbF} \Omega_P^nH\leq n$. 
\end{proof}

In particular, if $P_0, \ldots , P_{n-1}$ are $\mbF$-injective, this will be referred to as $\mbF$-$\domdim \La \geq n$, then the previous lemma applies.

\subsection{The relative cotilting correspondence}

We give a generalization of the cotilting correspondence to a relative set-up together with a relative dualizing summand - this is a generalization of Auslander-Solberg's main results in \cite{ASoII, ASoIII} which we reobtain as a corollary. 
We will use the $4$-tuple assignments for our theorem (see Definition \ref{bAS}, Lemma \ref{bAS-correspondence}).

As before, we fix an additive subbifunctor $\mbF=\mbF_G=\mbF^H$ of $\Ext^1_{\La}(-,-)$ for some generator $G$ and cogenerator $H$. 

Define 
\[
\mathsf{K}^{+, b}_{\mbF}(\add (H))=\{Y\in \mathsf{K}^+(\add (H)) \;\vert\; \exists\; n\in \Z \;\text{such that}\; \mathsf{H}^i(\Hom_{\La}(Y,H))=0 \; \text{for} \; i\geq n \}
\]
then we have $\mathsf{D}^b_{\mbF}(\Lamod) \simeq  \mathsf{K}^{+,b}_{\mbF}(\add (H))$ as triangulated categories, where $\mathsf{D}^b_{\mbF}(\Lamod)$ is the bounded derived category of the exact category $\Lamod $ with the exact structure induced by $\mbF$. For more on the derived category of an exact category we refer to \cite{Nee, KelDer, Pan}. As in the standard case, one can prove that an $\mbF$-self-orthogonal $\La$-module $L$ is an $\mbF$-cotilting module if and only if $\mathsf{Thick}(L)=\mathsf{K}^b(\add (H))$ where by $\mathsf{Thick}(L)$ we mean the smallest triangulated subcategory of $\mathsf{K}^b(\add (H))$ which contains $L$ and closed under direct summands. We also have the following lemma which can be proved by the same argument in the standard case (cf. \cite{CHU, AiI}).

\begin{lem}\label{mutation} Let $L=M \oplus U$ be a basic $\mbF$-cotilting module.
\begin{itemize}
\item[(1)] If there exists an $\mbF$-exact sequence  $0 \to U \xrightarrow{f} M_0 \to V \to 0$ with $f$ the left minimal $\add (M)$-approximation of $U$, then $M \oplus V$ is a basic $\mbF$-cotilting module with $\id_{\mbF} (M \oplus V) \leq \id_{\mbF} L$. 
Furthermore, this $\mbF$-exact sequence $($after adding $1_M$ to $f$ and its cokernel $)$ gives rise to a strong $0$-$\add(M)$-dualizing sequence with 
$\Ext^i(U \oplus M, V \oplus M)=0$ for $i \geq 1$. 

\item[(2)] If there exists an $\mbF$-exact sequence $0 \to V \to M_1 \xrightarrow{g} U \to 0$ with $g$ the right minimal $\add (M)$-approximation of $U$, then $M \oplus V$ is a basic $\mbF$-cotilting module with $\id_{\mbF} (M \oplus V) \leq \id_{\mbF} L +1$. 
Again this gives rise to a strong $0$-$\add(M)$-dualizing sequence with $\Ext^i(V \oplus M, U \oplus M)=0$ for $i \geq 1$.
\end{itemize}
\end{lem}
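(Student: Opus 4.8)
The statement is Lemma \ref{mutation}, a relative version of the classical mutation lemma for cotilting modules. The approach is to transport the classical arguments (as in \cite{CHU, AiI}) into the exact category $(\Lamod, \mbF)$ via the characterization $\mathsf{Thick}(L)=\mathsf{K}^b(\add(H))$ and the relative version of Wei's criterion in Theorem \ref{weisresult}. I will prove (1) in detail; (2) is dual (replace $\add(H)$-resolutions by $\add(G)$-type arguments / pass to the opposite algebra).

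\emph{Step 1: $M\oplus V$ is $\mbF$-self-orthogonal and has the right $\mbF$-injective dimension.} Start from the given $\mbF$-exact sequence $0\to U\xrightarrow{f} M_0\to V\to 0$ with $f$ a minimal left $\add(M)$-approximation. Applying $\Hom_\La(M\oplus U,-)$ and using $\Ext^{>0}_\mbF(L,L)=0$ (where $L=M\oplus U$) one gets $\Ext^i_\mbF(M\oplus U,V)=0$ for $i\geq 1$ by dimension shift along this short exact sequence; in particular $\Ext^i_\mbF(M,V)=0$. For the other direction, apply $\Hom_\La(-,M\oplus V)$ to the same sequence: since $f$ is a left $\add(M)$-approximation the map $\Hom_\La(M_0,M)\to\Hom_\La(U,M)$ is onto, and combined with $\Ext^i_\mbF(M\oplus U, M\oplus V)=0$ (already half-established) one deduces $\Ext^i_\mbF(V, M\oplus V)=0$ for $i\geq 1$. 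For the $\mbF$-injective dimension bound, resolve: an $\mbF$-injective coresolution of $U$ of length $\leq \id_\mbF L$ together with the sequence $0\to U\to M_0\to V\to 0$ and the horseshoe lemma in the exact category $(\Lamod,\mbF)$ produces one for $V$ of the same length (the point is $M_0\in\add(M)$, and $M$ is a summand of an $\mbF$-cotilting module so $\id_\mbF M\leq \id_\mbF L$). Hence $\id_\mbF(M\oplus V)\leq\id_\mbF L$.

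\emph{Step 2: $M\oplus V$ is an $\mbF$-cotilting module.} By the relative Wei criterion (Theorem \ref{weisresult}) it suffices to show $\mathsf{Thick}(M\oplus V)=\mathsf{K}^b(\add(H))$, or equivalently that $M\oplus V$ satisfies condition (iii) of Definition \ref{Fcotilting} for $\mbF=\mbF^H$. The short exact sequence $0\to U\to M_0\to V\to 0$ shows that in $\mathsf{K}^b(\add(H))$ (equivalently $\mathsf{D}^b_\mbF(\Lamod)$) we have $U\in\mathsf{Thick}(M\oplus V)$, hence $L=M\oplus U\in\mathsf{Thick}(M\oplus V)\subseteq\mathsf{Thick}(L)=\mathsf{K}^b(\add(H))$; since $L$ is $\mbF$-cotilting the first inclusion is an equality, giving $\mathsf{Thick}(M\oplus V)=\mathsf{K}^b(\add(H))$. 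Combined with Step 1 this yields that $M\oplus V$ is $\mbF$-cotilting; basicness is clear since $V$ has no summand in $\add(M)$ by minimality of the approximation $f$ (and $L$ basic $\Rightarrow M$ basic).

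\emph{Step 3: the dualizing sequence.} Add $1_M\colon M\to M$ to $f$ to get $0\to M\oplus U\to M\oplus M_0\to M\oplus V\to 0$; this is $\mbF$-exact with middle term in $\add(M)$, so it is a candidate $0$-$\add(M)$-dualizing sequence. Conditions (i) is immediate. For (ii) one must check $(-,M)$ and $\kdual(M,-)$ are exact on it: $(-,M)$-exactness is exactly the approximation property of $f$; $\kdual(M,-)$-exactness — i.e. strongness in the sense of Definition 6.9 — follows because $\Ext^1_\mbF(M, M\oplus U)=0$ (as $M\oplus U=L$ is $\mbF$-self-orthogonal) forces $\Hom_\La(M,M\oplus M_0)\to\Hom_\La(M,M\oplus V)$ onto, and more precisely the sequence is $\mbF_{L}$-exact hence $\kdual(L,-)$ is exact on it so a fortiori (using $M\in\add L$ is not needed—rather $L$ is the left end) the left end $L$ lies in $\gen^{\mbF_L}$. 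Condition (iii): $\add(M\oplus V)=\add(M\oplus\Omega_M^{-1}(M\oplus U))$ and $\add(M\oplus U)=\add(M\oplus\Omega_M^{1}(M\oplus V))$ hold by construction of $V=\Omega_M^{-1}U$ and minimality. Finally $\Ext^i_\La(U\oplus M, V\oplus M)=0$ for $i\geq 1$ was shown in Step 1 (note this is ordinary $\Ext$, which here coincides with $\Ext_\mbF$ on these modules, or follows directly since the relevant sequences are $\mbF$-exact).

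\emph{Step 4: part (2).} This is dual, using instead the minimal right $\add(M)$-approximation and an $\mbF$-projective argument, or by applying part (1) in the opposite category with the subbifunctor $\mbF^*$ (cf. the remark after Definition 7.2); the only difference is the bookkeeping on $\mbF$-injective dimension, where pulling back by $0\to V\to M_1\to U\to 0$ raises $\id_\mbF$ by at most one, giving $\id_\mbF(M\oplus V)\leq\id_\mbF L+1$.

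\emph{Main obstacle.} The genuinely relative point — and the step I expect to require the most care — is Step 3, verifying that the mutation sequence is not merely $\add(M)$-dualizing but \emph{strong}, i.e. that $\kdual(M,-)$ (equivalently $(-,R)$ for the right end) is exact on it. In the non-relative case this is automatic once one knows $(-,M)$ is exact and the ends are $\mbF$-self-orthogonal, but relatively one must keep track of which cogenerator/generator governs $\mbF$ and invoke the $\mbF$-self-orthogonality of $L$ in the precise form given by Lemma \ref{strongdualseq}; the rest is a faithful transcription of \cite{CHU, AiI} into the exact structure $(\Lamod,\mbF)$, for which the homological apparatus ($\Ext_\mbF$, $\mbF$-resolutions, the derived category $\mathsf{D}^b_\mbF$) has already been set up.
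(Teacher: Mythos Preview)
Your proposal is correct and follows exactly the route the paper indicates: the paper gives no proof of its own for this lemma, merely stating that it ``can be proved by the same argument in the standard case (cf.\ \cite{CHU, AiI})''; you have carried out precisely that transcription into the exact category $(\Lamod,\mbF)$, using the $\mathsf{Thick}$-characterization of $\mbF$-cotilting modules stated just before the lemma. One small caveat: your parenthetical claim at the end of Step~3 that ordinary $\Ext^i_\La$ and $\Ext^i_\mbF$ coincide on these modules is not justified and is in general false; what you actually proved (and what is needed downstream in Theorem~\ref{relBBthm}) is the vanishing of $\Ext^i_\mbF(L,V\oplus M)$, so the unsubscripted $\Ext$ in the lemma statement should be read as $\Ext_\mbF$.
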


Now we are ready to present our improvement of Auslander and Solberg's results.  
Recall, the $4$-tuple assignment 
\[
 [\La, M, L, G] \mapsto [\Ga, {_{\Ga}M}, \widetilde{L}, \widetilde{G}]
\]
where $\Ga=\End_{\La}(M)$, $\widetilde{L}=(G,M)$ and $\widetilde{G}=(L,M)$.
We also consider the dual $4$-tuple assignment 
\[
[\La, M, R, H] \mapsto [\Ga, {_{\Ga}M}, \widetilde{R}, \widetilde{H}]
\]
where $\Ga=\End_{\La}(M)$, $\widetilde{R}=\kdual (M,H)$ and $\widetilde{H}=\kdual (M,R)$. 

\begin{thm} \label{relBBthm}
Keep the above notations. Then we have
\begin{itemize}
 \item[(1)]
The $4$-tuple assignment restricts to an involution on the set of $4$-tuples $[\La , M, L, G]$ satisfying 
\begin{itemize}
\item[(1a)] $\La \in \add (G)$, $\mbF=\mbF_G$, 
\item[(1b)] $L$ is $\mbF$-cotilting and $M$ is an $\mbF$-dualizing summand of $L$. 
\end{itemize}
\item[(2)]
The dual $4$-tuple assignment restricts to an involution on the set of $4$-tuples $[\La , M, R, H]$ satisfying 
\begin{itemize}
\item[(2a)] $\kdual \La \in \add (H)$, $\mbF=\mbF^H$, 
\item[(2b)] $R$ is $\mbF$-cotilting and
$M$ is an $\mbF$-codualizing summand of $R$ $($that is, $M\in \add(R)$ and $R \in \gen_1^{\mbF}(M)$ $)$. 
\end{itemize}
Furthermore, for an assignment $[\La, M, R, H] \mapsto [\Ga, {_{\Ga}M}, \widetilde{R}, \widetilde{H}]$ we have $\id_{\mbF^H} R= \id_{\mbF^{ \widetilde{H}}} \widetilde{R}. $
\end{itemize}
\end{thm}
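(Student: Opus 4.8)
The plan is to prove Theorem~\ref{relBBthm} by reducing everything to the $6$-tuple machinery developed in Section~8 together with Wei's characterization of relative cotilting modules (Theorem~\ref{weisresult}) and the relative fullfaithfulness results (Lemma~\ref{fullfaith}, Lemma~\ref{cogenF}, Lemma~\ref{relduality}). The key observation is that the hypotheses in (1) are exactly a specialization of the hypotheses of Lemma~\ref{bAS-correspondence}(1): if $L$ is $\mbF$-cotilting with $\mbF$-dualizing summand $M$, then by Theorem~\ref{weisresult} and Lemma~\ref{HigherTilt} the module $M$ is automatically $1$-$\mbF$-faithful (since $H\in\gen^{\mbF}_{k-1}(M)$ where $\id_{\mbF}M\le 1$, using that $M$ is a dualizing summand of the cotilting module $L$), and the defining $\mbF$-exact sequence $0\to C_k\to\cdots\to C_0\to H\to 0$ of the cotilting module, combined with the syzygy description $L=M\oplus\Omega^k_M H$, realizes $L$ as the left end of an $\mbF$-exact strong $\add(M)$-dualizing sequence. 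So first I would record this reduction carefully, checking each of the three conditions (i)--(iii) of the dualizing sequence and the ``strong'' condition (exactness of $\kdual(L,-)$, equivalently $L\in\cogen^k_{\mbF^R}(M)$, which follows from Lemma~\ref{strongdualseq} and $\mbF$-self-orthogonality of $L$).

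Next I would run the $6$-tuple assignment from Definition~\ref{bAS} and Lemma~\ref{bAS-correspondence}: given $[\La,M,L,R,G,H]$ satisfying the above, the assignment produces $[\Ga,{}_{\Ga}M,\widetilde L,\widetilde R,\widetilde G,\widetilde H]$ with $\Ga\in\add(\widetilde G)$, $\kdual\Ga\in\add(\widetilde H)$, $\mbF_{\widetilde G}=\mbF^{\widetilde H}=:\widetilde\mbF$, ${}_{\Ga}M$ is $1$-$\widetilde\mbF$-faithful, $M$ is a $\widetilde\mbF$-dualizing summand of $\widetilde L=(G,M)$, and $\widetilde L$ is the left end of an $\widetilde\mbF$-exact strong $\add(M)$-dualizing sequence. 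To close the loop I must then upgrade these conclusions: I need $\widetilde L$ to be an $\widetilde\mbF$-cotilting $\Ga$-module. For this I would invoke Theorem~\ref{weisresult} in the form: $\widetilde L$ is $\widetilde\mbF$-cotilting iff $\cogen^{k-1}_{\widetilde\mbF}(\widetilde L)=\bigcap_{i\ge 1}\Ker\Ext^i_{\widetilde\mbF}(-,\widetilde L)$. The left-hand side is transported from $\cogen^{k-1}_{\mbF}(L)$ under the duality $(-,M)$ of Lemma~\ref{relduality}/Lemma~\ref{cogenF}, while $\mbF$-self-orthogonality of $\widetilde L$ follows from $\mbF$-self-orthogonality of $L$ via Lemma~\ref{fullfaith}(2) (the natural map on $\Ext$'s is an isomorphism on modules in the relevant perpendicular category). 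The $\mbF$-injective dimension statement $\id_{\mbF^H}R=\id_{\mbF^{\widetilde H}}\widetilde R$ I would get from the same transport: the minimal $\mbF^R$-injective coresolution of any module in $\cogen^{\infty}_{\mbF^R}(M)$ is sent by $(-,M)$ to the minimal $\mbF^{\widetilde H}$-projective resolution of its image (and dually), so lengths of resolutions match; equivalently apply Lemma~\ref{strongness}/\ref{CorRelFB} which pins down the dualizing sequence length $k$ as an invariant shared by both sides.

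The main obstacle, I expect, is verifying that the conclusions of Lemma~\ref{bAS-correspondence} genuinely \emph{strengthen} to the cotilting statement, i.e.\ that ``left end of a strong $\add(M)$-dualizing sequence'' together with ``$\widetilde\mbF$-self-orthogonal'' and the copresentation condition forces $\widetilde L$ to satisfy Wei's criterion rather than merely being a partial cotilting object --- one has to be careful that the syzygy count $k$ is preserved and that no $\add$-summands are lost or gained when the trivial sequence $0\to N\xrightarrow{1}N\xrightarrow{0}N\xrightarrow{1}N\to 0$ is split off. Concretely the subtle point is condition (iii) of Definition~\ref{Fcotilting}: one needs an $\widetilde\mbF$-exact sequence $0\to\widetilde L_k\to\cdots\to\widetilde L_0\to\widetilde H\to 0$ with $\widetilde L_i\in\add(\widetilde L)$, and this must be produced by applying $(-,M)$ to an $\mbF_G$-exact resolution of ${}_{\La}L$ by $\add(G)$ (dually, of $R$), whose exactness after $(-,M)$ is exactly the ``strong'' hypothesis. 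Once the $k=1$ and $k=0$ bookkeeping in Lemma~\ref{mutation} is combined with the inductive structure of Lemma~\ref{HigherTilt}, the general $k$ follows by dévissage, and part (2) is the $\kdual$-dual of part (1) (pass to $\La^{\op}$, apply $(-)^{\op}$), with the displayed equality $\id_{\mbF^H}R=\id_{\mbF^{\widetilde H}}\widetilde R$ being the $\kdual$-image of the $\pd$-invariance one checks directly in (1).
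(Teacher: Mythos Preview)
Your overall architecture---reduce to Lemma~\ref{bAS-correspondence} and then show the output of the $6$-tuple assignment again satisfies the cotilting hypotheses---is the right one and matches the paper. But two of the key steps in your reduction do not work as written.

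First, your argument that $M$ is $1$-$\mbF$-faithful is not valid. You invoke Lemma~\ref{HigherTilt}, which requires $\id_{\mbF}M\le 1$ and $M$ $\mbF$-self-orthogonal, and you assert the syzygy description $L=M\oplus\Omega_M^kH$. None of this is part of the hypothesis: being an $\mbF$-dualizing summand only says $L\in\cogen^1_{\mbF}(M)$, and an arbitrary $\mbF$-cotilting module with such a summand need not be special in the sense of Section~\ref{specialTilt}. The paper instead uses the tensor criterion of Lemma~\ref{cogenF}: since $L$ is $\mbF$-cotilting it is $1$-$\mbF$-faithful, so $(L,H)\otimes_B(G,L)\to(G,H)$ is an isomorphism (with $B=\End_{\La}(L)$); and $L\in\cogen^1_{\mbF}(M)$ gives $(M,H)\otimes_{\Ga}(L,M)\to(L,H)$ an isomorphism. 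Composing (and using $(G,M)\cong(L,M)\otimes_B(G,L)$) yields the required isomorphism $(M,H)\otimes_{\Ga}(G,M)\to(G,H)$.

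Second, the relevant $\add(M)$-dualizing sequence is always of length $1$, namely $0\to L\to M_0\to M_1\to R\to 0$ coming directly from $L\in\cogen^1_{\mbF}(M)$; there is no induction on $k$ and no d\'evissage. Lemma~\ref{mutation} then shows $R$ is again $\mbF$-cotilting, so one is in the setting of Lemma~\ref{bAS-correspondence}. To close the loop the paper does \emph{not} try to verify Wei's criterion for $\widetilde L$ by transporting $\cogen^{k-1}_{\mbF}(L)$; that transport does not obviously land in $\cogen^{k-1}_{\widetilde\mbF}(\widetilde L)$ in general. Instead it checks the three axioms of Definition~\ref{Fcotilting} directly for $\widetilde R=\kdual(M,H)$: apply $\kdual(M,-)$ to an $\mbF$-injective coresolution of $R$ and to an $\add(R)$-resolution of $H$, and verify (using $L,H,R\in\gen_1^{\mbF}(M)$ and the natural isomorphisms of Lemma~\ref{cogenF}) that the resulting sequences are $\widetilde\mbF$-exact; this gives $\id_{\widetilde\mbF}\widetilde R\le\id_{\mbF}R$ and, together with $\widetilde\mbF$-self-orthogonality (checked via the same commutative diagrams), that $\widetilde R$ is $\widetilde\mbF$-cotilting. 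Then $\widetilde L$ is $\widetilde\mbF$-cotilting by Lemma~\ref{mutation} applied to the strong dualizing sequence produced by Lemma~\ref{bAS-correspondence}. The equality $\id_{\mbF^H}R=\id_{\mbF^{\widetilde H}}\widetilde R$ follows because the assignment is an involution, so the inequality just proved also holds in the other direction.
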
 
\begin{proof} 
We prove (1) and (2) together. 

We want to use Lemma \ref{bAS-correspondence}, so we first prove that (1b) (or (2b)) implies that $M$ is $1$-$\mbF$-faithful. To prove $M$ is $1$-$\mbF$-faithful we need to show the natural map $(M, H) \otimes_{\Ga} (G,M) \to (G, H)$ is an isomorphism, where $\Ga=\End_{\La}(M)$. Since $L$ is $\mbF$-cotilting it is $1$-$\mbF$-faithful and thus the natural map $(L, H) \otimes_{B } (G,L) \to (G, H)$ is an isomorphism, where $B=\End_{\La}(L)$. By Lemma \ref{cogenF} (1), $M$ being an $\mbF$-dualizing summand of $L$ is equivalent to that the natural map $(M, H) \otimes_{\Ga } (L,M) \to (L, H)$ is an isomorphism. Hence we have
\[
(M, H) \otimes_{\Ga} (G,M)\xrightarrow{\cong} ((M, H) \otimes_{\Ga} (L,M))\otimes_{B } (G,L) \xrightarrow{\cong} (L, H) \otimes_{B } (G,L) \xrightarrow{\cong} (G, H)
\]
as desired. Since $L$ is $\mbF$-cotilting and $M$ is an $\mbF$-dualizing summand of $L$, we have an $\mbF$-exact strong $\add(M)$-dualizing sequence $0\to L \to M_0 \to M_1 \to R \to 0$ with $M_i\in \add (M)$. By Lemma \ref{mutation} we see that $R$ is also an $\mbF$-cotilting module. Now, by Lemma \ref{bAS-correspondence} the 6-tuple assignment restricts to an involution on the set of 6-tuples $[\La , M, L, R, G, H]$ satisfying the conditions (1a), (1b), (2a) and (2b)  if we prove that $\widetilde{R}:=\kdual(M,H)$ and $\widetilde{L}:=(G,M)$ are $\widetilde{\mbF}$-cotilting modules, where $\widetilde{\mbF}:=\mbF_{\widetilde{G}}=\mbF^{\widetilde{H}}$, $\widetilde{G}=(L,M)$ and $\widetilde{H}=\kdual(M,R)$.

Assume $\id_{\mbF} R=n$, then we have $\mbF$-exact sequences
\[
0 \to R \to H^0 \to H^1 \to \cdots \to H^{n-1} \to H^n \to 0 \eqno{(*)}
\]
and
\[
0 \to R_n \to R_{n-1} \to \cdots \to R_1 \to R_0 \to H \to 0. \eqno{(**)}
\]
The functor $(M,-)$ is exact on both $(*)$ and $(**)$. Applying $\kdual (M,-)$ to $(**)$ we get an exact sequence 
\[
0 \to \kdual (M,H)= \widetilde{R} \to \kdual (M,R_0) \to \kdual (M,R_1) \to \cdots \to \kdual (M,R_{n-1}) \to \kdual (M,R_n) \to 0 \eqno{(\star\star)}
\]
of $\Ga$-modules, where each $\kdual (M,R_i)\in \add (\widetilde{H})$ is an $\widetilde{\mbF}$-injective module. We claim that this sequence is $\widetilde{\mbF}$-exact which will imply that $(\star\star)$ is an $\widetilde{\mbF}$-injective resolution of $\widetilde{R}$ and so $\id_{\widetilde{\mbF}} \widetilde{R} \leq n$. Consider the following commutative diagram
\[
\xymatrix @C=0.8cm{
0 \ar[r] & ((L,M),\kdual (M,H)) \ar[r]\ar[d]_{\cong} & ((L,M),\kdual (M,R_0)) \ar[r]\ar[d]_{\cong}  & \cdots \ar[r] & ((L,M),\kdual (M,R_n)) \ar[r]\ar[d]_{\cong} & 0 \\
0 \ar[r] & \kdual ((M,H)\otimes (L,M)) \ar[r] & \kdual ((M,R_0)\otimes (L,M)) \ar[r] & \cdots \ar[r] & \kdual ((M,R_n)\otimes (L,M)) \ar[r] &0 \\
0 \ar[r] & \kdual (L,H) \ar[r]\ar[u]^{\cong} & \kdual (L,R_0) \ar[r]\ar[u]^{\cong} & \cdots \ar[r] & \kdual (L,R_n) \ar[r]\ar[u]^{\cong} & 0
}
\]
The first row and the second row are naturally isomorphic by the Hom-Tensor adjunction, the second row and the last row are naturally isomorphic because $H, R\in \gen_1^{\mbF_L} (M)$. The last row is obtained by applying the functor $\kdual (L,-)$ to $(**)$ and it is exact . Hence the first row is exact and the claim follows.

Similarly, apply the functor $\kdual (M,-)$ to $(*)$ we will get an $\widetilde{\mbF}$-exact sequence
\[
0 \to \kdual (M,H_n) \to \kdual (M,H_{n-1})  \to \cdots \to \kdual (M,H_1) \to \kdual (M,H_0) \to \kdual (M,R)= \widetilde{H} \to 0 \eqno{(\star)}
\]
with $\kdual (M,H_i)\in \add (\widetilde{R})$.
Now applying the functor $\kdual(\widetilde{R},-)=\kdual ((M,H),-)$ to $(\star\star)$ we will get the first row of the following commutative diagram
\[
\xymatrix @C=0.8cm{
0 \ar[r] & (\kdual (M,H),\kdual (M,H)) \ar[r] & (\kdual (M,H),\kdual (M,R_0)) \ar[r]  & \cdots \ar[r] & (\kdual (M,H),\kdual (M,R_n)) \ar[r] & 0 \\
0 \ar[r] & (H,H) \ar[r]\ar[u]^{\cong}_{\kdual (M,-)} & (R_0,H) \ar[r]\ar[u]^{\cong}_{\kdual (M,-)} & \cdots \ar[r] & (R_n,H) \ar[r]\ar[u]^{\cong}_{\kdual (M,-)} & 0
}
\]
The lower row is exact because $(**)$ is $\mbF$-exact and the vertical arrows are isomorphisms because $H, R\in \gen_1 (M)$. Therefore the upper row is exact and this means $\Ext^i_{\widetilde{\mbF}}(\widetilde{R},\widetilde{R})=0$ for $i>0$. Combining $(\star)$ and $(\star \star)$, we see that $\widetilde{R}$ is an $\widetilde{\mbF}$-cotilting module. According to the proof of Lemma \ref{bAS-correspondence}, there is a strong $\add({_{\Ga}M})$-dualizing sequnce $0\to  \widetilde{L} \to \widetilde{M_0} \to \widetilde{M_1} \to \widetilde{R} \to 0$ with $\widetilde{M_i}\in \add ({_{\Ga}M})$. Again by Lemma \ref{mutation}, we conclude that $\widetilde{L}$ is an $\widetilde{\mbF}$-cotilting module.

Finally, since the dual 4-tuple assignment restricts to an involution we have $\id_{\mbF} R=\id_{\widetilde{\mbF}} \widetilde{R}$.



\end{proof}

\begin{cor}

\begin{itemize}
\item[(1)] The functors $(-, {_{\La}} M):\Lamod \longleftrightarrow \Gamod : (-, {_{\Ga}} M)$ restrict to dualities ${^{{}_{0<}\perp_{\mbF}} L} \longleftrightarrow {^{{}_{0<}\perp_{\widetilde{\mbF}}} \widetilde{L} }$ and ${^{{}_{0<}\perp_{\mbF}} R} \longleftrightarrow {^{{}_{0<}\perp_{\widetilde{\mbF}}} \widetilde{R}}$.

\item[(2)] We have $\id_{\mbF} R \leq \id_{\mbF} L \leq \id_{\mbF} R + 2$ and $\id_{\widetilde{\mbF}} \widetilde{R} \leq \id_{\widetilde{\mbF}} \widetilde{L} \leq \id_{\widetilde{\mbF}} \widetilde{R} + 2$. \end{itemize}
\end{cor}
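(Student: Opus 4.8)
The plan is to extract both parts from the main theorem (Theorem \ref{relBBthm}) together with the duality statement of Lemma \ref{relduality} (and its dual) specialized to the situation at hand, namely the $4$-tuple $[\La,M,L,G]$ with $\La\in\add(G)$, $\mbF=\mbF_G$, $L$ an $\mbF$-cotilting module and $M$ an $\mbF$-dualizing summand of $L$; the associated $6$-tuple $[\La,M,L,R,G,H]$ (with $H=\tau G\oplus\kdual\La$, and $R$ the right end of the strong $\add(M)$-dualizing sequence $0\to L\to M_0\to M_1\to R\to 0$ produced in the proof of Theorem \ref{relBBthm}); and its image $[\Ga,{}_{\Ga}M,\widetilde L,\widetilde R,\widetilde G,\widetilde H]$ under the $6$-tuple assignment, where $\widetilde G=(L,M)$, $\widetilde H=\kdual(M,R)$, $\widetilde L=(G,M)$, $\widetilde R=\kdual(M,H)$, and $\widetilde{\mbF}=\mbF_{\widetilde G}=\mbF^{\widetilde H}$. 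By Theorem \ref{relBBthm} all of $\widetilde L,\widetilde R$ are $\widetilde{\mbF}$-cotilting and $M$ is a $\widetilde{\mbF}$-dualizing summand of $\widetilde G$.

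For part (1): I would invoke Lemma \ref{relduality} in the form that, since $\La\in\cogen^1_{\mbF^H}(M)$ and $H\in\gen_1(M)$, the functors $(-,{}_{\La}M)$ and $(-,{}_{\Ga}M)$ give a duality $\cogen^k_{\mbF^H}(M)\longleftrightarrow\cogen^1_{\mbF^R}(M)\cap\bigcap_{i=1}^{k-1}\Ker\Ext^i_\Ga(-,R)$. Taking $k$ large and using Wei's theorem (Theorem \ref{weisresult}) to identify ${}^{{}_{0<}\perp_{\mbF}}L$ with a stable $\cogen$-category eventually equal to $\cogen^k_{\mbF^H}(M)$ for $k\geq \id_{\mbF}L$ (here one uses that $L$ is $\mbF$-cotilting, so $\mbF=\mbF^H=\mbF^L$ up to the extra summand $\kdual\La$, i.e.\ $\mbF^H=\mbF^{H\oplus\kdual\La}$ and $L\in\add(H)$ is not needed — rather one uses $\bigcap_{i\geq1}\Ker\Ext^i_{\mbF}(-,L)=\cogen^{k-1}_{\mbF}(L)=\cogen^{k-1}_{\mbF}(M)$ by Lemma \ref{dualsum-cogen}/\ref{dualSummand} together with $\cogen_{\mbF}^{k-1}(M)=\cogen^{k-1}_{\mbF^H}(M)$), and then doing the same computation on the $\Ga$-side with $\widetilde L$ in place of $L$, gives the first duality ${}^{{}_{0<}\perp_{\mbF}}L\longleftrightarrow{}^{{}_{0<}\perp_{\widetilde{\mbF}}}\widetilde L$. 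The duality ${}^{{}_{0<}\perp_{\mbF}}R\longleftrightarrow{}^{{}_{0<}\perp_{\widetilde{\mbF}}}\widetilde R$ is obtained identically, replacing $L$ by $R$ and $\widetilde L$ by $\widetilde R$ and noting that $R$ (resp.\ $\widetilde R$) is also $\mbF$-cotilting (resp.\ $\widetilde{\mbF}$-cotilting) by Lemma \ref{mutation}, so the same argument applies verbatim.

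For part (2): I would use Lemma \ref{mutation} applied to the strong $0$-$\add(M)$-dualizing sequence $0\to L\to M_0\to M_1\to R\to 0$, viewed as two successive mutations. Writing $L=M\oplus U$, the first mutation $0\to U\xrightarrow{f} M_0'\to V\to 0$ (part (1) of Lemma \ref{mutation}) gives an $\mbF$-cotilting module $M\oplus V$ with $\id_{\mbF}(M\oplus V)\leq\id_{\mbF}L$, and the second mutation $0\to V'\to M_1'\xrightarrow{g} (M\oplus V\text{-summand})\to 0$ (part (2)) raises the injective dimension by at most $1$ per step; composing the two minimal approximations across the $4$-term sequence gives $\id_{\mbF}R\leq\id_{\mbF}L$ on one side, and running the mutations the other way (from $R$ back to $L$, which takes two steps of the type in Lemma \ref{mutation}(2)) gives $\id_{\mbF}L\leq\id_{\mbF}R+2$. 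The analogous inequalities $\id_{\widetilde{\mbF}}\widetilde R\leq\id_{\widetilde{\mbF}}\widetilde L\leq\id_{\widetilde{\mbF}}\widetilde R+2$ follow the same way on the $\Ga$-side, using the strong $\add({}_{\Ga}M)$-dualizing sequence $0\to\widetilde L\to\widetilde M_0\to\widetilde M_1\to\widetilde R\to 0$ from the proof of Lemma \ref{bAS-correspondence}; alternatively one transports them across the duality of part (1) together with the identity $\id_{\mbF}R=\id_{\widetilde{\mbF}}\widetilde R$ already recorded in Theorem \ref{relBBthm}.

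The main obstacle I anticipate is the bookkeeping in part (1): carefully matching the perpendicular categories ${}^{{}_{0<}\perp_{\mbF}}L$ and ${}^{{}_{0<}\perp_{\mbF}}R$ with the truncated categories $\cogen^k_{\mbF^H}(M)$ (resp.\ $\cogen^k_{\mbF^R}(M)$) appearing in Lemma \ref{relduality}, so that the restricted dualities there genuinely land on the full Ext-perpendicular categories and not merely on truncations — this requires knowing that $\cogen^k_{\mbF^H}(M)$ stabilizes once $k\geq\id_{\mbF}L$, which is exactly the content of Wei's theorem applied to the $\mbF$-cotilting module $L$ combined with $\cogen^{k}_{\mbF}(M)=\cogen^{k}_{\mbF}(L)$ from Lemma \ref{reldualizing}. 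Everything else is an application of results already established above.
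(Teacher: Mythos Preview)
For part (2), your approach via Lemma \ref{mutation} is exactly what the paper does; the paper's entire proof of (2) is the single sentence ``follows from Lemma \ref{mutation}.''

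For part (1), your route differs from the paper's and runs into a genuine difficulty. The paper does \emph{not} go through Lemma \ref{relduality}. Instead it argues directly: for $X\in{}^{{}_{0<}\perp_{\mbF}}L$ it takes an $\mbF$-projective resolution $\cdots\to P_1\to P_0\to X\to 0$ (with $P_i\in\add(G)$), applies $(-,M)$, and observes that the resulting complex $0\to(X,M)\to(P_0,M)\to(P_1,M)\to\cdots$ has terms in $\add(\widetilde L)=\add((G,M))$ and is $\widetilde{\mbF}$-exact, whence $(X,M)\in\copres^\infty_{\widetilde{\mbF}}(\widetilde L)={}^{{}_{0<}\perp_{\widetilde{\mbF}}}\widetilde L$ by Wei's theorem. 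For the $R$-case it applies $((Y,M),-)$ to the explicit $\widetilde{\mbF}$-injective resolution $(\star\star)$ of $\widetilde R$ built in the proof of Theorem \ref{relBBthm} and checks exactness by hand.

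The gap in your approach is this: Lemma \ref{relduality}, applied with the cogenerator $H$, yields a duality
\[
\cogen^k_{\mbF^H}({}_{\La}M)\;\longleftrightarrow\;\cogen^1_{\mbF^{\widetilde R}}({}_{\Ga}M)\cap\bigcap_{i=1}^{k-1}\Ker\Ext^i_{\Ga}(-,\widetilde R),
\]
where $\widetilde R=\kdual(M,H)$. The target is phrased in terms of \emph{absolute} $\Ext_{\Ga}(-,\widetilde R)$ and the exact structure $\mbF^{\widetilde R}$, neither of which coincides with $\widetilde{\mbF}=\mbF^{\widetilde H}$ (recall $\widetilde H=\kdual(M,R)\neq\widetilde R$ in general). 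So even after you identify ${}^{{}_{0<}\perp_{\mbF}}L$ with $\cogen^\infty_{\mbF}(M)$ via Wei and Lemma \ref{reldualizing}, you still owe the identification of the right-hand side above with $\cogen^\infty_{\widetilde{\mbF}}({}_{\Ga}M)={}^{{}_{0<}\perp_{\widetilde{\mbF}}}\widetilde L$. That step is not immediate, and your ``bookkeeping'' remark does not cover it: the obstacle is not stabilization of $\cogen^k$, but the mismatch between $(\widetilde R,\mbF^{\widetilde R},\Ext_{\Ga})$ on one side and $(\widetilde L,\widetilde{\mbF},\Ext_{\widetilde{\mbF}})$ on the other. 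The paper's direct resolution argument sidesteps this issue entirely.
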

\begin{proof}

(1) Given $X\in {^{{}_{0<}\perp_{\mbF}} L}$ we need to show that $(X, {_{\La}} M) \in {^{{}_{0<}\perp_{\widetilde{\mbF}}} \widetilde{L} }$ and it is enought to show $(X, {_{\La}} M) \in \copres^{\infty}_{{\widetilde{\mbF}}}(\widetilde{L})$ by Theorem \ref{weisresult}. Taking an $\mbF$-projective resolution $\cdots \to P_1 \to P_0 \to X \to 0$ of $X$ and applying $(-, {_{\La}} M)$ to get a complex $0 \to (X,M) \to (P_0, M) \to (P_1, M) \to \cdots$. A standard argument shows that it is ${\widetilde{\mbF}}$-exact and therefore $(X, {_{\La}} M) \in \copres^{\infty}_{{\widetilde{\mbF}}}(\widetilde{L})$. Now given $Y\in {^{{}_{0<}\perp_{\mbF}} R}$ we will prove that $(Y, {_{\La}} M)\in {^{{}_{0<}\perp_{\widetilde{\mbF}}} \widetilde{R}}$. Applying $((Y, M), -)$ to the $\widetilde{\mbF}$-injective resolution $(\star\star)$ of $\widetilde{R}$ gives a complex $0 \to ((Y,M), \widetilde{R}) \to ((Y,M), \kdual(M, R_0)) \to \cdots \to ((Y,M), \kdual(M, R_n)) \to 0$. One can easily check that it is in fact exact and thus $(Y, {_{\La}} M)\in {^{{}_{0<}\perp_{\widetilde{\mbF}}} \widetilde{R}}$. \\
(2) follows from Lemma \ref{mutation}.
\end{proof}

\begin{rem}
In particular, if we take $M=L$ to be the trivial $\mbF$-dualizing summand then we have $[\La, L ,L, G] \mapsto [\Ga, {_\Ga}L, \widetilde{L}=(G,L), \Ga]$ and thus $\widetilde{L}$ is a cotilting $\Ga$-module, ${_\Ga}L$ is a dualizing summand of $\widetilde{L}$ and $\id_{\mbF} L \leq \id_{\Ga} \widetilde{L} \leq \id_{\mbF} L + 2$. This gives \cite[Theorem 3.13]{ASoII}. The fact that the 4-tuple assignment restricts to an involution gives \cite[Theorem 2.8]{ASoIII}. 
\end{rem}


\subsection{Derived equivalence induced by an $\mbF$-dualizing summand}
Let $[\La , M, L, G]$ be a 4-tuple satisfying 
 $\La \in \add (G)$, $\mbF=\mbF_G$, $L$ is $\mbF$-cotilting and $M$ is an $\mbF$-dualizing summand of $L$. Then by Theorem \ref{relBBthm} the 4-tuple assignment gives a 4-tuple $[\Ga=\End_{\La}(M) , {_\Ga}M, \widetilde{L}=(G,M), \widetilde{G}=(L,M)]$ satisfying $\Ga \in \add (\widetilde{G})$, $\widetilde{\mbF}=\mbF_{\widetilde{G}}$, $\widetilde{L}$ is $\widetilde{\mbF}$-cotilting and ${_\Ga}M$ is an $\widetilde{\mbF}$-dualizing summand of $\widetilde{L}$. We consider the derived categories of exact categories 
$\mathsf{D}^b_{\mbF}(\Lamod)$ and $\mathsf{D}^b_{\widetilde{\mbF}}(\Gamod)$ and we will show the functors $(-,{_{\La}} M)$ and $(-,{_{\Ga}} M)$ induce a duality between triangulated categories $\mathsf{D}^b_{\mbF}(\Lamod)$ and $\mathsf{D}^b_{\widetilde{\mbF}}(\Gamod)$. 

\begin{pro}\label{relder-equi}
Let $[\La , M, L, G]$ be a $4$-tuple such that $\La \in \add (G)$, $\mbF=\mbF_G$, $L$ is $\mbF$-cotilting and $M$ is an $\mbF$-dualizing summand of $L$ and let $[\Ga, {_\Ga}M, \widetilde{L}, \widetilde{G}]$ be the corresponding $4$-tuple under the $4$-tuple assignment. Then the functors $(-,{_{\La}} M)$ and $(-,{_{\Ga}} M)$ induce a triangle duality between $\mathsf{D}^b_{\mbF}(\Lamod)$ and $\mathsf{D}^b_{\widetilde{\mbF}}(\Gamod)$.
\end{pro}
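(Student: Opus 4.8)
The strategy is to realize both derived categories as homotopy categories of complexes of $\mbF$-injective objects and show that $(-,{}_{\La}M)$ sends one to the other. Recall from the setup that $\mathsf{D}^b_{\mbF}(\Lamod)\simeq \mathsf{K}^{+,b}_{\mbF}(\add(H))$ and $\mathsf{D}^b_{\widetilde{\mbF}}(\Gamod)\simeq \mathsf{K}^{+,b}_{\widetilde{\mbF}}(\add(\widetilde H))$, where $H=\tau G\oplus\kdual\La$ (with $\mbF=\mbF_G=\mbF^H$) and $\widetilde H=\kdual(M,R)$, $R$ being the right end of the $\mbF$-exact strong $\add(M)$-dualizing sequence $0\to L\to M_0\to M_1\to R\to 0$ arising from Lemma \ref{mutation}. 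First I would observe that, since $M\in\add(H)$ (because $M\in\add(L)$ and $L$ is $\mbF$-cotilting, hence $L\in\gen_1^{\mbF}(M)$ forces $H$ to dominate $M$ up to summands --- more precisely $M$ is a summand of a module in $\add(H)$, and in fact the hypothesis that $M$ is an $\mbF$-dualizing summand of the $\mbF$-cotilting $L$ together with Lemma \ref{bAS-correspondence} gives $\mcI(\widetilde{\mbF})=\add(\widetilde H)=\add(\kdual(M,-)$ applied to $\add(R))$, the functor $\kdual(M,-)$ restricts to a duality $\add(H\oplus M)\leftrightarrow$ something, but the cleaner route is: the contravariant functor $(-,{}_{\La}M)$ restricts to the duality $\add({}_{\La}M)\leftrightarrow\add(\Ga)$ of Lemma \ref{projectivization}, and composing with $\kdual(\Ga,-)\circ(\text{id})$ or directly using $\nu_{\Ga}(M',M)\cong\kdual(M,M')$ from Lemma \ref{fullfaith} we get that $(-,{}_{\La}M)$ carries $\add(H\oplus M)$ onto $\add(\widetilde H\oplus M)$ --- here one checks $(M_i,M)\in\add(\widetilde G)$ for the dualizing sequence and $\kdual(M,R_i)$-type identifications as in the proof of Theorem \ref{relBBthm}).

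The key step is then the following: the contravariant additive functor $(-,{}_{\La}M):\Lamod\to\Gamod$ is exact on $\mbF$-exact sequences with all terms in $\add(H)$, because $\mbF^H$-monomorphisms are exactly those on which $(-,H)$, hence $(-,M)$ (as $M\in\add(H)$), is exact --- but one actually needs exactness of $(-,M)$ on $\mbF$-exact sequences between arbitrary modules when the outer terms lie in suitable subcategories; the safe statement is that $(-,M)$ sends an $\mbF$-exact sequence with terms in $\add(H)$ to an exact sequence, which is immediate since $\mbF=\mbF^H$ and $M$ is a summand of (a power of) $H$. Consequently $(-,{}_{\La}M)$ induces a contravariant functor on the homotopy category level $\mathsf{K}^b(\add(H))\to\mathsf{K}^b(\add(\widetilde H))$, and it extends to the ``$+,b$'' versions because the boundedness/cohomological conditions defining $\mathsf{K}^{+,b}_{\mbF}$ and $\mathsf{K}^{+,b}_{\widetilde{\mbF}}$ correspond under the identification $\Hom_{\La}(Y,H)\cong\kdual\big((M,H)\otimes_{\Ga}(Y,M)\big)\cong\Hom_{\Ga}((Y,M),\widetilde H)$ (using Lemma \ref{cogenF}(1), valid since the terms are in $\add(H)=\cogen^1_{\mbF}(M)$ for those objects, and the Hom-tensor adjunction exactly as in the big diagrams in the proof of Theorem \ref{relBBthm}). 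The quasi-inverse is built symmetrically from $(-,{}_{\Ga}M)$ using the involutivity of the $4$-tuple assignment (Theorem \ref{relBBthm}), which guarantees that applying the construction twice returns to $[\La,M,L,G]$; since $(-,{}_{\La}M)$ and $(-,{}_{\Ga}M)$ are adjoint on the nose (Lemma \ref{projectivization}) and are mutually quasi-inverse dualities on $\add(H)\leftrightarrow\add(\widetilde H)$, the induced triangle functors on the homotopy categories are mutually quasi-inverse, giving the triangle duality.

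I expect the main obstacle to be bookkeeping at the level of the triangulated structure: one must check that $(-,{}_{\La}M)$ on $\mathsf{K}^b(\add(H))$ takes distinguished triangles (i.e.\ mapping cones of chain maps) to distinguished triangles --- equivalently that it commutes with the shift (up to sign, contravariantly it sends $[1]$ to $[-1]$) and sends cones to cones. This is formal for an additive functor between homotopy categories of additive categories, but the subtlety is that the \emph{derived} categories $\mathsf{D}^b_{\mbF}$ and $\mathsf{D}^b_{\widetilde{\mbF}}$ are only \emph{equivalent} to the $\mathsf{K}^{+,b}$-categories, not equal, and one must verify that the functor descends compatibly with these equivalences --- i.e.\ that $(-,{}_{\La}M)$ applied to an $\mbF$-injective resolution of a complex computes the correct object, independent of the chosen resolution. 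This follows because two $\mbF$-injective resolutions are homotopy equivalent and $(-,{}_{\La}M)$ is an additive functor (hence preserves homotopies), but spelling it out requires care with the unbounded-above, cohomologically-bounded condition. The other routine-but-nontrivial point is confirming that the cohomological finiteness condition is genuinely preserved in both directions, which reduces to the natural isomorphism of $\Ga$-modules (resp.\ $K$-vector spaces) $\Hom_{\La}(Y,H)\cong\Hom_{\Ga}((Y,M),\widetilde H)$ for $Y\in\add(H)$, already essentially contained in the proof of Theorem \ref{relBBthm}.
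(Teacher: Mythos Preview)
Your approach has a genuine gap at its foundation. The claim that $M\in\add(H)$ is false in general: $M$ is an $\mbF$-dualizing summand of the $\mbF$-cotilting module $L$, meaning $M\in\add(L)$ and $L\in\cogen^1_{\mbF}(M)$, but nothing forces $L$ (or $M$) to be $\mbF$-injective. Already in the classical case $\mbF=\mbF_{\La}$, with $L$ any non-injective cotilting module and $M=L$ the trivial dualizing summand, one has $M\notin\add(\kdual\La)=\add(H)$. Once $M\notin\add(H)$, the contravariant functor $(-,{}_{\La}M)$ is \emph{not} exact on arbitrary $\mbF$-exact sequences, and it does not carry $\add(H)$ into $\add(\widetilde H)$: the module $(H,M)$ has no reason to lie in $\add(\kdual(M,R))$. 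The attempted repair via $\nu_{\Ga}(H,M)\cong\kdual(M,H)$ does not help, since even if this isomorphism held it would identify $(H,M)$ with $\nu_{\Ga}^{-}\widetilde R$, not with anything in $\add(\widetilde H)$. Consequently the whole passage from $\mathsf{K}^{+,b}_{\mbF}(\add(H))$ to $\mathsf{K}^{+,b}_{\widetilde{\mbF}}(\add(\widetilde H))$ via $(-,M)$ collapses.

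The paper circumvents this by not working with $\add(H)$ at all. Instead it uses that for an $\mbF$-cotilting module $L$ the functor $(-,{}_{\La}L)$ already induces a triangle duality $\mathsf{D}^b_{\mbF}(\Lamod)\simeq\mathsf{D}^b(B\text{-}\modu)$ with $B=\End_{\La}(L)$ (this is the relative tilting theorem, cf.\ \cite[Proposition 4.4.3]{Bu-Closed}), and likewise $(-,{}_{\Ga}\widetilde L)$ gives $\mathsf{D}^b_{\widetilde{\mbF}}(\Gamod)\simeq\mathsf{D}^b(\widetilde B\text{-}\modu)$ with $\widetilde B=\End_{\Ga}(\widetilde L)$. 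One then checks, using Lemma \ref{fullfaith}(1), that $\End_B((G,L))\cong\widetilde B$ and that under this identification the cotilting $B$-module $(G,L)$ corresponds to the cotilting $\widetilde B$-module $(\widetilde G,\widetilde L)$, so the classical cotilting duality links $\mathsf{D}^b(B\text{-}\modu)$ and $\mathsf{D}^b(\widetilde B\text{-}\modu)$. Composing the three dualities yields the result. The point is that $L$, being $\mbF$-cotilting, is the correct ``tilting object'' for the relative derived category, whereas $M$ alone is not.
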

\begin{proof}
Let $B=\End_{\La} (L)$ and $\widetilde{B}=\End_{\Ga} (\widetilde{L})$, then $C:=(G,L)$ is a cotilting $B$-module and $\widetilde{C}:=(\widetilde{G},\widetilde{L})$ is a cotilting $\widetilde{B}$-module. By \cite[Proposition 4.4.3]{Bu-Closed} , the functor $(-, {_{\La}} L)$ induces a triangle duality between $\mathsf{D}^b_{\mbF}(\Lamod)$ and $\mathsf{D}^b(B\text{-mod})$ and the functor $(-, {_{\Ga}} \widetilde{L})$ induces a triangle duality between $\mathsf{D}^b_{\widetilde{\mbF}}(\Gamod)$ and $\mathsf{D}^b(\widetilde{B}\text{-mod})$.

We note that by Lemma \ref{fullfaith} (1) the composition
\[
\End_{B}(C)=((G,L),(G,L))\xrightarrow{\cong} \End_{\La}(G)^{op}\xrightarrow{\cong} ((G,M),(G,M))=\End_{\Ga}(\widetilde{L})=\widetilde{B}
\]
is an isomorphism of algebras. Similarly, we have $\End_{\widetilde{B}}(\widetilde{C})\cong \End_{\Ga}(\widetilde{G})^{op}\cong B$. Since ${_B} C$ is cotilting, ${_{\widetilde{B}}} C$ is also cotilting and we have
\[
{_{\widetilde{B}}} C=(B, {_B} C)=((L,L),(G,L))\xrightarrow{\cong} (G,L)\xrightarrow{\cong} ((L,M),(G,M))=(\widetilde{G},\widetilde{L})={_{\widetilde{B}}} \widetilde{C}
\]
by Lemma \ref{fullfaith} (1). It follows that the functors $(-,{_{B}} C)$ and $(-,{_{\widetilde{B}}} \widetilde{C})$ induce a triangle duality between $\mathsf{D}^b(B\text{-mod})$ and $\mathsf{D}^b(\widetilde{B}\text{-mod})$. The desired triangle duality follows by combining this duality and the above triangle dualities.
\end{proof}

\begin{rem}
As the above proof suggests, there exist triangle equivalences $\mathsf{D}^b_{\mbF}(\Lamod) \simeq \mathsf{D}^b(\widetilde{B}\text{-mod})$ and $\mathsf{D}^b_{\mbF}(\Gamod) \simeq \mathsf{D}^b(B\text{-mod})$. The dual version of Proposition \ref{relder-equi} shows that an $\mbF$-codualizing summand of an $\mbF$-tilting module will induce a relative derived equivalence.
\end{rem}

\subsection{$\mbF$-Gorenstein algebra}
Recall that an algebra $\La$ is called $Gorenstein$ if $\id({_\La}\La)<\infty$ and $\id(\La_\La)<\infty$. Define
\[
\mcP^{\infty}(\La)=\{X\in \Lamod | \pd_{\La}X<\infty \} \ \text{and}\
\mcI^{\infty}(\La)=\{Y\in \Lamod | \id_{\La}Y<\infty \}.
\]
Then $\La$ being Gorenstein is equivalent to $\mcP^{\infty}(\La)=\mcI^{\infty}(\La)$. Let $\mbF=\mbF_G=\mbF^H$ be a subbifunctor of $\Ext_{\La}^1$ and define 
\[
\mcP^{\infty}(\mbF)=\{X\in \Lamod | \pd_{\mbF}X<\infty \} \ \text{and}\
\mcI^{\infty}(\mbF)=\{Y\in \Lamod | \id_{\mbF}Y<\infty \}.
\]
Following \cite{ASo-Gor} we call an algebra $\mbF$-$Gorenstein$ if $\mcP^{\infty}(\mbF)=\mcI^{\infty}(\mbF)$, and $\mbF$-Gorenstein algebras can be chcaracterized as follows.

\begin{lem}\label{F-Gor}$($\cite[Proposition 3.3]{ASo-Gor}$)$
\begin{itemize}
\item[(1)] An algebra $\La$ is $\mbF$-Gorenstein if and only if there exists an $\mbF$-cotilting $\mbF$-tilting module. 
\item[(2)] An algebra $\La$ is $\mbF$-Gorenstein if and only if every $\mbF$-cotilting module is $\mbF$-tilting and every $\mbF$-tilting module is $\mbF$-cotilting. 
\end{itemize}
\end{lem}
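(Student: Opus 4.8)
The plan is to prove Lemma \ref{F-Gor} by reducing both statements to the classical characterization of Gorenstein algebras (via (co)tilting modules and finite (co)resolution dimensions), applied inside the exact category structure induced by $\mbF$. First I would recall the relative versions of the basic homological facts: since $\mbF=\mbF_G=\mbF^H$ has enough $\mbF$-projectives $\add(G\oplus\La)=\mcP(\mbF)$ and enough $\mbF$-injectives $\add(H\oplus\kdual\La)=\mcI(\mbF)$, the functors $\Ext^i_{\mbF}$ are well-defined and the usual dimension-shift arguments go through verbatim. In particular $\pd_{\mbF}X<\infty$ iff $X$ admits a finite $\mbF$-projective resolution, and an $\mbF$-module is $\mbF$-tilting iff it satisfies the finitistic Bongartz-type conditions inside this exact structure; similarly for $\mbF$-cotilting as in Definition \ref{Fcotilting} and Wei's characterization Theorem \ref{weisresult}.

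For part (1): if $\La$ is $\mbF$-Gorenstein, then $G\oplus\La\in\mcP^{\infty}(\mbF)=\mcI^{\infty}(\mbF)$, so all $\mbF$-projectives have finite $\mbF$-injective dimension; conversely all $\mbF$-injectives have finite $\mbF$-projective dimension. I would then construct a module $T$ which is simultaneously $\mbF$-tilting and $\mbF$-cotilting: take $n=\id_{\mbF}(G\oplus\La)=\pd_{\mbF}(H\oplus\kdual\La)$ (these agree for $\mbF$-Gorenstein, by the standard argument that $\mcP^\infty=\mcI^\infty$ forces $\sup\{\id_{\mbF}P : P\ \mbF\text{-proj}\}=\sup\{\pd_{\mbF}I: I\ \mbF\text{-inj}\}$), and let $T=\bigoplus_{j=0}^{n-1}P_j \oplus \Omega_{\mbF}^{n}(\kdual\La\text{-part})$ following the construction already used in the Lemma preceding \ref{relBBthm} (the minimum element of $\cotilt^{\mbF}_n(\La)$). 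One checks $\id_{\mbF}T\le n$ forces $\mbF$-self-orthogonality by dimension shift, and the $\mbF$-projective resolution of $H$ terminating after finitely many steps (because $\pd_{\mbF}H<\infty$) gives the $\mbF$-exact coresolution of $H$ by $\add(T)$ needed for $\mbF$-cotilting; dually, finiteness of $\id_{\mbF}G$ gives the $\mbF$-exact resolution of $G$ by $\add(T)$ needed for $\mbF$-tilting. For the converse, if an $\mbF$-cotilting $\mbF$-tilting $T$ exists, then $\mcP^\infty(\mbF)=\operatorname{Thick}(\add T)^{\le}=\mcI^\infty(\mbF)$ because the thick subcategory generated by $T$ in $\mathsf{D}^b_\mbF(\Lamod)$ equals $\mathsf{K}^b(\mcP(\mbF))=\mathsf{K}^b(\mcI(\mbF))$ inside $\mathsf{D}^b_\mbF(\Lamod)$ — here I would invoke the derived-category description $\mathsf{D}^b_{\mbF}(\Lamod)\simeq\mathsf{K}^{+,b}_\mbF(\add H)$ recalled just before Lemma \ref{mutation}, and that $\operatorname{Thick}(T)=\mathsf{K}^b(\add H)$ for an $\mbF$-cotilting module, together with the dual statement for $\mbF$-tilting.

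For part (2): the "if" direction is immediate from (1) once we know that having one $\mbF$-cotilting $\mbF$-tilting module is equivalent to $\mbF$-Gorensteinness. For the "only if": assume $\La$ is $\mbF$-Gorenstein and let $C$ be any $\mbF$-cotilting module; then $\id_{\mbF}C<\infty$ by definition, so $C\in\mcI^\infty(\mbF)=\mcP^\infty(\mbF)$, hence $\pd_{\mbF}C<\infty$; combined with $\mbF$-self-orthogonality and the $\mbF$-exact coresolution of $H$ by $\add(C)$, a standard argument (dualizing the proof that an $\mbF$-tilting module with finite injective dimension is $\mbF$-cotilting) shows $C$ is $\mbF$-tilting, and symmetrically every $\mbF$-tilting module is $\mbF$-cotilting.

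The main obstacle I expect is not conceptual but bookkeeping: one must make sure every classical lemma used (dimension shift, the Bongartz-completion-free characterization of (co)tilting via perpendicular categories, the identification of thick subcategories in the derived category with finite (co)resolution dimensions) is genuinely available in the $\mbF$-relative exact structure. Since $\mbF$ is of finite type with enough $\mbF$-projectives and $\mbF$-injectives, all of these transfer — this is exactly the point of working with $\mbF=\mbF_G=\mbF^H$ — but the delicate step is the equality $\sup\{\id_{\mbF}P\}=\sup\{\pd_{\mbF}I\}$ under $\mcP^\infty(\mbF)=\mcI^\infty(\mbF)$, which in the classical case uses that $\Lamod$ has only finitely many indecomposables up to iso and a finiteness/rigidity argument; in the relative case the same finiteness holds since we are still over a finite-dimensional algebra, so this goes through, but it should be stated carefully rather than waved through.
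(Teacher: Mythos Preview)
The paper does not give its own proof of this lemma; it is quoted with a citation to Auslander--Solberg \cite[Proposition 3.3]{ASo-Gor}. So there is no in-paper argument to compare against, and your proposal stands on its own.

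Your outline is essentially correct, but two of your steps are more complicated than necessary and one deserves to be made explicit rather than called ``standard''.

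For part (1), forward direction: you do not need the minimum $\mbF$-cotilting module or the equality $\id_{\mbF}G=\pd_{\mbF}H$ (which need not hold literally; only finiteness of both is guaranteed by $\mcP^{\infty}(\mbF)=\mcI^{\infty}(\mbF)$). The $\mbF$-projective generator $G$ itself works: it is trivially $\mbF$-tilting, and since $\id_{\mbF}G<\infty$ and $\pd_{\mbF}H<\infty$ (the latter giving the required finite $\mbF$-exact $\add(G)$-resolution of $H$), it is also $\mbF$-cotilting.

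For part (1), converse: the derived-category route is valid but heavier than needed. If $T$ is $\mbF$-tilting then the $\mbF$-exact coresolution $0\to G\to T_0\to\cdots\to T_m\to 0$ together with $\id_{\mbF}T<\infty$ gives $\id_{\mbF}G<\infty$; dually $\pd_{\mbF}H<\infty$. Then any $X\in\mcP^{\infty}(\mbF)$ has a finite $\mbF$-resolution by modules in $\add(G)$, each of finite $\id_{\mbF}$, so $\id_{\mbF}X<\infty$; and symmetrically for the other inclusion.

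For part (2), only-if direction: your ``standard argument'' is correct but should be spelled out, since this is the one genuinely nontrivial step. Given $C$ $\mbF$-cotilting, we have $G\in{}^{{}_{0<}\perp_{\mbF}}C=\cogen^{\infty}_{\mbF}(C)$, and $\pd_{\mbF}C<\infty$ by $\mbF$-Gorensteinness. Choose $n>\max\{\pd_{\mbF}C,\id_{\mbF}G\}$ and set $X=\Omega_C^{-n}G$. Dimension shift against the $C$-coresolution of $G$ gives, using $X\in{}^{{}_{0<}\perp_{\mbF}}C$,
\[
\Ext^1_{\mbF}\bigl(X,\Omega_C^{-(n-1)}G\bigr)\cong\Ext^n_{\mbF}(X,G)=0
\]
(the last vanishing since $n>\id_{\mbF}G$). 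Hence the $\mbF$-exact sequence $0\to\Omega_C^{-(n-1)}G\to C_{n-1}\to X\to 0$ splits, so $X\in\add(C)$, and $G$ admits a finite $\mbF$-exact $\add(C)$-coresolution. Combined with $\pd_{\mbF}C<\infty$ and $\mbF$-self-orthogonality, $C$ is $\mbF$-tilting. The dual argument handles $\mbF$-tilting modules.
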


\begin{cor}\label{relGor}
Let $[\La , M, L, G]$ be a $4$-tuple satisfying $\La \in \add (G)$, $\mbF=\mbF_G$, $L$ is $\mbF$-cotilting and $M$ is an $\mbF$-dualizing summand of $L$ and let $[\Ga, {_\Ga}M, \widetilde{L}, \widetilde{G}]$ be the corresponding $4$-tuple under the $4$-tuple assignment. Then $\La$ is an $\mbF$-Gorenstein algebra if and only if $\Ga$ is an $\widetilde{\mbF}$-Gorenstein algebra.
\end{cor}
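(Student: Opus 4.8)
The plan is to combine the structural bijection from the $4$-tuple assignment (Theorem \ref{relBBthm}) with Auslander and Solberg's characterization of $\mbF$-Gorenstein algebras (Lemma \ref{F-Gor}), together with the fact that the $4$-tuple assignment transports $\mbF$-cotilting modules to $\widetilde{\mbF}$-cotilting modules. The key observation is that an $\mbF$-Gorenstein algebra is detected by the \emph{existence} of a single module which is simultaneously $\mbF$-cotilting and $\mbF$-tilting, so it suffices to show that the $4$-tuple machinery sends such a module on the $\La$-side to an analogous module on the $\Ga$-side. Since by symmetry (the assignment is an involution) it is enough to prove one direction.

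First I would recall from Theorem \ref{relBBthm} and its proof that the $6$-tuple assignment $[\La, M, L, R, G, H] \mapsto [\Ga, {_{\Ga}M}, \widetilde{L}, \widetilde{R}, \widetilde{G}, \widetilde{H}]$ sends $\mbF$-cotilting modules $L$ (resp. $R$) to $\widetilde{\mbF}$-cotilting modules $\widetilde{G} = (L,M)$ (resp. $\widetilde{R} = \kdual(M,H)$), and that $\widetilde{\mbF} = \mbF_{\widetilde{G}} = \mbF^{\widetilde{H}}$. The dual of Proposition \ref{relder-equi} (see the Remark following it) records that an $\mbF$-codualizing summand of an $\mbF$-\emph{tilting} module induces the dual derived-equivalence statement; running the dual $4$-tuple argument of Lemma \ref{bAS-correspondence} and Theorem \ref{relBBthm} with ``cotilting'' replaced by ``tilting'' throughout shows that the assignment likewise sends $\mbF$-tilting modules to $\widetilde{\mbF}$-tilting modules. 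Thus, if $T$ is an $\mbF$-tilting $\mbF$-cotilting $\La$-module, then applying the $4$-tuple assignment (with an appropriate dualizing summand, e.g. taking $M$ itself as the $\mbF$-dualizing summand of $L = T$ when $T$ has $M$ as a summand, or more simply running both the cotilting and tilting versions through) produces a $\Ga$-module $\widetilde{T}$ which is simultaneously $\widetilde{\mbF}$-tilting and $\widetilde{\mbF}$-cotilting.

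Concretely I would argue: suppose $\La$ is $\mbF$-Gorenstein. By Lemma \ref{F-Gor}(1) there is a module $T$ which is both $\mbF$-cotilting and $\mbF$-tilting. We may enlarge our $4$-tuple to include this $T$ and feed it through the assignment alongside $L$; since the $4$-tuple assignment applied to an $\mbF$-cotilting module yields a $\widetilde{\mbF}$-cotilting module and applied to an $\mbF$-tilting module (by the dual of the argument in the proof of Theorem \ref{relBBthm}) yields a $\widetilde{\mbF}$-tilting module, and since for $T$ both hypotheses hold simultaneously with the \emph{same} image module $\widetilde{T} = (T,M)$ (resp. $\kdual(M,\tau_k T \oplus \kdual \La)$), we conclude $\widetilde{T}$ is $\widetilde{\mbF}$-cotilting and $\widetilde{\mbF}$-tilting. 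Hence by Lemma \ref{F-Gor}(1) again, $\Ga$ is $\widetilde{\mbF}$-Gorenstein. The converse follows because the $4$-tuple assignment is an involution (Theorem \ref{relBBthm}), so the roles of $(\La,\mbF)$ and $(\Ga,\widetilde{\mbF})$ are symmetric.

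The main obstacle I anticipate is making precise that the \emph{same} image module witnesses both the $\widetilde{\mbF}$-tilting and $\widetilde{\mbF}$-cotilting properties — that is, verifying that the functor $(-,M)$ (and $\kdual(M,-)$) simultaneously transports an $\mbF$-tilting resolution and an $\mbF$-cotilting coresolution of $T$ to $\widetilde{\mbF}$-exact resolutions of the appropriate type, without the two constructions producing a priori different modules. This requires checking that $T$ being both tilting and cotilting forces $T \in \add(G) \cap \add(H)$ (so that the relevant Hom/tensor natural maps are isomorphisms by Lemma \ref{cogenF}), and that $\id_{\mbF} T < \infty$ and $\pd_{\mbF} T < \infty$ pass through the duality via the dimension-preservation statements in Theorem \ref{relBBthm} and its Corollary. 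Once this bookkeeping is done, the $\mbF$-self-orthogonality and the exactness of the transported (co)resolutions are routine applications of Lemma \ref{cogenF} and Lemma \ref{fullfaith}, exactly as in the proof of Theorem \ref{relBBthm}.
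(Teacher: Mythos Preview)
Your overall strategy has a genuine gap, and the obstacle you flag at the end is in fact fatal to the argument as written. The $4$-tuple assignment of Theorem \ref{relBBthm} requires $M$ to be an $\mbF$-dualizing summand of the module being transported, i.e.\ $M \in \add(T)$ and $T \in \cogen^1_{\mbF}(M)$. For an \emph{arbitrary} $\mbF$-tilting-$\mbF$-cotilting module $T$ produced by Lemma \ref{F-Gor}(1) there is no reason whatsoever for either condition to hold: $M$ was chosen relative to the fixed module $L$, not to $T$. Your proposed fix, namely that $T$ being tilting-cotilting should force $T \in \add(G)\cap\add(H)$, is false already in the absolute case $\mbF=\Ext^1_\La$, where $\add(G)\cap\add(H)$ consists of projective-injectives while tilting-cotilting modules need not be projective-injective. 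So you cannot ``feed $T$ through the assignment alongside $L$'' and the argument does not close.

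The paper's proof avoids this entirely by using Lemma \ref{F-Gor}(2) rather than (1). Since $L$ (and $R$, from the $6$-tuple) are already $\mbF$-cotilting by hypothesis, $\La$ being $\mbF$-Gorenstein is \emph{equivalent} to $L$ and $R$ themselves being $\mbF$-tilting. Now one applies the tilting version of Theorem \ref{relBBthm} to these specific modules, for which $M$ is by construction the right kind of summand, to conclude $\widetilde{L}, \widetilde{R}$ are $\widetilde{\mbF}$-tilting; as they are already $\widetilde{\mbF}$-cotilting, Lemma \ref{F-Gor} gives that $\Ga$ is $\widetilde{\mbF}$-Gorenstein. The point is that working with the module $L$ already present in the $4$-tuple, rather than an auxiliary $T$, makes the dualizing-summand hypothesis automatic.
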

\begin{proof}
Consider the 6-tuple assignment $[\La , M, L,R, G,H] \mapsto [\Ga, {_\Ga}M, \widetilde{L}, \widetilde{R},\widetilde{G}, \widetilde{H}  ] $ as in the proof of Theorem \ref{relBBthm}. Then $L,R$ are $\mbF$-cotilting modules and $\widetilde{L}, \widetilde{R}$ are $\widetilde{\mbF}$-cotilting modules. By Lemma \ref{F-Gor}, $\La$ is $\mbF$-Gorenstein if and only if $L$ and $R$ are $\mbF$-tilting modules, if and only if $\widetilde{L}, \widetilde{R}$ are $\widetilde{\mbF}$-tilting modules by the tilting version of Theorem \ref{relBBthm}, if and only if $\Ga$ is $\widetilde{\mbF}$-Gorenstein by Lemma \ref{F-Gor} again.
\end{proof}

\begin{rem}
\begin{itemize}
\item[(1)] The tilting version of Theorem \ref{relBBthm} implies that $\pd_{\widetilde{\mbF}} \widetilde{L}=\pd_{\mbF} L$, $\pd_{\mbF} L \leq \pd_{\mbF} R \leq \pd_{\mbF} L + 2$ and $\pd_{\widetilde{\mbF}} \widetilde{L} \leq \pd_{\widetilde{\mbF}} \widetilde{R} \leq \pd_{\widetilde{\mbF}} \widetilde{L} + 2$. Now by using \cite[Proposition 3.4]{ASo-Gor}, we see that $\pd_{\widetilde{\mbF}} \widetilde{G}=\id_{\widetilde{\mbF}} \widetilde{H} \leq \pd_{\widetilde{\mbF}} \widetilde{L}+\id_{\widetilde{\mbF}} \widetilde{L} \leq \pd_{\mbF} L+ \id_{\mbF} R +2$. 

\item[(2)] In particular, if we take $M=L$ then the above result gives  \cite[Proposition 3.1 and Proposition 3.6]{ASo-Gor}. 

\end{itemize}
\end{rem}

\section{Special cotilting} \label{specialTilt}
We assume throughout this section that $\mbF=\mbF_G=\mbF^H$ for a generator $G$ and a cogenerator $H$. The easiest situation where relative dualizing summands appear in relative cotilting modules are when these summands are 1-$\mbF$-faithful $\mbF$-injective modules. 

\begin{dfn} Let $C$ be an $\mbF$-cotilting module of $\id_{\mbF} C\leq r$. We say that $C$ is special if 
it has an $\mbF$-injective $(r-1)$-$\mbF$-dualizing summand $I$. This is equivalent to an $\mbF$-injective summand $I$ of $C$ such that $\cogen^{r-1}_{\mbF}(C) = \cogen^{r-1}_{\mbF}(I)$ by Lemma \ref{reldualizing}. We sometimes call $C$ $I$-special if it is special with respect to the $\mbF$-injective $I$.\\
Dually, we say an $\mbF$-tilting module $T$ of $\pd_{\mbF} T\leq r$ is special if it has a $\mbF$-projective summand $P$ such that $\gen_{r-1}^{\mbF}(T) = \gen_{r-1}^{\mbF}(P)$.
\end{dfn}

We look at a minimal $\mbF$-injective $\mbF$-coresolution of $G$ 
\[
0\to G \to I_0\to I_1\to I_2 \to \cdots
\]
and define $J_n= \bigoplus_{t\leq n} I_t$ (so in particular we have $G\in \cogen^n_{\mbF} (J_n)$

\begin{thm} 
Let $r\geq 1$. 
We consider the following three finite sets. 
\begin{itemize}
\item[(1)] Isomorphism classes of basic special cotilting modules of $\id_{\mbF} \leq r$. 
\item[(2)] Isomorphism classes of basic $\mbF$-injective modules $I$ with $G\in \cogen^{r-1}_{\mbF}(I)$. 
\item[(3)] Isomorphism classes of basic $I\in \add (H)$ with $J_{r-1}\in \add (I)$. 
\end{itemize}
Then the sets $(2)$ and $(3)$ are equal. Mapping $C$ to its maximal $F$-injective summand gives a bijection between $(1)$ and $(2)$. The inverse is given by 
mapping $I$ to $C_{I,r}:=I\oplus \Omega_I^rH$. 
\end{thm}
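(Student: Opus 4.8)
The plan is to verify the three claimed assertions in turn: that sets $(2)$ and $(3)$ coincide, that $C \mapsto$ (maximal $\mbF$-injective summand) maps $(1)$ into $(2)$, and that $I \mapsto C_{I,r} := I \oplus \Omega_I^r H$ maps $(2)$ into $(1)$ and is inverse to the first map. First I would dispose of the equality $(2) = (3)$: by Lemma \ref{reldualizing} (applied with $L = J_{r-1}$ and the $\mbF$-dualizing summand being an $\mbF$-injective $I$), and since $\mbF$-injectives are exactly $\add(H)$, an $\mbF$-injective module $I$ satisfies $G \in \cogen^{r-1}_{\mbF}(I)$ iff $\cogen^{r-1}_{\mbF}(I) = \cogen^{r-1}_{\mbF}(J_{r-1})$ (both contain $G$ and hence all of $\cogen^{r-1}_{\mbF}(J_{r-1})$, by minimality of the $\mbF$-injective coresolution which realizes $G \in \cogen^{r-1}_{\mbF}(J_{r-1})$). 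One then checks that this forces $J_{r-1} \in \add(I)$ using that the minimal left $\add(I)$-approximations appearing in the $\mbF$-injective coresolution of $G$ must be summands of those for $J_{r-1}$; conversely if $J_{r-1} \in \add(I)$ then trivially $G \in \cogen^{r-1}_{\mbF}(J_{r-1}) \subseteq \cogen^{r-1}_{\mbF}(I)$. The crucial point is that for an $\mbF$-injective $I$, the notion $G \in \cogen^{r-1}_{\mbF}(I)$ only depends on $\add(I)$ containing the first $r$ terms of the minimal coresolution.

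Next I would show $C_{I,r} := I \oplus \Omega_I^r H$ is a special $\mbF$-cotilting module of $\id_{\mbF} \leq r$ whenever $I \in (2)$. Here I invoke Lemma \ref{HigherTilt}: an $\mbF$-injective module has $\id_{\mbF} I = 0 \leq 1$ and is automatically $\mbF$-self-orthogonal, and $G \in \cogen^{r-1}_{\mbF}(I)$ translates (by the duality in Lemma \ref{cogenF}, or directly via Theorem \ref{Rel-fb}) to $H \in \gen^{\mbF}_{r-1}(I)$ — this is the analogue I need to feed into Lemma \ref{HigherTilt} (in its dual/cotilting form, with the roles of $G,H$ and $\cogen,\gen$ swapped, which is available since $\mbF = \mbF_G = \mbF^H$ with $G$ generator, $H$ cogenerator). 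Lemma \ref{HigherTilt} then gives that $C_{I,r} = I \oplus \Omega_I^r H$ is an $r$-$\mbF$-cotilting module with $\cogen^{r-1}_{\mbF}(I) = \bigcap_{i \geq 1} \Ker \Ext^i_{\mbF}(-, C_{I,r})$, and that $I$ is an $(r-1)$-$\mbF$-dualizing summand of $C_{I,r}$; in particular $C_{I,r}$ is $I$-special by definition, and its $\mbF$-injective summands are controlled by $I$ since any further $\mbF$-injective summand of $\Omega_I^r H$ would be a summand of $H$ split off from a minimal approximation sequence, contradicting minimality — so $I$ is precisely the maximal $\mbF$-injective summand of $C_{I,r}$. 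That handles one direction of the bijection.

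Finally I would check that starting from $C \in (1)$, taking its maximal $\mbF$-injective summand $I$ lands in $(2)$ and then $C_{I,r} \cong C$. By definition of special, $C$ has an $\mbF$-injective $(r-1)$-$\mbF$-dualizing summand, which by Lemma \ref{reldualizing} means $\cogen^{r-1}_{\mbF}(C) = \cogen^{r-1}_{\mbF}(I)$; since $C$ is $\mbF$-cotilting, Wei's theorem (Theorem \ref{weisresult}) gives $\cogen^{r-1}_{\mbF}(C) = \bigcap_{i \geq 1}\Ker\Ext^i_{\mbF}(-,C)$ and in particular $G \in \bigcap_{i\geq 1}\Ker\Ext^i_{\mbF}(-,C) = \cogen^{r-1}_{\mbF}(C) = \cogen^{r-1}_{\mbF}(I)$ (using that $G$ is a generator, hence $\mbF$-projective in the largest possible sense, so $\Ext^i_{\mbF}(G,-)=0$ for $i\geq 1$ — or more carefully, that $G \in \mcP(\mbF) \subseteq \cogen^{r-1}_{\mbF}(C)$ because $C$ being $\mbF$-cotilting makes it $1$-$\mbF$-faithful, combined with $\id_{\mbF}C \leq r$). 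Hence $I \in (2)$. Then $C_{I,r} = I \oplus \Omega_I^r H$ and the previous paragraph shows $\bigcap_{i\geq 1}\Ker\Ext^i_{\mbF}(-,C_{I,r}) = \cogen^{r-1}_{\mbF}(I) = \cogen^{r-1}_{\mbF}(C) = \bigcap_{i\geq 1}\Ker\Ext^i_{\mbF}(-,C)$, and since $\mbF$-cotilting modules are determined up to additive equivalence by their perpendicular category (standard cotilting theory, or uniqueness in Wei's correspondence), we get $\add(C) = \add(C_{I,r})$, i.e.\ $C \cong C_{I,r}$ as basic modules. Together with the second map being well-defined into $(1)$ and the two composites being identities, this establishes the bijection.

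I expect the main obstacle to be the careful bookkeeping in the equality $(2) = (3)$ and in identifying $I$ as the \emph{maximal} $\mbF$-injective summand — i.e.\ showing that $\Omega_I^r H$ contributes no new $\mbF$-injective summands, which requires a minimality argument about left $\add(I)$-approximations (no $\mbF$-injective summand can be "split off" from $\Omega_I^r H$ because the defining approximation is minimal). The cotilting-side homological facts are essentially citations to Lemma \ref{HigherTilt}, Theorem \ref{weisresult} and Lemma \ref{reldualizing}; the genuinely new work is assembling them and the minimality argument correctly, plus confirming that the dual form of Lemma \ref{HigherTilt} (with an $\mbF$-injective $M$ and $H \in \gen^{\mbF}_{r-1}(M)$) applies verbatim.
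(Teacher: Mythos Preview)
Your proposal is essentially correct and follows the same overall strategy as the paper: equality of (2) and (3) via minimality of the $\mbF$-injective coresolution of $G$, construction of $C_{I,r}$ via Lemma~\ref{HigherTilt}, and identification of the two maps as inverse via Wei's characterization and uniqueness of $\mbF$-cotilting modules with a given perpendicular category.

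There is one organizational difference worth noting. The paper handles the well-definedness of $(1)\to(2)$ and the composite $(2)\to(1)\to(2)$ simultaneously by a single clean observation: if $C$ is $I$-special with maximal $\mbF$-injective summand $J$, then the chain $\copres^{r-1}_{\mbF}(I)\subseteq\copres^{r-1}_{\mbF}(J)\subseteq\copres^{r-1}_{\mbF}(C)$ collapses (using $\copres=\cogen$ for $\mbF$-injectives and for $\mbF$-cotilting modules), whence $J\in\cogen_{\mbF}(I)$; but $J$ is $\mbF$-injective, so the $\mbF$-monomorphism $J\hookrightarrow I^n$ splits and $J\in\add(I)$, giving $\add(I)=\add(J)$. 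This single splitting argument replaces two separate pieces of your proof: (a) your claim that $\Omega_I^rH$ has no $\mbF$-injective summand (which is correct, but needs exactly this splitting argument---minimality alone only excludes summands in $\add(I)$, and you need the $\mbF$-monomorphism $\Omega_I^rH\hookrightarrow I_{r-1}$ to split off any $\mbF$-injective piece), and (b) your passage from the special-witness $I'$ to the maximal $\mbF$-injective summand $I$ in the $(1)\to(2)$ step, where you write $\cogen^{r-1}_{\mbF}(C)=\cogen^{r-1}_{\mbF}(I)$ but Lemma~\ref{reldualizing} only directly gives this for $I'$. Your invocation of Lemma~\ref{reldualizing} in the $(2)=(3)$ step is also misplaced (the lemma concerns dualizing summands, not the comparison of two $\mbF$-injectives), though the minimality argument you give immediately after is exactly what is needed and is what the paper uses.
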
 

\begin{proof}
Assume $J_{r-1}\in \add (I)\subset \add (H)$, then clearly $G\in \cogen^{r-1}_{\mbF}(J_{r-1})\subset \cogen^{r-1}_{\mbF}(I)$ and we conclude that (3) is a subset of (2). So assume $I \in \add (H)$ with $G \in \cogen^{r-1}_{\mbF}(I)$. Since the minimal $\mbF$-injective $\mbF$-exact $r$-copresentation (of $G$) must be a summand of any other $\mbF$-injective $\mbF$-exact $r$-copresentation, it follows that $J_{r-1}\in \add(I)$ and therefore the sets (2) and (3) are equal. \\
So let $C$ be an $I$-special $r$-$\mbF$-cotilting module and let $J$ be its maximal injective summand - of course $I \in \add (J)$ and clearly $\copres_{\mbF}^{r-1} (I) \subseteq \copres_{\mbF}^{r-1} (J) \subseteq \copres_{\mbF}^{r-1} (C) $.
Since $I,J $ are $\mbF$-injective and $C$ is $r$-$\mbF$-cotilting we conclude that these inclusions of subcategories coincide with  $\cogen_{\mbF}^{r-1} (I) \subseteq \cogen_{\mbF}^{r-1} (J) \subseteq \cogen_{\mbF}^{r-1} (C) $. 
Since $C$ is $I$-special it follows that they are all equal, in particular $J \in \cogen_{\mbF} (I)$ implies $J \in \add (I)$ and therefore $\add (I) = \add(J)$. This means the map is well-defined. It follows from lemma \ref{HigherTilt} that the assignment $I \mapsto C_I= I \oplus\Omega_I^r H$ is the inverse map. 
\end{proof}

Let $\Sigma_\mbF^r(\La )$ be the finite  subposet of the poset of isomorphism classes of basic $\mbF$-cotilting modules of $\id_{\mbF} \leq r$, where the partial order is given by inclusion of perpendicular categories... \\
Let $\add_{J_{r-1}}(H)$ be the lattice given by isomorphism classes of basic summands $I$ of $H$ such that $J_{r-1} \in \add(I)$ . The partial order is just given by inclusion of summands, the meet and join are defined in the obvious way. In particular, if $H=J_{r-1}\oplus X$ with $\lvert X \rvert =t$, then the lattice  $\add_{J_{r-1}}(H)$ is isomorphic to the power set $\mathcal{P}(\{1, 2, \ldots , t\})$  which is a poset  with respect to inclusion and a lattice with respect to intersection and union (sometimes also referred to as a $t$-dimensional cube). 

\begin{cor} The finite poset $\Sigma_\mbF^r(\La )$ is a lattice and the bijection from the previous theorem gives a lattice isomorphism 
\[ \Sigma_\mbF^r(\La) \to \add_{J_{r-1}}(H). \]
\end{cor}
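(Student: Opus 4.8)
The plan is to promote the set-theoretic bijection from the previous theorem to a lattice isomorphism by verifying that the partial orders on the two sides match up, and then using the explicit description of $\add_{J_{r-1}}(H)$ as a power-set lattice (sublattice of summands of $H$) to deduce that $\Sigma^r_{\mbF}(\La)$ is itself a lattice. The key point is that the bijection $C\mapsto (\text{maximal }\mbF\text{-injective summand of }C)$ with inverse $I\mapsto C_{I,r}=I\oplus\Omega_I^r H$ is \emph{order-reflecting and order-preserving}, so it transports the lattice structure.

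First I would recall that the order on $\Sigma^r_{\mbF}(\La)$ is $C\leq C'$ iff $C\in{}^{{}_{0<}\perp_{\mbF}}C'=\bigcap_{i\geq 1}\Ker\Ext^i_{\mbF}(-,C')$, which by Wei's theorem (Theorem \ref{weisresult}) equals $\cogen^{r-1}_{\mbF}(C')$ when $C'$ is $r$-$\mbF$-cotilting; hence $C\leq C'$ iff $\cogen^{r-1}_{\mbF}(C)\subseteq\cogen^{r-1}_{\mbF}(C')$ (using that both perpendicular categories stabilize and that $C\in\cogen^{r-1}_{\mbF}(C)$). Next, for a special cotilting module $C$ with maximal $\mbF$-injective summand $I$ we have, by the definition of special together with Lemma \ref{reldualizing}, $\cogen^{r-1}_{\mbF}(C)=\cogen^{r-1}_{\mbF}(I)$. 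Therefore $C\leq C'$ in $\Sigma^r_{\mbF}(\La)$ if and only if $\cogen^{r-1}_{\mbF}(I)\subseteq\cogen^{r-1}_{\mbF}(I')$ for the corresponding maximal $\mbF$-injective summands $I,I'$. So it remains to identify this containment of $\cogen$-categories with the order on $\add_{J_{r-1}}(H)$.

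The second step is to show that for $\mbF$-injective modules $I,I'$ containing $J_{r-1}$ as a summand, one has $\cogen^{r-1}_{\mbF}(I)\subseteq\cogen^{r-1}_{\mbF}(I')$ iff $\add(I)\subseteq\add(I')$. One direction is trivial: $\add(I)\subseteq\add(I')$ gives $\cogen^{r-1}_{\mbF}(I)\subseteq\cogen^{r-1}_{\mbF}(I')$ directly from the definition. For the converse, suppose $\cogen^{r-1}_{\mbF}(I)\subseteq\cogen^{r-1}_{\mbF}(I')$. Since $I$ is $\mbF$-injective, $I\in\cogen^{0}_{\mbF}(I')\subseteq\cogen^{r-1}_{\mbF}(I')$ is not automatic, but $I\in\cogen^{r-1}_{\mbF}(I)$ gives $I\in\cogen^{r-1}_{\mbF}(I')$, and then because $I$ is $\mbF$-injective the $\mbF$-monomorphism $I\to I'_0$ in its copresentation splits, so $I\in\add(I')$; as $I,I'$ are basic this forces the summands of $I$ to be summands of $I'$, i.e.\ $\add(I)\subseteq\add(I')$. (Here I use that $\add(I)\subseteq\add(H)$ and all modules are basic, so ``summand of'' is the partial order on $\add_{J_{r-1}}(H)$.) Combining with the previous paragraph, the bijection $C\mapsto I$ satisfies $C\leq C'\iff\add(I)\subseteq\add(I')$, which is exactly the order on $\add_{J_{r-1}}(H)$; thus it is an order isomorphism.

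Finally, since $\add_{J_{r-1}}(H)$ is a lattice (it is the interval $[J_{r-1},H]$ in the Boolean lattice of basic summands of $H$, with meet = intersection of summands and join = sum of summands, all containing $J_{r-1}$), and $\Sigma^r_{\mbF}(\La)$ is order-isomorphic to it via the previous theorem's bijection, $\Sigma^r_{\mbF}(\La)$ is a lattice and the bijection is a lattice isomorphism. The main obstacle I anticipate is the converse direction of the second step: pinning down that a containment of $\cogen^{r-1}_{\mbF}$-categories between $\mbF$-injectives really does force containment of $\add$-closures, which rests on the $\mbF$-injectivity making the relevant $\mbF$-monomorphisms split together with the basic-ness hypothesis; once that splitting is in hand everything else is bookkeeping. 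A secondary point worth spelling out is why the correspondence sends meets to intersections and joins to sums of $\mbF$-injective summands, i.e.\ that $C_{I\cap I',r}$ is the meet of $C_{I,r}$ and $C_{I',r}$ and similarly for joins — but this is immediate once the map is known to be an order isomorphism between two lattices, since an order isomorphism of lattices automatically preserves meets and joins.
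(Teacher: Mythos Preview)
Your argument is correct and complete. The paper states this corollary without proof, treating it as an immediate consequence of the bijection in the preceding theorem together with the explicit description of $\add_{J_{r-1}}(H)$ as a Boolean cube; you have supplied exactly the missing step, namely that the bijection is an order isomorphism, by showing that $C\leq C'$ translates first to $\cogen^{r-1}_{\mbF}(I)\subseteq\cogen^{r-1}_{\mbF}(I')$ (via Wei's theorem and the defining property of special cotilting modules) and then to $\add(I)\subseteq\add(I')$ (via the splitting of $\mbF$-monomorphisms out of $\mbF$-injectives).

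One minor slip that does not affect the argument: the parenthetical inclusion $\cogen^{0}_{\mbF}(I')\subseteq\cogen^{r-1}_{\mbF}(I')$ in your second paragraph goes the wrong way (larger $k$ yields a smaller category). Your actual reasoning uses the correct chain $I\in\cogen^{r-1}_{\mbF}(I)\subseteq\cogen^{r-1}_{\mbF}(I')\subseteq\cogen_{\mbF}(I')$, so the conclusion stands.
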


We also observe that if an $I$-special $r$-$\mbF$-cotilting module $C$ has an $(r-1)$-$\mbF$-dualizing summand $M$, then $I \in \add (M)$. 

We give now several little applications, in particular connecting it with the other parts of the article. 

\subsection{Examples and applications}
\begin{itemize}
\item[(1)] Non-relative special tilting has been defined in \cite{PS1} and many special cases had been considered before, as APR-tilting and BB-tilting \cite{BGP}, \cite{BrenBut}, \cite{APR}, $n$-APR-tilts \cite{IO} or flip-flops for posets \cite{L}. 
Any endomorphism ring of a generator has a canonical special cotilt, this has been used to define desingularizations of orbit closures and quiver Grassmannians in \cite{CIFR}, \cite{CBSa}, \cite{PS2}.
\item[(2)] We explain that (non-relative) special cotilting naturally gives two recollements relating the cotilted algebras: 
Let $I$ be a $(k-1)$-faithful injective $\La$-module for $k\geq 1$ and $C=C_{I,k}=I \oplus \Omega^{k}_I \kdual \La$ the $I$-special $k$-cotilting module. 
Then $\Omega_I^{k}$ is an equivalence of categories $\add \kdual \La /\add I \to \add C/ \add I$ (for the definition of ideal quotients, see \cite[A.3]{ASS}) with quasi-inverse $\Omega_I^{-k}$ (this follows from \cite[Theorem 5.2]{AR-CM} with $\mathcal{X}=\add (I)$). 
Let $B=\End_{\La}(C)^{op}$, then ${}_B\kdual C$  is special $k$-tilting module with respect to the $(k-1)$-faithful projective module $P=(C,I)$. 
Let $P=B \varepsilon $ and $I=\kdual (e \La)$ for idempotents $e\in \La, \varepsilon \in B$. Then the equivalence $\Omega_I^{k}$ induced an isomorphism of algebras 
\[ (\La/(e))^{op} = \End_{\add \kdual \La /\add I }(\kdual \La ) \cong \End_{\add C/ \add I}(C )= (B/(\varepsilon ))^{op}\] 
Observe also $e\La e\cong  \End_{\La} (I)^{op} \cong \varepsilon B \varepsilon $, therefore we have two recollements with isomorphic ends induced by the idempotents $e, \varepsilon$. 
 \[
 \xymatrix{   && \Lamod  \ar[dd]_{F}\ar@<1ex>[dll]^{p} \ar@<-1ex>[dll]_{q } \ar@<1ex>[drr]|{e} && \\
 \La / (e) \text{-}\modu \quad  \ar[urr]|{i} \ar[drr]|{j} &&&&  \;\;
\eLamod \ar@<-2ex>[ull]_{\ell }  \ar@<0ex>[ull]^{r }  \ar@<0ex>[dll]_{\lambda}   \ar@<2ex>[dll]^{\rho } \\
  && B \text{-}\modu \ar@<1ex>[ull]^{\pi} \ar@<-1ex>[ull]_{\phi } \ar@<-1ex>[urr]|{\varepsilon } && \\
 }
 \]
Furthermore, the cotilting functor $F:=\kdual (-,C)$ commutes with the following functors from the recollements $\varepsilon \circ F=e, F\circ \ell =\lambda $.

\item[(3)] The standard cogenerator correspondence says that the assignment $[\La , {}_{\La}E] \mapsto [\Ga , {}_{\Ga}I]$ defined by $\Ga = \End(E), I={}_{\Ga}E$ gives a bijection between 
\begin{itemize}
\item[(a)] $[\La , {}_{\La}E]$ with $\kdual \La \in \add E.$
\item[(b)] $[\Ga , {}_{\Ga}I]$ with $I$ injective and $\Ga \in \cogen^1(I)$.
\end{itemize}
Let us denote $C_I$ to be the special $2$-cotilting $\Ga$-module which exists in situation (b). Then the AS-assignment $[\Ga , {}_{\Ga}I, C_I] \mapsto [\La , {}_{\La}E, G]$ with $G=(C_I,I)$ gives a natural extension of the cogenerator correspondence to a bijection between the following. 
\begin{itemize}
\item[(a')] $[\La , {}_{\La}E, G]$ with $\kdual \La \in \add E$ and $E$ is an $\mbF_G$-cotilting module.
\item[(b')] $[\Ga , {}_{\Ga}I, C_I]$ with $I$ injective and $\Ga \in \cogen^1(I)$, $C_I$ $2$-cotilting with $\cogen^1 (C_I) = \cogen^1 (I)$.
\end{itemize}
\end{itemize}

This can be generalized to the $4$-tuple assignment as follows: 

\subsubsection{Example of the relative cotilting correspondence using special cotilting} \label{ExOf4tuple}
This is our main example for theorem \ref{relBBthm}. Let us look at the $5$-tuple assignment $[\La, I, L, G,H] \mapsto [\Gamma=\End_{\La}(I) , {}_{\Gamma} I, \widetilde{L}=(G,I),\widetilde{G}=(L,I),  \widetilde{H}=\kdual (I,H)]$. 
Then this gives a involution on the following $5$-tuples $[\La, I, L, G,H]$ with  
$\La \in \add(G), \kdual \La \oplus I \in \add (H)$, $\mbF=\mbF^H=\mbF_G$ and $L$ is an $I$-special $2$-$\mbF$-cotilting module. \\

The proof goes as follows: By Theorem \ref{relBBthm} we know that $\widetilde{L}$ is again an $\widetilde{\mbF}$-cotilting module with $\widetilde{\mbF}= \mbF^{\widetilde{H}}$ and has an $\widetilde{F}$-dualizing summand ${}_{\Gamma}I$. 
So we need to see that $\idim_{\widetilde{\mbF}} \widetilde{L} \leq 2$, then $\widetilde{L}$ is the (uniquely determined) ${}_{\Gamma}I$-special $2$-$\mbF$-cotilting module.  
Recall that the assumption ensures that we have an $\mbF$-exact strong $I$-dualizing sequence $0 \to L \to I_0 \to I_1 \to H \to 0$ with $I_j \in \add (I)$, so we can see $R:=H$ as the right end of it. 
This has been used to show that for $\widetilde{G}:=(L,I), \widetilde{H} =\kdual (I,H)$ we have $\widetilde{\mbF}= \mbF^{\widetilde{H}}=\mbF_{\widetilde{G}}$. 
Now, apply $(-,I)$ to a minimal projective presentation of $G$ and $\kdual (I,-)$ to a minimal injective copresentation of $H$ to obtain an $\widetilde{\mbF}$-exact, strong ${}_{\Gamma}I$-dualizing sequence with left end $(G,I)=\widetilde{L}$ and right end $\kdual (I,H)=\widetilde{H}$. This ensures that $\idim_{\widetilde{\mbF}} \widetilde{L}\leq 2$ and therefore $\widetilde{L}$ is an ${}_{\Gamma}I$-special $2$-$\widetilde{\mbF}$-cotilting. \\

We remark that special $r$-(co)tilting requires an $\mbF$-injective $(r-1)$-$\mbF$-dualizing summand. In our previously considered assignments we looked only at $1$-$\mbF$-dualizing summands, that is why our example only works for $r=2$.


\subsubsection{Mutation and dualizing sequences induce special tilts on endomorphism rings}

\begin{lem}
 Let $0 \to L \to M_0 \to \cdots \to M_k \to R \to 0$ be an $\mbF$-exact strong $k$-$M$-dualizing sequence with $\Ext^j_{\mbF}(L,R)=0$ for $j \geq 1$ and $L$,$R$ be $\mbF$-selforthogonal. Let $B =\End(L)$ and $A = \End (R)$. Then $T=(L,R)$ is a special $k$-tilting $A$-module with respect to $P= (M,R)$ and $C=\kdual (L,R)$ is a special 
 $k$-cotilting $B$-module with respect to $I=\kdual (L,M)$. Furthermore, we have 
 $\End_{A}(T)\cong B^{op}$ and $\End_{B} (C) \cong A^{op}$. 
 \end{lem}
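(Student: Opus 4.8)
Write $(-,-)=\Hom_\La(-,-)$ and denote the given sequence by $\eta\colon 0\to L\to M_0\to\cdots\to M_k\to R\to 0$. The plan is to apply to $\eta$ the two functors furnished by projectivization of $R$ and of $L$ and to read off the claimed (co)tilting data. From the axioms of a $k$-$\add(M)$-dualizing sequence we have $\add(L)=\add(M\oplus\Omega_M^{k+1}R)$ and $\add(R)=\add(M\oplus\Omega_M^{-(k+1)}L)$, so $M\in\add(L)\cap\add(R)$; hence by Lemma \ref{projectivization} the module $P=(M,R)$ is projective over $A=\End(R)$ and lies in $\add(T)$ (since $M\in\add(L)$), while $I=\kdual(L,M)$ is injective over $B=\End(L)$ and lies in $\add(C)$. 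Moreover, by Lemma \ref{strongdualseq}, strongness of $\eta$ is equivalent to each of: $(-,R)$ is exact on $\eta$; and $\Hom_\La(L,-)$ --- equivalently $\kdual(L,-)$, as $\kdual$ is exact --- is exact on $\eta$.

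For $T=(L,R)$ over $A$: applying the exact functor $(-,R)$ to $\eta$ gives an exact sequence $0\to A\to (M_k,R)\to\cdots\to (M_0,R)\to T\to 0$ with $(M_i,R)\in\add(P)\subseteq\mcP(A)$. As $P\in\add(T)$, this sequence is at once a finite projective resolution of $T$ (bounding $\pd_A T$) and an $\add(T)$-coresolution of $A$, i.e.\ axioms (i) and (iii) for a (special) tilting module. For self-orthogonality, apply $\Hom_A(-,T)$ to this resolution; since $L$ and all $M_i$ lie in $\cogen^1(R)$ (for $L$ this follows from $\eta$ being $\mbF^R$-exact, which gives $L\in\cogen^{k+1}(R)$), Lemma \ref{fullfaith}(1) identifies the complex computing $\Ext^{*}_A(T,T)$ with $\Hom_\La(L,-)$ applied to $\eta$ (with the term $\Hom_\La(L,L)$ omitted); strongness makes the latter exact, so $\Ext^{>0}_A(T,T)=0$ and $\End_A(T)\cong\Hom_\La(L,L)^{\op}=B^{\op}$, the $\op$ coming from the contravariance of $(-,R)$. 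Finally $L=M\oplus\Omega_M^{k+1}R$ gives $T=P\oplus(\Omega_M^{k+1}R,R)$, and truncating the resolution above exhibits $T\in\gen_{k-1}(P)$; the dual of Lemma \ref{dualSummand} (together with Lemma \ref{dualsum-cogen} for small $k$), applied to the faithfully balanced tilting module $T$, then forces $\gen_{k-1}(T)=\gen_{k-1}(P)$, so $P$ is the projective dualizing summand witnessing that $T$ is special.

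The statement about $C=\kdual(L,R)$ over $B$ is the mirror image, with the exact contravariant functor $\kdual(L,-)$ replacing $(-,R)$: applying $\kdual(L,-)$ to $\eta$ yields $0\to C\to\kdual(L,M_k)\to\cdots\to\kdual(L,M_0)\to\kdual B\to 0$ with $\kdual(L,M_i)\in\add(I)\subseteq\mcI(B)$ and $I\in\add(C)$, giving the bound on $\id_B C$ and an $\add(C)$-coresolution of $\kdual B$. Applying $\Hom_B(C,-)$ to this injective coresolution and using Lemma \ref{fullfaith}(1) --- now with $R$ and the $M_i$ in $\gen_1(L)$, the membership of $R$ again coming from strongness --- identifies the complex computing $\Ext^{*}_B(C,C)$ with $\Hom_\La(-,R)$ applied to $\eta$ (with the term $\Hom_\La(R,R)$ omitted); hence $\Ext^{>0}_B(C,C)=0$ and $\End_B(C)\cong\Hom_\La(R,R)^{\op}=A^{\op}$. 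Specialness of $C$ with respect to $I=\kdual(L,M)$ follows as for $T$, from the decomposition $C=I\oplus\kdual(\Omega_M^{k+1}R,R)$ and Lemma \ref{reldualizing}.

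The main work --- and the step I expect to be the real obstacle --- is the identification, in the second and third paragraphs, of the relevant $\Hom$-complex with $\Hom_\La(L,-)$ (resp.\ $\Hom_\La(-,R)$) applied to $\eta$: one must verify that the natural comparison maps provided by Lemma \ref{projectivization} and Lemma \ref{fullfaith} assemble into an isomorphism of complexes compatible with all differentials, and that the $\cogen^1$ and $\gen_1$ memberships required by Lemma \ref{fullfaith} really do follow from strongness of $\eta$ together with the $\mbF$-self-orthogonality of $L$ and $R$ and the vanishing $\Ext^{>0}_\mbF(L,R)=0$. Once this identification is in hand, the (co)tilting axioms, the ``special'' clause, and the endomorphism-ring isomorphisms all drop out of the same computation.
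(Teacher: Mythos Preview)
Your proof is correct and follows essentially the same route as the paper's: apply $(-,R)$ to the strong dualizing sequence to get the projective resolution $0\to A\to P_k\to\cdots\to P_0\to T\to 0$, then use Lemma~\ref{fullfaith} to identify $\Ext^*_A(T,T)$ with quantities over $\La$. The paper invokes Lemma~\ref{fullfaith}(2) as a black box, using the hypotheses $L\in{}^{{}_{\mbF,1\leq}\perp}R$ and $L\in\cogen^\infty_{\mbF}(R)$ to obtain $\Ext^j_{\mbF}(L,L)\cong\Ext^j_A(T,T)$ and then conclude from $\mbF$-self-orthogonality of $L$; you instead unpack this via Lemma~\ref{fullfaith}(1) term by term, identifying $\Hom_A((M_i,R),(L,R))\cong\Hom_\La(L,M_i)$ (which only needs $M_i\in\add(R)\subseteq\cogen^1(R)$), so that the complex computing $\Ext^*_A(T,T)$ becomes $\Hom_\La(L,-)$ applied to $\eta$ and its exactness is exactly strongness.

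One small remark: for that term-by-term identification you in fact only need $M_i,R\in\cogen^1(R)$, not $L\in\cogen^1(R)$ as you write; the latter is true but irrelevant here. Also note that your argument shows the self-orthogonality of $T$ (and the ring isomorphism $\End_A(T)\cong B^{\op}$) follow from strongness alone, so the hypotheses $\Ext^{>0}_{\mbF}(L,R)=0$ and $\mbF$-self-orthogonality of $L$ are not used in your approach --- this is a mild sharpening relative to the paper's argument. Your treatment of the ``special'' clause (exhibiting $T\in\gen_{k-1}(P)$ from the resolution and then invoking the dual of Lemma~\ref{dualSummand}) is more explicit than the paper, which simply asserts it.
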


\begin{proof}
Apply $(-,R)$ to the strong dualizing sequence, setting $P_i={}_{A}(M_i,R)\in\add (P)$, we get an exact sequence of $A$-modules 
\[  0 \to A \to P_k \to \cdots \to P_0 \to T \to 0.\]
This shows $\pd T\leq k$ and $A $ has an $\add (T)$-resolution with all middle terms in $\add (P)$ ($\subseteq \add (T)$). Since the dualizing sequence is strong and by assumption $L \in {}^{{}_{\mbF,1\leq }\perp }R\cap \cogen^{\infty }_{\mbF} (R)$, we can use Lemma \ref{fullfaith},(2) to get an isomorphism $\Ext^j_{\mbF}(L,L) \to \Ext^j_{A} (T,T)$. Since $L$ is $\mbF$-selforthogonal, the module $T$ is selforthogonal. This implies that $T$ is a special $k$-tilting module with respect to $P$. 
Similarly, one can show that $C$ is a special $k$-cotilting module with respect to $I$. 
The last claim follows from Lemma \ref{fullfaith},(1).  
\end{proof}



\subsubsection{Passing to endomorphism rings of special cotilting modules}
Recall, that in the non-relative case the \emph{Brenner-Butler} assignment $(BB)\colon [\Si , J, L'] \mapsto [B=\End_{\Si} (L'), \kdual (L', J),{}_B L']$
maps $J$-special $t$-cotilting $\Si $-modules $L'$ to a $\kdual (L',J)$-special $t$-cotilting $B$-module and this assignment is an involution on these triples. \\
We explain how this relates to relative special cotilting: Let $H$ be a basic cogenerator, $\Si =\End_{\La}(H)^{op}$ and $\varepsilon \in \Si $ the projection onto the summand $\kdual \La$, then we have a pair of adjoint functors 
\[  \ell =\kdual (-,H) \colon \Lamod \rightleftarrows \Simod \colon \varepsilon = (\Si \varepsilon , -)
\]
(cf. Appendix) with $\Bild \ell = \gen_1 (\Si \varepsilon  )$. As always we set $\mbF=\mbF^H$. 
Then for $I \in \add (H)$ we have $\ell (I) \in \add (\kdual \Si )$ and:  
$H \in \gen^{\mbF}_{t-1}(I) \Leftrightarrow \kdual \Si \in \gen_{t-1}(\ell (I)), \bigoplus_{j\geq 1}^{t} \Omega_{\ell (I)}^{j}\kdual \Sigma  \in \gen_1 (\Si \varepsilon  )$.\\ 

The assignment $[\La, I, L, H] \mapsto [\Sigma = \End_{\La} (H)^{op}, \ell(I), \ell (L)]$ injects an $I$-special $t$-$\mbF^H$-cotilting modules $L$ to an $\ell (I)$-special $t$-cotilting $\Sigma $-module $\ell (L)$. Any $J$-special $t$-cotilting $\Sigma$-module $L'$ for some $J \in \add ( \kdual \Sigma  )$ is in the image of this assignment if and only if $\bigoplus_{j\geq 1}^{t} \Omega_{J}^{j}\kdual \Sigma  \in \gen_1 (\Si \varepsilon )$. 
The assignment $[\La, I, L, H] \mapsto [B = \End_{\La} (L), \kdual(L,I), \kdual (L,H)]$ injects an $I$-special $t$-$\mbF^H$-cotilting modules $L$ to an $\kdual (L,I)$-special $t$-cotilting $B$-module $\kdual (L,H)$. 
In fact, combining the assignments we get a commuting triangle as follows 
\[ \xymatrix{ &[\La , I, L, H]\ar[dl]_{\ell}\ar[dr]^{\kdual (L,-)} & \\
[\Sigma = \End_{\La} (H)^{op}, \ell(I), \ell (L)] \ar@{<->}[rr]^{(BB)}&& [B = \End_{\La} (L), \kdual(L,I), \kdual (L,H)] }
\]

\begin{exa}
Here are the endomorphism rings of the special cotilts of the relative Auslander algebras for $\La=K(1\to 2\to 3\to 4\to 5)$, $G=P_5 \oplus M, M=\bigoplus_{i=1}^4\bigoplus_{j\geq 0} \tau^{-j}P_i$, $\mbF=\mbF_G$ from Example \ref{ex-FAusalg-2}, (4). We choose $I=M\in \add (H) $, then the $M$-special tilting and cotilting modules conincide with: $G,M\oplus S_4, M\oplus S_3, M\oplus S_2, H$. Their respective endomorphism ring is shown by the quiver with relations below.  
\[ 
\xymatrix @-1.5pc{
&&&& \bullet \ar[dr]&&&&\\
&&& \bullet \ar[ur] \ar[dr]\ar @{..}[rr]&& \bullet \ar[dr]&&&\\
&&\bullet \ar[ur]\ar[dr]\ar @{..}[rr]&&  \bullet \ar[dr]\ar[ur]\ar @{..}[rr]&&\bullet  \ar[dr]&& \\
&\bullet \ar[ur]\ar @{..}@/_1pc/[rrrr]& & \bullet \ar[ur]\ar @{..}@/_1pc/[rrrr]& & \bullet \ar[ur]& & \bullet &\\
\bullet \ar[ur]\ar @{..}@/_1pc/[urrr]&&&& &&&&
}
\xymatrix @-1.5pc{
&&&& \bullet \ar[dr]&&&&\\
&&& \bullet \ar[ur] \ar[dr]\ar @{..}[rr]&& \bullet \ar[dr]&&&\\
&&\bullet \ar[ur]\ar[dr]\ar @{..}[rr]&&  \bullet \ar[dr]\ar[ur]\ar @{..}[rr]&&\bullet  \ar[dr]&& \\
&\bullet \ar[ur] \ar[dr]\ar @{..}[rr]& & \bullet \ar[ur]\ar @{..}@/_1pc/[rrrr]& & \bullet \ar[ur]& & \bullet &\\
&&\bullet \ar[ur] \ar @{..}@/_1pc/[urrr]&&& &&&
}
\xymatrix @-1.5pc{
&&&& \bullet \ar[dr]&&&&\\
&&& \bullet \ar[ur] \ar[dr]\ar @{..}[rr]&& \bullet \ar[dr]&&&\\
&&\bullet \ar[ur]\ar[dr]\ar @{..}[rr]&&  \bullet \ar[dr]\ar[ur]\ar @{..}[rr]&&\bullet  \ar[dr]&& \\
&\bullet \ar[ur] \ar @{..}@/_1pc/[drrr]& & \bullet \ar[ur]\ar[dr]\ar @{..}[rr]& & \bullet \ar[ur]& & \bullet &\\
&&&&\bullet \ar[ur] \ar @{..}@/_1pc/[urrr]&&& &
}
\]
\[
\xymatrix @-1.5pc{
&&&& \bullet \ar[dr]&&&&\\
&&& \bullet \ar[ur] \ar[dr]\ar @{..}[rr]&& \bullet \ar[dr]&&&\\
&&\bullet \ar[ur]\ar[dr]\ar @{..}[rr]&&  \bullet \ar[dr]\ar[ur]\ar @{..}[rr]&&\bullet  \ar[dr]&& \\
&\bullet \ar[ur] \ar @{..}@/_1pc/[rrrr]& & \bullet \ar[ur]\ar @{..}@/_1pc/[drrr]& & \bullet \ar[ur] \ar[dr]\ar @{..}[rr]& & \bullet &\\
&&& &&&\bullet \ar[ur] &&
}
\xymatrix @-1.5pc{
&&&& \bullet \ar[dr]&&&&\\
&&& \bullet \ar[ur] \ar[dr]\ar @{..}[rr]&& \bullet \ar[dr]&&&\\
&&\bullet \ar[ur]\ar[dr]\ar @{..}[rr]&&  \bullet \ar[dr]\ar[ur]\ar @{..}[rr]&&\bullet  \ar[dr]&& \\
&\bullet \ar[ur]\ar @{..}@/_1pc/[rrrr] & & \bullet \ar[ur]\ar @{..}@/_1pc/[rrrr]& & \bullet \ar[ur] \ar @{..}@/_1pc/[drrr]& & \bullet \ar[dr]&\\
&&& &&&&&\bullet 
}
\]

\end{exa}

\section{Appendix: Embedding into an abelian category} 
We fix $\Delta = \End_{\La}(G)^{op}$ and $e\in \Delta $ projection onto the summand $\La$ (resp. $\Sigma = \End_{\La} (H)^{op}$ and $\varepsilon \in \Sigma$ the projection onto $\kdual \La$), $r= \Hom_{\La} (G, -)$ then we have a pair $(e,r)$ of adjoint functors (resp. $\ell= \Sigma \varepsilon \otimes_{\La} -= \kdual \Hom_{\La}(-, H)$, then we have an adjoint pair $(\varepsilon, \ell) $)
\[ 
e \colon \Demod \rightleftarrows \Lamod \colon r  \quad (\text{resp.} \; \; \ell \colon \Lamod \rightleftarrows \Simod \colon \varepsilon )
\]
with $e$ is exact and $r$ is fully faithful, maps $\mbF$-exact sequences to exact sequences and $\add(G)$ to $\add (\Delta )$. In particular, it maps $\mbF$-projective resolutions to projective resolutions and we get induced isomorphisms 
\[  \Ext^i_{\mbF}(M,N) \to \Ext^i_{\Delta} (r(M), r(N)), \quad i \geq 0.\] 
Dually, $\varepsilon $ is exact, $\ell $ is fully faithful, maps $\mbF $-exact sequences to exact sequences and $\add (H) $ to $\add (\kdual \Sigma )$, it maps $\mbF$-injective resolutions to injective resolutions and induces isomorphisms on the Ext-groups $\Ext^i_{\mbF}(M,N) \to \Ext^i_{\Sigma } (\ell (M), \ell (N)),  i \geq 0$.  We have  \[\Bild r = \cogen^1 (\kdual (e \Delta))\text{ and }\Bild \ell = \gen_1 (\Sigma \varepsilon ). \]

It is also easy to see: If $T$ is a relative tilting $\La$-module, then $r(T)$ is a tilting $\Delta $-module: Conversely, every tilting $\Delta $-module in $\cogen^1(J)$ restricts under $e$ to a relative tilting module. This gives a bijection, respecting the partial order (given by inclusion of perpendicular categories). \\
If $C$ is a relative cotilting module then $\ell (C)$ is a cotilting $\Sigma$-module and every cotilting module in $\Bild \ell = \gen^1( \Sigma \varepsilon )$ restricts under $\varepsilon $ to a relative cotilting module. 


Furthermore, in \cite{ASoII} Auslander and Solberg showed 
\[ 
\gldim_{\mbF} \La \leq \gldim \Delta \leq \gldim_{\mbF} \La +2. 
\]

\begin{lem} \label{gldimF}
Let $\La$ and $\Delta $ be as before and $k\geq 0$. Then the following are equivalent: 
\begin{itemize}
    \item[(1)]  $\pd_{\mbF} \kdual \La \leq k$ and $\gldim \Delta \leq k+2$,
    \item[(2)] $\gldim_{\mbF} \La \leq k$,
    \item[(3)] $\idim_{\mbF} \La \leq k$ and $\gldim \Si \leq k+2$.
\end{itemize}
\end{lem}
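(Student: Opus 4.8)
The plan is to reduce the whole statement to the single implication $(1)\Rightarrow(2)$, exploiting the embeddings $r,\ell$ from the Appendix together with the left--right symmetry $\mbF\leftrightarrow\mbF^{*}$.

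\emph{Preliminaries I would record.} In the exact category $(\Lamod,\mbF)$ one has $\gldim_{\mbF}\La=\sup_{X}\pd_{\mbF}X=\sup_{Y}\idim_{\mbF}Y$, and if this is \emph{finite} then a dimension shift along the minimal $\mbF$-injective coresolution of an arbitrary $M'$ (with $d=\idim_{\mbF}M'<\infty$) yields $\Ext^{k+1}_{\mbF}(M',-)\cong\Ext^{k+1+d}_{\mbF}(\Omega^{-d}_{\mbF}M',-)$, the module $\Omega^{-d}_{\mbF}M'$ being $\mbF$-injective; hence, once $\gldim_{\mbF}\La<\infty$, proving $\gldim_{\mbF}\La\le k$ is the \emph{same} as proving $\pd_{\mbF}(\mcI(\mbF))\le k$, i.e. $\pd_{\mbF}H\le k$ where $H=\tau G\oplus\kdual\La$ and $\mcI(\mbF)=\add(H)$. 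Recall also that $r=\Hom_{\La}(G,-)\colon\Lamod\to\Demod$ and $\ell=\kdual\Hom_{\La}(-,H)\colon\Lamod\to\Simod$ are fully faithful, send $\mbF$-exact sequences to exact sequences ($r$ sending $\mbF$-projective resolutions to projective resolutions, $\ell$ sending $\mbF$-injective resolutions to injective resolutions), and induce isomorphisms $\Ext^{i}_{\mbF}(M,N)\cong\Ext^{i}_{\Delta}(rM,rN)\cong\Ext^{i}_{\Si}(\ell M,\ell N)$; thus $\gldim_{\mbF}\La\le\gldim\Delta$, and together with the Auslander--Solberg bound $\gldim\Delta\le\gldim_{\mbF}\La+2$. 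Finally, applying the lemma to the opposite data $(\La^{\op},\mbF^{*})$, where $\mbF^{*}=\mbF^{\kdual G}=\mbF_{\kdual H}$, the $K$-duality $\kdual$ carries $\mbF$ to $\mbF^{*}$, carries $\End_{\La}(G)^{\op}$ to $\End_{\La^{\op}}(\kdual H)^{\op}\cong\Si^{\op}$, and carries $\pd_{\mbF}\kdual\La$ to $\idim_{\mbF}\La$ while $\gldim_{\mbF^{*}}\La^{\op}=\gldim_{\mbF}\La$: this gives both the symmetric inequality $\gldim_{\mbF}\La\le\gldim\Si\le\gldim_{\mbF}\La+2$ and the fact that $(3)\Leftrightarrow(2)$ for $(\La,\mbF)$ is literally $(1)\Leftrightarrow(2)$ for $(\La^{\op},\mbF^{*})$. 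So it suffices to prove $(1)\Leftrightarrow(2)$.

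\emph{The easy half.} $(2)\Rightarrow(1)$: if $\gldim_{\mbF}\La\le k$ then $\pd_{\mbF}\kdual\La\le\gldim_{\mbF}\La\le k$ and $\gldim\Delta\le\gldim_{\mbF}\La+2\le k+2$. (By the symmetry above this also gives $(2)\Rightarrow(3)$.) For $(1)\Rightarrow(2)$: from $\gldim\Delta\le k+2$ and the functor $r$ we get $\gldim_{\mbF}\La\le k+2<\infty$; by the Preliminaries the claim $\gldim_{\mbF}\La\le k$ is then equivalent to $\pd_{\mbF}H=\max\{\pd_{\mbF}\tau G,\pd_{\mbF}\kdual\La\}\le k$, and since $\pd_{\mbF}\kdual\La\le k$ is assumed, the entire content of $(1)\Rightarrow(2)$ is the inequality $\pd_{\mbF}\tau G\le k$. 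I would attack this inside $\Demod$ via $r$: there $r(G)={}_{\Delta}\Delta$, $r(\kdual\La)=\kdual G$, $r(\tau G)=\Hom_{\La}(G,\tau G)$, $r$ turns $\mbF$-projective resolutions into honest projective $\Delta$-resolutions, and $\Bild r=\cogen^{1}(J)$ for the injective $\Delta$-module $J=\kdual(e\Delta)$. The hypothesis $\pd_{\mbF}\kdual\La\le k$ translates into $\Ext^{k+1}_{\Delta}(\kdual G,-)=0$ on $\cogen^{1}(J)$, while $\gldim\Delta\le k+2$ holds globally; analysing the minimal projective $\Delta$-resolutions of the simple $\Delta$-modules attached to the non-projective indecomposable summands of $G$ — these, together with the simples coming from $\La$, being exactly what can push $\gldim\Delta$ above $\gldim_{\mbF}\La$ — and playing the global bound against the vanishing on $\cogen^{1}(J)$ should force $\Ext^{k+1}_{\Delta}(\Hom_{\La}(G,\tau G),-)=0$ on $\cogen^{1}(J)$, i.e. $\pd_{\mbF}\tau G\le k$.

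\emph{The main obstacle} is precisely that last step. The two global-dimension inequalities are not by themselves enough to recover $(2)$ from $(1)$; one genuinely has to show that the ``extra'' $\mbF$-injective summand $\tau G$ of $H$ does not raise relative projective dimension past $k$, and for this one must understand how the $\Delta$-syzygies of the simples indexed by the non-$\La$ summands of $G$ sit relative to the subcategory $\cogen^{1}(J)=\Bild r$. This is where — and, I expect, only where — the hypothesis $\pd_{\mbF}\kdual\La\le k$ does work beyond what $\gldim\Delta\le k+2$ already supplies; making it precise amounts to the finer structural analysis of $\End_{\La}(G)^{\op}$ underlying the Auslander--Solberg inequality in \cite{ASoII}, and everything else in the argument is formal bookkeeping with $r,\ell$ and the duality $(\La,\mbF)\leftrightarrow(\La^{\op},\mbF^{*})$.
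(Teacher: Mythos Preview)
The reduction to $(1)\Rightarrow(2)$ via duality and the easy half $(2)\Rightarrow(1)$ are fine, and your observation that $\gldim\Delta\le k+2$ alone already forces $\gldim_{\mbF}\La<\infty$ is correct. The genuine gap is exactly where you flag it: you do not prove $\pd_{\mbF}\tau G\le k$, and the proposed route via simple $\Delta$-modules and ``finer structural analysis'' is both vague and pointed in the wrong direction.

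The missing idea is short and avoids $\tau G$ entirely. The hypothesis $\pd_{\mbF}\kdual\La\le k$ says more than you extract from it: since $r$ carries an $\mbF$-projective resolution of $\kdual\La$ to an honest projective $\Delta$-resolution of $J=r(\kdual\La)=\kdual(e\Delta)$, it gives $\pd_{\Delta}J\le k$ \emph{globally}, not merely the vanishing of $\Ext^{k+1}_{\Delta}(J,-)$ on $\cogen^{1}(J)$. Now take \emph{any} $X\in\cogen^{1}(J)=\Bild r$ and any $Y\in\Demod$. The $J$-copresentation $0\to X\to J_{0}\to J_{1}$ with $J_{t}\in\add(J)$ yields short exact sequences $0\to X\to J_{0}\to C\to 0$ and $0\to C\to J_{1}\to C'\to 0$; since $\pd_{\Delta}J_{t}\le k$, applying $\Hom_{\Delta}(-,Y)$ gives dimension shifts
\[
\Ext^{k+1}_{\Delta}(X,Y)\cong\Ext^{k+2}_{\Delta}(C,Y)\cong\Ext^{k+3}_{\Delta}(C',Y)=0
\]
by $\gldim\Delta\le k+2$. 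Thus $\pd_{\Delta}X\le k$ for every $X\in\Bild r$, hence $\pd_{\mbF}M\le k$ for every $M\in\Lamod$, which is $(2)$. Your detour through $\tau G$ and the reduction ``$\gldim_{\mbF}\La\le k\Leftrightarrow\pd_{\mbF}H\le k$'' are correct but unnecessary: the dimension-shift argument handles all $\La$-modules simultaneously and never mentions simple $\Delta$-modules.
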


\begin{proof}
$(1) \Rightarrow (2)\colon $ Let $J= \kdual  (e\Delta ) $. 
Clearly, $\gldim_{\mbF} \La \leq k $ if and only if $\Ext^{k+1}_{\Delta } (\cogen^1(J), \cogen^1(J)) =0$.
We have $J=\kdual (e\Delta )= r(\kdual \La )$ and it is easily seen that 
$\pd_{\mbF} \kdual \La \leq k $ is equivalent to $\pd {}_{\Delta }J \leq k$.\\
We claim the stronger implication: $\gldim \Delta \leq k+2$ and $\pd J \leq k$ implies $\Ext^{k+1} (\cogen^1 (J) ,\Demod )=0$ (i.e., $\pd X \leq k$ for all $X \in \cogen^1 (J)$). \\
If we have an exact sequence $0 \to A \to J_0 \to B \to 0$ with $J_0\in \add (J)$ and we apply a functor $(-,Y)$ then we get a dimension shift $\Ext^i(A, Y) \cong \Ext^{i+1}(B,Y)$ for all $i \geq k+1$. 
In particular, we have for $X \in \cogen^1 (J)$: $\Ext^{k+1}(X,Y) \cong \Ext^{k+2}( \Omega^-X, Y) \cong \Ext^{k+3} (\Omega^{-2} X, Y) =0$ since we assume that $\gldim \Delta \leq k+2$. \\
$(2) \Rightarrow (1)$ Clearly, if $\gldim_{\mbF} \La \leq k $, then $\pd_{\mbF} \kdual \La \leq k$. By Auslander-Solberg's result (see before) we also have 
\[ 
\gldim \Delta \leq \gldim_{\mbF} \La +2\leq k+2.
\] 
The equivalence of (2) and (3) is proven analogously. 
\end{proof}

\begin{exa}
Let $\La = K(1 \to 2 \to \cdots \to n )$. Then there are $2^N$ with $N=\sum_{k=1}^{n-1} k$ basic generators $G$. The minimal $\mbF$-global dimension is $0$ which is obtained if and only of $G$ is the Auslander generator. 
The maximal $\mbF$-global dimension is $n-1$ (cf. Example \ref{ex-FAusalg-2}, (4)). 
\end{exa}


\bibliographystyle{amsalpha}
\bibliography{Gen}

\end{document}